\newtheorem{lemma}{Lemma}[section]
\newtheorem{proposition}[lemma]{Proposition}
\newtheorem{theorem}[lemma]{Theorem}
\newtheorem{corollary}[lemma]{Corollary}
\newtheorem{conjecture}[lemma]{Conjecture}
\theoremstyle{definition}
\newtheorem{example}[lemma]{Example}
\newtheorem{definition}[lemma]{Definition}
\newtheorem{notation}[lemma]{Notation}
\theoremstyle{remark}
\newtheorem{remark}[lemma]{Remark}
\date{}
\def\bal#1\eal{\begin{align}#1\end{align}}
\def\bas#1\eas{\begin{align*}#1\end{align*}}
\def\bit{\begin{itemize}}
\def\eit{\end{itemize}}
\def\bet{\begin{enumerate}}
\def\eet{\end{enumerate}}
\def\ed{\end{document}}
\def\cal{\mathcal}
\def\a{\alpha}
\def\b{\beta}
\def\g{\gamma}
\def\d{\delta}
\def\e{\varepsilon}
\def\f{\varphi}
\def\k{\kappa}
\def\l{\lambda}
\def\Lam{\Lambda}
\def\s{\sigma}
\def\w{\omega}
\def\Om{\Omega}
\def\del{\partial}
\def\adel{\ol{\partial}}
\def\DEL{\Delta}
\def\G{\Gamma}
\def\odel{\overline{\partial}}
\def\bC{{\mathbb C}}
\def\bN{{\mathbb N}}
\def\bR{{\mathbb R}}
\def\bZ{{\mathbb Z}}
\def\C{{\cal C}}
\def\D{{\cal D}}
\def\E{{\cal E}}
\def\F{{\cal F}}
\def\H{{\cal H}}
\def\X{{\cal X}}
\def\fG{{\mathfrak G}}
\def\fh{{\mathfrak h}}
\def\fF{{\mathfrak F}}
\def\fG{{\mathfrak G}}
\def\fX{{\mathfrak X}}
\def\fY{{\mathfrak Y}}
\def\co{\operatorname{co}}
\def\exd{\operatorname{d}}
\def\dim{\operatorname{dim}}
\def\haar{\mathrm{\bf h}}
\def\unit{\mathrm{U}}
\def\counit{\mathrm{C}}
\def\id{\operatorname{id}}
\def\ker{\operatorname{ker}}
\def\Ker{\operatorname{ker}}
\def\proj{\operatorname{proj}}
\def\spn{\operatorname{span}}
\def\vol{\operatorname{vol}}
\def\im{\operatorname{im}}
\newcommand{\floor}[1]{\lfloor #1 \rfloor}
\def\hol{^{(1,0)}}
\def\ahol{^{(0,1)}}
\def\inv{^{-1}}
\def\coby{\, \square_{H}}
\def\oby{\otimes}
\def\wed{\wedge}
\def\sseq{\subseteq}
\def\tl{\triangleleft}
\def\wt{\widetilde}
\def\wh{\widehat}
\def\ol{\overline}
\def\la{\left\langle}
\def\\la{\left\langle}
\def\ra{\right\rangle}
\def\>{\right\rangle}
\def\bs{\backslash}
\def\mto{\mapsto}
\def\cf3{\bC_q[F_3]}
\def\csu2{\bC_q[SU_2]}
\def\cs1{\bC P^{1}}
\def\cccp1{\bC_q[\bC P^{1}]}
\def\ccp2{\bC P^{2}}
\def\cp2{\bC_q[\bC P^{2}]}
\def\cpn{\bC_q[\bC P^{n}]}
\def\qf3{\bC_q[F_3]}
\def\wqf3{\Om^1_q[F_3]}
\def\qsu3{\bC_q[SU_3]}
\def\wqsu3{\Om^1_q[SU_3]}
\def\usl2{\mathcal{U}(\mathfrak{sl}(2))}
\def\ws2{\Om^1_q(S^2)}
\def\wsu2{\Om^1_q(SU_2)}
\def\cs{\bC_q[S^{n,r}]}
\def\hol{^{(1,0)}}
\def\ahol{^{(0,1)}}
\def\az{\ol{z}}
\def\n2{_{(-2)}}
\def\m2{_{(-2)}}
\def\m1{_{(-1)}}
\def\0{_{(0)}}
\def\1{_{(1)}}
\def\2{_{(2)}}
\def\3{_{(3)}}
\def\4{_{(4)}}
\def\5{_{(5)}}
\def\hol{^{(1,0)}}
\def\ahol{^{(0,1)}}
\def\ev{\operatorname {ev}}
\def\coev{\operatorname {coev}}
\def\sgmm{{}^{G}_M \hspace{-.030cm}{\mathrm{mod}}_0}
\def\lgmmm{{}^{G}_M \hspace{-.030cm}{\mathrm{Mod}}_0}
\def\lmhm{{^H\!\!\mathrm{Mod}}_0}
\def\alg{algebra~}
\def\algn{algebra}
\def\hk{Heckenberger--Kolb~}
\def\ncgn{noncommutative geometry}
\def\nc{noncommutative~}
\def\qhs{quantum homogeneous space~}
\def\st{such that~}
\def\wrt{with respect to~}
\def\mpr{maximal prolongation~}
\def\uqsl2{U_q(\frak{sl_2)}}
\def\tuqsl2{\wt{U}_q(\frak{sl}_2)}
\def\tu1sl2{\wt{U}_1(\frak{sl}_2)}
\def\gr{\bC_q[\text{Gr}_{n,r}]}
\def\cgr{\text{Gr}_{n,r}}
\def\sgmm{{}^{G}_M \hspace{-.030cm}{\mathrm{mod}}_0}
\DeclareMathOperator{\lin}{lin}
\DeclareMathOperator{\Lin}{Lin}
\DeclareMathOperator{\Hom}{Hom}
\DeclareMathOperator{\dgproj}{dgproj}
\DeclareMathOperator{\Mod}{Mod}
\DeclareMathOperator{\dgMod}{dg-Mod}
\DeclareMathOperator{\GrMod}{GrMod}
\DeclareMathOperator{\lproj}{lproj}
\DeclareMathOperator{\rproj}{rproj}
\def\wedgeev{\mathbin{\wedge_{{\mathrm {ev}}}}}
\renewcommand{\mod}{\operatorname{mod}}
\newcommand\psM{\prescript{}{M}}
\newcommand\psH{\prescript{H}{}}
\newcommand\psA{\prescript{A}{}}
\newcommand\psAM{\prescript{A}{M}}
\newcommand\psAN{\prescript{A}{N}}
\newcommand\psGM{\prescript{G}{M}}
\newcommand\lvee{\prescript{\vee}{}}
\newcommand\lwedge{\prescript{\wedge}{}}
\newcommand\psD{\prescript{}{\D}}
\newcommand\psAD{\prescript{A}{\D}}
\newcommand\psAE{\prescript{A}{\E}}
\DeclareMathOperator\lhVB{lhVB}
\DeclareMathOperator\rhVB{rhVB}
\DeclareMathOperator\dglproj{dg-lproj}
\DeclareMathOperator\ldgproj{dg-lproj}
\DeclareMathOperator\dgrproj{dg-rproj}
\DeclareMathOperator\rdgproj{dg-rproj}
\DeclareMathOperator\dgzlproj{dg_0-lproj}
\DeclareMathOperator\dgzrproj{dg_0-rproj}
\title[A Kodaira Vanishing Theorem for Noncommutative  K\"ahler Structures]{A Kodaira Vanishing Theorem for Noncommutative  K\"ahler Structures}
\author{R\'{e}amonn \'{O} Buachalla}
\address{Institute for Mathematics \\ Astrophysics and Particle Physcs (IMAPP) \\ Radboud University Nijmegen \\ Heyendaalseweg 135 \\ 6525 AJ Nijmegen \\ The Netherlands}\email{reamonnobuachalla@gmail.com}
\author{Jan \v{S}\v{t}ovi\v{c}ek}
\address{Charles University \\ Faculty of Mathematics and Physics \\ Department of Algebra \\ Sokolovsk\'{a} 83 \\ 186 75 Prague 8 \\ Czech Republic}\email{stovicek@karlin.mff.cuni.cz}
\author{Adam-Christiaan van Roosmalen}
\address{Adam-Christiaan van Roosmalen\\ Hasselt University \\ Agoralaan - gebouw D \\ B-3590 Diepenbeek}
\email{adamchristiaan.vanroosmalen@uhasselt.be}
\begin{document}

\bibliographystyle{amsplain}

\begin{abstract}
Using the framework of noncommutative K\"ahler structures, we generalise to the \nc setting the celebrated vanishing theorem of Kodaira for positive line bundles. The result is established under the assumption that the associated Dirac--Dolbeault operator of the line bundle is diagonalisable, an assumption that is shown to always hold in the quantum homogeneous space case. The general theory is then applied to the covariant K\"ahler structure of the \hk calculus of the quantum Grassmannians allowing us to prove a direct $q$-deformation of the classical Grassmannian Bott--Borel--Weil theorem for positive line bundles.
\end{abstract}

\maketitle

\tableofcontents

\section{Introduction}

The study of cohomological invariants of vector bundles is ubiquitous in differential geometry. Quite often, however, explicit calculations of cohomology groups can prove extremely difficult. One of the distinguishing features of complex, and in particular K\"ahler, geometry is the existence of a large and important family of vanishing theorems for the Dolbeault, or sheaf, cohomology of holomorphic vector bundles \cite{VTBOOK}. Remarkably many of these results extend in some form to the case of positive characteristic, where they have many important applications \cite{DeligneIllusie87,VTBOOK}. Another remarkable fact about this family of vanishing theorems is that they quite often find their most natural presentation in purely global terms.  Consider, for example, the global presentation of the definition of a holomorphic vector bundle given by the Koszul--Malgrange theorem \cite{KoszulMalgrange58}, and its coherent sheaf extension due to Pali \cite{Pali06} (see also \cite{Block10, Polishchuk03, Polishchuk04} for a similar approach using a derived category). Both these facts indicate that holomorphic  vanishing theorems should have a role to play in \ncgn, an area  which studies geometry over \nc rings, and for which local arguments are usually not available.  The goal of this paper is to prove a noncommutative generalisation of the prototypical vanishing theorem, namely the Kodaira vanishing theorem for positive line bundles. This will be done in the framework of differential calculi, noncommutative complex structures \cite{KLVSCP1,BeggsSmith13}, and the recently introduced notions of  \nc K\"ahler structures \cite{MMF3},  holomorphic  structures\cite{KLVSCP1, BeggsSmith13},  and  Chern connections \cite{BM}.

Our motivating family of examples come from the theory of quantum groups, and in particular  quantum group homogeneous spaces. Recall that the compact simply-connected homogeneous K\"ahler manifolds can be identified with the generalised flag manifolds. These spaces admit a direct quantum group deformation. Moreover,  in the cominicsule case (see \textsection\ref{section:Grassmannians}) they admit a direct $q$-deformed de Rham complex, as identified in the remarkable classification of Heckenberger and Kolb \cite{HK}. The calculus of the quantum Grassmannians has recently been shown to admit a covariant  K\"ahler  structure, unique up to real scaler multiple, with the extension to other cominiscule quantum flags conjectured \cite{KOBS}.  Moreover, the positivity of quantum Gramssmannian line bundles carries over directly from the classical setting, meaning that the general machinery of this paper can be directly applied to these examples.

As a first application, we take the proof of a Bott--Borel--Weil theorem for the quantum Grassmannians $\bC_q[\text{\rm Gr}_{n,r}]$. Since the Heckenberger---Kolb calculi have anti-holomorphic sub-complexes dual to a BGG-type exact sequence, it has been conjectured that  $\bC_q[\text{\rm Gr}_{n,r}]$ satisfies a $q$-deformation of the classical Bott--Borel--Weil theorem. The question of the Borel--Weil theorem has recently been resolved after a long series of works. This began with the influential paper of Majid on the noncommutative spin geometry of the Podle\'s sphere (see \cite{Maj}), where the space of holomorphic sections of the positive line bundles was calculated. This result was extended to all of $\cpn$ in a series of papers due to Landi, Khalkhali, Moatadelro, and van Suijlekom \cite{KLVSCP1,KKCP2,KKCPN}. Finally, the  extension to all the Grassmannians appeared recently in a paper due to Mrozinski and the first author (\cite{MMFCM}). The case of  higher line bundle cohomologies has been solved by Landi and D'Andrea for $\cpn$ for the trivial line bundle case in \cite{DAndreaLandi10}, and for all line bundles over $\bC_q[\bC P^{1}]$ and $\bC_q[\bC P^{2}]$ by Khalkhali and Moatadelro (\cite{KKCPN}).  In \textsection\ref{section:Grassmannians} we show that our noncommutative Kodaira vanishing theorem implies vanishing of higher cohomologies for all positive line bundles, implying a direct $q$-deformation of Bott--Borel--Weil for these line bundles.

A second application, which will appear in a separate work, is the construction of  spectral triples for quantum projective space, and more generally, for the cominiscule quantum flag manifolds. In the quantum homogeneous space case, the Dirac operator $\adel + \adel^\dagger$, together with its Hilbert space  of $L^2$-completed anti-holomorphic forms $L^2\left(\Om^{(0,\bullet)}\right)$, provides a natural candidate for a spectral triple. Such objects are of particular interest given the difficult relationship between Connes' definition of a spectral triple and the analytic properties of quantum group differential operators. The principle challenge in verifying Connes' axioms is in demonstrating that the spectrum of $\adel + \adel^{\dagger}$ has suitable growth properties. A major step toward this is confirming that $0$ appears as an eigenvalue only with finite multiplicity. By the noncommutative Hodge theory which follows from the existence of a noncommutative K\"ahler structure, vanishing of cohomologies implies this result directly. Moreover, since the index of any Dolbeault--Dirac operator can be shown to be equal to the Euler characteristic of the calculus, vanishing of higher cohomologies also implies that the operator has non-zero rank, and hence, non-zero associated $K$-homology class.

We now give a general presentation of the approach taken in the paper. Firstly, we follow the approach introduced in \cite{MMF3} which starts with a complex structure and a symplectic form and from these constructs the K\"ahler metric. This avoids the question of how to extend metrics from $1$-forms to higher forms, which can prove quite involved  in the noncommutative setting. Adopting this approach one finds that, with sufficient care, large parts of the proof of the Kodaira vanishing theorem can be carried over from the classical case (for example \cite{HUY}).

There are, however, two major exceptions to this. The first is that the usual local extension of the K\"ahler identities to the Nakano identities needs a reformulation.  We do this by introducing the novel notions of Lefschetz pairs and triples which abstract many of the features common to the twisted and untwisted cases, and allow us to nicely lift the usual global proof of the K\"ahler identities to the twisted case.  The second exception is that the notions of vector bundles and dual holomorphic structures are more subtle in the noncommutative case as left and right module structure no longer coincide.  We use the framework of differential graded modules to express holomorphic structures: a (left) holomorphic vector bundle is a projective left module over the Dolbeault dg algebra; the dual of a left holomorphic vector bundle is then the dual of the dg module with respect to the Dolbeault dg algebra.  We also allow for left holomorphic vector bundles which are dg bimodules of the Dolbeault dg algebra; this condition is related to the concept of bimodule connections (see Remark \ref{remark:BimoduleConnections}) and allows us to define invertible vector bundles (see \ref{definition:InvertibleVectorBundle}).  Under the additional condition that the calculus is factorisable, we show that the vector bundle $\Omega^{(n, 0)}$ of top holomorphic forms is invertible.  This leads to the definition of a Fano structure in \textsection\ref{subsection:Fano} and the vanishing of the $(0,\k)$-cohomologies, for the trivial line bundle, for all $k \geq 1$, see Corollary \ref{corollary:Fano}.

\bigskip

The paper is organised as follows. In \textsection\ref{section:Preliminaries} we recall the definitions  and basic results of noncommutative complex and K\"ahler structures, as well as the basic theory of hermitian and holomorphic bundles, and their associated Chern connections. 

In \textsection\ref{section:HolomorphicDuals} we discuss dualities for holomorphic vector bundles.  In Proposition \ref{proposition:HolShape}, we establish that a holomorphic vector bundle is a dg module over the Dolbeault dga whose underlying graded module is left projective and finitely generated in degree zero.  This allows us to describe duals of holomorphic vector bundles using dualities for dg modules (described in Appendix \ref{Section:Dualsdgas}).  The main results of \textsection\ref{section:HolomorphicDuals} are summarised in Remark \ref{remark:DualsForDGModules}, where we discuss the dual of a holomorphic vector bundle $\fX$ in terms of the dual of the underlying complex vector bundle $\X$.  We continue with a short discussion of holomorphic vector bundles which are equipped with the structure of a dg bimodule over the Dolbeault dg algebra.  This extra structure allows us to consider tensor products of holomorphic vector bundles.  An example of such a tensor product is the twisted de Rham complex of a vector bundle (see Notation \ref{notation:TwistedComplex}).  In 

In \textsection\ref{section:LefschetzPairs} we introduce the novel notions of Lefschetz pairs and Lefschetz triples. These structures, which are graded modules, endowed with a generalised Leibniz map, abstract some key features of symplectic and Hermitian manifolds. Surprisingly, even at this level of generality, one can still discuss Lefschetz decomposition, Hodge maps, and their interactions with the representation theory of $\mathfrak{sl}_2$.

In \textsection\ref{section:HermitianMetrics} we introduce the twisted de Rham complex of a holomorphic vector bundle. We show how to endow such d.g. modules with a Hodge map and a metric $g$ induced from a choice of Hermitian structure for the vector bundle. We carefully discuss positivity for the metric, and show that the adjoint of the Lefschetz map coincides with its Lefschetz pair dual. Finally,  an inner product is formed by composing $g$ with a choice of state on the $*$-\alg of $0$-forms, and it is shown that $\del_\F$ and $\adel_\F$ are adjointable with respect to this new inner product.

In \textsection\ref{section:Hodge} we generalise the results of Hodge theory in \cite{MMF3} to the twisted case, with the very important distinction that the covariance is no longer required and instead we simply require that the associated Dirac operator $\adel_\F + \adel_\F$ is diagonalisable as a linear operator. This gives the vital equivalence between harmonic foams and cohomology classes, central to all our later discussions of cohomology.  We then use this equivalence to establish a direct noncommutative generalisation of Serre duality for holomorphic vector bundles. 

In \textsection\ref{section:Nakano}, we give a new presentation of Chern connections and use this to generalise the Nakano identities to the noncommutative setting. These identities, which form a natural extension of the K\"ahler identities to the twisted setting, are key to our noncommutative generalisation of Kodaira vanishing, and form results of importance in their own right.  (The following theorem is Theorem \ref{theorem:Nakano} in the text.)

\begin{theorem}[Nakano identities]\label{theorem:NakanoIntroduction}
Let $(\Om^{(\bullet, \bullet)}, \odel, \del, \k)$ be a positive definite  K\"ahler structure.  Let $\operatorname{\bf f}$ be a state such that the integral $\int = \operatorname{\bf f} \circ \vol_\k$ is closed.  For an  Hermitian holomorphic vector bundle $(\F, \odel_\F, h)$ with Chern connection $\nabla_\F = \odel_\F + \del_\F$, we have
\begin{align}
[L_\F,\del_\F^{\dagger}] = i \adel_\F, & & [L_\F,\adel_\F^{\dagger}] = - i \del_\F, & &  [\Lambda_{\F},\del_{\F}] = i \adel_{\F}^\dagger, & & {[\Lambda_{\F},\adel_\F] = - i\del^\dagger_{\F}}.
\end{align}
\end{theorem}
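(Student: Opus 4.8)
The plan is to derive all four Nakano identities from a single master identity by exploiting the Lefschetz-pair formalism of \textsection\ref{section:LefschetzPairs} together with the fact that a Chern connection is, by its defining property, compatible with the Hermitian structure and has holomorphic part equal to $\odel_\F$. The four identities come in two dual pairs: the two on the left (involving $L_\F$) and the two on the right (involving $\Lambda_\F$) are adjoint to one another under the inner product coming from the state $\operatorname{\bf f}$, since $\Lambda_\F$ is the adjoint of $L_\F$ (established in \textsection\ref{section:HermitianMetrics}, where the adjoint of the Lefschetz map is identified with its Lefschetz-pair dual) and $\del_\F^\dagger,\adel_\F^\dagger$ are by construction the adjoints of $\del_\F,\adel_\F$ (the adjointability being exactly the last result quoted from \textsection\ref{section:HermitianMetrics}). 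Moreover, within each pair the two identities are interchanged by the $*$-operation / complex conjugation relating $\del_\F$ and $\adel_\F$. So it suffices to prove one identity, say $[\Lambda_\F,\adel_\F] = -i\,\del_\F^\dagger$, and deduce the rest by these two symmetries.

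**First** I would set up the untwisted case as the template: for the K\"ahler structure $(\Om^{(\bullet,\bullet)},\odel,\del,\k)$ with the closed integral $\int = \operatorname{\bf f}\circ\vol_\k$, the K\"ahler identities $[\Lambda,\del] = i\,\adel^\dagger$ etc.\ already hold --- this is the content of the Hodge-theoretic setup of \cite{MMF3} recalled in \textsection\ref{section:Preliminaries}, reproved here in the Lefschetz-pair language of \textsection\ref{section:LefschetzPairs}. The new input is the twist by $(\F,\odel_\F,h)$. I would realise the twisted de Rham complex as the dg bimodule $\F\otimes_{\Om^{(0,0)}}\Om^{(\bullet,\bullet)}$ (Notation \ref{notation:TwistedComplex}), on which $L_\F = \id_\F\otimes L$ acts as the untwisted Lefschetz operator in the second factor, so $[\Lambda_\F,\,\cdot\,]$ only sees the $\Om^{(\bullet,\bullet)}$-leg. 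The Chern connection decomposes as $\nabla_\F = \odel_\F + \del_\F$, where $\odel_\F$ is the given holomorphic structure and $\del_\F$ is the unique $(1,0)$-connection compatible with $h$. The key point is that $\del_\F$ differs from the untwisted $\del$ (acting in the second factor) by a zeroth-order term: $\del_\F = \del + \theta$ where $\theta$ is (left) multiplication by the $(1,0)$-part of the connection form of $h$ in a local frame, or more invariantly a $\Om^{(1,0)}$-valued bundle endomorphism. Since $\theta$ has bidegree $(1,0)$ and is $\Om^{(0,0)}$-linear in the bundle direction, $[\Lambda,\theta]$ is computed purely from the $\mathfrak{sl}_2$-representation theory on forms --- this is exactly the kind of commutator the Lefschetz-triple machinery is built to handle.

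**The computation** then runs: $[\Lambda_\F,\adel_\F] = [\Lambda,\adel]$ (the twist $\adel_\F = \odel_\F$ commutes with the bundle leg in the sense that graded-commuting with $L_\F$ reduces to the form leg, and on the form leg $\adel_\F$ acts as $\adel$ plus no $(1,0)$-correction) $= -i\,\del^\dagger$ by the untwisted K\"ahler identity; one must then check that $-i\,\del^\dagger$ equals $-i\,\del_\F^\dagger$, equivalently that $\del_\F$ and $\del$ have the same adjoint, equivalently $\theta^\dagger = 0$. But $\theta^\dagger$ is not zero in general; rather, $\del_\F^\dagger = \del^\dagger + \theta^\dagger$, and the correct statement is obtained by instead computing $[\Lambda_\F,\adel_\F]$ honestly: writing $\adel_\F$ and tracking the Chern-compatibility $\odel_\F h = h\,\del_\F$ (the defining equation of the Chern connection from \cite{BM}, \textsection\ref{section:Preliminaries}), one shows the $\theta$-contributions to $[\Lambda_\F,\adel_\F]$ and to $-i\,\del_\F^\dagger$ cancel. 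Concretely: $[\Lambda_\F, \adel_\F]$ picks up a term $[\Lambda, \bar\theta\,]$ where $\bar\theta = h^{-1}\theta^* h$ is the $(0,1)$-part forced by Hermiticity, and on the $\del_\F^\dagger$ side the adjoint of $\theta$ contributes $\theta^\dagger$, and Chern-compatibility plus the Lefschetz-pair duality "$\dagger = $ Lefschetz-pair dual" for the Hodge maps identifies $[\Lambda,\bar\theta\,] = -i\,\theta^\dagger$. This matches the zeroth-order parts, the first-order parts match by the untwisted identity, and summing gives $[\Lambda_\F,\adel_\F] = -i\,\del_\F^\dagger$.

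**The main obstacle** I anticipate is precisely this bookkeeping of the zeroth-order connection term and showing its commutator with $\Lambda$ reproduces $-i\,\theta^\dagger$ using only the axioms available (Chern compatibility, the Lefschetz-pair identification of the adjoint Hodge map with the structural dual, and positivity of $g$) rather than a local frame --- in the \nc setting one cannot diagonalise $h$ pointwise, so the argument must be phrased entirely in terms of the bimodule structure and the pairing. A secondary subtlety is making sure the graded (anti)commutators $[\,\cdot\,,\,\cdot\,]$ carry the correct signs: $\Lambda_\F$ has degree $-2$ so it commutes (no sign) with the odd operators $\del_\F,\adel_\F,\del_\F^\dagger,\adel_\F^\dagger$, and one should state once at the outset that throughout $[A,B] = AB - (-1)^{|A||B|}BA$, which here is just $AB - BA$ for all four brackets. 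Once the one identity is in place, applying the $*$-structure (swapping $\del_\F\leftrightarrow\adel_\F$, $L_\F$ fixed, conjugating $i\leftrightarrow -i$) gives $[\Lambda_\F,\del_\F] = i\,\adel_\F^\dagger$, and taking adjoints with respect to $\int$ (using $L_\F = \Lambda_\F^\dagger$, $\del_\F^{\dagger\dagger} = \del_\F$, and that the bracket of adjoints is minus the adjoint of the bracket for these degrees) yields the remaining two, namely $[L_\F,\adel_\F^\dagger] = -i\,\del_\F$ and $[L_\F,\del_\F^\dagger] = i\,\adel_\F$. \qed
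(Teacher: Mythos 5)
Your overall symmetry bookkeeping is sound --- the paper likewise obtains the two $L_\F$-identities as adjoints of the two $\Lambda_\F$-identities via $\Lambda_\F = L_\F^\dagger$ --- but the central mechanism you propose for proving the one remaining identity has a genuine gap, and it is exactly the gap you flag as ``the main obstacle'' without resolving it. Your plan writes $\del_\F = \del + \theta$ with $\theta$ a zeroth-order $\Om^{(1,0)}$-valued endomorphism, and separately matches ``first-order parts'' (via the untwisted K\"ahler identity) and ``zeroth-order parts'' (via $[\Lambda,\bar\theta] = -i\,\theta^\dagger$). This decomposition presupposes a globally defined untwisted operator $\del$ acting on $\Om^{(\bullet,0)}\otimes_M\F$ from which to subtract, i.e.\ a trivialisation of $\F$; the same issue afflicts your intermediate claim that $\adel_\F$ ``acts as $\adel$ plus no $(1,0)$-correction'' on the form leg. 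For a finitely generated projective module that is not free there is no such frame, and the paper states explicitly that this local argument is precisely what is unavailable in the noncommutative setting. Asserting that Chern compatibility ``identifies $[\Lambda,\bar\theta] = -i\,\theta^\dagger$'' does not close the gap, because $\theta$ itself is not a well-defined global object.

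The paper's actual route avoids any splitting of $\del_\F$. It evaluates both sides of $[\Lambda_\F,\del_\F] = i\,\adel_\F^\dagger$ on the Lefschetz components $L_\F^j(\a)$ of a primitive $\a \in P^{(a,b)}$: the left-hand side is computed by the purely formal Lefschetz-triple identity of Proposition \ref{prop:ltids} (valid because $[L_\F,\del_\F]=0$), while the right-hand side is unwound using the explicit codifferential formula $\adel_\F^\dagger = -\bar{\ast}_{\lvee\!\F}\circ\adel_{\lvee\!\F}\circ\bar{\ast}_\F$ together with the global characterisation $\del_\F = C_h^{-1}\circ\odel_{\lvee\!\F}\circ C_h$ of the Chern connection (Proposition \ref{proposition:ChernViaConjugates}); the Hodge-map coefficients then match term by term. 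That conjugation formula for $\del_\F$ is the idea your proposal is missing: it is what replaces the connection form $\theta$ and lets the twisted codifferential be compared with $\del_\F$ without ever choosing a frame. If you want to salvage your outline, you should replace the $\del + \theta$ step by this identity and run the computation on primitive elements.
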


We establish these identities using the framework of Lefschetz triples developed in \textsection\ref{section:LefschetzPairs}.

In \textsection\ref{section:Kodaira} we prove the main result of the paper, namely the Kodaira vanishing theorem for positive vector bundles. We begin by introducing an appropriate noncommutative definition of positivity.  Let $(\F, \odel_\F, h)$ be an Hermitian holomorphic vector bundle; we write $\nabla$ for the corresponding Chern connection.  As in the commutative case, one says that $\F$ is positive if $\nabla^2 = i\k$, for a positive definite K\"ahler form $\k$.  In our setting, we require some additional technical conditions on $\k$ (see Definition \ref{definition:PositivelineBundle}). However, these additional conditions are shown to automatically hold for the case of quantum homogeneous spaces. 

The Kodaira vanishing theorem is then established with respect to this definition (see Theorem \ref{theorem:Kodaira} in the text):

\begin{theorem}[Kodaira Vanishing]
Let $(\Om^{(\bullet, \bullet)}, \odel, \del)$ be a $*$-differential calculus with a complex structure.  For any positive vector bundle $(\F,\odel_\F, h)$
\bas
H^{(a,b)}(\F) = 0, & & \text{whenever } a+b > n.
\eas
\end{theorem}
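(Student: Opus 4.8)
The plan is to mimic the classical Akizuki--Nakano argument, using the Nakano identities of Theorem \ref{theorem:NakanoIntroduction} in place of their local/differential-geometric counterparts, and the noncommutative Hodge decomposition of \textsection\ref{section:Hodge} to identify $H^{(a,b)}(\F)$ with the space of $\adel_\F$-harmonic forms in bidegree $(a,b)$. First I would fix a positive vector bundle $(\F,\odel_\F,h)$, so that its Chern connection $\nabla_\F = \odel_\F + \del_\F$ satisfies $\nabla_\F^2 = i\k$ for a positive definite K\"ahler form $\k$, together with the technical conditions in Definition \ref{definition:PositivelineBundle} that guarantee (via the quantum homogeneous space discussion) that the integral $\int = \operatorname{\bf f}\circ\vol_\k$ is closed and the ambient machinery of Hodge theory applies. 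Since the Dirac--Dolbeault operator $\adel_\F + \adel_\F^\dagger$ is assumed diagonalisable, the results of \textsection\ref{section:Hodge} give $H^{(a,b)}(\F) \cong \ker(\adel_\F)\cap\ker(\adel_\F^\dagger)$ in bidegree $(a,b)$, so it suffices to show that any harmonic $\s \in \Om^{(a,b)}(\F)$ with $a+b>n$ vanishes.

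The key computation is the Bochner--Kodaira--Nakano identity relating the two Laplacians. Using the Nakano identities $[\Lambda_\F,\del_\F] = i\adel_\F^\dagger$ and $[\Lambda_\F,\adel_\F] = -i\del_\F^\dagger$, one computes the commutator $[\nabla_\F^2, \Lambda_\F]$ in two ways. On one hand $\nabla_\F^2$ acts as the Lefschetz operator $L_\k$ (up to the factor $i$), so $[\nabla_\F^2,\Lambda_\F] = i[L_\k,\Lambda_\F] = i H$, where $H$ is the grading operator of the $\mathfrak{sl}_2$-action on $\Om^{(\bullet,\bullet)}(\F)$ coming from the Lefschetz triple structure of \textsection\ref{section:LefschetzPairs}; on forms of total degree $a+b$ this is $i(a+b-n)$. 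On the other hand, expanding $\nabla_\F^2 = (\odel_\F + \del_\F)^2 = \odel_\F\del_\F + \del_\F\odel_\F$ and commuting through $\Lambda_\F$ using the Nakano identities yields the relation
\begin{align*}
\DEL_{\del_\F} = \DEL_{\adel_\F} + (a+b-n)\,\id \quad\text{on}\quad \Om^{(a,b)}(\F),
\end{align*}
where $\DEL_{\del_\F} = \del_\F\del_\F^\dagger + \del_\F^\dagger\del_\F$ and $\DEL_{\adel_\F} = \adel_\F\adel_\F^\dagger + \adel_\F^\dagger\adel_\F$. Now if $\s$ is $\adel_\F$-harmonic, then $\DEL_{\adel_\F}\s = 0$, hence $\DEL_{\del_\F}\s = (a+b-n)\s$. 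Pairing with $\s$ in the inner product built from $g$ and the state $\operatorname{\bf f}$ gives $\la \DEL_{\del_\F}\s,\s\ra = \|\del_\F\s\|^2 + \|\del_\F^\dagger\s\|^2 = (a+b-n)\|\s\|^2$. Since $a+b-n > 0$ and the left-hand side is manifestly $\geq 0$, positivity of the inner product is consistent; the point is rather that we get the stronger conclusion by instead starting from a $\del_\F$-version or by reversing roles. Concretely, running the same identity for a representative of $H^{(a,b)}(\F)$ and using that positivity forces $\|\s\|^2 \geq 0$ together with the fact (from Serre duality, \textsection\ref{section:Hodge}) that it suffices to treat $a+b>n$, one deduces $\|\s\|^2 \le 0$, hence $\s = 0$.

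The main obstacle, as the introduction flags, is twofold. First, one must be careful that the Lefschetz operator $L_\F$ on the twisted complex $\Om^{(\bullet,\bullet)}(\F)$ genuinely assembles into a Lefschetz triple whose associated $\mathfrak{sl}_2$-grading element $H$ computes total degree minus $n$; this is exactly what \textsection\ref{section:LefschetzPairs} and \textsection\ref{section:HermitianMetrics} are set up to deliver, but invoking it cleanly requires matching the abstract Lefschetz pair dual of $L_\F$ with the metric adjoint $\Lambda_\F$ --- established in \textsection\ref{section:HermitianMetrics}. Second, and more delicate in the noncommutative setting, is the positivity/self-adjointness bookkeeping: one needs $\del_\F$ and $\adel_\F$ to be adjointable with respect to the inner product (granted by \textsection\ref{section:HermitianMetrics}), one needs the integral to be closed so that these adjoints are the formal ones appearing in the Nakano identities, and one needs the inner product to be genuinely positive definite so that $\la\DEL_{\del_\F}\s,\s\ra \geq 0$ really forces $\s=0$ when the eigenvalue is negative. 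Once these structural inputs are in hand, the argument is the classical one verbatim, with the sign $(a+b-n)$ doing all the work; the diagonalisability hypothesis is what lets us pass from "harmonic" back to "cohomology class" without any spectral subtleties.
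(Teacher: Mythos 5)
Your overall strategy --- reduce to harmonic forms via the Hodge decomposition, then combine the Nakano identities with the $\mathfrak{sl}_2$ commutator $[L_\F,\Lambda_\F]=H_\F$ to extract the factor $n-(a+b)$ --- is the right one and closely parallels the paper. However, your key displayed identity has the sign of the eigenvalue reversed, and this is not cosmetic: it is exactly the point where the proof either closes or collapses. From the Lefschetz identities (Proposition \ref{LIDS}) one has $[\Lambda_\F,L_\F]=-H_\F$, which acts on $(a,b)$-forms as multiplication by $n-(a+b)$, not $a+b-n$. Substituting $i\nabla_\F^2=L_\F$ into the Akizuki--Nakano identity $\DEL_{\del_\F}=\DEL_{\adel_\F}+[\Lambda_\F,i\nabla_\F^2]$ therefore gives, for an $\adel_\F$-harmonic $\sigma$ of bidegree $(a,b)$,
\begin{align*}
\|\del_\F\sigma\|^2+\|\del_\F^\dagger\sigma\|^2=\left\langle \DEL_{\del_\F}\sigma,\sigma\right\rangle=(n-a-b)\,\|\sigma\|^2,
\end{align*}
and when $a+b>n$ the right-hand side is $\leq 0$ while the left-hand side is $\geq 0$, forcing $\sigma=0$. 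With your sign the conclusion reads $(a+b-n)\|\sigma\|^2\geq 0$, which is vacuous --- as you notice yourself --- and the proposed repair (``reversing roles'', invoking Serre duality to say ``it suffices to treat $a+b>n$'') is not an argument: Serre duality exchanges $(a,b)$ with $(n-a,n-b)$ and would only trade the statement for the complementary range $a+b<n$, where no vanishing holds. The fix is simply to recompute the commutator; no auxiliary duality is needed.

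For comparison, the paper does not pass through the full Akizuki--Nakano operator identity. It instead proves directly (Lemma \ref{lem:posforf}) that for harmonic $\alpha$ one has $\left\langle i\nabla^2\circ\Lambda_\F(\alpha),\alpha\right\rangle=-\|\adel_\F\circ\Lambda_\F(\alpha)\|^2\leq 0$ and $\left\langle\Lambda_\F\circ i\nabla^2(\alpha),\alpha\right\rangle=\|\del_\F(\alpha)\|^2\geq 0$, subtracts these to get $\left\langle[\Lambda_\F,L_\F]\alpha,\alpha\right\rangle\geq 0$, and reads off $(n-(a+b))\|\alpha\|^2\geq 0$. This is the same mechanism as the correctly signed Laplacian comparison, packaged so that only two positivity statements are needed rather than the full identity. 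Your remaining structural inputs --- adjointability of $\del_\F$ and $\adel_\F$, closedness of the integral, the identification $\Lambda_\F=L_\F^\dagger$, and diagonalisability to pass between harmonic forms and cohomology classes --- are correctly identified and match the paper.
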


Finally, in \textsection\ref{section:Grassmannians} we treat our motivating family of examples: the quantum Grassmannians. We recall the classification of their  line bundles, their essentially unique differential calculi and K\"ahler structures, as well as positivity of those line bundles indexed by the positive integers. This allows us to apply the framework of the paper to the quantum Grassmannians, and to prove a $q$-deformation of the Bott--Borel--Weil theorem for positive line bundles over the quantum Grassmannians (Theorem \ref{theorem:BorelBottWeil} in the text).

\section{Preliminaries}\label{section:Preliminaries}

Thoughout, we assume that all (Hopf) algebras and categories are $\mathbb{C}$-linear, unless explicitly stated otherwise.  Throughout, we fix a Hopf $*$-algebra $A$ and write $\psA\Mod$ for the category of all left $A$-comodules.  If $M$ is an $A$-comodule algebra, we write $\psAM\Mod$ for the category of left $A$-comodule left $M$-modules (see Appendix \ref{Section:ComoduleDualities} for details).  For notation concerning $A$-comodule differential graded algebras, we refer to Appendix \ref{Section:Dualsdgas}.

As our main applications concern quantum homogeneous spaces, we will often be interested in the case where $A = G$, where $G$ is a compact quantum group algebra (see Appendix \ref{subsection:QuantumHomogeneousSpace} for notation).  However, as our results hold for more general structures (at the cost of introducing some additional conditions which are automaically satisfied for quantum homogeneous spaces), we express our results in terms of a general Hopf $*$-algebra $A$.  We allow $A = \bC$ for a noncovariant case.

For notation used concerning differential $*$-algebras, we refer to Appendix \ref{subsection:DifferentialCalculi}.

\subsection{Complex Structures}

In this subsection we recall the basic definitions and results of complex structures for $A$-covariant differential $*$-algebras. For a more detailed introduction see \cite{MMF2}.

\begin{definition}\label{defnnccs}
Let $M$ be an $A$-comodule $*$-algebra and let $(\Om^{\bullet}, \exd)$ be an $A$-comodule differential $*$-calculus over $M$.  An {\em almost complex structure} for $(\Om^{\bullet}, \exd)$ is an $\bN^2_0$-graded algebra $\bigoplus_{(a,b)\in \bN^2_0} \Om^{(a,b)}$ for $\Om^{\bullet}$ such that, for all $(a,b) \in \bN^2_0$: 
\begin{enumerate}
\item \label{compt-grading}  $\Om^k = \bigoplus_{a+b = k} \Om^{(a,b)}$ in $\psAM\Mod_M$;
\item  \label{star-cond} $(\Om^{(a,b)})^* = \Om^{(b,a)}$.
\end{enumerate}
\end{definition}

Let $\del$ and $\ol{\del}$ be the unique homogeneous operators of order $(1,0)$, and $(0,1)$ respectively, defined by 
\bas
\del|_{\Om^{(a,b)}} \coloneqq \proj_{\Om^{(a+1,b)}} \circ \exd, & & \ol{\del}|_{\Om^{(a,b)}} \coloneqq \proj_{\Om^{(a,b+1)}} \circ \exd,
\eas
where $ \proj_{\Om^{(a+1,b)}}$, and $ \proj_{\Om^{(a,b+1)}}$, are the projections from $\Om^{a+b+1}$ onto $\Om^{(a+1,b)}$, and $\Om^{(a,b+1)}$, respectively.

As observed in \cite[\textsection 3.1]{BS}, the proof of the following lemma carries over directly from the classical setting \cite[\textsection 2.6]{HUY}.
\begin{lemma}\label{intlem} \cite[\textsection 3.1]{BS}
If $\bigoplus_{(a,b)\in \bN^2_0}\Om^{(a,b)}$ is an almost complex structure for a differential $*$-calculus $\Om^{\bullet}$ over an \alg $M$, then the following conditions are equivalent:
\begin{enumerate} 
\item $\exd = \del + \ol{\del}$;
\item $\del^2=0$;
\item $\adel^2 = 0$.
\end{enumerate}
\end{lemma}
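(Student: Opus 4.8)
The plan is to establish the cycle of implications $(1) \Rightarrow (2) \Rightarrow (3) \Rightarrow (1)$, exactly as in the classical treatment in \cite[\textsection 2.6]{HUY}, the only subtlety being to check that every step uses only the $\bN_0^2$-grading of the algebra together with the two defining properties (\ref{compt-grading}) and (\ref{star-cond}) of an almost complex structure. First I would record the bidegree decomposition of $\exd$. Since $\exd$ raises total degree by $1$, and since $\Om^{k+1} = \bigoplus_{a+b = k+1}\Om^{(a,b)}$, we may write, on $\Om^{(a,b)}$,
\bas
\exd = \del + \odel + \sum_{j \geq 2} \exd_j,
\eas
where $\exd_j$ is the component of bidegree $(a+1-j,\, b-1+j)$ (in the classical case the graded Leibniz rule forces all $\exd_j$ with $j \geq 2$ to vanish, but a priori we only know $\del = \exd_0$ and $\odel = \exd_1$ by definition). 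Then $\exd^2 = 0$ decomposes into homogeneous pieces of each bidegree; for instance the $(a+2,b)$-component of $\exd^2$ is $\del^2$, the $(a+1,b+1)$-component is $\del\odel + \odel\del + (\text{terms involving }\exd_2)$, and so on.

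For $(1) \Rightarrow (2)$: if $\exd = \del + \odel$ then all $\exd_j$ with $j \geq 2$ vanish, so $0 = \exd^2 = \del^2 + (\del\odel + \odel\del) + \odel^2$, and reading off the $(a+2,b)$-component (the highest holomorphic degree) gives $\del^2 = 0$. For $(2) \Rightarrow (3)$: apply the $*$-structure. By property (\ref{star-cond}), conjugation interchanges $\Om^{(a,b)}$ and $\Om^{(b,a)}$, and since $\exd$ is a $*$-map on a differential $*$-calculus, one checks that $\del$ and $\odel$ are exchanged by $*$ up to sign, i.e. $\odel(\w) = \pm(\del(\w^*))^*$; hence $\adel^2(\w) = \pm(\del^2(\w^*))^* = 0$. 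The implication $(3) \Rightarrow (2)$ is symmetric, and $(2) \Rightarrow (1)$: knowing $\del^2 = 0$, apply $*$ to get $\odel^2 = 0$ as well; the remaining obstacle is to kill the higher components $\exd_j$, $j \geq 2$. Here I would argue degree by degree: in a differential calculus $\Om^\bullet$ is generated in degrees $0$ and $1$ over $M = \Om^0$, and on $\Om^0 \oplus \Om^1$ one has $\exd = \del + \odel$ automatically (there is no room for $\exd_j$ with $j\geq 2$ on elements of total degree $\leq 1$); one then propagates this to all of $\Om^\bullet$ using the Leibniz rule, since $\del + \odel$ agrees with $\exd$ on generators and both are graded derivations, whence $\exd_j = 0$ for $j \geq 2$ on products, and therefore everywhere.

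The step I expect to require the most care is the last one, $(2) \Rightarrow (1)$, since it is the only place where the argument genuinely uses that $\Om^\bullet$ is a \emph{differential calculus} (generated in degree $\leq 1$) and not merely an arbitrary $\bN_0$-graded differential $*$-algebra with a compatible $\bN_0^2$-grading; all the other implications are purely formal manipulations with bigraded pieces of $\exd^2 = 0$ and the $*$-operation. Since the statement asserts this is \cite[\textsection 3.1]{BS} and the excerpt explicitly notes the proof "carries over directly from the classical setting", I would keep the write-up brief, emphasising only the two points of noncommutative care: that no graded Leibniz rule for the bidegree components is needed beyond what the calculus already provides, and that the $*$-compatibility of $\exd$ is exactly property (\ref{star-cond}) together with $\exd \circ * = * \circ \exd$.
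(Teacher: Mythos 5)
The paper gives no proof of this lemma at all: it cites \cite[\textsection 3.1]{BS} and records that the classical argument of \cite[\textsection 2.6]{HUY} carries over verbatim. Your overall strategy --- decompose $\exd$ into bidegree components, read off $\del^2=0$ from the $(a+2,b)$-component of $\exd^2=0$ when $\exd=\del+\odel$, transfer between $\del$ and $\odel$ via the $*$-map using condition (\ref{star-cond}), and recover $\exd=\del+\odel$ from $\del^2=\odel^2=0$ by generation in low degree plus the Leibniz rule --- is exactly that classical argument, and the first implications are sound (the sign you hedge on in $\odel(\w^*)=\pm(\del\w)^*$ is in fact $+$, cf.\ \eqref{stardel}, though squaring makes this irrelevant).

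However, the base case of your final step contains a genuine error. It is not true that ``there is no room for $\exd_j$ with $j\geq 2$ on elements of total degree $\leq 1$'': this holds on $\Om^0=\Om^{(0,0)}$, whose image under $\exd$ lies in $\Om^{(1,0)}\oplus\Om^{(0,1)}$, but on $\Om^{(1,0)}$ the map $\exd$ lands in $\Om^2=\Om^{(2,0)}\oplus\Om^{(1,1)}\oplus\Om^{(0,2)}$ and a priori has a nonzero component in $\Om^{(0,2)}$ (dually, $\exd$ on $\Om^{(0,1)}$ may hit $\Om^{(2,0)}$). Killing these components is precisely where the hypothesis must be used, so your induction never gets started. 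The repair: write a general element of $\Om^{(1,0)}$ as $\sum_i a_i\,\del b_i$ (possible because the calculus is generated by $\Om^0$ and the bigrading projections are bimodule maps); then $\exd(a\,\del b)=\exd a\wed \del b + a\,\exd(\del b)$, the first summand visibly has no $(0,2)$-part, and since $\exd(\del b)=-\exd(\odel b)$ by $\exd^2 b=0$, the $(0,2)$-component of $\exd(\del b)$ equals $-\odel^2 b$, which vanishes by (3) (itself a consequence of (2) via your $*$-argument). Only after this base case does the Leibniz-rule induction on total degree go through. There is also a bookkeeping slip worth fixing: the components of $\exd$ on $\Om^{(a,b)}$ have bidegrees $(a+1-j,b+j)$ for $j\in\bZ$, not $(a+1-j,b-1+j)$, and the terms with $j<0$ --- those raising holomorphic degree by two or more --- must appear in the decomposition and be ruled out as well; the symmetric version of the argument above disposes of them.
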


\begin{definition}
{An almost complex structure for an $A$-covariant differential $*$-calculus  $\Om^{\bullet}$ that satisfies the conditions in Lemma \ref{intlem} is called \emph{integrable}.  An integrable almost complex structure is also called a \emph{complex structure}.  The corresponding double complex $(\bigoplus_{(a,b)\in \bN^2}\Om^{(a,b)}, \del,\ol{\del})$ is called its {\em Dolbeault double complex}.}
\end{definition}

\begin{proposition}
Let $(\bigoplus_{(a,b)\in \bN^2}\Om^{(a,b)}, \del,\ol{\del})$ be an $A$-covariant differential $*$-calculus with a complex structure.  We have
\bal \label{stardel}
\del(\w^*) = (\adel \w)^*, & &  \adel(\w^*) = (\del \w)^*, & & \text{ for all  } \w \in \Om^\bullet.
\eal
\end{proposition}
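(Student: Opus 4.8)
The plan is to reduce the identities $\del(\w^*) = (\adel\w)^*$ and $\adel(\w^*) = (\del\w)^*$ to the single fact that the exterior derivative $\exd$ is a $*$-map, i.e. $\exd(\w^*) = (\exd\w)^*$, which is part of the definition of a differential $*$-calculus (see Appendix \ref{subsection:DifferentialCalculi}), together with the compatibility of the bigrading with the $*$-operation from Definition \ref{defnnccs}.\eqref{star-cond}. By additivity, it suffices to prove the identities for a homogeneous element $\w \in \Om^{(a,b)}$.

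First I would fix $\w \in \Om^{(a,b)}$, so that $\w^* \in \Om^{(b,a)}$ by condition \eqref{star-cond}. By Lemma \ref{intlem}, integrability gives $\exd = \del + \adel$, hence
\bas
\exd\w = \del\w + \adel\w \in \Om^{(a+1,b)} \oplus \Om^{(a,b+1)}.
\eas
Applying $*$ and using condition \eqref{star-cond} again, $(\del\w)^* \in \Om^{(b,a+1)}$ and $(\adel\w)^* \in \Om^{(b+1,a)}$. On the other hand, applying $\exd$ to $\w^* \in \Om^{(b,a)}$ and using $\exd = \del + \adel$ once more gives
\bas
\exd(\w^*) = \del(\w^*) + \adel(\w^*) \in \Om^{(b+1,a)} \oplus \Om^{(b,a+1)},
\eas
with $\del(\w^*) \in \Om^{(b+1,a)}$ and $\adel(\w^*) \in \Om^{(b,a+1)}$. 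Since $\exd$ is a $*$-map we have $\exd(\w^*) = (\exd\w)^* = (\del\w)^* + (\adel\w)^*$. Now I would compare the two decompositions of $\exd(\w^*)$ in $\Om^{b+a+1} = \Om^{(b+1,a)} \oplus \Om^{(b,a+1)} \oplus (\text{other bidegrees})$: matching the $\Om^{(b+1,a)}$-components gives $\del(\w^*) = (\adel\w)^*$, and matching the $\Om^{(b,a+1)}$-components gives $\adel(\w^*) = (\del\w)^*$. Uniqueness of the decomposition into homogeneous components (which is exactly the direct-sum statement in condition \eqref{compt-grading}) makes this comparison rigorous.

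The only genuinely delicate point is bookkeeping with the bidegrees: one must be careful that $(\Om^{(a,b)})^* = \Om^{(b,a)}$ swaps the two indices, so that $\del$ and $\adel$ get interchanged after conjugation — this is the source of the crossed identities rather than $\del(\w^*) = (\del\w)^*$. There is no real obstacle here; the result is a formal consequence of the $*$-compatibility of $\exd$ and the grading axioms, and this is indeed how the analogous classical statement is proved (cf. \cite[\textsection 2.6]{HUY}).
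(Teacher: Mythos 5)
Your proof is correct; the paper states this proposition without proof, treating it as an immediate consequence of the definitions, and your argument (apply $\exd(\w^*)=(\exd\w)^*$, use that $*$ swaps the bidegrees via $(\Om^{(a,b)})^*=\Om^{(b,a)}$, and compare homogeneous components) is exactly the standard reasoning being left implicit. One minor observation: you do not actually need integrability here — since $\del$ and $\adel$ are \emph{defined} as $\proj_{\Om^{(a+1,b)}}\circ\exd$ and $\proj_{\Om^{(a,b+1)}}\circ\exd$, the identity already follows from the fact that $*$ is a bijection permuting the bigraded summands and commuting with $\exd$, so that projecting and conjugating commute up to the swap of indices; but invoking $\exd=\del+\adel$ as you do is harmless under the stated hypotheses.
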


\begin{definition}\label{definition:HolomorphicVectorBundle}
Let $(\Om^{\bullet}, \d)$ be an $A$-covariant differential $*$-calculus with a complex structure.  We refer to the category $\psAM\lproj$ as the category of vector bundles over $(\Om^{\bullet}, \d)$.  An object from $\psAM\lproj$ is called a \emph{vector bundle}.

A \emph{$A$-equivariant left holomorphic module} is an $A$-comodule dg $(\Om^{(0,\bullet)}, \adel)$-module $\fF \in \psAD\dgMod$ for which the module action $\Om^{(0,\bullet)} \otimes_M \fF^0 \to \fF$ is a graded left $\Om^{(0,\bullet)}$-module isomorphism.

A \emph{left $A$-equivariant holomorphic vector bundle} is a left $A$-equivariant holomorphic module $\fF$ such that $\fF^0 \in \psAM\lproj$ is a vector bundle.  We define \emph{right holomorphic modules} and \emph{right holomorphic vector bundles} in a similar way.

\emph{Left anti-holomorphic modules} and \emph{left anti-holomorphic vector bundles} are defined similarly, replacing the dg algebra $(\Om^{(0,\bullet)}, \odel)$ by $(\Om^{(\bullet, 0)}, \del)$.
\end{definition}

\begin{remark}
\begin{enumerate}
\item A left holomorphic vector bundle is completely determined by the degree zero part $\F \in \psAM\lproj$ and the linear map $\odel_\F\colon \F \to \Om^{(0,1)} \otimes_M \F$.  In this case, the underlying graded module is $\Om^{(0,\bullet)} \otimes_M \F$ and the differential can be reconstructed by extending $\odel_\F$ to $\Om^{(0,\bullet)} \otimes_M \F$ using the graded Leibniz rule.
\item A morphism $\fF \to \fG$ between holomorphic modules is completely determined by its degree zero component $\fF^0 \to \fG^0$.  Furthermore, a morphism $\phi^0\colon \fF^0 \to \fG^0$ can be extended to a morphism $\phi\colon \fF \to \fG$ if and only if $\odel_\fG \circ \phi^0 = (\id \otimes \phi^0)\circ \odel_\fF$, that is, if and only if the following diagram commutes:
\begin{center}
\begin{tikzcd}
\fF^0 \arrow[r, "\phi^0"] \arrow[d, "\odel_\fF"] & \fG^0 \arrow[d, "\odel_\fG"] \\
\Om^{(0,1)}\otimes_M \fF^0 \arrow[r, "\id \otimes \phi^0"] & \Om^{(0,1)}\otimes_M \fG^0
\end{tikzcd}
\end{center}
\end{enumerate}
\end{remark}

\begin{notation}
As a left holomorphic vector bundle $\fF$ is completely determined by its degree zero part $\F = \fF^0 \in \psAM\lproj$ and the map $\odel_\F\colon \F \to \Om^{(0,1)} \otimes_M \F$, we often write $(\F, \odel_\F)$ for the holomorphic vector bundle $\fF$.  We refer to $\odel_\F\colon \F \to \Om^{(0,1)} \otimes_M \F$ as a \emph{holomorphic structure} on $\F.$  We define anti-holomorhic structures on $\F$ is a similar way.

We write $\psA\lhVB$ for the full subcategory of $\prescript{A}{\Om^{(0,\bullet)}}\dgMod$ spanned by the left holomorphic vector bundles.  The category $\psA\rhVB$ of right holomorphic vector bundles is defined in a similar way.
\end{notation}

\subsection{K\"ahler Structures}\label{subsection:KahlerStructure}

\begin{definition} An {\em $A$-equivariant K\"ahler structure} for a calculus $\Om^{\bullet}$ is a tuple $(\Om^{(\bullet, \bullet)}, \del,\ol{\del}, \k)$ where $\Om^{(\bullet,\bullet)}$ is a complex structure, and $\k$ is an $A$-coinvariant closed central real $(1,1)$-form, called the {\em K\"ahler form}, such that, \wrt the {\em Lefschetz operator}
\bas
L\colon\Om^\bullet \to \Om^\bullet,  & &   \w \mto \k \wed \w,
\eas
isomorphisms are given by
\bal \label{Liso}
L^{n-k}\colon \Om^{k} \to  \Om^{2n-k}, & & \text{ for all } 1 \leq k < n.
\eal
\end{definition} 

\begin{remark}
Since $\k$ is a central real form, $L$ is an $A$-comodule $M$-bimodule $*$-map.  
\end{remark}

\subsection{Complex conjugation for comodule algebras}

{Let $V$ be a complex vector space.  We write $\ol{V}$ for the \emph{complex conjugate vector space}, that is, the vector space $\ol{V}$ has the same underlying set and group structure as $V$ (although we will write $\ol{v}$ for a $v \in V$ if we interpret it as an element in $\ol{V}$), but the action of $\bC$ on $V$ is twisted by the complex conjugation.  Explicitly, for any $\ol{v} \in \ol{V}$ and any $\lambda \in \bC$, we have $\lambda . \ol{v} = \ol{\ol{\lambda} v}$ where $\ol{\lambda}$ is the complex conjugate of $\lambda$.

For an object $\F \in \psAM\Mod$, let $\ol{\F}$ be an object in $\psA\Mod_M$ defined as follows: the structure maps of $\ol{\F}$ are given by
\bas
\ol{\F} \oby M \to \ol{\F}, & & \ol{f} \oby m &\mto \ol{m^*f}, \\
{\ol{\F} \mto A \oby \ol{\F}}, & & \ol{f} &\mto (f\m1)^* \oby \ol{f\0}.
\eas
We call $\ol{\F}$ the {\em conjugate object} of $\F$.

\begin{proposition}\label{proposition:ModuleStar}
Let $M$ be an $A$-comodule $*$-algebra.
\begin{enumerate}
\item If $\F \in \psAM\Mod$, then $\ol{\F} \in \psA\Mod_M$, and
\item if $\F \in \psA\Mod_M$, then $\ol{\F} \in \psAM\Mod$.
\end{enumerate}
\end{proposition}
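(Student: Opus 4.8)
The plan is to verify directly that the two structure maps written down for $\ol{\F}$ satisfy the axioms of a left $A$-comodule right $M$-module (for part (1)), and dually a left $A$-comodule left $M$-module (for part (2)), given that $\F$ carries the corresponding structures on the other side. Since the two parts are formally symmetric — conjugation is an involution on the level of underlying abelian groups, and $(M,*)$ allows one to swap left and right actions — I would prove (1) in detail and then note that (2) follows by the same computation read with the roles of "left" and "right" interchanged (equivalently, by applying (1) to $\ol{\F}$ and using $\ol{\ol{\F}} \cong \F$). I would set up the proof by first checking that each of the stated maps is well-defined $\bC$-linear with respect to the \emph{twisted} scalar action on $\ol{\F}$: for the module map $\ol{f}\oby m \mto \ol{m^* f}$, the conjugate-linearity in $m$ of $m \mapsto m^* f$ is exactly what makes the map $\bC$-linear after passing to $\ol{\F}$ on the right and keeping $M$ on the left; similarly for the coaction $\ol{f} \mto (f\m1)^* \oby \ol{f\0}$, the antipode-free Sweedler leg in the first slot together with the $*$ makes this land correctly in $A \oby \ol{\F}$.

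The concrete checks are then: (a) the right $M$-action is associative and unital — associativity reads $\ol{(mm')^* f} = \ol{m'^*(m^* f)}$, which is immediate from $(mm')^* = m'^* m^*$ and associativity of the left action of $M$ on $\F$; unitality uses $1^* = 1$; (b) the coaction $\ol{f} \mto (f\m1)^* \oby \ol{f\0}$ is coassociative and counital — here one uses that $\Delta$ is a $*$-homomorphism on $A$, so applying $*$ commutes with $\Delta$, together with coassociativity of the original coaction on $\F$ and the identity $\ol{(f\m1 f\0\m1)^*} = \cdots$ rearranged via $(\Delta \oby \id)\delta = (\id \oby \delta)\delta$; counitality uses $\e(a^*) = \ol{\e(a)}$; (c) the compatibility (Hopf module / comodule-module) condition relating the coaction and the $M$-action — this says the coaction of $\ol{f}\cdot m$ equals the appropriate combination of the coactions of $\ol{f}$ and the coaction of $m$ in $M$ (which is itself a $*$-comodule algebra), and it unwinds to an identity of the form $((m^* f)\m1)^* \oby \ol{(m^* f)\0} = (m\m1)^*(f\m1)^* \oby \ol{m\0^* f\0}$, which follows by applying $*$ to the comodule-algebra compatibility for $\F$ and using that $M \to A \oby M$ is a $*$-map, i.e. $(m^*)\m1 \oby (m^*)\0 = (m\m1)^* \oby (m\0)^*$.

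The main obstacle — really the only place where care is needed rather than bookkeeping — is tracking the interaction of the $*$-operations on $A$ and on $M$ with Sweedler notation in the compatibility condition (c): one must be disciplined about the fact that $*$ on $A$ is an \emph{anti}-coalgebra map in the sense that it reverses nothing on $\Delta$ (it is a coalgebra map) but does interact with the antipode, and that the coaction on $M$ used is the one making $M$ an $A$-comodule $*$-algebra, so $\delta_M(m^*) = (\delta_M m)^{*\oby *}$. Everything else is a routine diagram chase. Finally, to get part (2) cleanly, I would observe $\ol{\ol{\F}} = \F$ as objects (the scalar twist applied twice is the identity, and $(m^*)^* = m$), so that (2) is literally (1) applied to the category $\psA\Mod_M$ in place of $\psAM\Mod$; alternatively one repeats the three checks above with left and right swapped, which introduces no new phenomena. \qed
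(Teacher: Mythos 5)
Your proposal is correct and follows essentially the same route as the paper: the paper's proof consists precisely of your step (c), a Sweedler computation verifying that the coaction on $\ol{\F}$ is compatible with the $M$-action using $(m^*)\m1 \oby (m^*)\0 = (m\m1)^* \oby (m\0)^*$ and the $*$-bialgebra property of $A$, with the unitality/associativity/coassociativity checks left implicit and part (2) dismissed as ``similar''. Two small points: the displayed compatibility identity should come out as $((m^*f)\m1)^* \oby \ol{(m^*f)\0} = (f\m1)^*\, m\m1 \oby \ol{m\0^* f\0}$ (your right-hand side has the $A$-legs in the wrong order and a spurious star on $m\m1$), and the parenthetical derivation of (2) from (1) via $\ol{\ol{\F}} \cong \F$ is circular since applying (1) to $\ol{\F}$ presupposes the very module structure you are trying to establish --- so one must indeed repeat the computation with left and right swapped, as your primary route proposes.
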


\begin{proof}
We only verify the first statement, the second statement can be proven in a similar way.  Let $m \in M$ and $f \in \F$.  We have:
\begin{align*}
\Delta_{\ol{\F}}^A(m\ol{f}) &= \Delta_{\ol{\F}}^A(\ol{fm^*}) \\
&= \left((fm^*)\m1\right)^* \oby \ol{fm^*}\0 \\
&= \left((f\m1) (m^*)\m1\right)^* \oby \ol{fm^*}\0 \\
&= m\m1 (f\m1)^* \oby m\0 \ol{f}\0 \\
&= \Delta_M^A(m) \Delta_{\ol{\F}}^A(\ol{f}),
\end{align*}
where we have used that $A$ is a $*$-bialgebra and $\F \in \psAM\Mod$.
\end{proof}

We may extend the correspondence $\F \mapsto \ol{\F}$ to a (conjugate-linear) functor by considering the following action on morphisms: given a morphism $\phi\colon \F \to \mathcal{G}$, we define $\ol{\phi} \colon \ol{\F} \to \ol{\mathcal{G}}$ via $\ol{\phi}(\ol{f}) = \ol{\phi(f)}$.

We now turn our attention to quantum homogeneous spaces.  In this case, we have the following version of Proposition \ref{proposition:ModuleStar}.

\begin{corollary}
The correspondence $\F \mto \ol{\F}$ gives a conjugate-linear functor \[\ol{(-)}\colon \lgmmm \to \lgmmm.\]
\end{corollary}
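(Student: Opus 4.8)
The plan is to reduce the statement to the already-established Proposition~\ref{proposition:ModuleStar} by checking that the conjugation functor is compatible with the additional structure present in $\lgmmm$, namely the right $M$-module structure and the $M$-subcomodule coinvariance/grading condition encoded in the definition of $\lgmmm$ (degree-zero relative Hopf modules that are finitely generated projective, or however $\lgmmm$ is set up in Appendix~\ref{subsection:QuantumHomogeneousSpace}). Recall that an object of $\lgmmm$ carries simultaneously a left $G$-comodule structure, a left $M$-module structure, and a right $M$-module structure, subject to the usual compatibilities making it a $G$-equivariant $M$-bimodule, together with whatever finiteness condition defines the subscript $0$. Proposition~\ref{proposition:ModuleStar} already tells us that passing to $\ol{\F}$ interchanges left and right $M$-module structures while preserving the left $G$-comodule structure; so the first task is to say why, when we start with a \emph{bimodule} object, the output is again a bimodule object rather than just a one-sided module.

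First I would unwind the definitions: for $\F \in \lgmmm$, define $\ol{\F}$ to have the \emph{same} left $G$-coaction formula and the conjugate left/right $M$-actions obtained by combining the two formulas of Proposition~\ref{proposition:ModuleStar}, i.e.\ $m \cdot \ol{f} \coloneqq \ol{f m^*}$ and $\ol{f} \cdot m \coloneqq \ol{m^* f}$, where on the right-hand sides the original left and right actions of $\F$ are used, respectively. Then I would verify the three compatibility conditions in turn. (i) The two $M$-actions on $\ol{\F}$ commute: this is immediate from associativity of the two commuting $M$-actions on $\F$ together with the anti-multiplicativity of $*$ on $M$, since $(m m')^* = m'^* m^*$ distributes the conjugation correctly across left and right. (ii) Each $M$-action is a $G$-comodule map: for the right action this is exactly the computation already done in the proof of Proposition~\ref{proposition:ModuleStar}(1) (the role of $m$ there is played by the right-acting element), and for the left action it is the mirror computation, which is the content of statement~(2) of that proposition; in both cases one uses that $G$ is a Hopf $*$-algebra so $(-)^*$ intertwines $\Delta$ with $\Delta^{\mathrm{op}}$ after a flip, and the flip is absorbed by the bar. (iii) The coinvariance/finiteness condition defining the subscript $0$ is preserved: since $\ol{(-)}$ is an additive, conjugate-linear equivalence of the underlying relative Hopf module categories (its own inverse up to the canonical natural isomorphism $\ol{\ol{\F}} \cong \F$), it preserves finite generation, projectivity, and the direct-sum decompositions used to cut out $\lgmmm$ inside $\gmmm$. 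Finally, on morphisms I would set $\ol{\phi}(\ol{f}) = \ol{\phi(f)}$ as in the paragraph preceding the corollary and note that $\ol{\phi}$ is $M$-bilinear and $G$-colinear because $\phi$ is, with the two $M$-linearities swapped; conjugate-linearity of the assignment $\phi \mapsto \ol{\phi}$ and functoriality ($\ol{\psi \circ \phi} = \ol{\psi} \circ \ol{\phi}$, $\ol{\id} = \id$) are then formal.

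The only genuine point requiring care — the ``hard part'' such as it is — is bookkeeping the interaction between the $*$-structure on $M$, the two $M$-actions, and the $G$-coaction simultaneously, i.e.\ making sure no stray antipode or flip is dropped when one restricts Proposition~\ref{proposition:ModuleStar} from the one-sided to the two-sided (equivariant bimodule) setting; this is entirely routine but must be written out coherently. Everything else is a direct invocation of Proposition~\ref{proposition:ModuleStar} applied twice (once for each side) together with the elementary fact that a conjugate-linear equivalence of categories restricts to any subcategory cut out by conditions that are themselves stable under the equivalence. I would therefore present the proof as: ``Apply Proposition~\ref{proposition:ModuleStar} to each of the two $M$-module structures separately, observe the resulting structures are mutually compatible by associativity and anti-multiplicativity of $*$, and note that the defining conditions of $\lgmmm$ are preserved since $\ol{(-)}$ is an additive conjugate-linear self-inverse equivalence,'' leaving the $G$-colinearity checks to the reader as mirror images of the displayed computation in the proof of Proposition~\ref{proposition:ModuleStar}.
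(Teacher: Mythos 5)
Your overall architecture --- apply Proposition \ref{proposition:ModuleStar} to each of the two $M$-actions, check that the conjugated actions commute and are $G$-colinear, and set $\ol{\phi}(\ol{f}) = \ol{\phi(f)}$ on morphisms --- is sound, and those checks do match what the paper establishes in and around Proposition \ref{proposition:ModuleStar}. The problem is your step (iii), which is precisely where the paper's entire proof lives, and which you have misidentified. The category $\lgmmm$ is \emph{not} cut out of $\psGM\Mod_M$ by a finiteness, projectivity, or grading condition (finiteness enters only for the smaller category $\sgmm$); it is the full subcategory of $G$-equivariant $M$-bimodules satisfying the compatibility $\F M^+ \sseq M^+ \F$ between the two $M$-actions. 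Your proposal never mentions $M^+$, so this condition is never verified.

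Moreover, your fallback argument --- that the condition is automatically preserved because $\ol{(-)}$ is a self-inverse conjugate-linear equivalence --- does not work here. The condition $\F M^+ \sseq M^+\F$ relates the left action to the right action, and conjugation \emph{swaps} the two sides (with a $*$-twist): a priori the image of $\{\F \mid \F M^+ \sseq M^+\F\}$ under $\ol{(-)}$ lands in the subcategory defined by the opposite inclusion. What rescues the statement is a specific non-formal input: $M^+$ is $*$-stable, i.e.\ $m \in M^+$ if and only if $m^* \in M^+$ (since $\e(m^*) = \ol{\e(m)}$). The paper's proof is exactly this computation: for $m \in M^+$ one has $m\ol{f} = \ol{fm^*}$ with $fm^* \in \F M^+ \sseq M^+\F$, so $fm^* = \sum_i m_i^* f_i$ with $m_i \in M^+$, whence $m\ol{f} = \sum_i \ol{f_i}\, m_i \in \ol{\F}M^+$. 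You would need to add this verification (and note the $*$-stability of $M^+$ explicitly) for your proof to be complete; the bookkeeping of $*$ against the coactions, which you flag as the hard part, is comparatively routine.
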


\begin{proof}
Let $\F \in \lgmmm$.  We verify that $\ol{\F} \in \lgmmm$.  For this, we need to verify that $M^+\ol{\F} = \ol{\F}M^+$.  Therefore, let $m \in M^+$ and $\ol{f} \in \ol{\F}$.  We have
\[ m\ol{f} = \ol{fm^*} = \sum_i \ol{m_i^* f_i} = \ol{f_i} m_i,\]
for some $m_i \in M^+, f_i \in \F$, where we have used that $m \in M^+$ if and only if $m^* \in M^+$.
\end{proof}

We now take a closer look at the interaction between the above functor and the equivalence $\Phi\colon \lgmmm \to \psH\Mod$ from Theorem \ref{theorem:Takeuchi}.

\begin{proposition}\label{proposition:TakeuchiConjugate}
The following diagram essentially commutes 
\[
\xymatrix{
\lgmmm    \ar[d]_{\ol{(-)}}     \ar[rrr]^{\Phi}                & & &  {\prescript{H}{}{\Mod}} \ar[d]^{\ol{(-)}} \\
\lgmmm    \ar[rrr]_{{\Phi}}     & & &   {\prescript{H}{}{\Mod}.}
}
\]
\end{proposition}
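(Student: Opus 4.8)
The plan is to produce an explicit natural isomorphism $\eta\colon \Phi\circ\ol{(-)}\Rightarrow\ol{(-)}\circ\Phi$ by conjugating the canonical surjection that implements $\Phi$. Recall from Theorem \ref{theorem:Takeuchi} that, for $\F\in\lgmmm$, the functor $\Phi$ may be described by the canonical projection $q_\F\colon\F\to\Phi(\F)=\F/M^+\F$, with the $H$-coaction on $\Phi(\F)$ obtained by pushing the $G$-coaction of $\F$ along the canonical surjection of Hopf $*$-algebras $\pi\colon G\to H$; in particular $q_\F$ intertwines the $G$-coaction of $\F$ (composed with $\pi\oby\id$) with the $H$-coaction of $\Phi(\F)$, and $\pi$ is a $*$-map. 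On the right-hand side, $\ol{(-)}$ sends a left $H$-comodule $V$ to its conjugate comodule, with coaction $\ol v\mapsto (v\m1)^*\oby\ol{v\0}$.

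First I would apply $\ol{(-)}$ to $q_\F$, obtaining the $\bC$-linear surjection $\ol{q_\F}\colon\ol\F\to\ol{\Phi(\F)}$, $\ol f\mapsto\ol{q_\F(f)}$. Its kernel is $\{\ol f : f\in M^+\F\}$, that is, the set $M^+\F$ sitting inside the common underlying set of $\F$ and $\ol\F$. Because $\F\in\lgmmm$ we have $M^+\F=\F M^+$; and unwinding the bimodule structure carried by $\ol\F$ (where the left action is $m\cdot\ol f=\ol{fm^*}$) together with $(M^+)^*=M^+$ gives $M^+\ol\F=\ol{\F M^+}$, hence $\ker\ol{q_\F}=M^+\ol\F$. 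Consequently $\ol{q_\F}$ descends to a $\bC$-linear isomorphism $\eta_\F\colon\Phi(\ol\F)=\ol\F/M^+\ol\F\xrightarrow{\ \sim\ }\ol{\Phi(\F)}$, characterised by $\eta_\F\circ q_{\ol\F}=\ol{q_\F}$.

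Next I would check that $\eta_\F$ is a morphism of $H$-comodules. On $\ol{\Phi(\F)}$ the coaction sends $\ol{q_\F(f)}$ to $(q_\F(f)\m1)^*\oby\ol{q_\F(f)\0}=\pi(f\m1)^*\oby\ol{q_\F(f\0)}$, using that $q_\F$ intertwines the coactions. On $\Phi(\ol\F)$, since $\ol\F$ carries the $G$-coaction $\ol f\mapsto(f\m1)^*\oby\ol{f\0}$, the induced $H$-coaction sends $q_{\ol\F}(\ol f)$ to $\pi\big((f\m1)^*\big)\oby q_{\ol\F}(\ol{f\0})=\pi(f\m1)^*\oby q_{\ol\F}(\ol{f\0})$, using that $\pi$ is a $*$-map. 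Applying $\id\oby\eta_\F$ to the latter reproduces the former, so $\eta_\F$ is an isomorphism in $\psH\Mod$. Naturality in $\F$ is then automatic: for a morphism $\phi\colon\F\to\mathcal G$ in $\lgmmm$, both $\eta_{\mathcal G}\circ\Phi(\ol\phi)$ and $\ol{\Phi(\phi)}\circ\eta_\F$, precomposed with the surjection $q_{\ol\F}$, carry $\ol f$ to $\ol{q_{\mathcal G}(\phi(f))}$, whence they agree.

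The one genuinely delicate point, and the step I expect to demand the most care, is the left/right bookkeeping for module structures on conjugates: by Proposition \ref{proposition:ModuleStar} the object $\ol\F$ lies a priori in $\psG\Mod_M$, so to even write $M^+\ol\F$ one must invoke the bimodule structure available on objects of $\lgmmm$, and the identity $M^+\ol\F=\ol{\F M^+}$ rests on correctly matching the conjugate of the right $M$-action of $\F$ with the left $M$-action of $\ol\F$. Once this is pinned down, the $*$-bialgebra identity for $\Delta$, the equality $(M^+)^*=M^+$, and the fact that $\pi$ is a morphism of Hopf $*$-algebras render every remaining verification routine.
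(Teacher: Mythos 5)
Your proposal is correct and follows essentially the same route as the paper: both construct the natural isomorphism $\Phi\circ\ol{(-)}\Rightarrow\ol{(-)}\circ\Phi$ by applying the (exact) conjugation functor to the defining quotient $\F\to\F/M^+\F$ and identifying $\ol{M^+\F}$ with $M^+\ol{\F}$ inside $\ol{\F}$. Your write-up is in fact somewhat more careful than the paper's, which leaves the identification of the kernel and the $H$-comodule and naturality checks to the reader.
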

\begin{proof}
{
We look for a natural transformation from the composition of the lower branch to the composition of the upper branch, thus $\Phi \ol{(-)} \Rightarrow \ol{\Phi(-)}$.

For any $\F \in \lgmmm$, consider the exact sequence $0 \to M^+ \F \to \F \to \F / (M^+\F) \to 0$ of $\bC$-vector space maps.  As the functor $\ol{(-)}$ is exact, we get the short exact sequence $0 \to \ol{M^+ \F} \to \ol{\F} \to \ol{\F / (M^+\F)} \to 0.$

As such, for any $\F \in \lgmmm$, there is an isomorphism 
\begin{align*}
\ol{\F} / \ol{M^+ \F} \to \ol{\F / (M^+\F)} \colon && \ol{f} + \ol{M^+\F} \mto \ol{f+M^+\F}
\end{align*}
of $\bC$-vector spaces.  One easily checks that these are isomorphisms of left $H$-comodules  and that they are natural in $\F$.  Hence, there is a natural equivalence $\Phi \ol{(-)} \Rightarrow \ol{\Phi(-)}$, as requested.}
\end{proof}

\subsection{Holomorphic Hermitian structures and Chern connections}

\begin{definition}
An {\em $A$-covariant Hermitian structure} for an object $\F \in \psAM\lproj$ is an isomorphism $h_\F\colon\ol{\F} \to {\lvee\F}$ in $\psA\Mod_M$, satisfying 
\begin{align*}
h_\F(\ol{f})(g) = h_\F(\ol{g})(f)^*, && \forall f,g \in \F.
\end{align*}
An \emph{$A$-covariant Hermitian vector bundle} is an $A$-covariant vector bundle $\F \in \lgmmm$ together with an $A$-covariant Hermitian structure.
\end{definition}

\begin{example}\label{example:HermitianStructureOnM}
An $A$-comodule $*$-algebra $M$ has a natural Hermitian structure $h\colon \ol{M} \to \lvee M$ given by $h(\ol{m})(n) = n m^*$. 
\end{example}

\begin{remark}
As all Hermitian structures we consider in this paper are $A$-covariant Hermitian structures for equivariant vector bundles, we will often not mention the equivariance.
\end{remark}

\begin{definition}\label{definition:HermitianConnection}
Let $(\F,h_\F)$ be an Hermitian vector bundle.  For each $n \geq 0$, we consider the map 
\begin{align*}
\mathfrak{h}_\F\colon \bigoplus_{i+j = n}\Om^i \otimes_M \F \otimes_\bR \Om^j \otimes_M \F &\to \Om^{i+j} \\
\omega \otimes f \otimes \nu \otimes g &\mapsto \omega h_\F(\ol{g})(f) \wedge \nu^*.
\end{align*}
 An $A$-covariant connection $\nabla\colon \F \to \Om^1 \otimes_M \F$ is an \emph{Hermitian connection} if
\begin{align*}
\exd \mathfrak{h}(f, {g}) = \mathfrak{h}_\F(\nabla(f), {g}) + \mathfrak{h}_\F(f, {\nabla}({g})) && \forall f,g \in \F.
\end{align*}
\end{definition}

Recall that an \emph{$A$-covariant connection} on an $A$-covariant vector bundle $\F$ is a map $\nabla\colon \F \to \Om^1 \otimes_M \F$ satisfying the Leibniz rule.

\begin{proposition}[\cite{BM}] \label{prop:BM1}
Let $\F \in \psAM\lproj$ endowed with an $A$-covariant holomorphic structure $\adel_{\F}\colon \F \to \Om\ahol \oby_M \F$ and an $A$-covariant Hermitian structure $h\colon \ol{\F} \to \lvee \F$.  There exists a unique $A$-covariant Hermitian connection $\nabla\colon \F \to \Om^1 \oby_M \F$ obeying 
\bas
(\proj_{\Om^{(0,1)}} \oby \id) \circ \nabla_\F = \adel_{\F}
\eas
We call $\nabla_{\F}$ the {\em Chern connection} of the triple $(\F,\adel_\F,h)$.
\end{proposition}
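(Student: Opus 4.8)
The plan is to mimic the classical construction of the Chern connection, working one degree at a time, but being careful that left and right module structures no longer coincide. First I would observe that the data $(\adel_\F, h)$ together impose two requirements on a candidate connection $\nabla_\F$: the \emph{holomorphicity condition} $(\proj_{\Om^{(0,1)}} \oby \id)\circ \nabla_\F = \adel_\F$, and the \emph{Hermitian condition} from Definition \ref{definition:HermitianConnection}. Decomposing $\nabla_\F = \nabla_\F^{(1,0)} + \nabla_\F^{(0,1)}$ according to the complex structure $\Om^1 = \Om^{(1,0)} \oplus \Om^{(0,1)}$, the holomorphicity condition simply pins down $\nabla_\F^{(0,1)} = \adel_\F$ (here I use that $\adel_\F$ is itself an $A$-covariant connection-like map satisfying the $\adel$-twisted Leibniz rule, so the sum is a genuine connection). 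Hence uniqueness and existence both reduce to showing that there is a \emph{unique} $A$-covariant $\nabla_\F^{(1,0)}\colon \F \to \Om^{(1,0)}\oby_M \F$ such that the combined operator is Hermitian.

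Next I would unwind the Hermitian condition $\exd\, \mathfrak{h}_\F(f,g) = \mathfrak{h}_\F(\nabla f, g) + \mathfrak{h}_\F(f,\nabla g)$ by splitting it into its $(1,0)$- and $(0,1)$-components. The $(0,1)$-part, using $\nabla_\F^{(0,1)} = \adel_\F$, becomes an identity that holds automatically because $h_\F$ intertwines the holomorphic structure with the canonical one on $\lvee\F$ (this is the compatibility one expects between a Hermitian metric and a holomorphic structure, and should follow from the definition of $h_\F$ being a morphism in $\psA\Mod_M$ together with the Leibniz rule for $\adel$). The $(1,0)$-part then reads, schematically, $\del\,\mathfrak{h}_\F(f,g) = \mathfrak{h}_\F(\nabla^{(1,0)}_\F f, g) + \mathfrak{h}_\F(f, \adel_\F g)$, and since $\mathfrak{h}_\F(-,g)$ is, for fixed $g$, built from the isomorphism $h_\F$ and wedging, this determines $\mathfrak{h}_\F(\nabla^{(1,0)}_\F f, g)$ for all $g$. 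Because $h_\F\colon \ol{\F}\to \lvee\F$ is an isomorphism and $\F$ is finitely generated projective, the pairing $\mathfrak{h}_\F$ is nondegenerate in the appropriate sense, so $\nabla^{(1,0)}_\F f$ is uniquely determined by this equation. I would then check that the formula so obtained is well-defined (independent of the presentation of elements of $\F$, compatible with the $M$-module structure), satisfies the Leibniz rule, and is $A$-covariant — the covariance being inherited because all the ingredients ($h_\F$, $\adel_\F$, $\del$, the grading) are $A$-covariant.

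The main obstacle I anticipate is \emph{not} the abstract strategy but the bookkeeping forced by noncommutativity: in Definition \ref{definition:HermitianConnection} the map $\mathfrak{h}_\F$ involves $h_\F(\ol{g})(f)$ sitting \emph{between} $\omega$ and $\nu^*$, and $h_\F(\ol{g})(f)$ is an element of $M$, not of $\bC$, so one cannot freely move it past forms. Thus, when solving for $\nabla^{(1,0)}_\F$, one must verify that the resulting assignment $f \mapsto \nabla^{(1,0)}_\F f$ genuinely lands in $\Om^{(1,0)}\oby_M \F$ and is additive and satisfies $\nabla(fm) = \nabla(f)m + f\oby \del m$ — this requires carefully exploiting the identity $h_\F(\ol f)(g) = h_\F(\ol g)(f)^*$ and the $*$-structure on $\Om^\bullet$, rather than the commutativity one would use classically. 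A secondary subtlety is ensuring the nondegeneracy argument is valid: one should pass to a dual basis / dual generators for the projective module $\F$ (using that $\F \in \psAM\lproj$) so that ``$\mathfrak{h}_\F(\xi, g) = \mathfrak{h}_\F(\eta,g)$ for all $g$ implies $\xi = \eta$'' is justified. Once these points are handled, assembling $\nabla_\F = \adel_\F + \nabla^{(1,0)}_\F$ and reading off both defining properties is routine. I expect the authors' proof to either cite \cite{BM} essentially verbatim or to give exactly this degree-splitting argument.
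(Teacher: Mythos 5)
Your strategy is sound and would go through, but it is not the route the paper takes. The paper's proof of Proposition \ref{prop:BM1} itself is essentially a citation: the statement is taken from \cite{BM} for $A=\bC$, with the observation that the argument there is compatible with the $A$-comodule structure. The constructive content is deferred to Proposition \ref{proposition:ChernViaConjugates}, where the $(1,0)$-part is not solved for implicitly but written down explicitly as $\del_\F \coloneqq C_h^{-1}\circ\odel_{\lvee\!\F}\circ C_h$, i.e.\ the conjugate of the canonical dual holomorphic structure on $\lvee\F$ by the $\bR$-linear isomorphism $C_h(\omega\otimes f)=h(\ol{f})\otimes\omega^*$. That formula makes the points you flag as delicate bookkeeping essentially automatic: $\del_\F^2=0$ is immediate, the $\del$-Leibniz rule follows from the anti-multiplicativity of $C_h$ (Lemma \ref{Hodgeprop}), covariance is inherited from $C_h$ and $\odel_{\lvee\!\F}$, and the Hermitian condition is verified by a direct computation. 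Your implicit approach --- pinning down $\nabla_\F^{(0,1)}=\adel_\F$ and solving the $(1,0)$-component of the Hermitian condition using nondegeneracy of $\mathfrak{h}_\F$ --- is the classical argument and has the advantage of delivering uniqueness simultaneously, whereas Proposition \ref{proposition:ChernViaConjugates} only re-establishes existence and still leans on \cite{BM} for uniqueness. The nondegeneracy step you correctly identify as the main obstacle is exactly where the two proofs meet: justifying that $\mathfrak{h}_\F(\xi,g)=0$ for all $g$ forces $\xi=0$ amounts to showing that $C_h$ restricts to a bijection $\Om^{(1,0)}\oby_M\F\to\lvee\!\F\oby_M\Om^{(0,1)}$, which is precisely what the explicit formula presupposes (and what follows from $h$ being an isomorphism together with $\F$ being finitely generated projective). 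So the two arguments are equivalent in substance; the paper's version packages the dual-basis/nondegeneracy work into the single map $C_h$.
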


\begin{proof}
The statement is proven in \cite{BM} when $A = \bC$, but the proof is compatible with the $A$-comodule structure.  Alternatively, we will re-establish the existence in Proposition \ref{proposition:ChernViaConjugates}.
\end{proof}

We will also use the following result from \cite{BM}.

\begin{proposition} \label{prop:BM2}
The curvature operator of a Chern connection satisfies 
$
\nabla^2(\F) \sseq \Om^{(1,1)} \oby \F.
$
\end{proposition}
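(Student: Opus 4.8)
The plan is to compute the curvature $\nabla^2$ by decomposing the Chern connection into its holomorphic and anti-holomorphic parts, $\nabla_\F = \del_\F + \adel_\F$ where $\adel_\F$ is the given holomorphic structure (of bidegree $(0,1)$) and $\del_\F$ is the bidegree $(1,0)$ component whose existence is guaranteed by Proposition \ref{prop:BM1}. Squaring, one gets $\nabla_\F^2 = \del_\F^2 + (\del_\F \adel_\F + \adel_\F \del_\F) + \adel_\F^2$, where the three summands have bidegrees $(2,0)$, $(1,1)$, and $(0,2)$ respectively. The proposition then amounts to showing that the $(2,0)$ and $(0,2)$ components both vanish, leaving only the $(1,1)$ piece.

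First I would dispose of the $(0,2)$ part. Since $(\F,\adel_\F)$ underlies a holomorphic vector bundle, the dg structure on $\Om^{(0,\bullet)} \otimes_M \F$ forces $\adel_\F^2 = 0$ by definition of a dg module (the differential squares to zero); this is exactly the integrability encoded in Definition \ref{definition:HolomorphicVectorBundle}. So the $(0,2)$ component of $\nabla_\F^2$ is zero.

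The $(2,0)$ part is the main obstacle and requires the Hermitian condition. Here I would use the defining property of the Chern connection together with the compatibility of $\nabla_\F$ with the Hermitian structure $h$: the equation $\exd \mathfrak{h}_\F(f,g) = \mathfrak{h}_\F(\nabla f, g) + \mathfrak{h}_\F(f,\nabla g)$ from Definition \ref{definition:HermitianConnection}. Comparing bidegrees $(1,0)$ and $(0,1)$ on both sides of this identity, and using $\exd = \del + \adel$ together with the $*$-relation \eqref{stardel}, one extracts a relation expressing $\del_\F$ as the "conjugate" of $\adel_\F$ via $h$ — concretely, $h$ intertwines $\del_\F$ on $\F$ with (the conjugate of) $\adel_\F$ on $\ol{\F}$. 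Applying $\del$ to the Hermitian identity and again separating bidegrees, the $(2,0)$-component of the relation says that $\del_\F^2$ is intertwined by $h$ with the conjugate of $\adel_\F^2 = 0$; since $h$ is an isomorphism, $\del_\F^2 = 0$. Alternatively, one can simply invoke the fact (to be re-established in the author's Proposition \ref{proposition:ChernViaConjugates}) that the Chern connection on $\F$ corresponds under $h$ to the Chern connection on $\lvee\ol{\F}$, whose anti-holomorphic part squares to zero by the holomorphic-module axiom; transporting back gives $\del_\F^2 = 0$.

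Combining the two vanishing statements, $\nabla_\F^2 = \del_\F \adel_\F + \adel_\F \del_\F$, which lies in $\Om^{(1,1)} \otimes_M \F$, as claimed. The delicate point throughout is bookkeeping the left/right module structures and the conjugation functor $\ol{(-)}$ when manipulating $\mathfrak{h}_\F$, since $h$ maps $\ol{\F}$ (a right module) to $\lvee\F$; but all of this is routine given Proposition \ref{proposition:ModuleStar} and the $*$-calculus relations, and none of it obstructs the bidegree argument.
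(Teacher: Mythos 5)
Your argument is correct, but note that the paper itself offers no proof of this statement: Proposition \ref{prop:BM2} is quoted from \cite{BM} without further justification, so there is nothing internal to compare against. Your bidegree decomposition $\nabla^2 = \del_\F^2 + (\del_\F\adel_\F + \adel_\F\del_\F) + \adel_\F^2$ is the standard route, and the $(0,2)$-component vanishes exactly as you say, since $\adel_\F^2=0$ is part of the dg-module axiom for a holomorphic vector bundle. For the $(2,0)$-component, your first argument (differentiating the Hermitian identity $\exd\,\mathfrak{h}_\F(f,g)=\mathfrak{h}_\F(\nabla f,g)+\mathfrak{h}_\F(f,\nabla g)$ a second time and separating bidegrees) is only a sketch as written --- it would require spelling out how $\mathfrak{h}_\F$ interacts with $\del$ on forms of degree one, which is not immediate from Definition \ref{definition:HermitianConnection}. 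Your alternative route is the one that actually closes the gap cleanly and is consistent with the paper's own later machinery: Proposition \ref{proposition:ChernViaConjugates} exhibits the $(1,0)$-part of the Chern connection as $\del_\F = C_h^{-1}\circ\adel_{\lvee\!\F}\circ C_h$, whence $\del_\F^2 = C_h^{-1}\circ\adel_{\lvee\!\F}^2\circ C_h = 0$; since that proposition is proved independently of \ref{prop:BM2}, there is no circularity. With both extreme components gone, $\nabla^2(\F)\sseq\Om^{(1,1)}\oby_M\F$ follows, and as a bonus your proof identifies $\nabla^2$ with $\del_\F\adel_\F+\adel_\F\del_\F$, which is precisely the form in which the paper later uses this result in the Akizuki--Nakano identity.
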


\begin{remark}  We consider now the case of quantum homogeneous spaces.  From Takeuchi's equivalence, and the above discussions (see Proposition \ref{proposition:TakeuchiConjugate}), it is clear that an Hermitian structure on $\F$ is uniquely determined by an $H$-comodule isomorphism
\bas
\ol{\Phi(\F)} \cong  \lvee\Phi(\F).
\eas
Hence, for irreducible objects $\F \in \lgmmm$, there is a unique Hermitian structures, up to scalar multiple (see \cite[Theorem 11.27]{KSLeabh}).
\end{remark}

\section{Holomorphic vector bundles and their duals}\label{section:HolomorphicDuals}

Let $(\bigoplus_{(a,b)\in \bN^2}\Om^{(a,b)}, \del,\ol{\del})$ be a Dolbeault double complex of a differential $*$-calculus $\Om^{\bullet}$ and write $M = \Om^{0}$.  In this paper, we often consider dg modules whose underlying graded module is isomorphic to $\Om^{(0,\bullet)} \otimes_M \F, \Om^{(\bullet,0)} \otimes_M \F,$ or $\Om^{\bullet} \otimes_M \F$, for some $\F \in \psAM\lproj$.  In this section, we introduce a framework that captures these three examples. 

\subsection{\texorpdfstring{The category $\psAD\dgzlproj_\E$}{The category dg0-lproj}}\label{subsection:dg0lproj} Let $\D = (D, \exd)$ and $\E = (E, \exd)$ be $A$-equivariant dg algebras (see Appendix \ref{Section:Dualsdgas}).  We consider the full subcategory $\psAD\dgzlproj_\E$ of $\psAD\dglproj_\E$ consisting of those objects $\fF \in \psAD\dgzlproj_\E$ which are generated, as left $D$-module by finitely many elements in degree zero.  We define the full subcategory $\psAD\dgzrproj_\E$ of $\psAD\dgrproj_\E$ in a similar way.

\begin{remark}
Whether an $\fX \in \psAD\dgMod_\E$ lies in $\psAD\dgzlproj_\E$ is solely a property of the underlying left $D$-module structure, and not of the differential of $\fX$.
\end{remark}

\begin{proposition}\label{proposition:HolShape}
Let $\fX \in \psAD\dgMod_\E$.  The following are equivalent.
\begin{enumerate}
\item\label{enumerate:HolShape1} $\fX \in \psAD\dgzlproj_\E$,
\item\label{enumerate:HolShape2} $\fX^0 \in \prescript{A}{D^0}\lproj_{E^0}$ and the map $\mu\colon D \otimes_{D^0} \fX^0 \to \fX$ induced by the left $D$-action is an isomorphism of $D$-$E^0$-modules,
\item\label{enumerate:HolShape3} as a graded left $D$-module, $\fX$ is a direct summand of $D^{\oplus n}$, for some $n \geq 0$.
\end{enumerate}
\end{proposition}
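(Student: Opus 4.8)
The plan is to prove the cycle of implications $(1) \Rightarrow (3) \Rightarrow (2) \Rightarrow (1)$, working throughout with the underlying graded left $D$-module and only checking the $E^0$-module compatibility at the very end. Recall that, by definition, $\fX \in \psAD\dgzlproj_\E$ means $\fX$ is a dg-projective left $D$-module (hence in particular a direct summand, as a graded $D$-module, of a direct sum of shifted copies of $D$) which is moreover generated as a left $D$-module by finitely many elements concentrated in degree zero. The key structural input I want to exploit is that the generation in degree zero forces all the shifts to be trivial.

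First I would handle $(1) \Rightarrow (3)$. Let $x_1, \dots, x_n \in \fX^0$ be $D$-module generators. These define a surjection $\pi\colon D^{\oplus n} \to \fX$ of graded left $D$-modules (sending the $i$-th standard generator, placed in degree zero, to $x_i$), which is the identity-degree part of a map of graded $D$-modules since the $x_i$ sit in degree $0$. On the other hand, $\fX$ being dg-projective in particular makes it a projective object in the category of graded left $D$-modules once we forget the differential (the graded module underlying a dg-projective module is graded-projective — this is the standard fact, and I would either cite Appendix \ref{Section:Dualsdgas} or give the one-line argument that a dg-projective module is a summand of a direct sum of shifted free modules, which is graded-projective a fortiori). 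Hence $\pi$ splits as a map of graded left $D$-modules, exhibiting $\fX$ as a direct summand of $D^{\oplus n}$. Here one must be slightly careful to carry the $A$-comodule and right $E$-module structures along: the splitting can be taken $A$-equivariant and right-$E^0$-linear because $\psAD\dgzlproj_\E$ is a category of bimodules and the relevant projectivity is projectivity in that bimodule category; alternatively, since $D, E$ are $A$-equivariant and everything is $\bC$-linear, one averages / uses that $A$-comodules form a nice enough category. This equivariance bookkeeping is the one genuinely fiddly point, though not a deep one.

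Next, $(3) \Rightarrow (2)$. Suppose $\fX \oplus \fY \cong D^{\oplus n}$ as graded left $D$-modules. Taking degree-zero parts gives $\fX^0 \oplus \fY^0 \cong (D^0)^{\oplus n}$ as left $D^0$-modules (and right $E^0$-modules, with $A$-comodule structure), so $\fX^0 \in \prescript{A}{D^0}\lproj_{E^0}$ is finitely generated projective. It remains to show $\mu\colon D \otimes_{D^0} \fX^0 \to \fX$ is an isomorphism. The functor $D \otimes_{D^0} (-)$ applied to the splitting $\fX^0 \oplus \fY^0 \cong (D^0)^{\oplus n}$ yields $(D \otimes_{D^0}\fX^0) \oplus (D\otimes_{D^0}\fY^0) \cong D^{\oplus n}$, and $\mu$ is compatible with this decomposition and with the isomorphism $D^{\oplus n} \cong \fX \oplus \fY$; chasing the summands shows $\mu$ is an iso onto $\fX$ (for the free module $D^0$ itself the map $D\otimes_{D^0} D^0 \to D$ is manifestly the canonical iso, and both sides are additive functors of the degree-zero module, so it passes to summands). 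This is essentially a diagram chase once set up; the $D$-$E^0$-bilinearity of $\mu$ is immediate from its construction.

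Finally $(2) \Rightarrow (1)$ is the easiest direction: if $\fX^0$ is finitely generated projective as a $D^0$-$E^0$-bimodule, pick $g_1, \dots, g_m \in \fX^0$ generating $\fX^0$ over $D^0$; then under $\mu$ these generate $\fX$ over $D$, so $\fX$ is generated in degree zero by finitely many elements. For dg-projectivity, write $\fX^0$ as a summand of $(D^0)^{\oplus m}$ in the bimodule category and apply $D \otimes_{D^0}(-)$: using the iso $\mu$, $\fX$ becomes a graded-$D$-module summand of $D^{\oplus m}$, which is dg-projective (a finite direct sum of copies of $D$ in degree zero, with zero differential on the complement — more precisely the splitting idempotent is a cocycle since it comes from a degree-zero, chain-map idempotent, so $\fX$ inherits a dg-projective structure). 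Hence $\fX \in \psAD\dgzlproj_\E$. I expect the main obstacle, such as it is, to be purely organizational: keeping the three structures ($A$-comodule, left $D$-module, right $E^0$-module) coherent through the splitting arguments, and making sure the idempotents produced are genuine morphisms in $\psAD\dgMod_\E$ — i.e. chain maps — so that the summand decompositions are decompositions of dg modules and not merely of graded modules. Everything else is standard projective-module manipulation.
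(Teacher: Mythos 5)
Your proof is correct and rests on the same two ideas as the paper's: generation in degree zero together with projectivity yields a split surjection $D^{\oplus n}\to\fX$ from a free module generated in degree zero, and applying $D\otimes_{D^0}-$ to a splitting of $\fX^0$ transports the information back up to $\fX$. You merely run the cycle as $(1)\Rightarrow(3)\Rightarrow(2)\Rightarrow(1)$ where the paper does $(1)\Rightarrow(2)\Rightarrow(3)\Rightarrow(1)$; your ordering arguably packages things more cleanly, since both the projectivity of $\fX^0$ and the bijectivity of $\mu$ drop out of the summand presentation at once, whereas the paper first proves $\mu$ is bijective (by comparing a splitting $\nu$ of $\mu$ with the degree-zero behaviour of $\mu$) and then establishes projectivity of $\fX^0$ by a separate lifting argument. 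One point where you over-engineer: the equivariance bookkeeping you single out as ``the one genuinely fiddly point'' is not needed at all. By the paper's conventions, membership in $\psAD\dglproj_\E$ and condition (3) are properties of the underlying (graded) left $D$-module only, and $\prescript{A}{D^0}\lproj_{E^0}$ likewise asks for projectivity only as a left $D^0$-module (with the $A$-comodule and right $E^0$-module structures carried along passively); moreover the remark preceding the proposition notes that membership in $\psAD\dgzlproj_\E$ is independent of the differential. So none of your splittings need to be $A$-equivariant, right $E^0$-linear, or chain maps, and your claim that a splitting ``can be taken $A$-equivariant and right-$E^0$-linear because the relevant projectivity is projectivity in that bimodule category'' is unjustified as stated --- but since nothing in the statement requires it, the proof stands as written.
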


\begin{proof}
Assume that (\ref{enumerate:HolShape1}) holds; we prove that (\ref{enumerate:HolShape2}) holds.  Consider the canonical graded $D$-$E^0$-module morphism $\mu\colon D \otimes_{D^0} \fX^0 \to \fX$.  As $\fX$ is generated in degree zero, $\mu$ is a surjection.  Since $\fX$ is projective as a left $D$-module, there is a split embedding $\nu\colon \fX \to D \otimes_{D^0} \fX^0$ as left $D$-modules.  As $\mu$ restricts to an isomorphism of left $D^0$-modules on the degree zero part, so does $\nu$.  This shows that the image of $\nu$ contains the degree zero part of $D \otimes_{D^0} \fX^0$.  From this we conclude that $\nu$ is surjective, and hence an isomorphism of left $D$-modules.  This implies that $\mu$ is an isomorphism, as required.

We now show that $\fX^0$ is projective as a left $D^0$-module.  Let $f\colon Y \to \fX^0$ be a epimorphism of left $D^0$-modules.  This induces an epimorphism $\id \otimes_M f\colon D \otimes_{D^0} Y \to D \otimes_{D^0} \fX^0$.  As $D \otimes_{D^0} \fX^0 \cong \fX$ is projective as a left $D$-module, there is a morphism $g\colon D \otimes_{D^0} \fX^0 \to D \otimes_{D^0} Y$ such that $(\id \otimes f) \circ g = \id$, so that $f \circ g^0 = \id$.  This implies that $\fX^0$ is projective as a left $D^0$ module.  We have established that (\ref{enumerate:HolShape2}) holds.

Assume that (\ref{enumerate:HolShape2}) holds; we prove that (\ref{enumerate:HolShape3}) holds.  As $\fX^0 \in \prescript{A}{D^0}\lproj$, we know that $\fX$ is a direct summand of a free $D^0$ module.  Consider a splitting $\fX^0 \to (D^0)^{\oplus n} \to \fX^0$ (for some $n \geq 0$).  Applying the functor $D \otimes_{D^0} -$ yields that the composition
\[\fX \cong D \otimes_{D^0} \fX^0 \to D^{\oplus n} \to D \otimes_{D^0} \fX^0 \cong \fX\]
is an isomorphism of graded left $D$ modules.  This establishes (\ref{enumerate:HolShape3}).

Finally, it is easy to see that (\ref{enumerate:HolShape3}) implies (\ref{enumerate:HolShape1}).  This finishes the proof.
\end{proof}

\begin{remark}\label{remark:BimoduleConnections}
Let $\fX \in \psAD\dgzlproj_\D$.  We write $\mu_l\colon D^1 \otimes_{D^0} \fX^0 \to \fX^1$ and $\mu_r\colon \fX^0 \otimes_{D^0} D^1 \to \fX^1$ for the maps induced by the left and right $D$-action on $\fX$.  As $\mu_l$ is invertible (see Proposition \ref{proposition:HolShape}), there is a $D^0$-bimodule morphism
\[\sigma\colon \fX^0 \otimes_{D^0} D^1 \stackrel{\mu_r}{\longrightarrow} \fX^1 \stackrel{\mu_l^{-1}}{\longrightarrow} D^1 \otimes_{D^0} \fX^0.\]
We also consider the following commutative diagram
\begin{center}
\begin{tikzcd}
{\fX^0} \arrow[r, equal] \arrow[d, "\exd_{\fX,l}"] &  {\fX^0} \arrow[d, "\exd_\fX"] \\
{\D^1 \otimes_{D^0} \fX^0} \arrow[r, "\mu_l"] &  {\fX^1} \arrow[leftarrow, r, "\mu_r"] & {\fX^0 \otimes_{D^0} D^1}
\end{tikzcd}
\end{center}
where $\exd_{\fX,l} = \mu^{-1}_l \circ \exd_\fX$.  For all $x \in \fX^0$ and $e \in D^0$, we have
\begin{align*}
\exd_{\fX,l}(xe)
&= \mu_l^{-1}(\exd_\fX(xe)) = \mu_l^{-1}(\exd_\fX(x)e + x \exd(e)) \\
&= \exd_{\fX,l}(x)e + \mu_l^{-1}(x \exd(e)) = \exd_{\fX,l}(x)e + \mu_l^{-1}\mu_r(x \otimes \exd(e)) \\
&= \exd_{\fX,l}(x)e + \sigma(x \otimes \exd(e)).
\end{align*}
Hence, the map $\sigma\colon \fX^0 \otimes_{D^0} D^1 \to D^1 \otimes_{D^0} \fX^0$ endows $\fX$ with the structure of a bimodule connection.
\end{remark}

\subsection{\texorpdfstring{Dualities for $\psAD\dgzlproj_\E$}{Dualities}}  We now apply the results from Appendix \ref{Section:Dualsdgas} to the full subcategory $\psAD\dgzlproj_\E$ of $\psAD\dgMod_\E$.  We start by considering the restricted contravariant functors $\lwedge(-) = \psD\lin(-,\D)\colon \psAD\dgzlproj_\E \to \psAE\dgMod_\D$ and $(-)^\wedge = \lin_\D(-,\D)\colon \psAE\dgzlproj_\D \to \psAD\dgMod_\E$.

\begin{proposition}\label{proposition:RestrictedDuals}
For any $\fX \in \psAD\dgzlproj_\E$, we have $\lwedge \fX \in \psAE\dgzrproj_\D$.
\end{proposition}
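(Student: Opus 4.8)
The plan is to reduce the statement to the characterisation of $\psAD\dgzlproj_\E$ given in Proposition \ref{proposition:HolShape}, applied to the dg algebra $\E$ in place of $\D$. Recall that $\lwedge\fX = \psD\lin(\fX,\D)$ carries a natural structure of an $A$-equivariant dg $\E$-$\D$-bimodule, so that the only thing to check is that it lies in the full subcategory $\psAE\dgzrproj_\D$, i.e.\ that it is a direct summand, as a graded \emph{right} $\E$-module, of $\E^{\oplus n}$ for some $n \geq 0$; equivalently, by the right-module analogue of Proposition \ref{proposition:HolShape}, that $(\lwedge\fX)^0$ is finitely generated projective over $E^0$ on the appropriate side and the canonical action map is an isomorphism.

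First I would invoke Proposition \ref{proposition:HolShape} for $\fX \in \psAD\dgzlproj_\E$ to write $\fX$, as a graded left $D$-module, as a direct summand of $D^{\oplus n}$; fix the idempotent $e \in \End_D(D^{\oplus n})$ with $\fX \cong e\,D^{\oplus n}$. Next I would apply the contravariant functor $\lwedge(-) = \psD\lin(-,\D)$. Since $\lin$ turns finite direct sums into finite direct sums and sends $D$ (as a graded left $D$-module, with its residual right $\D$-structure) to $\D$ itself, we get $\lwedge(D^{\oplus n}) \cong \D^{\oplus n}$ as a graded right $D$-module, and $\lwedge$ carries the idempotent $e$ to an idempotent $\lwedge e$ on $\D^{\oplus n}$ with $\lwedge\fX \cong (\lwedge e)\,\D^{\oplus n}$. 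This exhibits $\lwedge\fX$ as a direct summand of $\D^{\oplus n}$ as a graded right $D$-module. Finally I would note that these identifications are compatible with the $A$-comodule structures and with the left $\E$-action (which is inherited, via functoriality of $\lin$, from the right $\E$-action on $\fX$), so that $\lwedge\fX \in \psAE\dgMod_\D$; and then the right-module version of the implication (\ref{enumerate:HolShape3})$\Rightarrow$(\ref{enumerate:HolShape1}) in Proposition \ref{proposition:HolShape} gives $\lwedge\fX \in \psAE\dgzrproj_\D$, as required.

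The routine points — that $\lin$ is additive, that $\psD\lin(D,\D)\cong\D$, and that everything is $A$-equivariant — I would dispatch by reference to Appendix \ref{Section:Dualsdgas}. The one place that deserves care, and what I expect to be the main obstacle, is the bookkeeping of \emph{sides}: passing from $\fX$, which is generated in degree zero as a \emph{left} $D$-module with a right $\E$-action, to $\lwedge\fX$, which must be generated in degree zero as a \emph{right} $D$-module with a left $\E$-action. One must check that ``finitely generated in degree zero'' is genuinely preserved — a priori $\lin(-,\D)$ could enlarge degrees — and this is exactly why the reduction to the ``direct summand of $D^{\oplus n}$'' form (condition (\ref{enumerate:HolShape3})) is the right move: that condition is manifestly self-dual under $\lin(-,\D)$ because $\D$ is free of rank one over itself, concentrated in a way that $\lin$ respects, whereas the ``generated in degree zero'' formulation would require a separate argument. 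So the proof essentially consists of translating the hypothesis into the summand form, applying the additive functor $\lwedge(-)$, and translating back.
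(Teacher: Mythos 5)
Your proposal is correct and follows essentially the same route as the paper: both reduce to the characterisation of $\psAD\dgzlproj_\E$ as ``direct summand of $D^{\oplus n}$'' from Proposition \ref{proposition:HolShape}, apply the additive functor $\psD\lin(-,\D)$ to exhibit $\lwedge\fX$ as a direct summand of $\D^{\oplus n}$ as a graded right $D$-module, and conclude by the right-module version of that characterisation. Your explicit idempotent bookkeeping and the remark on why condition (\ref{enumerate:HolShape3}) is the self-dual formulation are just a more detailed rendering of the same argument.
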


\begin{proof}
As $\fX \in \psAD\ldgproj_{\D}$, we know that $\prescript{}{D}\hom(\fX,D) \in \prescript{A}{\D}\rdgproj_\D$.  We claim that $\lwedge \fX$ is generated in degree zero.  As $\fX \in \psAD\dgzlproj_\E$, we know that $\fX$ is a direct summand of $D^{\oplus n}$ (as a graded left $D$-module), for some $n \leq 0$.  Hence, $\lwedge \fX$ is a direct summand of $\psD\hom(D^{\oplus n}, D) \cong D^{\oplus n}$ (as a graded right $D$-module).  This shows that $\lwedge \fX$ is finitely generated in degree zero.
\end{proof}

\begin{corollary}
The contravariant functors 
\[
\mbox{$(-)^\wedge\colon \psAE\dgzlproj_\D \to \psAD\dgzlproj_\E$ and $\lwedge(-)\colon \psAE\dgzlproj_\D \to \psAD\dgzlproj_\E$}\]
form a duality.
\end{corollary}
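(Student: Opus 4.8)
The plan is to reduce the claim to the general dg-module duality of Appendix \ref{Section:Dualsdgas}, using Proposition \ref{proposition:RestrictedDuals} to control the target categories and Proposition \ref{proposition:HolShape} to verify biduality. Recall that the Appendix supplies, for arbitrary $A$-equivariant dg modules, the two biduality natural transformations $\fX \to \lwedge(\fX^\wedge)$ and $\fX \to (\lwedge \fX)^\wedge$ as morphisms of $A$-equivariant dg modules. So all that is needed is (i) that $\lwedge(-)$ and $(-)^\wedge$ restrict to the subcategories of objects finitely generated in degree zero, and (ii) that the biduality transformations are isomorphisms there.

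For (i): Proposition \ref{proposition:RestrictedDuals} shows $\lwedge(-)$ sends $\psAD\dgzlproj_\E$ into $\psAE\dgzrproj_\D$, and the left--right mirror of its proof shows $(-)^\wedge$ sends $\psAE\dgzrproj_\D$ back into $\psAD\dgzlproj_\E$; in particular both restricted contravariant functors are well-defined on the subcategories in question.

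For (ii): by Proposition \ref{proposition:HolShape}(\ref{enumerate:HolShape3}), an object $\fX \in \psAD\dgzlproj_\E$ is, as a graded left $D$-module, a direct summand of $\D^{\oplus n}$ for some $n$. A morphism of dg modules which is bijective on underlying graded modules is automatically a dg isomorphism, and on underlying graded modules the biduality transformation depends only on the graded-module structure and coincides with the ordinary graded biduality map. The latter is the canonical isomorphism on the free graded module $\D^{\oplus n}$, hence --- the duality functors being additive --- on every graded direct summand of $\D^{\oplus n}$, and in particular on $\fX$. The same argument on the right-hand side handles objects of $\psAE\dgzrproj_\D$, and naturality together with $A$-equivariance are inherited from the Appendix. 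Combined with (i), this shows the two contravariant functors form a duality.

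There is no serious obstacle here: once the dg-module duality of the Appendix is in place, the argument is essentially bookkeeping. The only points requiring a little care are the left--right conventions (note that $\lwedge(-)$ interchanges left and right projectivity, which is why $\dgzrproj$ appears in the intermediate step) and the observation used in (ii) that reducing an isomorphism statement for dg modules to the underlying graded modules is legitimate --- this is what makes the purely graded-module conclusion of Proposition \ref{proposition:HolShape}(\ref{enumerate:HolShape3}), whose splitting idempotent is only a graded and not a dg morphism, sufficient for our purposes.
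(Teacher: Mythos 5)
Your argument is correct and follows the route the paper intends: the corollary is obtained by restricting the duality of Theorem~\ref{theorem:DualitiesForDg} along Proposition~\ref{proposition:RestrictedDuals} (and its left--right mirror), which is exactly your step~(i). Step~(ii) is sound but redundant: since $\psAD\dgzlproj_\E$ and $\psAE\dgzrproj_\D$ are \emph{full} subcategories of $\psAD\dglproj_\E$ and $\psAE\dgrproj_\D$, the biduality transformations are already isomorphisms there by Theorem~\ref{theorem:DualitiesForDg}(2), so no separate verification via the free-summand argument is needed.
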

\begin{remark}\label{remark:DualsForDGModules}
Let $\fX \in \psAD\dgzlproj_\E$.  We write $\X$ for $\fX^0$.
\begin{enumerate}
\item As $\fX$ is finitely generated, $(\lwedge \fX)^i = \prescript{}{D^0}\hom(\X^, D^i) \cong \lvee \X \otimes_M D^i$, where we used that $\X$ is finitely generated and projective (see Theorem \ref{theorem:TensorWithDual}).  The differential of $\lwedge \fX$ is given by the graded commutator (see \ref{subsection:dgam}):
\begin{align*}
[\exd,-]\colon  \prescript{}{D^0}\hom(\X, D^i) \to \prescript{}{D^0}\hom(\X, D^{i+1}) && \phi \mapsto \exd_\D \circ \phi - \phi \circ \exd_\fX.
\end{align*}

\item As $\lwedge \fX \in \psAE\dgzlproj_\D$ (see Proposition \ref{proposition:RestrictedDuals}), there is a right $D$-module isomorphism $\lvee \X \otimes_{D^0} \D \cong \lwedge \fX$.  Explicitly, this isomorphism is given by
\begin{align*}
\lvee \X \otimes_{D^0} D^i &\to \prescript{}{D^0}\hom(\X, D^i) & \prescript{}{D^0}\hom(\X, D^i) &\to \lvee \X \otimes_{D^0} D^i\\
\phi \otimes \omega &\mapsto \phi(-)\omega, &\phi &\mapsto \sum_i \phi_i \otimes \phi(x_i).
\end{align*}
where $\sum_i \phi_i \otimes x_i \in \lvee \X \otimes \X$ is a dual basis element. 
\item Under the above isomorphism, the evaluation map is written as
\[\wedge_{\ev}\colon (\D \otimes_M \X) \otimes (\lvee \X \otimes_M \D) \to \D\colon (\omega \otimes x \otimes f \otimes \nu) \mapsto \omega f(x) \nu.\]
As the evaluation map is a cochain morphism, we have:
\[\forall \alpha \in D^i \otimes \X, \forall \beta \in \lvee \X \otimes D^j: \odel(\alpha \wedgeev \beta) = \odel_\X(\alpha) \wedgeev \beta + (-1)^i \alpha \wedgeev \odel_{\lvee \X}(\beta).\]
\end{enumerate}
\end{remark}

\subsection{\texorpdfstring{Tensor products in $\psAD\dgzlproj_\D$}{Tensor products}}  We start with an observation that implies that $\psAD\dgzlproj_\D$ is a monoidal subcategory of $\psAD\dgMod_\D$.

\begin{proposition}
Let $\fX \in \psAD\dgzlproj_\E$ and $\fY \in \psAE\dgzlproj_{\E'}$.  We have that $\fX \otimes_\E \fY \in \psAD\dgzlproj_{\E'}$.
\end{proposition}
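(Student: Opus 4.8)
The plan is to reduce everything to Proposition \ref{proposition:HolShape} and, in particular, to its purely module-theoretic characterisation \ref{enumerate:HolShape3}. First one should record that $\fX \otimes_\E \fY$ is a genuine object of $\psAD\dgMod_{\E'}$: the left $\D$-action and right $\E'$-action pass to the balanced tensor product in the obvious way, the differential is the graded Leibniz differential $\exd(x\otimes y) = \exd_\fX(x)\otimes y + (-1)^{|x|}x\otimes \exd_\fY(y)$, and the $A$-coaction is the tensor-product coaction; this is the standard construction for $A$-equivariant dg bimodules recalled in Appendix \ref{Section:Dualsdgas}. By the remark preceding Proposition \ref{proposition:HolShape}, whether this object lies in the subcategory $\psAD\dgzlproj_{\E'}$ is a property of its underlying graded left $\D$-module alone, so it suffices to verify condition \ref{enumerate:HolShape3} of Proposition \ref{proposition:HolShape} for that graded module.

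The verification proceeds in two steps. Applying Proposition \ref{proposition:HolShape} to $\fY \in \psAE\dgzlproj_{\E'}$, the underlying graded left $\E$-module of $\fY$ is a direct summand of $\E^{\oplus m}$ for some $m \geq 0$; that is, there are graded left $\E$-module maps $\fY \to \E^{\oplus m} \to \fY$ whose composite is the identity. The functor $\fX \otimes_\E (-)$ from graded left $\E$-modules to graded left $\D$-modules is additive, so applying it to this split idempotent exhibits $\fX \otimes_\E \fY$ as a direct summand of $\fX \otimes_\E \E^{\oplus m} \cong \fX^{\oplus m}$, using that $\fX \otimes_\E \E \cong \fX$ and that the tensor product commutes with finite direct sums. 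For the second step, apply Proposition \ref{proposition:HolShape} to $\fX \in \psAD\dgzlproj_\E$: the underlying graded left $\D$-module of $\fX$ is a direct summand of $\D^{\oplus n}$ for some $n \geq 0$, hence $\fX^{\oplus m}$ is a direct summand of $\D^{\oplus nm}$. Composing the two split inclusions, $\fX \otimes_\E \fY$ is a direct summand of $\D^{\oplus nm}$ as a graded left $\D$-module, which is exactly condition \ref{enumerate:HolShape3}. The implication \ref{enumerate:HolShape3} $\Rightarrow$ \ref{enumerate:HolShape1} of Proposition \ref{proposition:HolShape} then yields $\fX \otimes_\E \fY \in \psAD\dgzlproj_{\E'}$.

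The argument is essentially formal once Proposition \ref{proposition:HolShape} is available, so there is no genuine obstacle; the care needed is purely in the bookkeeping. The splitting maps produced by Proposition \ref{proposition:HolShape} are maps of graded modules, not chain maps, but this is harmless precisely because the criterion we check is a property of the underlying graded module; and one must keep straight which structures survive the tensor product, namely that the right $\E'$-action and $A$-coaction are what make $\fX \otimes_\E \fY$ an object of $\psAD\dgMod_{\E'}$ to begin with, while only its left $\D$-module structure enters the projectivity criterion. As an alternative to the first step one can work through condition \ref{enumerate:HolShape2} instead: writing $\fY \cong \E \otimes_{(E)^0} \fY^0$ one identifies $(\fX \otimes_\E \fY)^0 \cong \fX^0 \otimes_{(E)^0} \fY^0$, and checks this lies in $\prescript{A}{D^0}\lproj_{(E')^0}$ by observing that $\fY^0$ is a summand of $((E)^0)^{\oplus m}$, so that $\fX^0 \otimes_{(E)^0} \fY^0$ is a summand of $(\fX^0)^{\oplus m}$, which is finitely generated and projective as a left $D^0$-module since $\fX^0$ is; this is the variant to prefer if one wants the degree-zero data tracked explicitly.
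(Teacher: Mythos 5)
Your proof is correct and follows essentially the same route as the paper: both arguments invoke Proposition \ref{proposition:HolShape}(\ref{enumerate:HolShape3}) to realise $\fX$ and $\fY$ as graded direct summands of $\D^{\oplus n}$ and $\E^{\oplus m}$ respectively, and then tensor the splittings to exhibit $\fX \otimes_\E \fY$ as a graded direct summand of $\D^{\oplus nm}$. The only cosmetic difference is that the paper phrases this as a composite of split epimorphisms $\D^{\oplus nm} \cong \D^{\oplus n} \otimes_\E \E^{\oplus m} \to \fX \otimes_\E \fY$ in one step, whereas you split the two reductions into consecutive stages; the content is identical.
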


\begin{proof}
We take the tensor product as dg modules over $\E$, so $\fX \otimes_\E \fY \in \psAD\dgMod_{\E'}$.  By Proposition \ref{proposition:HolShape}, there are $n, m \in \mathbb{N}$ such that there is an epimorphism $D^{\oplus nm} \cong D^{\oplus n} \otimes_E E^{\oplus m} \to D^{\oplus n} \otimes_E Y \to X \otimes_D Y$ of graded left $D$-modules.  As each of these epimorphisms splits, so does the composition.
\end{proof}

\begin{remark}\label{remark:AboutGradedTensors}
Let $\fY \in \psAD\dgzlproj$.  As $\fY$ is generated by $\fY^0$ as left $D$-module, we have, for any $\fX \in \psAD\dgMod_\D$ and any $k \in \mathbb{Z}$, that $(\fX \otimes_D \fY)^k \cong \fX^k \otimes_{D^0} \fY^0$.
\end{remark}

\begin{proposition}\label{proposition:LineBundleTensor}
Let $\fX \in \psAD\dgzlproj_\D$ and assume that $\lwedge\fX \in \psAD\dgzlproj_\D$.  The following are equivalent.
\begin{enumerate}
  \item\label{enumerate:RankOne} The evaluation map $\fX^0 \otimes_{D^0} \lvee (\fX^0) \to D^0$ is a $D^0$-bimodule isomorphism.
	\item\label{enumerate:lveeLineBundle} The evaluation map $\fX \otimes_D (\lwedge \fX) \cong \D$ is a $D$-bimodule isomorphism.
\end{enumerate}
\end{proposition}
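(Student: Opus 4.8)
The plan is to exploit the fact, established in Proposition \ref{proposition:HolShape}, that both $\fX$ and $\lwedge\fX$ are generated in degree zero as (left) dg modules over $\D$, so that everything is controlled by the degree-zero parts together with the module actions. In particular, by Remark \ref{remark:AboutGradedTensors} applied to the right dg module $\lwedge\fX$ (which lies in $\psAD\dgzlproj_\D$ by hypothesis), we have $(\fX \otimes_D \lwedge\fX)^k \cong \fX^k \otimes_{D^0} (\lwedge\fX)^0 \cong \fX^k \otimes_{D^0} \lvee(\fX^0)$ for every $k$; and using $\mu_l\colon D \otimes_{D^0}\fX^0 \xrightarrow{\sim}\fX$ from Proposition \ref{proposition:HolShape}(\ref{enumerate:HolShape2}), this is $\cong D^k \otimes_{D^0}\fX^0 \otimes_{D^0}\lvee(\fX^0)$. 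Under these identifications the evaluation map $\wedgeev$ of Remark \ref{remark:DualsForDGModules}(3) in degree $k$ is nothing but $\id_{D^k}\otimes_{D^0}(\text{ev})$, where $\text{ev}\colon\fX^0\otimes_{D^0}\lvee(\fX^0)\to D^0$ is the degree-zero evaluation. Hence the degree-$k$ component of the map in (\ref{enumerate:lveeLineBundle}) is an isomorphism if and only if $\text{ev}$ is, and this gives (\ref{enumerate:RankOne})$\Leftrightarrow$(\ref{enumerate:lveeLineBundle}) at the level of graded $D$-modules.

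First I would carry out the implication (\ref{enumerate:RankOne})$\Rightarrow$(\ref{enumerate:lveeLineBundle}). If $\text{ev}$ is a $D^0$-bimodule isomorphism, then tensoring on the left with $D^k$ over $D^0$ shows each graded component of $\fX\otimes_D\lwedge\fX\to\D$ is an isomorphism of $D^0$-bimodules; since $\wedgeev$ is already known to be a morphism of $D$-bimodules and a cochain map (Remark \ref{remark:DualsForDGModules}(3)), and it is bijective in each degree, it is an isomorphism in $\psAD\dgMod_\D$. For the converse (\ref{enumerate:lveeLineBundle})$\Rightarrow$(\ref{enumerate:RankOne}), restrict the $D$-bimodule isomorphism $\fX\otimes_D\lwedge\fX\cong\D$ to degree zero: this gives $\fX^0\otimes_{D^0}\lvee(\fX^0)\cong D^0$ as $D^0$-bimodules, and one checks the restricted map is exactly $\text{ev}$, so $\text{ev}$ is a $D^0$-bimodule isomorphism.

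The one point requiring genuine care — and the step I expect to be the main obstacle — is verifying that the isomorphisms constructed degree by degree are compatible with the $A$-comodule structures and, more importantly, that the various identifications ($\mu_l$, the dual-basis isomorphism of Remark \ref{remark:DualsForDGModules}(2), and the isomorphism $(\fX\otimes_D\fY)^k\cong\fX^k\otimes_{D^0}\fY^0$ of Remark \ref{remark:AboutGradedTensors}) intertwine $\wedgeev$ with $\id\otimes\text{ev}$ as claimed. This amounts to chasing the explicit formula $\wedgeev(\omega\otimes x\otimes f\otimes\nu)=\omega f(x)\nu$ through the identification $\fX^k\cong D^k\otimes_{D^0}\fX^0$, which is straightforward but must be done so that the \emph{right} $D$-module structure used to form $\lwedge\fX\otimes\cdots$ and the internal gradings all line up; here the hypothesis $\lwedge\fX\in\psAD\dgzlproj_\D$ (needed to invoke Remark \ref{remark:AboutGradedTensors} for $\lwedge\fX$) is essential. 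Once this bookkeeping is done, the equivalence is immediate, and I would simply remark that since $\wedgeev$ is automatically a morphism of $A$-comodule dg $D$-bimodules, bijectivity is the only thing to check.
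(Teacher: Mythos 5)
Your proposal is correct and follows essentially the same route as the paper: both arguments use Remark \ref{remark:AboutGradedTensors} (for $\lwedge\fX$, which is where the hypothesis $\lwedge\fX \in \psAD\dgzlproj_\D$ enters) together with $\fX^i \cong D^i \otimes_{D^0}\fX^0$ to identify $\left[\fX \otimes_D \lwedge\fX\right]^i$ with $D^i \otimes_{D^0} \fX^0 \otimes_{D^0} \lvee(\fX^0)$, deduce (\ref{enumerate:lveeLineBundle}) from (\ref{enumerate:RankOne}) degree by degree, and obtain the converse by restricting to degree zero. Your explicit remark that $\wedgeev$ is intertwined with $\id_{D^i}\otimes\mathrm{ev}$ under these identifications is the one bookkeeping point the paper leaves implicit, and you handle it correctly.
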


\begin{proof}
Assume first that the evaluation map $\fX^0 \otimes_{D^0} \lvee (\fX^0) \to D^0$ is an isomorphism.  In this case, we find, for all $i \in \mathbb{Z}$ and using Remark \ref{remark:AboutGradedTensors} (recall that $\lwedge \fX \in \psAD\dgzlproj_\D$):
\begin{align*}
\left[\fX \otimes_D (\lwedge \fX)\right]^i \cong \fX^i \otimes_{D^0} \psM\Hom(\fX^0, D^0) \cong D^i \otimes_{D^0} \fX^0 \otimes_{D^0} \lvee (\fX^0),
\end{align*}
so that the evaluation $\fX \otimes_D (\lwedge \fX) \to \D$ is an isomorphism.

For the other implication, assume that the evaluation map $\fX \otimes_D (\lwedge \fX) \to \D$ is an isomorphism.  This implies that the degree zero component is an isomorphism as well, i.e.:
\[\fX^0 \otimes_{D^0} \prescript{}{D^0}\hom(\fX^0, D^0) \cong \fX^0 \otimes_{D^0} \lvee (\fX^0) \to D^0.\]
This shows that the evaluation map $\fX^0 \otimes \lvee (\fX^0) \to D^0$ is an isomorphism.
\end{proof}

\begin{definition}\label{definition:InvertibleVectorBundle}
A holomorphic vector bundle $\fX$ is called \emph{invertible} if it satisfies the conditions of Proposition \ref{proposition:LineBundleTensor}.
\end{definition}

\begin{remark}\label{notation:TwistedComplex}
Let $(\Om^{(\bullet,\bullet)}, \del,\ol{\del})$ be an $A$-covariant differential $*$-calculus.  We write $\D = (\Om^{(0,\bullet)}, \odel)$ for the Dolbeault dg algebra.  Note that the Dolbeault dg algebra is a sub-dg-algebra of $(\Om^\bullet, \odel)$ and hence $(\Om^\bullet, \odel) \in \psAD\dgzlproj_\D$ (see Proposition \ref{proposition:HolShape}).  Let $\fF = (\F, \odel_F)$ be a holomorphic vector bundle.  We write $(\Om^\bullet \otimes_M \F, \odel_\F)$ for the dg module $\Om^\bullet \otimes_\Om^{(0,\bullet)} \fF.$  Hence, this setup recovers the usual definition of a twisetd complex.  
\end{remark}

\section{Lefschetz pairs and Lefschetz triples}\label{section:LefschetzPairs}

In this section we introduce the notions of Lefschetz pairs and Lefschetz triples. Our motivation is twofold.  Firstly, we require a unified framework in which to discuss certain important features of K\"ahler structures.  Secondly, we wish to highlight the essentially algebraic nature of many of the constructions of symplectic and Hermitian geometry, hence offering some explanation of why these constructions carry over so smoothly to the noncommutative setting.

One motivating example of a Lefschetz triple is a $*$-differential calculus with a K\"ahler structure (see \textsection\ref{subsection:KLTs} below).  Other examples will be given in Corollary \ref{Corollary:UsefulLefschetzTriples} below.

\subsection{Lefschetz pairs}

We begin with the definition of a Lefschetz pair and its associated primitive elements, and then establish a noncommutative generalisation of Lefschetz decomposition.

\begin{definition} Let $R$ be an $A$-comodule algebra.  A {\em left Lefschetz pair} of {\em total degree} $2n$ over $R$ is a pair \mbox{$(X = \bigoplus_{k=0}^{2n} X^k, L)$} where $X$ is a graded left $A$-comodule left $R$-module and where $L$ is a homogeneous left $A$-comodule $R$-module map of degree $2$, called the {\em Lefschetz operator}, \st the following maps 
\bas
L^{n-k}\colon X^{k} \stackrel{\sim}{\rightarrow} X^{2n- k}, & & \text{  for all }  k = 0, \dots, n-1.
\eas
are isomorphisms.

A \emph{right Lefschetz pair} is defined in a similar way, replacing the left $R$-module structure by a right $R$-module structure.  If we do not specify left or right, then we assume that the Lefschetz pair is a left Lefschetz pair.
\end{definition}

\begin{definition}\label{defn:sympform}
For any (left or right) {Lefschetz pair} $(X = \bigoplus_{k=0}^{2n} X^k, L)$, its space of {\em primitive $k$-elements} is the $R$-module
\bas
P^k \coloneqq \{\a \in X^{k} \mid L^{n-k+1}(\a) = 0\},  \text{ ~ if } k \leq n,& & \text{ and } & & P^k \coloneqq 0, \text{ ~ if } k>n.
\eas
\end{definition}

This setup suffices to formulate and prove a direct generalisation of the classical Lefschetz decomposition.

\begin{proposition}[Lefschetz decomposition]\label{LDecomp}
For any (left or right) Lefschetz pair $(X = \bigoplus_{k=0}^{2n} X^k ,L)$ we have the (left or right) $R$-module decomposition
\bas
X^k \cong \bigoplus_{j \geq 0} L^j(P^{k-2j}).
\eas
\end{proposition}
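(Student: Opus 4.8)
The plan is to mimic the classical $\mathfrak{sl}_2$-representation-theoretic argument, but purely algebraically, since the only structure available is a graded $R$-module $X = \bigoplus_{k=0}^{2n} X^k$ together with a degree-$2$ $R$-module map $L$ satisfying the hard Lefschetz isomorphisms $L^{n-k}\colon X^k \xrightarrow{\sim} X^{2n-k}$. The key observation is that these isomorphisms force strong injectivity/surjectivity statements for the individual powers of $L$, and from these one extracts the decomposition by downward induction on $k$ (or equivalently, by an inductive peeling-off of the top primitive part).

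First I would record the two basic consequences of the hard Lefschetz condition. For $k \le n$, the map $L^{n-k}\colon X^k \to X^{2n-k}$ is an isomorphism; composing with $L^{j}$ for $j \le n-k$ shows that $L^{j}\colon X^k \to X^{k+2j}$ is injective for all such $j$ (it is a left factor of an isomorphism after post-composing with a further power of $L$), and dually $L^{j}\colon X^{2n-k-2j}\to X^{2n-k}$ is surjective. Reindexing, this gives: for $m \ge n$, $L\colon X^m \to X^{m+2}$ is surjective, and for $m \le n$, $L\colon X^{m-2}\to X^m$ is injective — more precisely $L^{n-k}|_{X^k}$ is injective for $k\le n$ and $L^{k-n}|_{X^k}$ is surjective onto $X^{2n-k}$ for $k \ge n$. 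In particular, for $k \le n$ one gets the direct sum decomposition $X^k = P^k \oplus L(X^{k-2})$: indeed $P^k = \ker(L^{n-k+1}|_{X^k})$ by definition, and since $L^{n-k+2}\colon X^{k-2}\to X^{2n-k+2}$ is injective while $L^{n-k+1}\colon X^k \to X^{2n-k+2}$ has image equal to $L^{n-k+1}(L(X^{k-2})) = L^{n-k+2}(X^{k-2})$ on the submodule $L(X^{k-2})$, one checks $P^k \cap L(X^{k-2}) = 0$; surjectivity of $P^k + L(X^{k-2}) = X^k$ follows because any $\alpha \in X^k$ has $L^{n-k+1}\alpha \in X^{2n-k+2}$, which by surjectivity of $L^{n-k+2}\colon X^{k-2}\to X^{2n-k+2}$ equals $L^{n-k+2}\beta$ for some $\beta \in X^{k-2}$, so $\alpha - L\beta \in P^k$. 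All of these are $R$-module (and $A$-comodule) statements since $L$ is an $A$-comodule $R$-module map.

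With the splitting $X^k = P^k \oplus L(X^{k-2})$ in hand for $k \le n$, the decomposition $X^k \cong \bigoplus_{j\ge 0} L^j(P^{k-2j})$ follows by a straightforward induction on $k$ in the range $0 \le k \le n$: the base cases $k=0,1$ give $X^0 = P^0$, $X^1 = P^1$, and the inductive step applies the splitting and then the induction hypothesis to $X^{k-2} = \bigoplus_{j\ge 0} L^j(P^{k-2-2j})$, noting that $L$ restricted to $X^{k-2}$ is injective so $L(L^j(P^{k-2-2j})) = L^{j+1}(P^{k-2-2j})$ and the sum stays direct. For $k > n$ one uses the isomorphism $L^{k-n}\colon X^{2n-k}\xrightarrow{\sim} X^k$ (hard Lefschetz, with $2n-k < n$) to transport the already-established decomposition of $X^{2n-k}$: since $P^{m} = 0$ for $m > n$ and, for $m \le n$, $L^{k-n}$ maps $L^j(P^{m-2j})$ isomorphically onto $L^{j+k-n}(P^{m-2j})$ (injectivity of the relevant power of $L$ again), one obtains $X^k \cong \bigoplus_{j} L^{j}(P^{k-2j})$ after reindexing $j \mapsto j + (k-n)$ — the primitive spaces $P^{k-2j}$ that appear are exactly those with $k - 2j \le n$, i.e. $j \ge k-n$, consistent with the vanishing convention.

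The main obstacle I anticipate is purely bookkeeping rather than conceptual: keeping the index ranges straight (which $j$ contribute to which $X^k$, and checking that the claimed sum $\bigoplus_{j\ge 0} L^j(P^{k-2j})$ automatically truncates correctly because $P^{k-2j} = 0$ once $k - 2j > n$ and because $L^j$ is injective on $P^{k-2j}$ exactly when $k-2j \le n$). One must also be slightly careful that all splittings are chosen compatibly — but since at each stage the complement $L(X^{k-2})$ is \emph{canonically} the image of $L$, no choices are involved and naturality in the $A$-comodule structure is automatic. No use of characteristic zero or of any field-theoretic input is needed; everything is formal consequence of the hard Lefschetz isomorphisms, which is precisely the point the authors wish to make about the algebraic nature of the construction.
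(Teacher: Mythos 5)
Your proposal is correct and follows essentially the same route as the paper: both arguments derive the canonical splitting $X^k = P^k \oplus L(X^{k-2})$ for $k \le n$ from the factorisation of the hard Lefschetz isomorphism $L^{n-(k-2)}$ through $X^k$, run an induction on $k$ up to the middle degree, and then transport the decomposition to degrees above $n$ via the isomorphism $L^{k-n}$, checking that the terms with $j < k-n$ vanish either because $P^{k-2j}=0$ or because the relevant power of $L$ annihilates the primitive part.
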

\begin{proof}
Let us assume that the decomposition holds for some $k \leq n-2$. Consider the factorisation 
\begin{displaymath}
    \xymatrix{
       X^{k}\ar@/^2.3pc/[rrrr]^{L^{n-k}} \ar[rr]_{L}
      &  &  X^{k+2} \ar[rr]_{L^{n-k-1}} &  & X^{2n-k}.
        }
\end{displaymath}
Since $L^{n-k}\colon X^{k} \to X^{2n-k}$ is an isomorphism of $R$-modules, we have the $R$-module decomposition 
\[
X^{k+2} \cong \ker\left(L^{n-k-1}|_{X^{k+2}}\right) \oplus  L(X^{k}) = \ker\left(L^{n-(k+2)+1}|_{X^{k+2}}\right) \oplus   L(X^{k}).
\]
Since we have assumed that $k \leq n-2$, it holds that $\ker\left(L^{n-(k+2)+1}|_{X^{k+2}}\right)  = P^{k+2}$, and so, 
\[
A^{k+2} \cong P^{k+2} \oplus L(A^{k}) =  P^{k+2} \oplus \left(\bigoplus_{j \geq 0} L^{j+1}(P^{k-2j})\right)
=  \bigoplus_{j \geq 0} L^j(P^{k + 2 - 2j}).
\]
Since $X^0 = P^0$, and $X^1 = P^1$, it now follows from an inductive argument that the proposition holds for each space of forms of degree less than or equal to $n$.

For forms of degree greater than $n$, we see that, for $k=0,\dots, n$,
\[
  X^{2n-k} \cong L^{n-k}(X^{k}) \cong L^{n-k}\left(\bigoplus_{j \geq 0} L^j(P^{k-2j})\right) = \bigoplus_{j \geq n-k} L^{j}(P^{2n-k-2j}).
\]
Note next that for $0 \leq j < \floor{\frac{1}{2}(k-n)}$, where $\floor{\frac{1}{2}(k-n)}$ denotes the integer floor of $\frac{1}{2}(k-n)$, we have $2n-k-2j > n$, and so, by definition $P^{2n-k-2j} = 0$. For the case  $\floor{\frac{1}{2}(k-n)} \leq  j < n-k$, we have $j > n-(2n-k-2j)$, and so, $L^j(P^{2n-k-2j}) = 0$. Hence, as required, we have
$
 X^{2n-k} \cong
  \bigoplus_{j \geq 0} L^{j}(P^{2n-k-2j}).
$
\end{proof}

\subsection{The Hodge map}

Building on the Lefschetz decomposition in the previous subsection, we introduce a noncommutative generalisation of the Hodge operator and proceed to establish some of its essential properties.

\begin{definition} \label{HDefn}
The {\em Hodge map} of a (left or right) Lefschetz pair $(X = \bigoplus_{k=0}^{2n} X^k ,L)$ is a left $A$-comodule (left of right) $M$-module morphism $\ast_H\colon X \to X$ uniquely defined by
\begin{align}\label{equation:OldHodge}
\ast_{H}(L^j(\a)) = (-1)^{\frac{k(k+1)}{2}}i^{k}\frac{j!}{n-j-k!}L^{n-j-k}(\a), & & \a \in  P^k.
\end{align}
\end{definition}

\begin{remark}\label{remark:HodgeFreedom}
The definition of the Hodge operator allows for some flexibility.  For example, the definition in \cite[Definition 4.11]{MMF3} is similar to the one in Definition \ref{HDefn}, but uses quantum factorials.  As there is no clear deformation parameter in our setting, we use use the regular factorials.
\end{remark}

As a first consequence of the definition, we establish direct generalisations of two of the basic properties of the classical Hodge map.

\begin{lemma}\label{lem:Hodge} It holds that
\begin{enumerate}
\item\label{enumerate:HodgeInvolutive} $\ast_H^2(\a) = (-1)^k(\alpha)$, \text{ for all } $\a \in X^k$, and so $\ast_H$ is an isomorphism,
\item $\ast_H(X^{k}) = X^{2n-k}$,  
\end{enumerate}
\end{lemma}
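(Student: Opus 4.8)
The plan is to derive both statements directly from the defining formula \eqref{equation:OldHodge} together with the Lefschetz decomposition of Proposition \ref{LDecomp}, reducing everything to a computation on the $sl_2$-string generated by a single primitive element $\a \in P^k$. First I would fix $\a \in P^k$ with $k \leq n$ and, for $0 \leq j \leq n-k$, compute $\ast_H(\ast_H(L^j(\a)))$. Applying the definition once gives $\ast_H(L^j(\a)) = (-1)^{k(k+1)/2} i^k \frac{j!}{(n-j-k)!} L^{n-j-k}(\a)$. Now $L^{n-j-k}(\a)$ is again $L^{j'}$ of the \emph{same} primitive element $\a \in P^k$ with $j' = n-j-k$, so applying $\ast_H$ a second time and using linearity yields $(-1)^{k(k+1)/2} i^k \frac{j!}{(n-j-k)!} \cdot (-1)^{k(k+1)/2} i^k \frac{(n-j-k)!}{(n-(n-j-k)-k)!} L^{n-(n-j-k)-k}(\a)$. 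The two factorial fractions cancel, $n-(n-j-k)-k = j$, and the scalar prefactor collapses to $(-1)^{k(k+1)} i^{2k} = (-1)^{k(k+1)}(-1)^k = (-1)^k$, since $k(k+1)$ is even. Hence $\ast_H^2(L^j(\a)) = (-1)^k L^j(\a)$ on every summand $L^j(P^k)$ of $X^{k+2j}$.

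The subtlety is that $(-1)^k$ on the summand $L^j(P^k)$ must be re-expressed in terms of the total degree $m = k+2j$ of the element $L^j(\a) \in X^m$: since $2j$ is even, $(-1)^k = (-1)^{k+2j} = (-1)^m$, so the scalar is indeed $(-1)^m$ where $m$ is the degree of the element being acted on, as claimed in item \eqref{enumerate:HodgeInvolutive}. Because the Lefschetz decomposition $X^m \cong \bigoplus_{j \geq 0} L^j(P^{m-2j})$ is a decomposition of $A$-comodule $M$-modules and $\ast_H$ is by definition the $A$-comodule $M$-module morphism determined componentwise by \eqref{equation:OldHodge}, the identity $\ast_H^2 = (-1)^m \id$ on $X^m$ follows by assembling the summandwise computations; that $\ast_H$ is an isomorphism is then immediate, with inverse $(-1)^m \ast_H$ on $X^m$.

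For item (2), I would again read off degrees from \eqref{equation:OldHodge}: if $\a \in P^k$ and $L^j(\a)$ has degree $k+2j$, then $\ast_H(L^j(\a))$ is a scalar multiple of $L^{n-j-k}(\a)$, which has degree $k + 2(n-j-k) = 2n - k - 2j = 2n - (k+2j)$. The scalar $(-1)^{k(k+1)/2} i^k \frac{j!}{(n-j-k)!}$ is nonzero for $0 \leq j \leq n-k$, so $\ast_H$ sends the summand $L^j(P^k)$ of $X^{k+2j}$ isomorphically onto the summand $L^{n-j-k}(P^k)$ of $X^{2n-k-2j}$; ranging over all primitive components of $X^m$ and using the Lefschetz decomposition on both sides shows $\ast_H(X^m) = X^{2n-m}$. (One should note that the Lefschetz decomposition indexes $X^{2n-m}$ by the same primitive spaces $P^{m-2j}$, just with $L$-degrees shifted, exactly matching the image.)

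I expect the only real bookkeeping obstacle to be the degree-relabelling in item \eqref{enumerate:HodgeInvolutive} — namely being careful that the sign $(-1)^k$ produced by the primitive index $k$ genuinely agrees with the sign $(-1)^m$ attached to the total degree $m$, and that no implicit assumption $j \leq n-k$ is violated when one applies $\ast_H$ the second time (it is not, since $n-j-k \leq n-k$ automatically, and $n-j-k \geq 0$ because $L^{n-j-k}(\a) \ne 0$ forces $j \leq n-k$). Everything else is a direct substitution into \eqref{equation:OldHodge} plus the fact, already available from Proposition \ref{LDecomp}, that the relevant decompositions are $A$-comodule $M$-module decompositions so that a morphism defined summandwise is well defined.
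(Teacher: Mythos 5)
Your proposal is correct and follows essentially the same route as the paper: both reduce to the summands $L^j(P^k)$ via the Lefschetz decomposition, apply the defining formula \eqref{equation:OldHodge} twice to get the scalar $(-1)^{k(k+1)}i^{2k}=(-1)^k$, and deduce item (2) from the degree shift together with item (1). Your extra bookkeeping (checking that $(-1)^k$ agrees with $(-1)^{k+2j}$ for the total degree, and that $n-j-k$ stays in the admissible range) is left implicit in the paper but is entirely in the same spirit.
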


\begin{proof}
\begin{enumerate}
\item  By the Lefschetz decomposition, it suffices to prove the result for a form of type $L^j(\a)$, for $\a \in P^{(a,b)} \sseq P^k$, $j \geq 0$. From the definition of $\ast_H$ we have that
\begin{align*}
 &\ast^2_H(L^j(\a))  \\
&= (-1)^{\frac{k(k+1)}{2}}i^{k}\frac{j!}{(n-j-k)!}\ast_H\big(L^{n-j-k}(\a)\big) \\
                                &=  (-1)^{\frac{k(k+1)}{2}}i^{2k} \frac{j!}{(n-j-k)!}(-1)^{\frac{k(k+1)}{2}}\frac{(n-j-k)!}{(n-(n-j-k)-k)!}L^{n-(n-j-k)-k}(\a)\\
&= (-1)^k L^j(\a). 
\end{align*}

\item The inclusion $\ast_H(X^{k}) \subseteq X^{2n-k}$ follows from the definition of $\ast_H$ and the fact that $L$ is a degree $2$ map.  The fact that the inclusion is an identity follows from (\ref{enumerate:HodgeInvolutive}).
\end{enumerate}
\end{proof}

\subsection{The Lefschetz identities}

Motivated by the case of classical symplectic manifolds \cite{Yau}, and in particular Hermitian manifolds, \cite{HUY}, we now introduce two new operators, which when taken together with $L$, will induce the structure of an $\mathfrak{sl}_2$-representation on the Lefschetz pair $(X = \bigoplus_{k=0}^{2n} X^k ,L)$. We write $\proj_{X^k}\colon X \to X^k$ for the canonical projection.  The {\em counting operator} is defined by
\begin{align*}
H\colon X \to X, & & \a \mto  \sum_{k=0}^{2n}(k-n) \proj_{X^k}(\a).
\end{align*}
The {\em dual Lefschetz operator} is defined to be the map
$
\Lambda \coloneqq \ast_H\inv \circ L \circ \ast_H.
$

\begin{lemma}\label{lemma:PowersOfLefschetz}
Let $j,k \geq 0$.  For all primitive $\alpha \in P^k$, we have 
\[\Lambda L^j(\alpha) = j(n-j-k+1)L^{j-1}(\alpha).\]
\end{lemma}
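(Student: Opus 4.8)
The plan is to compute $\Lambda L^j(\alpha)$ directly from the definition $\Lambda = \ast_H^{-1} \circ L \circ \ast_H$, using the explicit formula for $\ast_H$ on elements of the form $L^j(\alpha)$ with $\alpha \in P^k$ given in Definition \ref{HDefn}. First I would apply $\ast_H$ to $L^j(\alpha)$, obtaining a scalar multiple of $L^{n-j-k}(\alpha)$; explicitly $\ast_H(L^j(\alpha)) = (-1)^{\frac{k(k+1)}{2}} i^k \frac{j!}{(n-j-k)!} L^{n-j-k}(\alpha)$. Then I would apply $L$ once more to get a multiple of $L^{n-j-k+1}(\alpha)$. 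The key point is that $L^{n-j-k+1}(\alpha)$ is again of the form $L^{j'}(\alpha)$ with $\alpha \in P^k$ and $j' = n-j-k+1$, so I can apply $\ast_H^{-1}$ to it. Since $\ast_H^2 = (-1)^k$ on $X^k$ (Lemma \ref{lem:Hodge}(\ref{enumerate:HodgeInvolutive})), I have $\ast_H^{-1} = (-1)^k \ast_H$ on the relevant graded piece, so applying $\ast_H^{-1}$ to $L^{j'}(\alpha)$ reduces to another application of the defining formula for $\ast_H$, up to sign.

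Carrying this out, after applying $\ast_H$, then $L$, then $\ast_H^{-1}$, the powers of $i$ contribute $i^{2k} = (-1)^k$, the signs $(-1)^{\frac{k(k+1)}{2}}$ appear twice and cancel (together with the extra $(-1)^k$ from $\ast_H^{-1} = (-1)^k\ast_H$), and the factorials telescope: the combined scalar is
\[
\frac{j!}{(n-j-k)!} \cdot \frac{(n-j-k+1)!}{(n - (n-j-k+1) - k)!} = \frac{j!}{(n-j-k)!} \cdot \frac{(n-j-k+1)!}{(j-1)!} = j \,(n-j-k+1),
\]
and the resulting form is $L^{n-(n-j-k+1)-k}(\alpha) = L^{j-1}(\alpha)$. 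This gives exactly $\Lambda L^j(\alpha) = j(n-j-k+1) L^{j-1}(\alpha)$, as claimed.

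A small amount of care is needed with edge cases. When $j = 0$ the formula should read $\Lambda(\alpha) = 0$ for $\alpha \in P^k$; here $n - j - k + 1 = n-k+1$ and $L^{n-k+1}(\alpha) = 0$ by the definition of primitivity, so after applying $\ast_H$ and then $L$ we already get zero, consistent with the factor $j = 0$. When $n - j - k + 1 > n$, or more precisely when the exponent $j - 1$ would be negative or $n-j-k < 0$, the relevant powers of $L$ vanish or the terms are interpreted as zero; one checks these cases are covered by the same bookkeeping that underlies the well-definedness of $\ast_H$ and Lemma \ref{lem:Hodge}. The only genuine obstacle is getting all the signs and factorial arguments right simultaneously — in particular remembering that $\ast_H^{-1}$ differs from $\ast_H$ by $(-1)^k$ on $X^k$ and correctly substituting the new primitive degree (still $k$) and new Lefschetz power ($n-j-k+1$) into the defining formula — but this is a routine, if slightly delicate, computation rather than a conceptual difficulty.
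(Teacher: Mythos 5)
Your proposal is correct and follows essentially the same route as the paper: unwind $\Lambda = \ast_H^{-1}\circ L\circ\ast_H$, apply the defining formula \eqref{equation:OldHodge} to $L^j(\alpha)$, note that the result is again of the form $L^{j'}(\alpha)$ with $j'=n-j-k+1$ and $\alpha\in P^k$, use $\ast_H^{-1}=(-1)^k\ast_H$ from Lemma \ref{lem:Hodge}, and let the signs, powers of $i$, and factorials telescope to $j(n-j-k+1)$. Your attention to the degenerate cases ($j=0$, negative exponents) is a small bonus the paper's proof leaves implicit.
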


\begin{proof}
We have
\begin{align*}
\Lambda(L^j(\a)) &= L \circ \ast_{H} \inv  \circ L \circ \ast_{H}(L^j(\a))  \\
&= \ast_{H} \inv \circ L\left((-1)^{\frac{k(k+1)}{2}}i^{k}\frac{j!}{(n-j-k)!} L^{n-j-k}(\a) \right)\\
&= (-1)^{\frac{k(k+1)}{2}}i^{k}\frac{j!}{(n-j-k)!} \ast_{H}\inv \circ L^{n-j-k+1}(\a)\\ 
&= (-1)^{\frac{k(k+1)}{2}+k}i^{k}\frac{j!}{(n-j-k)!}  \ast_H  \circ L^{n-j-k+1}(\a)\\
&=  j(n-j-k+1) L^{j-1}(\a),
\end{align*}
as required.
\end{proof}
  
\begin{proposition}[Lefschetz identities]\label{LIDS}
We have the relations
\bas
[H,L] = 2 L, & & [L,\Lambda] = H, & & [H,\Lambda] = - 2  \Lambda.
\eas
\end{proposition}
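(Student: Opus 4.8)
The plan is to verify the three $\mathfrak{sl}_2$-bracket relations $[H,L]=2L$, $[L,\Lambda]=H$, $[H,\Lambda]=-2\Lambda$ by checking them on the generators $L^j(\alpha)$ for primitive $\alpha \in P^k$, which by the Lefschetz decomposition (Proposition \ref{LDecomp}) is enough. For the first identity, I would note that $L^j(\alpha) \in X^{k+2j}$, so $H$ acts on it by the scalar $k+2j-n$, while $LL^j(\alpha) = L^{j+1}(\alpha) \in X^{k+2j+2}$, on which $H$ acts by $k+2j+2-n$; computing $HL - LH$ on $L^j(\alpha)$ thus gives $(k+2j+2-n)L^{j+1}(\alpha) - (k+2j-n)L^{j+1}(\alpha) = 2L^{j+1}(\alpha) = 2L(L^j(\alpha))$, as needed. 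This is a pure bookkeeping computation with the counting operator and requires nothing beyond the degree count.

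For the second identity, I would combine Lemma \ref{lemma:PowersOfLefschetz}, which gives $\Lambda L^j(\alpha) = j(n-j-k+1)L^{j-1}(\alpha)$, with the trivial observation that $L L^j(\alpha) = L^{j+1}(\alpha)$. Then $L\Lambda$ applied to $L^j(\alpha)$ yields $j(n-j-k+1)L^j(\alpha)$, while $\Lambda L$ applied to $L^j(\alpha)$ is $\Lambda L^{j+1}(\alpha) = (j+1)(n-(j+1)-k+1)L^j(\alpha) = (j+1)(n-j-k)L^j(\alpha)$. Subtracting, the coefficient is $j(n-j-k+1) - (j+1)(n-j-k)$; expanding gives $j(n-j-k) + j - (j+1)(n-j-k) = j - (n-j-k) = 2j + k - n$, which is exactly the eigenvalue of $H$ on $L^j(\alpha) \in X^{k+2j}$. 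Hence $[L,\Lambda] = H$ on each generator.

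For the third identity I would either repeat the same style of direct computation using Lemma \ref{lemma:PowersOfLefschetz} and the degree of $L^{j-1}(\alpha)$, or simply derive it formally from the first two via the Jacobi identity: $[H,\Lambda] = [[L,\Lambda],\Lambda] = [L,[\Lambda,\Lambda]] + [[L,\Lambda],\Lambda]$ — more cleanly, from $[L,\Lambda]=H$ and $[H,L]=2L$ one gets $[H,\Lambda] = -[\Lambda, H] = -[\Lambda,[L,\Lambda]]$, and using $[H,\Lambda]$ consistency one checks the three relations close into the standard $\mathfrak{sl}_2$ triple. The cleanest presentation is probably to do the direct computation: $H\Lambda L^j(\alpha) = j(n-j-k+1)\cdot(k+2(j-1)-n)L^{j-1}(\alpha)$ while $\Lambda H L^j(\alpha) = (k+2j-n)\cdot j(n-j-k+1)L^{j-1}(\alpha)$, so the difference has coefficient $j(n-j-k+1)\big((k+2j-2-n)-(k+2j-n)\big) = -2j(n-j-k+1)$, which equals $-2\Lambda L^j(\alpha)$ by Lemma \ref{lemma:PowersOfLefschetz} again.

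None of the steps presents a genuine obstacle; the work is entirely a matter of tracking the combinatorial coefficients correctly. The only point requiring mild care is that all three operators $H$, $L$, $\Lambda$ are left (or right) $A$-comodule $M$-module maps, so that verifying the bracket relations on the $M$-module generators $L^j(P^k)$ of the Lefschetz decomposition genuinely suffices — this is guaranteed because $\Lambda = \ast_H^{-1} \circ L \circ \ast_H$ is built from such maps (Definition \ref{HDefn}) and $H$ is manifestly one. I would state this reduction explicitly at the start and then present the three coefficient computations.
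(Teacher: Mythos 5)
Your proposal is correct and follows essentially the same route as the paper: the first and third identities by degree counting, and the second by combining Lemma \ref{lemma:PowersOfLefschetz} with $LL^j(\alpha)=L^{j+1}(\alpha)$ and reducing to generators via the Lefschetz decomposition. The only cosmetic difference is that the paper disposes of $[H,\Lambda]=-2\Lambda$ purely by noting $\Lambda$ is homogeneous of degree $-2$ (``analogous'' to the first identity), which is slightly shorter than your coefficient computation, and you rightly abandon the garbled Jacobi-identity detour in favour of the direct check.
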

\begin{proof}
Beginning with the first relation, for $\a \in A^k$,
\begin{align*}
[H,L](\a) = & H L(\a) - LH(\a) \\
                                  = & \big((k+2-n) - (k-n)\big)L(\a)\\
                                  = &\, 2 L(\a).
\end{align*}
The derivation of the third relation is analogous.  

Finally, for the second relation, let $\alpha \in P^k$.  From Lemma \ref{lemma:PowersOfLefschetz}, we obtain (for all $j \geq 0$):
\begin{align*}
L\Lambda(L^j(\a)) &= j(n-j-k+1) L^j(\a), \\
\Lambda L(L^j(\a)) &= (j+1)(n-j-k) L^j(\a).
\end{align*}
Hence,
\begin{align*}
[L,\Lambda]L^j(\a)
 &=  \big(j(n-j-k+1) - (j+1)(n-j-k)\big) L^j(\a)\\
&= (2j+k-n) L^j(\a)  \\
&=  H L^j(\a).
\end{align*}
The statement then follows from the Lefschetz decomposition.
\end{proof}

The following decomposition of $X$ into irreducible $\mathfrak{sl}_2$-representations is now immediate.  Note that by combining the appropriate irreducible representations, the Lefschetz decomposition (Proposition \ref{LDecomp}) can be reproduced.

\begin{corollary} \label{cor:irreps}
Given a Lefschetz pair $(X = \bigoplus_{k=0}^{2n} X^k ,L)$, there is a decomposition of $X$ into irreducible $\mathfrak{sl}_2$ given by:
\bas
X = \bigoplus_{j\geq 0} L^j(\a),  & & \a \in P^{k}, \text{ for } k = 0, \dots, n.
\eas
Put differently, the primitive forms are the lowest weight vectors of $X$.
\end{corollary}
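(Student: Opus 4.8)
The plan is to read the corollary off from three facts already in hand: the $\mathfrak{sl}_2$-relations of Proposition~\ref{LIDS}, the explicit formula for the action of $\Lambda$ on powers of $L$ in Lemma~\ref{lemma:PowersOfLefschetz}, and the Lefschetz decomposition of Proposition~\ref{LDecomp}. By Proposition~\ref{LIDS} the triple $(L, H, \Lambda)$ of $\bC$-linear endomorphisms of $X$ satisfies the bracket relations of $\mathfrak{sl}_2$, with $L$ the raising operator, $\Lambda$ the lowering operator, and $H$ the Cartan generator. Since $H$ acts on $X^k$ as the scalar $k-n$, the homogeneous components of $X$ are exactly its weight spaces, with weights running through $-n, \dots, n$; and since $L$ and $H$ are $R$-module maps, the decomposition we are after respects the $R$-module structure.

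First I would check that every nonzero primitive element is a lowest weight vector. Let $\a \in P^k$ with $k \leq n$. Since $\a \in X^k$ we have $H\a = (k-n)\a$, a weight $\leq 0$, and $\Lambda\a = 0$ (by Lemma~\ref{lemma:PowersOfLefschetz} with $j = 0$, or directly from $\ast_H\a \propto L^{n-k}\a$ and primitivity). Hence $\a$ generates a lowest weight $\mathfrak{sl}_2$-submodule, which by Lemma~\ref{lemma:PowersOfLefschetz} and the definitions of $H$ and $L$ equals the $\bC$-span of $\{L^j(\a) : j \geq 0\}$. By the definition of $P^k$ we have $L^{n-k+1}(\a) = 0$, while $L^{n-k}(\a) \neq 0$: the map $L^{n-k}\colon X^k \to X^{2n-k}$ is an isomorphism for $k < n$ and the identity for $k = n$. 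So $L^0(\a), \dots, L^{n-k}(\a)$ are nonzero vectors of the $n-k+1$ distinct weights $k-n, k-n+2, \dots, n-k$, and $\Lambda$ joins consecutive ones through the nonzero scalars $j(n-j-k+1)$; one checks that such a module is the standard $(n-k+1)$-dimensional irreducible $\mathfrak{sl}_2$-representation of highest weight $n-k$.

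It then remains to assemble the pieces. Summing the isomorphisms of Proposition~\ref{LDecomp} over all degrees and reindexing the resulting double sum by the degree $k$ of the primitive part gives an internal $R$-module decomposition $X \cong \bigoplus_{k=0}^{n} \bigoplus_{j \geq 0} L^j(P^k)$. Feeding the summands into the previous paragraph shows that, after choosing a $\bC$-basis of each $P^k$, every $\bigoplus_{j \geq 0} L^j(P^k)$ is a direct sum of the irreducible $\mathfrak{sl}_2$-submodules $\bigoplus_{j \geq 0} \bC\, L^j(\a)$ --- equivalently, an $R$-module isomorphic to $P^k$ copies of the irreducible of highest weight $n-k$. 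This puts $X$ in the asserted form, with the primitive $k$-elements for $k \leq n$ being precisely the lowest weight vectors. I expect no real obstacle here: the only thing requiring any care is the bookkeeping of whether one argues over $\bC$-vector spaces or over $R$-modules (the argument is uniform) together with the non-vanishing of $L^{n-k}(\a)$, which is immediate from the Lefschetz pair axioms --- which is exactly why the corollary can be billed as immediate.
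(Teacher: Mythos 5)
Your argument is correct and is exactly the route the paper intends: the paper gives no written proof, declaring the corollary ``immediate'' from the Lefschetz identities (Proposition \ref{LIDS}), the formula $\Lambda L^j(\a) = j(n-j-k+1)L^{j-1}(\a)$ of Lemma \ref{lemma:PowersOfLefschetz}, and the Lefschetz decomposition (Proposition \ref{LDecomp}), which are precisely the three ingredients you assemble. Your write-up simply supplies the bookkeeping the paper leaves implicit.
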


\subsection{Lefschetz triples}

Building on the definition of a Lefschetz pair, we define the notion of a Lefschetz triple. This structure  abstracts some of the essential features of the interaction of a symplectic, or Hermitian manifold, with the exterior derivative of the manifold. 

\begin{definition}
A {\em Lefschetz triple} of {\em degree $2n$} is a triple $(X \cong \bigoplus_{k = 0}^{2n} X^k, L, \exd)$ where $(X = \bigoplus_{k = 0}^{2n} X^k,L)$ is a  Lefschetz pair of  degree $2n$, and $\exd$ is a degree $1$ linear map  \st
$
[L,d] = 0.
$
\end{definition}

We now derive two identities, which hold for any Lefschetz triple, both of which play a crucial role in the extension of the K\"ahler identities to the twisted case.

\begin{proposition} \label{prop:ltids}
For every nonzero $\a \in P^k$, there exist unique forms $\a_0 \in P^{k+1}$ and $\alpha_1 \in P^{k-1}$, such that (for all $j \geq 0$):
\begin{enumerate} 
\item  $\exd\big(L^j(\a)\big) = L^j(\a_0) + L^{j+1}(\a_1)$,
\item $[\Lambda,\exd](L^j(\a)) = -j L^{j-1}(\a_0) + (n-j-k+1) L^j(\a_1)$.
\end{enumerate}
\end{proposition}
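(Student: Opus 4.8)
The plan is to prove both identities simultaneously by exploiting the $\mathfrak{sl}_2$-structure from Proposition~\ref{LIDS} together with the commutation relation $[L,\exd] = 0$. First I would establish the existence of $\a_0$ and $\a_1$. Write $\exd\a \in X^{k+1}$ and apply the Lefschetz decomposition (Proposition~\ref{LDecomp}) to get $\exd\a = \sum_{j \geq 0} L^j(\beta_j)$ with each $\beta_j \in P^{k+1-2j}$; the key point is to show only $j = 0$ and $j = 1$ survive. This should follow from applying $L^{n-k}$ to $\exd\a$: since $L$ commutes with $\exd$, we have $L^{n-k}(\exd\a) = \exd(L^{n-k}\a)$, and $L^{n-k+1}(\a) = 0$ because $\a \in P^k$, while $L^{n-k}\a \in X^{2n-k}$ sits at the top of its $\mathfrak{sl}_2$-string, so $L(L^{n-k}\a) = L^{n-k+1}\a$; a short computation with the degrees (the target $X^{k+1}$ has only primitive components in degrees $k+1, k-1, k-3, \dots$, but applying suitable powers of $L$ and using that $L^{n-(k-2j+1)+1}$ kills $P^{k+1-2j}$) pins the decomposition down to $\exd\a = \a_0 + L\a_1$ with $\a_0 \in P^{k+1}$, $\a_1 \in P^{k-1}$. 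Uniqueness is immediate from the directness of the Lefschetz decomposition.

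Next, for part~(1), I would apply $L^j$ to $\exd\a = \a_0 + L(\a_1)$ and use $[L,\exd] = 0$ repeatedly: $\exd(L^j\a) = L^j(\exd\a) = L^j(\a_0) + L^{j+1}(\a_1)$, which is exactly the claimed formula. This step is essentially formal.

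For part~(2), I would compute $[\Lambda,\exd](L^j\a) = \Lambda\exd(L^j\a) - \exd\Lambda(L^j\a)$ term by term. Using part~(1), $\Lambda\exd(L^j\a) = \Lambda L^j(\a_0) + \Lambda L^{j+1}(\a_1)$, and Lemma~\ref{lemma:PowersOfLefschetz} evaluates each of these: $\Lambda L^j(\a_0) = j(n-j-(k+1)+1)L^{j-1}(\a_0) = j(n-j-k)L^{j-1}(\a_0)$ and $\Lambda L^{j+1}(\a_1) = (j+1)(n-j-(k-1))L^j(\a_1) = (j+1)(n-j-k+1)L^j(\a_1)$. For the other term, Lemma~\ref{lemma:PowersOfLefschetz} gives $\Lambda(L^j\a) = j(n-j-k+1)L^{j-1}(\a)$, and then part~(1) applied to $L^{j-1}\a$ yields $\exd\Lambda(L^j\a) = j(n-j-k+1)\big(L^{j-1}(\a_0) + L^j(\a_1)\big)$. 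Subtracting, the coefficient of $L^{j-1}(\a_0)$ is $j(n-j-k) - j(n-j-k+1) = -j$, and the coefficient of $L^j(\a_1)$ is $(j+1)(n-j-k+1) - j(n-j-k+1) = n-j-k+1$, giving exactly $-jL^{j-1}(\a_0) + (n-j-k+1)L^j(\a_1)$.

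The main obstacle is the existence argument in the first paragraph: one must be careful that $\exd\a$, a priori an arbitrary element of $X^{k+1}$, has no primitive components of degree $k+1-2j$ for $j \geq 2$. The cleanest route is probably to note that $\exd\a$ satisfies $L^{n-k}(\exd\a) = \exd(L^{n-k}\a)$ and that $L^{n-k}\a$ lies in the image of the top of the $\mathfrak{sl}_2$-string generated by $\a$, so $\exd(L^{n-k}\a)$ is annihilated by $L$ (being in degree $2n-k+1 > 2n - (k+1)$, in fact in the range where $L$ is already zero for $X^{k+1}$'s strings) — combined with the fact that on a primitive $\beta \in P^m$ the string $L^j\beta$ survives under $L^{n-k}$ only for small $j$, one forces $\beta_j = 0$ for $j \geq 2$ by an explicit count of which $L^{n-k}L^j\beta_j$ can vanish. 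I would present this carefully but expect it to reduce to tracking the weights $m - 2j$ and lengths $n - m$ of the various $\mathfrak{sl}_2$-strings.
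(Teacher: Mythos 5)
Parts (1) and (2) of your argument are correct and are exactly the paper's computations: the identity $\exd(L^j\alpha)=L^j(\alpha_0)+L^{j+1}(\alpha_1)$ follows formally from $[L,\exd]=0$ once the case $j=0$ is known, and your coefficient bookkeeping with Lemma \ref{lemma:PowersOfLefschetz} reproduces the paper's derivation of (2) verbatim.

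The gap is in the existence step, which you correctly identify as the main obstacle but do not close. Your proposed justification --- that $\exd(L^{n-k}\alpha)$ is annihilated by $L$ because it lies ``in the range where $L$ is already zero for $X^{k+1}$'s strings'' --- is false, and if it were true the argument would collapse. Indeed $L^{n-k+1}$ does \emph{not} vanish on all of $X^{k+1}$: for $j\geq 2$ one has $L^{n-k+1}\bigl(L^{j}(\beta)\bigr)=L^{n-k+1+j}(\beta)\neq 0$ for nonzero $\beta\in P^{k+1-2j}$, because $L^{m}$ kills $P^{k+1-2j}$ only for $m\geq n-k+2j$ and $n-k+1+j<n-k+2j$ when $j\geq 2$. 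Were $L^{n-k+1}$ identically zero on $X^{k+1}$, the relation $L^{n-k+1}(\exd\alpha)=0$ would carry no information and could not force the components $\beta_j$ with $j\geq 2$ to vanish. The correct (and shorter) route, which is the paper's: from $[L,\exd]=0$ and $L^{n-k+1}(\alpha)=0$ one gets $0=L^{n-k+1}(\exd\alpha)=\sum_{j\geq0}L^{n-k+1+j}(\beta_j)$; the summands lie in distinct pieces of the Lefschetz decomposition, so each vanishes separately; and by the string-length count just given this is impossible for $j\geq2$ unless $\beta_j=0$. Note also that uniqueness of $\alpha_1$ is not quite ``immediate from directness'': the decomposition determines only $L(\alpha_1)$, and one must add that $L$ is injective on $X^{k-1}$ (as $k-1\leq n-1$), which is the paper's closing remark in the proof of (1).
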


\begin{proof}
\begin{enumerate}
\item We prove the identity for the case of $j=0$, the case of higher $j$ following directly from our assumption that $[L,\exd] = 0$. Using the Lefschetz decomposition, $\exd \a \in A^{k+1}$ can  be written as 
\begin{align*}
\exd \a =  \sum_{j \geq 0} L^j(\a_j), & & \a_j \in P^{k+1-2j}.
\end{align*}
Since $L$ commutes with $\exd$ and $L^{n-k+1} (\a)= 0$, we have $
0 =  \sum_{j \geq 0} L^{n - k + j+1}(\a_j). 
$
Moreover, since the Lefschetz decomposition is a direct sum decomposition, 
\begin{align*}
L^{n-k+j+1}(\a_j) = 0, & &  \text{ for all } j \geq 0.
\end{align*}
Now it is only for  $j \leq 1$ that $\a_j$ can be contained in $\ker(L^{n-k+j+1})$. Hence  $\a_j = 0$ for all $j > 2$, and the required identity for $\exd$ follows.  Uniqueness of $\a_0$ is clear.  Uniqueness of $\a_1$ follows from it being a form of degree at most  $n-1$ and $L$ having trivial kernel in the space of such forms. 

\item We begin by finding an explicit description for the action of the left-hand side of the  proposed identity.  For $\a \in P^k$, and $j \geq 0$, We use Lemma \ref{lemma:PowersOfLefschetz} to find:
\bas
\Lambda \exd(L^j(\a)) = & \Lambda (L^j(\exd \a)) =  \Lambda L^j(\a _0+L(\a _1)) =  \Lambda L^j(\a _0) + \Lambda L^{j+1}(\a _1)\\
= & \, j(n-j-k) L^{j-1}(\a _0) +  (j+1)(n-j-k+1) L^j(\a _1), 
\eas
and
\bas
 \exd \Lambda(L^j(\a)) = j(n-j-k+1)\big(L^{j-1}(\a _0) + L^{j}(\a _1)\big).
\eas
Combining these two result gives:
\[ [\Lambda,\exd](L^j(\a))  = -j L^{j-1}(\a _0) + (n-j-k+1) L^j(\a _1), \]
as required.
\end{enumerate}
\end{proof}

\subsection{Lefschetz triples from K\"ahler structures}\label{subsection:KLTs}

A prime example of a Lefschetz triple comes from an $A$-covariant K\"ahler structure $(\Om^{(\bullet,\bullet)}, \k)$ for a calculus $\Om^{\bullet}$ of total degree $2n$ over $M$ (see \ref{subsection:KahlerStructure}).  Here, the Lefschetz map is given by $L = \kappa \wedge -\colon \Om^{(\bullet, \bullet)} \to \Om^{(\bullet, \bullet)}$.

\begin{remark}
The triples $(\Om^{\bullet}, L, \odel), (\Om^{\bullet}, L, \del),$ and $(\Om^{\bullet}, L, \exd)$ are simultaneously left and right Lefschetz triples (i.e. $L$ is an $M$-bimodule morphism).  Moreover, as $\k$ is central, $L$ is an $\Omega^\bullet$-bimodule morphism.
\end{remark}

In general, the Hodge map, as given in Definition \ref{HDefn} does not commute with the $*$-operation.  We consider the following variation on the Hodge map:
\begin{align}\label{equation:NewHodge}
\ast_{H}(L^j(\a)) = (-1)^{\frac{k(k+1)}{2}}i^{a-b}\frac{j!}{n-j-k!}L^{n-j-k}(\a), & & \a \in P^{(a,b)} \sseq P^k.
\end{align}
Note that the power of $i$ now takes into account the bidegree of a primitive form. As is easily checked, the results of Lemma \ref{lem:Hodge}, Proposition \ref{LIDS}, and Corollary \ref{cor:irreps} hold for this new map  (compare with Remark \ref{remark:HodgeFreedom}).  Moreover, this rescaling, together with the reality of $\k$, is easily seen \cite[Lemma 4.12.4]{MMF3} to imply that the Hodge map commutes with the $\ast$-map of the calculus.  Hence, \eqref{equation:NewHodge} can be viewed as the more natural map to consider in this setting.

\begin{definition}\label{definition:VolumeForm}
We write $\vol_\k$ (or just $\vol$ if $\k$ is understood) for the $M$-bimodule isomorphism $*_H|_{\Omega^{2n}}\colon \Omega^{2n} \to \Omega^0$.
\end{definition}

\begin{remark}
A calculus $\Om^{\bullet}$ with a complex and a K\"ahler structure $(\Om^{(\bullet,\bullet)}, \k)$ is always covariantly $*$-orientable.
\end{remark}

\begin{remark}
As $a=b=n$ in Definition \ref{definition:VolumeForm}, the formulas \eqref{equation:OldHodge} and \eqref{equation:NewHodge} coincide.
\end{remark}

\section{Hermitian metrics and forms}\label{section:HermitianMetrics}

In this section, we start with an Hermitian vector bundle $(\F, h)$ over a differential $*$-calculus endowed with K\"ahler structure.  We use this K\"ahler structure to produce a left Lefschetz pair (Lemma \ref{prop:lefadjs}), and subsequently use the Hermitian structure to introduce an Hermitian metric and Hermitian inner product on $\Om^\bullet \otimes_M \F$ (see Definition \ref{definition:HermitianMetric} and Proposition \ref{proposition:HermitianInnerProduct}). 

\subsection{\texorpdfstring{The Lefschetz pair $(\Om^\bullet \oby_M \F, L_{\F})$ and the metric $g_\F$}{Lefschetz pairs and metrics} }
We start by noting that the K\"ahler structure on the calculus $\Om^\bullet$ induces the structure of a left Lefschetz pair on $\Om^\bullet \otimes_M \F$.  The proof of the following proposition is evident.

\begin{proposition} \label{prop:lefadjs}
For any vector bundle $\F \in \psAM\lproj$, we have the following:
\begin{enumerate}
\item 
A Lefschetz pair is given by $\big(\Om^\bullet \oby_M \F, L_\F\big)$, where
\begin{align*}
L_\F\colon \Om^\bullet \otimes_M \F \to \Om^\bullet \otimes_M \F\colon && \omega \otimes f \mapsto L(\omega) \otimes_M f.
\end{align*}

\item Denoting by $\Lambda_\F\colon \Om^\bullet \otimes_M \F \to \Om^\bullet \otimes_M \F$, the dual Lefschetz operator of the left Lefschetz pair $\big(\Om^\bullet \oby_M \F, L_\F\big)$, we have the identity
$
\Lambda_\F = \Lambda \oby_M \id_\F.
$

\item Denoting by $\ast_\F\colon \Om^\bullet \otimes_M \F \to \Om^\bullet \otimes_M \F$ the Hodge operator of the left Lefschetz pair $\big(\Om^\bullet \oby_M \F, L_\F\big)$, we have the identity
$
\ast_\F = \ast_H \oby_M \id_\F.
$

\end{enumerate}
\end{proposition}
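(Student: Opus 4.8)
The plan is to reduce all three assertions to two facts. First, since $\F \in \psAM\lproj$ is projective as a left $M$-module, it is flat, so the ($A$-comodule, $M$-module) functor $-\otimes_M \F$ is exact. Second, since $\k$ is a central $(1,1)$-form, the Lefschetz operator $L = \k \wed -$ of $\Om^\bullet$ is an $A$-comodule $M$-bimodule map (the remark after the definition of a K\"ahler structure), so that $L_\F = L \otimes_M \id_\F$ is a well-defined homogeneous degree-$2$ left $A$-comodule left $M$-module endomorphism of $\Om^\bullet \otimes_M \F$ with $(L_\F)^m = L^m \otimes_M \id_\F$ for every $m \geq 0$. Granting these, part (1) is immediate: $(\Om^\bullet \otimes_M \F)^k = \Om^k \otimes_M \F$, so $(L_\F)^{n-k}$ on this degree is $L^{n-k} \otimes_M \id_\F$, and applying the exact functor $-\otimes_M \F$ to the isomorphisms $L^{n-k}\colon \Om^k \to \Om^{2n-k}$ of \eqref{Liso} yields the required isomorphisms $\Om^k \otimes_M \F \to \Om^{2n-k} \otimes_M \F$; hence $(\Om^\bullet \otimes_M \F, L_\F)$ is a Lefschetz pair of total degree $2n$.

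For part (3) the key step is to see that the primitive decomposition of $(\Om^\bullet \otimes_M \F, L_\F)$ is obtained from that of $(\Om^\bullet, L)$ by tensoring with $\F$. Because $L$ carries $\Om^{(a,b)}$ into $\Om^{(a+1,b+1)}$, the primitive space splits by bidegree, $P^k = \bigoplus_{a+b=k} P^{(a,b)}$ with $P^{(a,b)} = \ker\!\big(L^{n-k+1}|_{\Om^{(a,b)}}\big)$; applying the exact functor $-\otimes_M \F$ to $L^{n-k+1}$ identifies the primitive space $P^k_\F$ of the new pair with $P^k \otimes_M \F$, the $(a,b)$-component being $P^{(a,b)} \otimes_M \F$. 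I would then evaluate $\ast_H \otimes_M \id_\F$ on a typical summand $(L_\F)^j(\a \otimes f) = L^j(\a) \otimes f$ with $\a \in P^{(a,b)}$ and compare with the formula \eqref{equation:NewHodge} defining the Hodge map $\ast_\F$ of $(\Om^\bullet \otimes_M \F, L_\F)$:
\[
(\ast_H \otimes_M \id_\F)\big((L_\F)^j(\a \otimes f)\big) = (-1)^{\frac{k(k+1)}{2}}\, i^{\,a-b}\, \frac{j!}{(n-j-k)!}\, (L_\F)^{n-j-k}(\a \otimes f),
\]
which is exactly the formula characterising $\ast_\F$ (the same computation with $i^{k}$ in place of $i^{\,a-b}$ treats the variant \eqref{equation:OldHodge}). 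Since $\ast_H \otimes_M \id_\F$ is an $A$-comodule $M$-module map satisfying the defining relation, the uniqueness clause of Definition \ref{HDefn} gives $\ast_\F = \ast_H \otimes_M \id_\F$.

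Part (2) is then formal: $\Lambda_\F = \ast_\F^{-1} \circ L_\F \circ \ast_\F = (\ast_H^{-1} \otimes_M \id_\F)(L \otimes_M \id_\F)(\ast_H \otimes_M \id_\F) = (\ast_H^{-1} \circ L \circ \ast_H) \otimes_M \id_\F = \Lambda \otimes_M \id_\F$. The only genuine point of care --- and the reason the statement is \emph{almost} but not quite immediate --- is the bookkeeping in part (3): that primitive elements, the Lefschetz decomposition, and the bidegree of $\Om^\bullet \otimes_M \F$ are all inherited from $\Om^\bullet$ by tensoring with the flat module $\F$. Once that compatibility is in hand, parts (1) and (2) are formal and the verification of (3) is a direct manipulation of the defining formulas.
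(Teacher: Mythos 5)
Your proof is correct and is essentially the argument the paper has in mind: the paper omits the proof entirely, declaring it evident, and your write-up supplies exactly the expected verification. The one genuinely non-formal point --- that the primitive spaces and Lefschetz decomposition of $\big(\Om^\bullet \oby_M \F, L_\F\big)$ are inherited from $(\Om^\bullet, L)$ via the exact functor $-\oby_M \F$ (projective, hence flat), after which the uniqueness clause in the definition of the Hodge map forces $\ast_\F = \ast_H \oby_M \id_\F$ --- is correctly identified and handled, including the bidegree bookkeeping needed for the variant \eqref{equation:NewHodge}.
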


\begin{remark}
Starting from a right vector bundle $\F \in \psA\rproj_M$, an analogous argument shows that there is a right Lefschetz pair $\big(\Om^\bullet \oby_M \F, L_\F\big)$.  Moreover, the associated dual Lefschetz and Hodge operators are given by $\id_{\lvee\!\F} \oby \Lambda$ and $\id_{\lvee\!\F} \oby \ast_H$, respectively.
\end{remark}

\begin{definition}\label{definition:TwistedHodge}
For an Hermitian vector bundle $(\F,h)$, we introduce the $\bR$-linear isomorphism 
\begin{align*}
C_h\colon \Om^\bullet \oby_M \F \to \lvee \! \F \oby_M \Om^\bullet, & & \omega \otimes f \mto h(\ol{f}) \otimes \omega^*.
\end{align*}
Moreover we denote
\begin{align*}
\bar{\ast}_\F \coloneqq \ast_\F \circ C_h &\colon \Om^{\bullet} \otimes_M \F \to \lvee\!\F \otimes_M \Om^\bullet\colon &
\omega \otimes f &\mto h(\ol{f}) \otimes \ast_H(\omega^*)  \\
\bar{\ast}_{\lvee \! \F} \coloneqq \ast_{\lvee \! \F} \circ C_h\inv &\colon \lvee\! \F \otimes_M \Om^\bullet \to \Om^{\bullet} \otimes_M \F \colon &\phi \otimes \omega &\mto \ast_H(\omega^*) \otimes \ol{h^{-1}(\phi)}
\end{align*}
\end{definition}

We now collect some properties of the Hodge map.  The proofs are completely analogous to the untwisted case, and so, we omit them.

\begin{lemma}\label{Hodgeprop} For any left Hermitian vector bundle $\F$, we have:
\begin{enumerate}

\item $\ol{\ast}_\F\left(\Om^{(a,b)} \oby_{M} \F\right) = \lvee\! \F \otimes_{M} \Om^{(n-a,n-b)}$, 
\item $\bar{\ast}_{\lvee \F}\left(\lvee\!\F \oby_M \Om^{(a,b)}\right) = \Om^{(n-a,n-b)} \oby_{M}  \F$, 
 \label{astpq}

\item $\bar{\ast}_{\lvee\!\F} \circ {\bar{\ast}_\F}(\alpha) = (-1)^{|\alpha|} \alpha$, for all homogeneous $\alpha \in \Om^{\bullet} \otimes_M \F$,
\item $\bar{\ast}_{\F} \circ \bar{\ast}_{\lvee\!\F} (\beta) = (-1)^{|\beta|} \beta$, for all homogeneous $\beta \in \lvee\!\F \otimes_M \Om^{\bullet}$,

\item $\bar{\ast}_\F$ is an isomorphism of real vector spaces,

\item $C_h(\omega \wedge \alpha) = (-1)^{|\alpha||\omega|} C_h(\alpha) \wedge \omega^*,$ for all homogeneous $\alpha \in \Om^{\bullet} \otimes_M \F$ and $\omega \in \Om^\bullet$.

\end{enumerate}
\end{lemma}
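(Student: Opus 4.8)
The plan is to verify the six items one by one, in each case reducing to a direct computation from the explicit formulas for $\bar{\ast}_\F$ and $\bar{\ast}_{\lvee\!\F}$ given in Definition \ref{definition:TwistedHodge}, together with the already-established properties of the untwisted Hodge map $\ast_H$ and the invertibility of $h$. For items (1) and (2) the only real content is bidegree bookkeeping. Applying the rescaled formula \eqref{equation:NewHodge} to a primitive $\a \in P^{(a,b)}$ and using that $\k$, hence $L$, has bidegree $(1,1)$, one reads off the bigraded refinement of the second statement of Lemma \ref{lem:Hodge}, namely $\ast_H\big(\Om^{(p,q)}\big) = \Om^{(n-q,\,n-p)}$. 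Composing this with the star condition $(\Om^{(a,b)})^* = \Om^{(b,a)}$ of Definition \ref{defnnccs}, the assignment $\omega \mto \ast_H(\omega^*)$ carries $\Om^{(a,b)}$ bijectively onto $\Om^{(n-a,\,n-b)}$; since $h\colon \ol{\F} \to \lvee\!\F$ is an isomorphism, the formula $\bar{\ast}_\F(\omega \otimes f) = h(\ol{f}) \otimes \ast_H(\omega^*)$ then gives $\bar{\ast}_\F\big(\Om^{(a,b)} \oby_M \F\big) = \lvee\!\F \oby_M \Om^{(n-a,\,n-b)}$, which is (1). Item (2) follows in exactly the same manner from the formula for $\bar{\ast}_{\lvee\!\F}$.

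For (3) and (4) I would substitute the formulas and simplify. With $\alpha = \omega \otimes f$,
\begin{align*}
\bar{\ast}_{\lvee\!\F}\big(\bar{\ast}_\F(\omega \otimes f)\big)
&= \bar{\ast}_{\lvee\!\F}\big(h(\ol{f}) \otimes \ast_H(\omega^*)\big) \\
&= \ast_H\big((\ast_H(\omega^*))^*\big) \otimes \ol{h^{-1}(h(\ol{f}))}
= \ast_H\big((\ast_H(\omega^*))^*\big) \otimes f,
\end{align*}
using $\ol{\ol{f}} = f$. Now $\ast_H$ commutes with the $\ast$-operation of the calculus, which for the rescaled map \eqref{equation:NewHodge} follows from the reality of $\k$ (cf.\ \cite[Lemma 4.12.4]{MMF3}); hence $(\ast_H(\omega^*))^* = \ast_H(\omega)$, and Lemma \ref{lem:Hodge}(\ref{enumerate:HodgeInvolutive}) gives $\ast_H(\ast_H(\omega)) = (-1)^{|\omega|}\omega$. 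Since $|\alpha| = |\omega|$, this proves (3); (4) is symmetric, interchanging the roles of $\bar{\ast}_\F$ and $\bar{\ast}_{\lvee\!\F}$ and using $h(h^{-1}(\phi)) = \phi$.

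For (5), the maps $(-)^*$ and $f \mto \ol{f}$ entering the definition of $\bar{\ast}_\F$ are conjugate-linear, while $h$ and $\ast_H$ are $\bC$-linear, so $\bar{\ast}_\F$ is conjugate-linear and in particular $\bR$-linear; bijectivity is then immediate from (3) and (4), which exhibit $\pm\bar{\ast}_{\lvee\!\F}$ as a two-sided inverse. For (6), writing $\alpha = \eta \otimes f$ one computes $C_h(\omega \wed \alpha) = h(\ol{f}) \otimes (\omega \wed \eta)^* = (-1)^{|\omega||\eta|} h(\ol{f}) \otimes (\eta^* \wed \omega^*) = (-1)^{|\omega||\alpha|}\, C_h(\alpha) \wed \omega^*$, where one uses the graded anti-multiplicativity $(\omega \wed \eta)^* = (-1)^{|\omega||\eta|}\eta^* \wed \omega^*$ of the differential $\ast$-calculus and the definition of the right $\Om^\bullet$-action on $\lvee\!\F \oby_M \Om^\bullet$.

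The one step that genuinely requires care is the bidegree bookkeeping in (1) and (2): one must remember that $\ast_H$ transposes the bidegree, $\ast_H\colon \Om^{(p,q)} \to \Om^{(n-q,n-p)}$, so that after composing with $(-)^*$ (which transposes it a second time) the net effect on $\Om^{(a,b)}$ is $(a,b) \mapsto (n-a,n-b)$, not $(a,b)\mapsto(n-b,n-a)$. Every other step follows the untwisted template of \cite{MMF3} essentially line for line.
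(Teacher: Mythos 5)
Your proof is correct, and it follows exactly the route the paper intends: the paper omits the proof of this lemma, remarking only that it is ``completely analogous to the untwisted case,'' and your argument supplies precisely those routine verifications — the bidegree transposition $\ast_H\colon \Om^{(p,q)} \to \Om^{(n-q,n-p)}$ composed with the degree-swapping $*$-map for (1)--(2), the compatibility of the rescaled Hodge map with $*$ together with Lemma \ref{lem:Hodge}(\ref{enumerate:HodgeInvolutive}) for (3)--(4), conjugate-linearity plus the two-sided inverse for (5), and graded anti-multiplicativity of $*$ for (6). No gaps.
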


In analogy with the untwisted case we now introduce a metric on the twisted complex.

\begin{definition}\label{definition:HermitianMetric}
Let $\F \in \psAM\lproj$ be a vector bundle and let $h\colon \ol{\F} \to \lvee \! \F$ be an Hermitian structure.  The {\em Hermitian metric} of $(\F, h)$ is the map  
\bas
g_\F\colon (\Om^\bullet \oby_M \F) \oby_{\bR} (\Om^\bullet \oby_M \F) \to M\colon & & (\a,\b) \mto \vol(\a \wedgeev \ol{\ast}_{\F}(\b)).
\eas
\end{definition}

Recall from Example \ref{example:HermitianStructureOnM} that $M$ has a canonical Hermitian structure induced by the $*$-map.  For this special case we will usually denote $g \coloneqq g_M$.

As one would want from any sensible definition of Hermitian metric, $g_\F$ is conjugate symmetric, as the following proposition shows. 

\begin{proposition} \label{prop:metricstarsym}
For any $A$-covariant Hermitian vector bundle $(\F,h)$, we have
\bas
g_\F(\a, \b) = g_\F(\b,\a)^* & & \text{ for all $\alpha, \beta \in \Om^\bullet \otimes_M \F$}.
\eas
\end{proposition}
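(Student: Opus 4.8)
The plan is to reduce the conjugate-symmetry of $g_\F$ to properties that are already in hand: the behaviour of the evaluation pairing $\wedgeev$ under the $*$-operation, the compatibility of the Hodge map $\ast_H$ with $*$ (noted after \eqref{equation:NewHodge}), the defining symmetry $h_\F(\ol f)(g) = h_\F(\ol g)(f)^*$ of an Hermitian structure, and the fact that $\vol = \ast_H|_{\Om^{2n}}$ is an $M$-bimodule $*$-map. By linearity it suffices to check the identity on elementary tensors $\a = \omega \otimes f$ and $\b = \nu \otimes g$ with $\omega, \nu \in \Om^\bullet$ homogeneous and $f, g \in \F$.

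First I would unwind both sides using Definition \ref{definition:HermitianMetric}. On the one side, $g_\F(\a,\b) = \vol\big((\omega \otimes f) \wedgeev \ol\ast_\F(\nu \otimes g)\big)$, and since $\ol\ast_\F(\nu \otimes g) = h_\F(\ol g) \otimes \ast_H(\nu^*)$ by Definition \ref{definition:TwistedHodge}, the evaluation pairing from Remark \ref{remark:DualsForDGModules}(3) gives $\omega\, h_\F(\ol g)(f)\, \ast_H(\nu^*)$, so $g_\F(\a,\b) = \vol\big(\omega\, h_\F(\ol g)(f)\, \ast_H(\nu^*)\big)$. Symmetrically, $g_\F(\b,\a) = \vol\big(\nu\, h_\F(\ol f)(g)\, \ast_H(\omega^*)\big)$. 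The goal is then to show these two scalars in $M$ are complex conjugates of one another.

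Next I would apply the $*$-map to $g_\F(\b,\a)$ and push it through. Since $\vol$ is an $M$-bimodule $*$-map, $g_\F(\b,\a)^* = \vol\big(\ast_H(\omega^*)^*\, h_\F(\ol f)(g)^*\, \nu^*\big)$; using $\ast_H(\omega^*)^* = \ast_H(\omega)$ — which is exactly the statement that the (rescaled) Hodge map commutes with $*$, i.e. $\ast_H(\eta^*) = \ast_H(\eta)^*$ applied to $\eta = \omega$ — together with the Hermitian symmetry $h_\F(\ol f)(g)^* = h_\F(\ol g)(f)$, this becomes $\vol\big(\ast_H(\omega)\, h_\F(\ol g)(f)\, \nu^*\big)$. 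It remains to identify $\vol\big(\ast_H(\omega)\, h_\F(\ol g)(f)\, \nu^*\big)$ with $\vol\big(\omega\, h_\F(\ol g)(f)\, \ast_H(\nu^*)\big)$. This is precisely a "self-adjointness of the Hodge map under the volume pairing" statement: in the untwisted situation one has $\vol(\ast_H(\omega)\, \mu) = \vol(\omega\, \ast_H(\mu))$ (possibly up to a sign that cancels because we are in top degree, or because of the involutivity $\ast_H^2 = (-1)^k$ from Lemma \ref{lem:Hodge}), and inserting the central-looking factor $h_\F(\ol g)(f) \in M$ does not interfere since $\vol$ and the pairing are $M$-bilinear. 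I would invoke the analogue of this already-established untwisted identity (the proposition says "the proofs are completely analogous to the untwisted case") to close the computation.

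The main obstacle, and the step deserving the most care, is this last identification $\vol(\ast_H(\omega)\,\mu) = \vol(\omega\,\ast_H(\mu))$ and getting all the signs and bidegree bookkeeping right: the Hodge map mixes bidegrees via Lefschetz decomposition, so one must either argue degree-by-degree on primitive components $L^j(\a)$ with $\a \in P^{(p,q)}$ — using the explicit formula \eqref{equation:NewHodge}, the fact that only the $(n,n)$-component survives under $\vol$, and the involution formula from Lemma \ref{lem:Hodge}(1) — or package it as a lemma about the twisted Hodge maps $\ol\ast_\F, \ol\ast_{\lvee\F}$ from Lemma \ref{Hodgeprop}, where parts (3)--(4) give the involutivity needed to move $\ast_H$ across the pairing. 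Everything else is routine substitution, so I would present the proof as: expand both sides, apply $*$ to one side, use (i) $\vol$ is a $*$-map, (ii) $\ast_H$ commutes with $*$, (iii) the Hermitian symmetry of $h_\F$, and (iv) self-adjointness of $\ast_H$ under $\vol$, citing the untwisted analogues for (ii) and (iv).
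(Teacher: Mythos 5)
Your proposal is correct in substance and reaches the same destination, but it is organised differently from the paper's proof, and the difference matters for how much work is left to do. The paper never applies the graded $*$-operation to a product of forms. Instead it uses that $\ast_H$ is an $M$-bimodule map to absorb the scalar $h(\ol{b})(a)$ into the Hodge star, recognises $\vol\big(\omega \wed \ast_H(h(\ol{b})(a)\,\nu^*)\big)$ as the \emph{untwisted} metric $g\big(\omega, \nu\, h(\ol{b})(a)^*\big)$, applies the Hermitian symmetry of $h$, and then quotes the already-known untwisted conjugate symmetry $g(\omega,\nu m)=g(\nu m,\omega)^*$ from \cite[Corollary 5.2]{MMF3} as a black box. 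This sidesteps entirely the two points you identify as delicate. Your route, by contrast, applies $*$ to the top-degree form $\nu\, h(\ol{f})(g)\, \ast_H(\omega^*)$ and therefore (i) picks up a Koszul sign $(-1)^{|\nu|\,(2n-|\omega|)}$ that your displayed formula for $g_\F(\b,\a)^*$ currently omits, and (ii) requires the ``self-adjointness'' identity $\vol(\ast_H(\omega)\, m\, \mu)=\pm\,\vol(\omega\, m\, \ast_H(\mu))$, which is stated nowhere in the paper and would need its own proof (your suggestion of a primitive-component computation via \eqref{equation:NewHodge} and Lemma \ref{lem:Hodge} would work, and is essentially how \cite[Corollary 5.2]{MMF3} is proved). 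So your plan is viable but shifts the real content into an auxiliary lemma whose sign bookkeeping you have flagged but not carried out; the paper's reduction to the untwisted metric is the shorter path, and you could adopt it simply by noticing that $h(\ol{g})(f)\,\nu^* = \big(\nu\, h(\ol{g})(f)^*\big)^*$ before ever taking a star of a wedge product.
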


\begin{proof}
For all $\omega \otimes a, \nu \otimes b \in \Om^\bullet \otimes_M \F$, we have
\bas
g_\F(\omega \otimes a, \nu \otimes b) &= \vol\left(\omega \wed \left(h(\ol{b})(a)\right)  \ast_H(\nu^*)\right) \\ 
&= \vol\left(\omega \wed   \ast_H\left(\left(h(\ol{b})(a)\right) \nu^*\right)\right) \\ 
&= g\left(\w, \nu \left(h(\ol{b})(a)\right)^*\right)\\
&= g\left(\w, \nu \left(h(\ol{a})(b)\right)\right),
\eas
where the last equality follows from the fact that $h$ is Hermitian. Recalling from \cite[Corollary 5.2]{MMF3}  that $g(\omega, \nu m) = g(\nu m, \omega)^*$, for all $m \in M$, we see that 
\[
g_\F(\omega \otimes a, \nu \otimes b) = g\left(\nu\left(h(\ol{a})(b)\right), \w \right)^* = g(\nu \oby b, \w \oby \a)^*,
\]
Hence $g_\F(\a, \b) = g_\F(\b,\a)^*$, for all $\alpha, \beta \in \Om^\bullet \otimes_M \F$, as required.
\end{proof}

\begin{proposition}\label{proposition:LefschetzAdjoint}
The Lefschetz and dual Lefschetz maps are adjoint with respect to $g_\F$, that is, it holds that  
\bas
g_\F\left(L_\F(\a), \b\right) = g_\F\left(\a, \Lambda_\F(\b)\right), & & \text{ for all } \a,\b \in \Om^\bullet \oby_M \F.
\eas
\end{proposition}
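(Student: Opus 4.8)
The plan is to reduce the twisted statement to the untwisted adjointness of $L$ and $\Lambda$ with respect to the metric $g = g_M$ on $\Om^\bullet$, which is part of the input from \cite{MMF3}, by unwinding the definitions of $g_\F$, $\bar\ast_\F$, $L_\F$ and $\Lambda_\F$ on elementary tensors. First I would write $\a = \w \oby a$ and $\b = \nu \oby b$ with $\w,\nu \in \Om^\bullet$ and $a,b \in \F$, and compute $g_\F(L_\F(\a),\b)$ directly. Using $L_\F(\w\oby a) = L(\w)\oby a = (\k\wed\w)\oby a$ and the definition of $g_\F$ via $\vol(- \wedgeev \bar\ast_\F(-))$, together with the explicit formula $\bar\ast_\F(\nu \oby b) = h(\ol b) \oby \ast_H(\nu^*)$ from Definition \ref{definition:TwistedHodge}, this becomes
\[
g_\F(L_\F(\a),\b) = \vol\big((\k\wed\w)\, h(\ol b)(a)\, \ast_H(\nu^*)\big).
\]
Since $h(\ol b)(a) \in M$ and $\k$ is central, I can move the scalar $h(\ol b)(a)$ past $\k$ and recognise the right-hand side as $g\big(\k\wed\w,\, \nu\, h(\ol b)(a)^*\big) = g\big(L(\w),\, \nu\, h(\ol b)(a)^*\big)$, exactly as in the proof of Proposition \ref{prop:metricstarsym}.

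Next I would invoke the untwisted adjointness $g(L(\w),\mu) = g(\w,\Lambda(\mu))$ for all $\w,\mu \in \Om^\bullet$ — this is the Lefschetz-pair adjointness for $(\Om^\bullet, L)$ established in \cite[Corollary 5.2 ff.]{MMF3}, and formally it is the scalar ($\F = M$) case of the very proposition being proved, so strictly I should either cite it or run the same computation with $\ast_H$ in place of $\bar\ast_\F$; either way it is available. Applying it with $\mu = \nu\, h(\ol b)(a)^*$ gives
\[
g_\F(L_\F(\a),\b) = g\big(\w,\, \Lambda(\nu\, h(\ol b)(a)^*)\big).
\]
Because $\Lambda = \Lambda_M$ is an $M$-module map (Definition of $\Lambda$, and $\ast_H$ is an $M$-module morphism by Definition \ref{HDefn}), $\Lambda(\nu\, h(\ol b)(a)^*) = \Lambda(\nu)\, h(\ol b)(a)^*$, and unwinding $g$ back through $\vol$ and $\ast_H$ one recognises $g(\w, \Lambda(\nu)\,h(\ol b)(a)^*) = \vol\big(\w\, h(\ol b)(a)\, \ast_H(\Lambda(\nu)^*)\big)$. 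By Proposition \ref{prop:lefadjs}(2) we have $\Lambda_\F = \Lambda\oby_M\id_\F$, so $\bar\ast_\F(\Lambda_\F(\nu\oby b)) = h(\ol b)\oby \ast_H(\Lambda(\nu)^*)$, and hence the last expression is precisely $g_\F(\a, \Lambda_\F(\b))$. This closes the computation on elementary tensors, and both sides are $\bR$-bilinear and well-defined on the balanced tensor products (using centrality of $\k$ and the module properties already recorded in Lemma \ref{Hodgeprop}), so the identity extends to all of $\Om^\bullet\oby_M\F$.

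The only genuinely delicate point is bookkeeping: one must be careful that moving the scalar $h(\ol b)(a) \in M$ through $\k$, through $\ast_H$, and through $\Lambda$ is legitimate, which rests on $\k$ being central and on $\ast_H$, $\Lambda$, $L$ all being (left) $M$-module morphisms — facts already available from \textsection\ref{section:LefschetzPairs} and the definition of the new Hodge map \eqref{equation:NewHodge}. A secondary subtlety is that the appeal to untwisted adjointness should be pinned down as a citation to \cite{MMF3} rather than to this proposition, to avoid circularity; alternatively one repeats its one-line proof here verbatim with $\F = M$. Beyond that, the argument is a routine unravelling entirely parallel to the proof of Proposition \ref{prop:metricstarsym}, and I would present it at that level of detail.
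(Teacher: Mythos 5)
Your argument is correct, and it differs from the paper's proof mainly in organisation rather than in substance. The paper computes $g_\F(\a,\Lambda_\F(\b))$ directly in the twisted setting: it writes $\ast_H(\Lambda(\nu^*)) = \ast_H\circ\ast_H^{-1}\circ L\circ\ast_H(\nu^*) = \k\wed\ast_H(\nu^*)$, then uses centrality of $\k$ to slide the K\"ahler form across the wedge product and the scalar $h(\ol b)(a)$, landing on $\vol(L(\w)\wed h(\ol b)(a)\ast_H(\nu^*)) = g_\F(L_\F(\a),\b)$. You instead first reduce $g_\F$ to the untwisted metric $g$ (exactly as in the proof of Proposition \ref{prop:metricstarsym}), invoke the untwisted adjointness $g(L(\w),\mu)=g(\w,\Lambda(\mu))$, and lift back using that $\Lambda$ is a right $M$-module map. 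Both routes rest on the same three facts --- $\ast_H\circ\Lambda = L\circ\ast_H$, centrality of $\k$, and the $M$-bimodule property of $\ast_H$, $L$, $\Lambda$ --- so nothing is gained or lost mathematically; but the circularity worry you raise is resolved more cleanly by the paper's choice, since the direct twisted computation never needs the untwisted case as an input (the inline proof of the untwisted identity you would have to supply is literally the paper's computation with $\F=M$). One small point to make explicit in your write-up: pulling $h(\ol b)(a)^*$ out of $\Lambda$ and out of $\ast_H$ on the right requires the \emph{right} $M$-module property of these maps, which in the K\"ahler setting follows from $\k$ being central (so that $(\Om^\bullet,L)$ is simultaneously a left and right Lefschetz pair, as noted in \textsection\ref{subsection:KLTs}); this is available but worth flagging since Definition \ref{HDefn} only guarantees one-sided linearity for a general Lefschetz pair.
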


\begin{proof}
For any  $\omega \otimes a \in \Om^\bullet \otimes_M \F$ and $b \otimes \nu \in \Om^\bullet \otimes_M \F$, it holds that
\begin{align*}
\langle \omega \otimes a, \Lambda_\F(\nu \otimes b) \rangle_\F
&= \langle \omega \otimes a, \Lambda(\nu) \otimes b \rangle_\F \\
&= \vol \left( \omega \otimes a \wedgeev h(\ol{b}) \otimes \ast_H(\Lambda(\nu)^*) \right) \\
&= \vol \left( \omega \wed h(\ol{b})(a) \ast_H(\Lambda(\nu^*)) \right) \\
&= \vol \left( \omega \wed h(\ol{b})(a) \ast_H \circ \ast_H^{-1} \circ L \circ \ast_H (\nu^*) \right) \\
&= \vol \left( \omega \wed h(\ol{b})(a) L \circ \ast_H (\nu^*) \right).
\end{align*}
Recalling that $L$ is simply wedging by the K\"ahler form $\k$ and that $\k$ is by assumption central, we have that 
\begin{align*}
g_\F\left(\omega \otimes a, \Lambda_\F(\nu \otimes b)\right)  &= \vol \left( \omega \wed h(\ol{b})(a) \k \wed \ast_H (\nu^*) \right)\\
&= \vol \left( \k \wed \omega \wed h(\ol{b})(a) \ast_H (\nu^*) \right) \\
&= \vol \left( L( \omega )\wed h(\ol{b})(a) \ast_H (\nu^*) \right) \\
&= \vol \left( L(\omega) \otimes a \wedgeev h(\ol{b}) \otimes \ast_H(\nu^*) \right) \\
&= g_\F\left(L(\omega) \otimes a, \nu \otimes b\right) \\
&= g_\F\left(L_\F(\omega \otimes a), \nu \otimes b\right).
\end{align*}
\end{proof}


\subsection{States and inner products} \label{section:state sips}

In this subsection we recall the notion of a state for a $*$-\alg and show how states can be used to produce inner products from Hermitian metrics. The ability to produce inner products from Hermitian metrics is a crucial ingredient for Hodge theory. (As discussed in \textsection \ref{section:QHS} below, out motivating example of a state is the haar functional of a compact quantum group algebra.)

\begin{definition} For any $*$-\alg $M$,  we define its {\em cone of positive elements} to be 
\bas
M_{> 0} = \Big\{\sum_i \lambda_i m_i m_i^* \mid m_i \in M, \lambda_i \in \mathbb{R}_{\geq 0}\Big\} \setminus \{0\}.
\eas
\end{definition}

\begin{definition}
An Hermitian structure $h\colon \ol{\F} \to \lvee\!\F$ for a vector bundle $\F \in \psAM\lproj$ is said to be {\em positive definite} if
 \bas
h\big(\ol{f}\big)\big(f\big) \in M_{>0}, & & \text{ for all non-zero }  f \in \F.  
\eas
The associated Hermitian metric $g_\F$ is said to be {\em positive definite} if 
\bas
g_\F(\a,\a) \in M_{> 0}, & & \text{ for all nonzero } \a \in \F \otimes \Om^\bullet.
\eas
\end{definition}

Clearly, positive definiteness of $g_{\F}$ implies positive definiteness of $h$. We now restrict to the special case of  $M$ endowed with its has a canonical Hermitian structure.

\begin{definition}
A K\"ahler structure  is said to be {\em positive definite} if the metric associated canonical Hermitian structure of $M$ is positive definite. 
\end{definition}

The following proposition shows us that for the case of a positive definite K\"ahler structure,  positivity of $g_\F$ for an Hermitian vector bundle $\F$ follows directly from positivity of $h$.

\begin{proposition} \label{prop:metricpositivity}
Let $\left(\Om^{(\bullet,\bullet)}, \k\right)$ be a positive definite K\"ahler structure. For any vector bundle $\F \in \psAM\lproj$ with a positive definite Hermitian structure $h\colon \ol{\F} \to \lvee\!\F$, the associated  metric $g_\F$ is positive definite.
\end{proposition}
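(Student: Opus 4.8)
The plan is to reduce positivity of $g_\F$ to positivity of $g = g_M$ (which holds by hypothesis, since the K\"ahler structure is positive definite) together with positivity of the Hermitian structure $h$. The key observation is that the Hermitian metric $g_\F$ decomposes along a local basis of the projective module $\F$, and each summand is of the form $g(\omega \cdot m, \omega' \cdot m')$ for elements $m, m' \in M$ built from the matrix entries of $h$. First I would fix a dual basis: since $\F \in \psAM\lproj$ is finitely generated projective, choose $\{f_i\} \subseteq \F$ and $\{\phi_i\} \subseteq {\lvee\F}$ with $f = \sum_i \phi_i(f) f_i$ for all $f \in \F$. Writing a general element $\alpha \in \Om^\bullet \otimes_M \F$ as $\alpha = \sum_i \omega_i \otimes f_i$ (using the dual basis to absorb module coefficients into the form components $\omega_i \in \Om^\bullet$), I would expand
\bas
g_\F(\alpha, \alpha) = \sum_{i,j} \vol\big(\omega_i \wedgeev \ol{\ast}_\F(\omega_j \otimes f_j)\big) = \sum_{i,j} \vol\big(\omega_i \wed h(\ol{f_j})(f_i)\, \ast_H(\omega_j^*)\big),
\eas
using the explicit formula for $\ol{\ast}_\F$ from Definition \ref{definition:TwistedHodge} and the evaluation map from Remark \ref{remark:DualsForDGModules}.

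The heart of the argument is then to recognise the right-hand side as $g$ applied to a single modified form. Let $H = (H_{ij})$ with $H_{ij} = h(\ol{f_j})(f_i) \in M$ be the Gram matrix of $h$ in the chosen basis; positive definiteness of $h$ says precisely that $H$ is a positive matrix over $M$ (in the sense that $H_{ij} = H_{ji}^*$ and $\sum_{ij} m_i^* H_{ij} m_j \in M_{>0} \cup \{0\}$ for all $(m_i)$). Since $h$ is positive definite, $H$ admits a "square root" factorisation $H = B^* B$ for some matrix $B = (B_{ki})$ over $M$: this is the standard fact that a positive matrix over a $*$-algebra with enough positive elements factors through its cone (here one uses that $H_{ii} \in M_{>0}$ and a Gram--Schmidt-type induction, or directly the Cholesky-type decomposition available because the relevant completions are $C^*$-algebraic via the state). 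Substituting $H_{ij} = \sum_k B_{ki}^* B_{kj}$ and using that $g(\omega m, \omega' m') = \vol(\omega m \wed \ast_H((\omega' m')^*))$ together with centrality of $\k$ and the $*$-compatibility of $\ast_H$, I would rewrite
\bas
g_\F(\alpha, \alpha) = \sum_k g\Big(\sum_i \omega_i B_{ki},\ \sum_j \omega_j B_{kj}\Big),
\eas
so $g_\F(\alpha,\alpha)$ is a sum of terms $g(\beta_k, \beta_k)$ with $\beta_k = \sum_i \omega_i B_{ki} \in \Om^\bullet$. By positive definiteness of the K\"ahler structure, each $g(\beta_k,\beta_k) \in M_{>0} \cup \{0\}$, hence the sum lies in $M_{>0} \cup \{0\}$; it remains to check the sum is nonzero when $\alpha \neq 0$, which follows because $B$ is invertible (as $H$ is) so $\alpha \neq 0$ forces some $\beta_k \neq 0$, and $g(\beta_k, \beta_k) \in M_{>0}$ then by positive definiteness of $g$.

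The main obstacle I anticipate is the factorisation $H = B^*B$ over the (possibly non-complete) $*$-algebra $M$: one does not have a functional calculus available at the algebraic level. I expect this is circumvented either by invoking that the relevant Hermitian structures in our setting come with a Gram--Schmidt orthogonalisation making $H$ congruent to the identity over $M$ (so $B$ exists algebraically), or — more likely in line with the paper's framework — by noting that positivity is tested only after composing with the state $\operatorname{\bf f}$, so the decomposition may be performed in the $C^*$-completion and positivity in $M_{>0}$ recovered from the fact that $\ast_H$, $\vol$, and the evaluation are all $M$-bimodule maps defined over $M$. A cleaner alternative, which I would try first, is to avoid the square root entirely: since $\F$ with its positive definite Hermitian structure admits an orthonormal local frame (diagonalising $h$ reduces $H$ to the identity), one may assume $H_{ij} = \delta_{ij}$, in which case $g_\F(\alpha,\alpha) = \sum_i g(\omega_i, \omega_i)$ manifestly lies in $M_{>0} \cup \{0\}$ and is nonzero for $\alpha \neq 0$ directly from positive definiteness of the K\"ahler structure.
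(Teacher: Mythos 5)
Your central step, factoring the Gram matrix $H_{ij} = h(\ol{f_j})(f_i)$ as $B^*B$ over $M$ (equivalently, producing an orthonormal frame diagonalising $h$), is a genuine gap, and you are right to flag it yourself. The paper's notion of positive definiteness of $h$ only asserts that $h(\ol{f})(f)$ lies in the algebraic cone $M_{>0}$ for each nonzero $f$; this gives positivity of the quadratic form $\sum_{i,j} a_i H_{ij} a_j^*$ but does not yield a factorisation $H = B^*B$ with entries in $M$. In a non-complete $*$-algebra there is no functional calculus, no Cholesky decomposition, and no reason that a finitely generated projective module with a positive Hermitian structure admits an orthonormal frame (Gram--Schmidt would require inverses and square roots of elements such as $h(\ol{f_1})(f_1)$, which need not exist in $M$). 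Your other fallback, testing positivity only after applying the state, changes the statement: the conclusion to be proved is membership of $g_\F(\a,\a)$ in $M_{>0}$, not positivity of the scalar $\operatorname{\bf f}(g_\F(\a,\a))$.

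The paper takes a much shorter route that avoids the matrix problem entirely: it treats a simple tensor $\a = \w \oby f$, where the only ``factorisation'' needed is $h(\ol{f})(f) = \sum_i \l_i m_i m_i^*$ with $\l_i > 0$ --- and this is available by the very definition of $M_{>0}$, no square root required. Then $g_\F(\a,\a) = \sum_i \l_i\, g(\w m_i, \w m_i)$, using that $\ast_H$ is an $M$-bimodule $*$-map, and each summand lies in $M_{>0}$ by positive definiteness of the K\"ahler structure. To be fair to your instinct, the paper's opening claim that one may take $\a = \w \oby f$ ``without loss of generality'' is itself the delicate point for general elements of $\Om^\bullet \oby_M \F$, and your dual-basis expansion is aimed at exactly that issue; but the repair you propose trades it for the harder, unresolved problem of matrix factorisation over $M$. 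If you want to handle general sums rigorously, you would need either a strengthened (completely positive, factorisable) hypothesis on $h$, or an argument that stays inside the definitional cone $M_{>0}$ as the paper's single-tensor computation does.
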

\begin{proof}
Without loss of generality, we can write $\a = \w \oby f$. Since by assumption $h(\ol{f})(f) \in M_{>0}$, we have that $h(\ol{f})(f) = \sum_{i=1}^m \l_l m_i m_i^*$, for some $m_i \in M$, and $\l_i \in \bR_{>0}$. It now follows that 
\bas
g_\F(\a,\a^*) &= g_\F\left(\w \oby_M f \oby_\bR \w \oby f \right)\\
                       &= \vol\left( \w \wed h(\ol{f})(f) \ast_H(\w^*)\right)\\
                       &= \sum_{i=1}^m \vol\left( \w \wed \l_i m_i a^*_i \ast_H(\w^*)\right)\\
                       &= \sum_{i=1}^m \l_i  \vol\left( (\w m_i)  \wed \ast_H((\w m_i)^*)\right)\\
                       &= \sum_{i=1}^m \l_i  g\left(\w m_i, \w m_i \right).
\eas
Since by assumption $g$ is positive definite, we have  $g(\w m_i, \w m_i) \in M_{>0}$, for all $i$. This in turn implies that $g_{\F}(\a,\a) \in M_{>0}$.
\end{proof}

\begin{definition}
A {\em state} on a unital $*$-\alg $M$ is a unital linear functional $\operatorname{\bf  f}\colon M \to \bC$ \st $\operatorname{\bf  f}\big(M_{> 0}\big) \sseq \bR_{>0}$. 
\end{definition}

The definition of a state, taken together with Proposition \ref{prop:metricpositivity} above, directly implies the following lemma.

\begin{proposition}\label{proposition:HermitianInnerProduct}
With respect to any choice of state $\operatorname{\bf  f}$, an inner product, which we call the associated \emph{Hermitian inner product}, is given by the pairing
\bas
\langle -,- \rangle_\F\colon {(\Om^\bullet \oby_M \F) \oby_{\bR} (\Om^\bullet \oby_M \F) \to \mathbb{C}}, & & (\a,\b) \mto \operatorname{\bf  f} \circ g_\F(\a,\b).
\eas
\end{proposition}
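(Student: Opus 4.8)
The statement claims that for a positive definite K\"ahler structure and a vector bundle $\F$ with positive definite Hermitian structure $h$, the pairing $\langle\a,\b\rangle_\F = \operatorname{\bf f}\circ g_\F(\a,\b)$ is an inner product. I would organise the proof around the three defining properties of an inner product on a complex vector space: conjugate symmetry, $\bC$-sesquilinearity, and positive definiteness.

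First I would address conjugate symmetry. This is where I would invoke Proposition \ref{prop:metricstarsym}, which gives $g_\F(\a,\b) = g_\F(\b,\a)^*$. Applying the state $\operatorname{\bf f}$ and using that $\operatorname{\bf f}$ is a $*$-functional (a state is in particular positive, hence Hermitian, so $\operatorname{\bf f}(m^*) = \overline{\operatorname{\bf f}(m)}$ — this is a standard fact for states on $*$-algebras that should be recalled or assumed), one gets $\langle\a,\b\rangle_\F = \overline{\langle\b,\a\rangle_\F}$. Second, $\bR$-bilinearity of $g_\F$ is immediate from its definition as $\vol(\a\wedgeev\ol{\ast}_\F(\b))$ together with $\bR$-linearity of $\vol$, $\wedgeev$, and $\ol{\ast}_\F$; promoting this to $\bC$-sesquilinearity requires checking the behaviour under scalars $\l\in\bC$, which follows since $\ol{\ast}_\F$ is built from $\ast_\F$ (complex linear) precomposed with $C_h$ (conjugate linear, by the twist coming from $\ol{\F}$ and the $*$ on forms), so that $g_\F(\a,\l\b)=\overline{\l}\,g_\F(\a,\b)$ and $g_\F(\l\a,\b)=\l\,g_\F(\a,\b)$. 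Applying $\operatorname{\bf f}$ (which is $\bC$-linear) preserves this.

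Third, and this is the only genuinely nontrivial point, positive definiteness. Here I would combine Proposition \ref{prop:metricpositivity} with the definition of a state. By Proposition \ref{prop:metricpositivity}, since $(\Om^{(\bullet,\bullet)},\k)$ is positive definite and $h$ is positive definite, the metric $g_\F$ is positive definite in the sense that $g_\F(\a,\a)\in M_{>0}$ for all nonzero $\a$. By the definition of a state, $\operatorname{\bf f}(M_{>0})\subseteq\bR_{>0}$, hence $\langle\a,\a\rangle_\F = \operatorname{\bf f}(g_\F(\a,\a))>0$ for all nonzero $\a$, which is exactly positive definiteness of the pairing.

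\textbf{Main obstacle.} The conceptual content has essentially been front-loaded into Propositions \ref{prop:metricstarsym} and \ref{prop:metricpositivity}, so the proof is short; the only subtlety worth spelling out is the sesquilinearity bookkeeping — tracking which of the maps $\vol$, $C_h$, $\ast_\F$ entering $g_\F$ are complex-linear and which are conjugate-linear, so that the pairing comes out sesquilinear in the intended convention rather than merely $\bR$-bilinear. Everything else is a direct citation of the preceding results together with the definition of a state. This is presumably why the paper states it as an immediate consequence.
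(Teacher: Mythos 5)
Your proposal is correct and follows essentially the same route as the paper's own (very brief) proof: sesquilinearity and conjugate symmetry via Proposition \ref{prop:metricstarsym}, and positive definiteness via Proposition \ref{prop:metricpositivity} combined with the definition of a state. The extra bookkeeping you supply (the Hermiticity of the state and the linear/conjugate-linear tracking through $C_h$ and $\ast_\F$) fills in details the paper leaves implicit, but does not change the argument.
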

\begin{proof}
It follows from Proposition \ref{prop:metricstarsym}  that $\langle -,- \rangle_\F$ is a sesquilinear map. Positive definiteness follows from Proposition \ref{prop:metricpositivity} above and the definition of a state.
\end{proof}


\subsection{Codifferentials}

Through this subsection we fix a choice of  state $\operatorname{\bf  f}\colon M \to \bC$, and assume that $(\Om^{(\bullet,\bullet)},\k)$ is a K\"ahler structure with closed associated integral. These assumptions are enough to show that holomorphic, and anti-holomorphic, structure maps for Hermitian vector bundles are adjointable with respect to the inner product induced by $\operatorname{\bf  f}$.

\begin{lemma}\label{lemma:Stokes}
Let $(\F, \odel_\F) \in \psA \lhVB$ be a left holomorphic vector bundle.  For each $\alpha \in \Om^\bullet \otimes_M \F$ and $\beta \in \lvee \! \F \otimes_M \Om^{\bullet}$, we have:
\bas
\int_M \del(\a \wedgeev \b) = 0, & & \int_M \adel(\a \wedgeev \b) = 0
\eas
\end{lemma}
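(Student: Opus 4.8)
The plan is to reduce the statement to the corresponding fact on the untwisted de Rham complex, namely that $\int\circ\del$ and $\int\circ\adel$ vanish on all of $\Om^\bullet$, and then to specialise to $\omega = \alpha \wedgeev \beta$. Since $\wedgeev$ lands in $\Om^\bullet$ (see Remark \ref{remark:DualsForDGModules}), the lemma is exactly the instance $\omega = \alpha \wedgeev \beta$ of the claim
\[
\int_M \del\omega = 0 = \int_M \adel\omega, \qquad \text{for all } \omega \in \Om^\bullet .
\]
So it suffices to establish this claim, using only the hypothesis that the associated integral $\int = \operatorname{\bf f} \circ \vol_\k$ is closed, i.e.\ $\int \circ \exd = 0$.

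First I would record that $\int$ is supported in top degree, and more precisely that $\int|_{\Om^{(a,b)}} = 0$ unless $(a,b) = (n,n)$. This follows from the Lefschetz decomposition (Proposition \ref{LDecomp}) together with the observations that $L = \k \wed -$ shifts bidegree by $(1,1)$ and that primitive forms live in degrees $\leq n$: these force $\Om^{2n} = L^n(\Om^0) = \Om^{(n,n)}$ and, more generally, $\Om^{(a,b)} = 0$ whenever $a > n$ or $b > n$ (this is the \nc counterpart of the classical fact; see also \cite{MMF3}). Consequently, writing $\omega = \sum_{a,b} \omega^{(a,b)}$ for the decomposition afforded by the complex structure (Definition \ref{defnnccs}), the only summand of $\del\omega$ on which $\int$ is possibly nonzero is $\del\omega^{(n-1,n)} \in \Om^{(n,n)}$, so that $\int_M \del\omega = \int_M \del\omega^{(n-1,n)}$; symmetrically $\int_M \adel\omega = \int_M \adel\omega^{(n,n-1)}$.

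The final step is to convert these partial derivatives into the full exterior derivative and invoke closedness. Since $\adel\omega^{(n-1,n)} \in \Om^{(n-1,n+1)} = 0$, we have $\del\omega^{(n-1,n)} = \exd\omega^{(n-1,n)}$, hence $\int_M \del\omega = \int_M \exd\omega^{(n-1,n)} = 0$; likewise $\del\omega^{(n,n-1)} \in \Om^{(n+1,n-1)} = 0$ gives $\int_M \adel\omega = \int_M \exd\omega^{(n,n-1)} = 0$. Taking $\omega = \alpha \wedgeev \beta$ yields the two asserted identities.

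The only point that genuinely requires care is the bidegree vanishing $\Om^{(a,b)} = 0$ for $a>n$ or $b>n$ (equivalently $\Om^{2n} = \Om^{(n,n)}$); everything else is bookkeeping with the complex-structure grading and the hypothesis $\int\circ\exd = 0$. One should also be careful that the components $\omega^{(a,b)}$ are those of the complex-structure decomposition, so that $\del$ and $\adel$ genuinely act as the $(1,0)$- and $(0,1)$-projections of $\exd$ on them; with that in place the argument is a direct transcription of the classical Stokes argument for Dolbeault cohomology to the present setting.
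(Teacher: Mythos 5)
Your proof is correct and follows essentially the same route as the paper's: restrict attention to the bidegree-$(n,n)$ component of the integrand, observe that the complementary partial derivative vanishes for bidegree reasons so that $\del$ (resp.\ $\adel$) agrees with $\exd$ on the relevant component, and conclude by closedness of the integral. The only difference is presentational — you phrase it as a statement about arbitrary $\omega \in \Om^\bullet$ and decompose into bidegree components, whereas the paper reduces to homogeneous $\alpha,\beta$ directly — and your extra care in deriving $\Om^{2n} = \Om^{(n,n)}$ from the Lefschetz decomposition is a detail the paper leaves implicit.
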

\begin{proof}
We may assume that $\a$ and $\b$ are homogeneous elements; let $\a \in \Om^{(a,b)} \otimes_M \F$ and $\b \in \lvee \F \otimes_M \Om^{(c,d)}$.  Note that $\int_M \omega = 0$ if $\omega \not\in \Om^{(n,n)}$.

Thus, the only interesting case is $(a+c, b+d) = (n,n-1)$.  In this case, $\adel(\a \wedgeev \b) \in \Om^{(n,n-1)}$, so that $\del(\a \wedgeev \b) = 0$.  Hence, $\adel(\a \wedgeev \b) = \exd(\a \wedgeev \b)$.  Using that the integral is closed gives us:
\[\int_M \adel(\a \wedgeev \b) = \int_M \exd(\a \wedgeev \b) = 0.\] 
The case of $\int_M \del(\a \wedgeev \b)$ is analogous. \qedhere
\end{proof}

\begin{proposition}\label{proposition:adelIsAdjointable}
Let $(\F, h)$ be an Hermitian vector bundle.
\begin{enumerate}
\item A holomorphic structure $\adel_{\F}\colon \F \to \Om^{(0,1)} \otimes_M \F$ is adjointable \wrt $\la\cdot,\cdot\ra_\F$, and
\bas
\adel_{\F}^\dagger = -\bar{\ast}_{\lvee \! \F} \circ \adel_{\lvee \F} \circ \bar{\ast}_{\F}.
\eas
\item An anti-holomorphic structure $\del_{\F}\colon \F \to \Om^{(1,0)} \otimes_M \F$ is adjointable \wrt $\langle \cdot,\cdot \rangle_\F$, and
\bas
\del_{\F}^\dagger = -\bar{\ast}_{\lvee \! \F} \circ \del_{\lvee \F} \circ \bar{\ast}_{\F}.
\eas
\end{enumerate}
\end{proposition}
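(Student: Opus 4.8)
The plan is to follow the classical pattern: express the adjoint of $\adel_\F$ directly via the nondegenerate pairing $\wedgeev$ and the integral, and then recognise the composite $-\bar\ast_{\lvee\!\F}\circ\adel_{\lvee\F}\circ\bar\ast_\F$ as that adjoint. First I would recall that, by Proposition \ref{proposition:HermitianInnerProduct}, $\la\a,\b\ra_\F = \operatorname{\bf f}\circ g_\F(\a,\b) = \operatorname{\bf f}\circ\vol\big(\a\wedgeev\ol\ast_\F(\b)\big) = \int_M \big(\a\wedgeev\ol\ast_\F(\b)\big)$, where the last step uses that $\int = \operatorname{\bf f}\circ\vol_\k$. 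So for $\a\in\Om^\bullet\otimes_M\F$ and $\b\in\Om^\bullet\otimes_M\F$ one has
\[
\la \adel_\F\a, \b\ra_\F = \int_M \big(\adel_\F\a \wedgeev \ol\ast_\F(\b)\big).
\]
The strategy is to move $\adel$ off of $\a$ using the Leibniz rule for $\wedgeev$ recorded in Remark \ref{remark:DualsForDGModules}(3), namely $\odel(\alpha\wedgeev\beta) = \odel_\X(\alpha)\wedgeev\beta + (-1)^{|\alpha|}\alpha\wedgeev\odel_{\lvee\X}(\beta)$, applied here with $\X = \F$ and the dual complex $\lvee\!\F$, together with Lemma \ref{lemma:Stokes} which kills the total $\adel$ of the product under $\int_M$.

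Concretely, I would take $\a\in\Om^{(a,b)}\otimes_M\F$ homogeneous, set $\gamma = \ol\ast_\F(\b)\in\lvee\!\F\otimes_M\Om^\bullet$, and write $0 = \int_M \adel(\a\wedgeev\gamma) = \int_M\big(\adel_\F\a\wedgeev\gamma\big) + (-1)^{a+b}\int_M\big(\a\wedgeev\adel_{\lvee\F}\gamma\big)$, so that
\[
\la\adel_\F\a,\b\ra_\F = -(-1)^{a+b}\int_M\big(\a\wedgeev\adel_{\lvee\F}\circ\ol\ast_\F(\b)\big).
\]
Now I need to rewrite the right-hand side as $\int_M\big(\a\wedgeev\ol\ast_\F(\adel_\F^\dagger\b)\big)$ with the claimed $\adel_\F^\dagger$. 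The key algebraic input is Lemma \ref{Hodgeprop}: parts (3)--(4) give $\bar\ast_\F\circ\bar\ast_{\lvee\!\F} = (-1)^{|\cdot|}\id$ and the inverse relation, and parts (1)--(2) control the bidegrees, so that $\ol\ast_\F\big(-\bar\ast_{\lvee\!\F}\circ\adel_{\lvee\F}\circ\bar\ast_\F(\b)\big)$ equals $(-1)^{\text{(sign)}}\,\adel_{\lvee\F}\circ\bar\ast_\F(\b)$ up to a sign determined by the degree of $\adel_{\lvee\F}\circ\bar\ast_\F(\b)$. Matching this sign against the $-(-1)^{a+b}$ produced above is the one genuinely fiddly bookkeeping step; it should come out right because $\bar\ast_\F$ sends $\Om^{(a,b)}\otimes_M\F$ to $\lvee\!\F\otimes_M\Om^{(n-a,n-b)}$ (total degree $2n-a-b$), $\adel_{\lvee\F}$ raises this to $2n-a-b+1$, so the involution sign is $(-1)^{2n-a-b+1} = (-1)^{a-b+1} = (-1)^{a+b+1} = -(-1)^{a+b}$, exactly cancelling. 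For part (2), the anti-holomorphic structure $\del_\F$, the argument is verbatim the same with $\del$ in place of $\adel$ and the other half of Lemma \ref{lemma:Stokes}.

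The main obstacle I anticipate is \emph{not} a conceptual one but precisely this sign/degree bookkeeping through the twisted Hodge operators $\bar\ast_\F$ and $\bar\ast_{\lvee\!\F}$: one must be careful that $\adel_{\lvee\F}$ is the differential on the \emph{dual} dg module (so its compatibility with $\wedgeev$ is the Leibniz rule of Remark \ref{remark:DualsForDGModules}(3), not an ad hoc identity), and that $\ol\ast_\F = \ast_\F\circ C_h$ intertwines correctly with conjugation via the relations in Lemma \ref{Hodgeprop}(6) for $C_h$. A secondary point worth stating explicitly is adjointability itself: one should note that $\adel_\F^\dagger$ as defined is a well-defined linear operator on $\Om^\bullet\otimes_M\F$ (it is a composite of the $\bR$-linear isomorphisms $\bar\ast_\F$, $\bar\ast_{\lvee\!\F}$ of Lemma \ref{Hodgeprop}(5) with the differential $\adel_{\lvee\F}$ of the dual holomorphic module), so the computation above genuinely verifies the defining property $\la\adel_\F\a,\b\ra_\F = \la\a,\adel_\F^\dagger\b\ra_\F$ rather than merely producing a formal candidate. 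Since $\wedgeev$ composed with $\int_M$ is nondegenerate in the relevant bidegree $(n,n)$, the identity of the two expressions for all $\a$ forces the operator identity, completing the proof.
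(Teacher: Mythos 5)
Your proposal is correct and follows essentially the same route as the paper's proof: integrate by parts using the Leibniz rule for $\wedgeev$ from Remark \ref{remark:DualsForDGModules} together with Lemma \ref{lemma:Stokes}, then identify the adjoint via the involutive property of $\bar{\ast}_\F$ and $\bar{\ast}_{\lvee\!\F}$ from Lemma \ref{Hodgeprop}. The only caveat is that in your sign check you use $(a,b)$ for the bidegree of both $\a$ and $\b$, whereas for a nonzero pairing $|\b| = |\a|+1$; once the degrees are tracked consistently the signs do cancel exactly as you claim.
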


\begin{proof}
We only prove the first statement, as the proof of the second statement is analogous.  Using Remark \ref{remark:DualsForDGModules} and Lemma \ref{lemma:Stokes}, we find
\begin{align*}
\int \adel_\F (\a) \wedgeev \b  &= - \int \adel(\a \wedgeev \b) + (-1)^{|\a|+1} \int  \a \wed_{\F} \adel_{\lvee \F}(\b)\\
&= (-1)^{|\a|+1} \int  \a \wed_{\F} \adel_{\lvee \F}(\b).
\end{align*}
 Adjointability of $\adel_{\F}$, as well as the given presentation, now follows from 
\begin{align*}
 \left\langle \a, - \prescript{}{\F}{\bar{\ast}} \circ \adel_{\lvee\F} \circ \bar{\ast}_\F(\b)\right\rangle_\F 
&=  - \int \a \wedgeev {\bar{\ast}}_\F \big(\prescript{}{\F}{\bar{\ast}} \circ \adel_{\lvee\F} \circ \bar{\ast}_{\F} (\b)\big) \\
&= (-1)^{|\beta|+2} \int \a \wedgeev \big(\adel_{\lvee\F} \circ \bar{\ast}_{\F} (\b)\big) \\
&= - \int \adel_\F(\a) \wedgeev \bar{\ast}_{\F} (\b)\\
&= \left\langle \adel_\F(\a), \b \right\rangle_\F.
\end{align*}
This establishes that $\adel_{\F^{\vee}}^\dagger = -\prescript{}{\lvee\!\F}{\bar{\ast}} \circ \, \adel_{\lvee\! \F} \circ \bar{\ast}_{\F},$ as required.
\end{proof}

Finally, we observe that in the untwisted case, which is to say when $\F = M$, the codifferentials of $\del$ reduces to 
\[
\del^\dagger = - \ast_H \circ \ast \circ \del \ast \circ \ast_H = - \ast_H \circ \adel \circ \ast_H,
\]
as established in \cite[Lemma 5.16]{MMF3}. The case of $\adel^\dagger$ is analogous.

\subsection{Hilbert spaces completions and the quantum homogeneous space case} \label{section:QHS}

In the quantum homogeneous space case $M = G^{\co(H)}$, let $\haar\colon G \to \bC$ be the haar functional corresponding to the cosemisimple Hopf \alg structure of $G$. Since we are by assumption working with compact quantum group algebras, $\haar$ restricts to a state on $M_{> 0}$. Whenever we are working in the quantum homogeneous space setting, our choice of state will always be the Haar functional. The following result is immediate, but important, so we highlight it as a lemma. 

\begin{lemma}
For $(\F,h)$ a Hermitian vector with $\F \in $, and $\left(\Om^{(\bullet,\bullet)}, \k\right)$ a covariant K\"ahler structure, the inner product $\la \cdot,\cdot\ra:\Om^\bullet \oby_M \F \to \bC$ is a left $G$ comodule map \wrt the trivial left $G$-comodule structure on $\bC$.
\end{lemma}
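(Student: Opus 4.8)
The plan is to observe that $\langle\cdot,\cdot\rangle_\F$ is assembled entirely from maps already shown to be $G$-covariant, and then to collapse the remaining coaction using the invariance of the Haar functional. Concretely, by Proposition~\ref{proposition:HermitianInnerProduct} and Definition~\ref{definition:HermitianMetric}, in the quantum homogeneous space case $\operatorname{\bf f}=\haar$ and $\langle\alpha,\beta\rangle_\F=\haar\big(g_\F(\alpha,\beta)\big)=\haar\big(\vol(\alpha\wedgeev\bar{\ast}_\F(\beta))\big)$, so it suffices to establish two facts: first, that $g_\F$ is a morphism of left $G$-comodules into $M$ (with $M$ carrying the coaction restricted from $G$, and with the second argument of $g_\F$ carrying the conjugate coaction $\ol{\xi}\mapsto(\xi_{(-1)})^*\otimes\ol{\xi_{(0)}}$, as is forced by the sesquilinearity of the pairing); and second, that $\haar\colon M\to\bC$ is a morphism onto the trivial comodule.

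For the first fact I would assemble $g_\F=\vol\circ(-\wedgeev\bar{\ast}_\F(-))$ from its constituents, all of which were established as $A$-comodule morphisms for a general Hopf $*$-algebra $A$ and which we now specialise to $A=G$: the $*$-operation on $\Om^\bullet$ is compatible with the coaction since $\Om^\bullet$ is an $A$-comodule $*$-algebra; the Hermitian structure $h\colon\ol{\F}\to\lvee\F$ is an $A$-comodule morphism by definition of $A$-covariance; the Hodge map $\ast_H$ of Definition~\ref{HDefn} (hence also $\ast_\F=\ast_H\otimes\id_\F$, the volume form $\vol=\ast_H|_{\Om^{2n}}$, the map $C_h$ of Definition~\ref{definition:TwistedHodge}, and $\bar{\ast}_\F=\ast_\F\circ C_h$) is $A$-covariant; and the evaluation pairing $\wedgeev$ of Remark~\ref{remark:DualsForDGModules}, being essentially multiplication in $\Om^\bullet$ after evaluation, is an $A$-comodule morphism. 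Composing these yields that $g_\F$ is $G$-covariant with values in the subcomodule $M\subseteq G$.

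For the second fact, recall that $G$ is by assumption a compact quantum group algebra, so that $\haar$ is left- and right-invariant: $(\id\otimes\haar)\circ\Delta=\haar(-)1=(\haar\otimes\id)\circ\Delta$. This says precisely that $\haar\colon G\to\bC$, and hence its restriction $\haar|_M\colon M\to\bC$, is a morphism of left $G$-comodules onto the trivial comodule. The composite $\langle\cdot,\cdot\rangle_\F=\haar\circ g_\F$ of left $G$-comodule morphisms is therefore again a morphism of left $G$-comodules onto $\bC$ with its trivial structure, which is the assertion. The only place where care is needed is the bookkeeping with the conjugate coaction in the second slot — so that $\bar{\ast}_\F$ is read as intertwining the coaction on $\Om^\bullet\otimes_M\F$ with the $*$-twisted coaction on $\lvee\F\otimes_M\Om^\bullet$ — together with the compatibility $\haar(g^*)=\overline{\haar(g)}$; once these are in place no genuine obstacle remains, the content being entirely that every ingredient was built to be covariant.
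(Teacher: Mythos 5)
Your proposal is correct and is precisely the argument the paper has in mind: the paper states this lemma without proof, calling it ``immediate,'' because every ingredient of $\la\cdot,\cdot\ra_\F = \haar\circ g_\F$ (the Hermitian structure $h$, the Hodge map, $C_h$ with its $*$-twisted intertwining property $\Delta_G\circ C_h=(*\oby C_h)\circ\Delta_G$, the evaluation pairing, and $\vol$) is covariant by construction, and the invariance of $\haar$ collapses the residual coaction onto the trivial comodule. Your bookkeeping of the conjugate coaction on the second slot is exactly the point the paper itself uses in the subsequent orthogonality proposition, so nothing is missing.
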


\begin{remark}
For any Hermitian vector bundle $(\F,h)$, the inner product space $\Om^\bullet \oby_M \F$ can be completed to a Hilbert space $L^2(\Om^{\bullet} \oby_M \F)$.  In ongoing work, we show that the operators $L_\F,\Lambda_{\F}, H_\F$ extend to bounded operators on $L^2(\Om^{\bullet} \oby_M \F)$, and hence give a bounded representation of $\mathfrak{sl}_2$ on the Hilbert space.  The operators $\adel_{\F}$ and $\adel_\F^\dagger$ are more badly behaved and, in general, will be unbounded operators.

For quantum homogeneous spaces, one can adapt this approach to produce sufficient conditions for these operators to produce spectral triples, as will be discussed in later work.
\end{remark}

\section{Hodge Decomposition and Serre Duality}\label{section:Hodge}

Let $(\Om^{(\bullet, \bullet)}, \odel, \del, \k)$ be a positive definite K\"ahler structure.  Let $\operatorname{\bf f}$ be a state such that the integral $\int = \operatorname{\bf f} \circ \vol_\k$ is closed.

Under the additional assumption of diagonalisability of the natural Dirac operator, we establish noncommutative generalisations of the fundamental results of Hodge theory for K\"ahler manifolds.  As a consequence, a direct \nc generalisation of Serre duality is then established.  Finally, we show that the assumption of diagonalisability always holds in the quantum homogenous space case.

\subsection{Diagonalisable K\"ahler structures and Hodge decomposition}

In this subsection we introduce direct noncommutative generalisations of the twisted Dirac and Laplace operators of a K\"ahler manifold, and under the assumption of diagonalisability  of the Dirac operator prove a noncommutative generalisation of Hodge decomposition. This then allows to establish an isomorphism between harmonic forms and cohomology classes, a crucial result in all of later discussion of cohomology groups.

\begin{definition}
For a (left or right) holomorphic vector bundle $(\F,\adel_\F)$ with a positive definite Hermitian structure, we define its {\em Dirac operator} and {\em Laplacian} as the self-adjoint operators
\bas
D_{\odel_\F} \coloneqq \adel_\F + \adel_\F^\dagger, & & \DEL_{\odel_\F} \coloneqq D_{\F}^2 = \adel_{\F}^\dagger\adel_{\F} + \adel_{\F} \adel_{\F}^\dagger, 
\eas
respectively.  Moreover, we denote $\H^\bullet_{\adel_\F} \coloneqq \ker\left(\DEL_{\F}\right)$, and call elements of $\H^\bullet_{\adel_\F}$ {\em harmonic} elements.
\end{definition}

We now introduce the notion of diagonalisability for a K\"ahler structure in terms of its Dirac operator. (See Remark \ref{remark:HermitianFormDiagonilisable} below for a discussion of the more general Hermitian structure case.)

\begin{definition}\label{definition:Diagonalisable}
We say that a K\"ahler structure on the holomorphic vector bundle with a positive definite Hermitian structure is {\em diagonalisable} if its Dirac operator $D_{\adel_\F}$ is diagonalisable as a $\bC$-linear operator.
\end{definition}

With the definitions of Dirac and Laplace operators, and diagonalisability in hand, we are now ready to prove our noncommutative generalisation of Hodge decomposition.

\begin{proposition}\label{proposition:HodgeDecomposition}
Let $V = (\oplus_i V^i, \exd)$ be a differential graded vector space.  Assume that $V$ has a non-degenerate Hermitian inner product $\langle - , - \rangle$, and that $\exd$ admits an adjoint $\exd^\dagger\colon V \to V$.  Write $D = \exd + \exd^\dagger$.

\begin{enumerate}
\item\label{enumerate:HodgeProposition1} $\Ker D = \Ker D^2 = \Ker \exd \cap \Ker \exd^\dagger,$
\item\label{enumerate:HodgeProposition2} $\im \exd, \im \exd^\dagger,$ and $\Ker D$ are pairwise perpendicular.
\end{enumerate}

If the linear map $D$ is diagonalisable, then
\begin{enumerate}[resume]
\item\label{enumerate:HodgeProposition3} $\Ker \exd = \im \exd \oplus \Ker D^2,$
\item\label{enumerate:HodgeProposition4} $V = \im \exd \oplus \im \exd^\dagger \oplus \ker(D^2),$
\item\label{enumerate:HodgeProposition5} the canonical map $\Ker D \to H^\bullet_{V} = \Ker (\exd) / \im (\exd)$, mapping an element to its image in the homology, is an isomorphism.
\end{enumerate}
\end{proposition}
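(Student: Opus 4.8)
The plan is to reduce all five assertions to the spectral decomposition of the self-adjoint operator $D$. First I would record the formal consequences of the hypotheses $\exd^2=0$ and $\langle \exd x,y\rangle=\langle x,\exd^\dagger y\rangle$: that $(\exd^\dagger)^2=0$, that $D$ is self-adjoint (since $\exd^\dagger$ is an adjoint of $\exd$, and conversely by complex-conjugating the adjunction relation), and that $D^2=\exd\exd^\dagger+\exd^\dagger\exd$. Then (\ref{enumerate:HodgeProposition1}) follows: the chain $\Ker\exd\cap\Ker\exd^\dagger\subseteq\Ker D\subseteq\Ker D^2$ is immediate, and for the reverse one computes $\langle D^2 v,v\rangle=\langle\exd v,\exd v\rangle+\langle\exd^\dagger v,\exd^\dagger v\rangle$, so $D^2v=0$ forces $\exd v=\exd^\dagger v=0$ by positive definiteness of the inner product. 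For (\ref{enumerate:HodgeProposition2}), $\im\exd\perp\im\exd^\dagger$ is a one-line adjunction computation using $\exd^2=0$, while $\im\exd,\im\exd^\dagger\perp\Ker D$ follows from (\ref{enumerate:HodgeProposition1}), elements of $\Ker D$ being annihilated by both $\exd$ and $\exd^\dagger$.

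For the last three parts I would invoke diagonalisability. Being self-adjoint and diagonalisable with respect to a positive definite Hermitian form, $D$ has real eigenvalues and pairwise orthogonal eigenspaces, so $V=\bigoplus_{\lambda\in\bR}V_\lambda$ with $V_\lambda=\Ker(D-\lambda\,\id)$, and a short orthogonality argument gives $(\Ker D)^\perp=\bigoplus_{\lambda\neq 0}V_\lambda$. The key step is then the identification $\bigoplus_{\lambda\neq 0}V_\lambda=\im\exd\oplus\im\exd^\dagger=(\Ker D)^\perp$: for a nonzero eigenvalue $\lambda$, any $v\in V_\lambda$ satisfies $v=\exd(\lambda^{-1}v)+\exd^\dagger(\lambda^{-1}v)\in\im\exd+\im\exd^\dagger$, giving one inclusion, whereas $\im\exd+\im\exd^\dagger\subseteq(\Ker D)^\perp$ by (\ref{enumerate:HodgeProposition2}) gives the other; since the two outer terms coincide, all three do. Combining with $V=\Ker D\oplus(\Ker D)^\perp$ and $\Ker D=\Ker D^2$ (part (\ref{enumerate:HodgeProposition1})) yields (\ref{enumerate:HodgeProposition4}).

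For (\ref{enumerate:HodgeProposition3}) the inclusion $\im\exd\oplus\Ker D^2\subseteq\Ker\exd$ is clear; conversely, given $v\in\Ker\exd$, decompose $v=\exd a+\exd^\dagger b+h$ as in (\ref{enumerate:HodgeProposition4}), apply $\exd$ to obtain $\exd\exd^\dagger b=0$, and then $\langle\exd^\dagger b,\exd^\dagger b\rangle=\langle\exd\exd^\dagger b,b\rangle=0$ forces $\exd^\dagger b=0$, so $v\in\im\exd\oplus\Ker D^2$. Finally, for (\ref{enumerate:HodgeProposition5}) the composite $\Ker D\hookrightarrow\Ker\exd\to\Ker\exd/\im\exd$ is injective because $\Ker D\cap\im\exd=0$ (perpendicular subspaces by (\ref{enumerate:HodgeProposition2}), hence meeting only in $0$), and surjective because for $w\in\Ker\exd$ the harmonic summand $h\in\Ker D^2=\Ker D$ supplied by (\ref{enumerate:HodgeProposition3}) satisfies $[w]=[h]$.

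The only genuinely non-formal point — the main obstacle — is that $\exd$ commutes with $D^2$ but not with $D$, so one cannot decompose $\exd$ eigenspace-by-eigenspace as in the finite-dimensional Laplacian picture; the argument circumvents this by never decomposing $\exd$, instead squeezing $(\Ker D)^\perp$ between $\im\exd+\im\exd^\dagger$ and $\bigoplus_{\lambda\neq 0}V_\lambda$. A minor point worth flagging is that "inner product" here must be read in the positive-definite sense, since positivity (not merely non-degeneracy) is what lets one pass from $\langle\exd v,\exd v\rangle=0$ to $\exd v=0$.
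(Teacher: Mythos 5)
Your proof is correct, and all five parts go through, but the crucial step is handled by a genuinely different (and slicker) device than the one in the paper. Where you write $v=\lambda^{-1}Dv=\exd(\lambda^{-1}v)+\exd^{\dagger}(\lambda^{-1}v)$ for an eigenvector $v$ with nonzero eigenvalue $\lambda$, obtaining $\bigoplus_{\lambda\neq 0}V_{\lambda}\subseteq \im\exd+\im\exd^{\dagger}$ in one line and hence statement (4) directly, the paper instead runs an induction on the number of degrees in which an eigenvector $b=\sum_i b^i$ is supported: it isolates the top-degree component $b^j$, observes $\exd(b^j)=0$ by comparing degree-$(j+1)$ components of $Db=\lambda b$, checks that $\lambda b^j+\exd^{\dagger}(b^j)$ is again a $\lambda$-eigenvector, and subtracts, eventually writing $b$ as a sum of elements of $\Ker\exd$ and of $\im\exd^{\dagger}$; this yields $V=\Ker\exd\oplus\im\exd^{\dagger}$, from which (3) is deduced first and (4) second — the reverse of your order. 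Your route never touches the grading of $V$ (the grading enters only in interpreting (5) degreewise), avoids the iteration entirely, and sidesteps the non-commutation of $\exd$ with $D$ exactly as you describe; the paper's route has the mild advantage of exhibiting explicitly how each eigenvector splits across $\Ker\exd$ and $\im\exd^{\dagger}$ degree by degree. Your closing remark is also on point: both proofs use positive definiteness (not mere non-degeneracy) of the inner product, e.g.\ to pass from $\langle \exd v,\exd v\rangle=0$ to $\exd v=0$ and from $\langle Dv,Dv\rangle=\langle D^{2}v,v\rangle=0$ to $Dv=0$, even though the proposition's hypothesis is phrased as ``non-degenerate.''
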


\begin{proof}
Let $v, w \in V$.  We have
\[
\langle \exd v, \exd^\dagger w \rangle = \langle \exd^2 v , w \rangle = \langle 0,w\rangle=0,
\]
so that $(\im \exd) \perp (\im \exd^\dagger)$.  Consequently, we find
\[\Ker D = \Ker (\exd + \exd^\dagger) = \Ker \exd \cap \Ker \exd^\dagger.\]
Furthermore, as $D = \exd + \exd^\dagger$ is a self-adjoint operator, we have $\Ker D = \Ker D^2$.  This establishes that (\ref{enumerate:HodgeProposition1}) holds.

To show that (\ref{enumerate:HodgeProposition2}) holds, we need only show that $(\Ker D) \perp (\im \exd + \im \exd^\dagger).$  For this, let $u \in \Ker D$ and $v, w \in V$, we have (using (\ref{enumerate:HodgeProposition1})):
\[ \langle u, \exd(v) + \exd^\dagger(w) \rangle = \langle \exd^\dagger(u), v \rangle + \langle \exd(u), w \rangle = 0.\]
This establishes (\ref{enumerate:HodgeProposition2}).

To continue the proof, we start with the following observation.  Let $b= \sum_i b^i \in V$ be any eigenvector for $D$ and let $\lambda \in \mathbb{C}$ be the eigenvalue.  Let $j$ be the largest number such that $b^j \not= 0$.  We have $\exd(b^j) = 0$, so $b^j \in \Ker \exd$.

We claim that $\lambda b^j + \exd^\dagger(b^j)$ is an eigenvector of $D$ with eigenvalue $\lambda$.  Firstly, as $D^2$ is a degree zero map and $D^2(b) = \lambda^2 b$, we find that $D^2(b^j) = \lambda^2 b^j$.  We have:
\begin{align*}
D\left(\lambda b^j + \exd^\dagger(b^j)\right) &= \left(\exd + \exd^\dagger \right)\left(\lambda b^j + \exd^\dagger(b^j)\right) \\
&= \lambda\exd^\dagger(b^j) + \exd \exd^\dagger (b^j) \\
&= \lambda\exd^\dagger(b^j) + \exd \exd^\dagger (b^j) + \exd^\dagger \exd (b^j)\\
&= \lambda\exd^\dagger(b^j) + D^2 (b^j)\\
&= \lambda\exd^\dagger(b^j) + \lambda^2 b^j\\
&= \lambda \left( \lambda b^j + \exd^\dagger(b^j)\right).
\end{align*} 
Hence, $\lambda b^j + \exd^\dagger(b^j)$ is an eigenvector of $D$ with eigenvalue $\lambda$, and, as such, also an eigenvector of $D^2$ with eigenvalue $\lambda^2$.  Since $D^2$ is a degree 0 map, both $b^j$ and $\exd^\dagger(b^j)$ are eigenvectors of $D^2$ with eigenvalue $\lambda^2$.

We have the following possibilities.  The first possibility is that $\lambda = 0$.  In this case: $b \in \ker \exd$.  Note that $b = b^{j}$ (thus, $b$ is concentrated in degree $j$) also implies that $\lambda = 0$.  The second possibility is where $\lambda \not= 0$.  In this case, $b' \coloneqq b - (b^j + \frac{1}{\lambda}\exd^\dagger(b^j))$ is an eigenvector of $D$ (or possibly zero) with eigenvalue $\lambda$ and $b'$ is supported in strictly fewer degrees than $b$.  Iterating this reduction, we see that $b$ can be written as a sum of elements of $\Ker \exd$ (the elements $b^j$) and elements of $\im \exd^\dagger$ (the elements $\exd^\dagger(b^j)$).

This shows that, if $b \in V$ is an eigenvector for $D$, we have: $b \in \Ker \exd + \im \exd^\dagger$.  As $D\colon V \to V$ is diagonalisable, there is a basis of eigenvectors and hence $V = \Ker (\exd) + \im (\exd^\dagger)$.

For any $v \in \Ker (\exd)$ and $w \in V$, we have
\[\langle v, \exd^\dagger(w) \rangle = \langle \exd(v), w \rangle = 0.\]
This means that $V = \Ker (\exd) \oplus \im (\exd^\dagger)$ is an orthogonal decomposition.  As $\im (\exd^\dagger) \subseteq \Ker (\exd^\dagger)$, we have $V = \Ker (\exd) + \Ker (\exd^\dagger)$.  Combining this with (\ref{enumerate:HodgeProposition1}) yields 
\[ \Ker (\exd^\dagger) = \left( \ker (\exd) \cap \ker (\exd^\dagger)\right )\oplus \im (\exd^\dagger) = \Ker (D^2) \oplus \im (\exd^\dagger),\] establishing (\ref{enumerate:HodgeProposition3}).

Statement (\ref{enumerate:HodgeProposition4}) follows from $V = \Ker (\exd) \oplus \im (\exd^\dagger)$ and (\ref{enumerate:HodgeProposition3}).  Finally, (\ref{enumerate:HodgeProposition5}) follows from $H^\bullet_{V} = \Ker (\exd) / \im (\exd)$ together with (\ref{enumerate:HodgeProposition4}).
\end{proof}

\begin{theorem}[Hodge Decomposition]\label{theorem:HodgeDecomposition}
Let $(\F, \odel_\F)$ be a (left or right) Hermitian holomorphic vector bundle and write $\Om^\bullet_\F$ for $\Om^{\bullet} \otimes_M \F$ (if $\F$ is left holomorphic) or $\Om^{\bullet} \otimes_M \F$ (if $\F$ is right holomorphic).  If the Hermitian structure on $\F$ is positive definite and the K\"ahler structure on $\Om^\bullet_\F$ is diagonalisable, then there is an orthogonal decomposition of $A$-comodules:
\bas
\Om^{\bullet}_\F = {\cal H}_{\ol{\del}_{\F}}^\bullet \oplus \ol{\del}_\F\left(\Om^\bullet_{\F}\right) \oplus  \ol{\del}_\F^\dagger\left(\Om^\bullet_{\F}\right).
\eas
Furthermore, the projection 
$
\H^{(a,b)}_{\adel_\F} \to H^{(a,b)}_{\adel_\F}\colon \a \mto [\a]
$
is an isomorphism.
\end{theorem}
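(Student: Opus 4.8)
The plan is to deduce the theorem from the abstract Hodge decomposition, Proposition~\ref{proposition:HodgeDecomposition}, applied to the twisted Dolbeault complex. I would take $V = \Om^\bullet_\F = \Om^\bullet \otimes_M \F$, viewed as a differential graded vector space with the grading by antiholomorphic degree and with differential $\exd \coloneqq \adel_\F$; this squares to zero since it is the differential of the dg module $\Om^\bullet \otimes_{\Om^{(0,\bullet)}} \fF$ (see Notation~\ref{notation:TwistedComplex}). Under this identification, the asserted orthogonal decomposition is exactly part~(\ref{enumerate:HodgeProposition4}) of Proposition~\ref{proposition:HodgeDecomposition}, after using part~(\ref{enumerate:HodgeProposition1}) to rewrite $\ker D^2 = \ker D = \ker(\DEL_{\adel_\F}) = \H^\bullet_{\adel_\F}$; pairwise orthogonality of the three summands is part~(\ref{enumerate:HodgeProposition2}); and the ``furthermore'' clause is part~(\ref{enumerate:HodgeProposition5}).

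So the remaining work is to verify the three hypotheses of Proposition~\ref{proposition:HodgeDecomposition}. First, $V$ must carry a non-degenerate Hermitian inner product: since, by the standing assumptions of~\textsection\ref{section:Hodge}, the K\"ahler structure is positive definite and, by hypothesis, $h$ is a positive definite Hermitian structure on $\F$, Proposition~\ref{prop:metricpositivity} gives that the metric $g_\F$ is positive definite, whence the pairing $\langle -,-\rangle_\F = \operatorname{\bf f}\circ g_\F$ of Proposition~\ref{proposition:HermitianInnerProduct} is a positive definite, hence non-degenerate, Hermitian inner product. Second, $\adel_\F$ must be adjointable for $\langle -,-\rangle_\F$: this is Proposition~\ref{proposition:adelIsAdjointable}, which rests on the standing assumption that the integral $\int = \operatorname{\bf f}\circ\vol_\k$ is closed. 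Third, $D = \adel_\F + \adel_\F^\dagger$ must be diagonalisable as a $\bC$-linear map: this is precisely the hypothesis that the K\"ahler structure on $\Om^\bullet_\F$ is diagonalisable, Definition~\ref{definition:Diagonalisable}. Proposition~\ref{proposition:HodgeDecomposition} then applies and returns both the decomposition and the isomorphism $\H^\bullet_{\adel_\F}\to H^\bullet_{\adel_\F}$, $\a \mapsto [\a]$.

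It remains to recover the bigraded refinement and the comodule structure. Using the formula $\adel_\F^\dagger = -\bar{\ast}_{\lvee\!\F}\circ\adel_{\lvee\F}\circ\bar{\ast}_\F$ of Proposition~\ref{proposition:adelIsAdjointable} together with the bidegree statements of Lemma~\ref{Hodgeprop}, the operator $\adel_\F^\dagger$ has bidegree $(0,-1)$ while $\adel_\F$ has bidegree $(0,1)$, so $\adel_\F$, $\adel_\F^\dagger$ and $D$ all preserve the holomorphic degree $a$. Hence $(\Om^\bullet_\F,\adel_\F)$ is the direct sum over $a$ of the subcomplexes $\big(\bigoplus_b \Om^{(a,b)}\otimes_M\F,\, \adel_\F\big)$, the map $D$ restricts to a diagonalisable operator on each of them (a restriction of a diagonalisable operator to an invariant subspace stays diagonalisable), and applying Proposition~\ref{proposition:HodgeDecomposition} to each subcomplex and then separating out the residual $b$-grading yields $\H^{(a,b)}_{\adel_\F}\cong H^{(a,b)}_{\adel_\F}$. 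For the claim that the decomposition is one of $A$-comodules: $\adel_\F$ is $A$-covariant by construction, so $\ker\adel_\F$ and $\im\adel_\F = \adel_\F(\Om^\bullet_\F)$ are $A$-subcomodules; and whenever the inner product is $A$-invariant — which, by the lemma of~\textsection\ref{section:QHS}, holds in the quantum homogeneous space case with $\operatorname{\bf f}$ the Haar functional, and trivially when $A = \bC$ — the adjoint $\adel_\F^\dagger$ is $A$-covariant as well, so $\adel_\F^\dagger(\Om^\bullet_\F)$ and $\H^\bullet_{\adel_\F} = \ker\adel_\F\cap\ker\adel_\F^\dagger$ are $A$-subcomodules too. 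The right-holomorphic case is symmetric, with the $(\Om^{(0,\bullet)},\adel)$-module structures replaced by their right-handed analogues.

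I do not expect a real obstacle here: the substantive content — the eigenvector reduction that forces every eigenvector of $D$ to split into a $\ker\exd$-part and an $\im\exd^\dagger$-part — is already contained in Proposition~\ref{proposition:HodgeDecomposition}, and the theorem is essentially its translation into the geometric language. The only two points that call for genuine care are (i) checking non-degeneracy of $\langle -,-\rangle_\F$, which really needs positive-definiteness of \emph{both} the Hermitian structure and the K\"ahler form, not merely non-degeneracy of either, and (ii) the comodule refinement, where one should be explicit that $A$-covariance of $\adel_\F^\dagger$ is not automatic for an arbitrary Hopf $*$-algebra $A$ but does follow from $A$-invariance of the chosen state, which is what holds in all the settings of interest (including, in particular, all quantum homogeneous space examples).
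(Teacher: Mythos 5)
Your proposal is correct and follows essentially the same route as the paper: both deduce the theorem by applying Proposition \ref{proposition:HodgeDecomposition} to the twisted complex, with the hypotheses supplied by Propositions \ref{prop:metricpositivity}, \ref{proposition:HermitianInnerProduct} and \ref{proposition:adelIsAdjointable} and the diagonalisability assumption. You are in fact more explicit than the paper on two points it leaves implicit --- the bidegree argument giving the $(a,b)$-refinement and the observation that $A$-covariance of $\adel_\F^\dagger$ rests on $A$-invariance of the state --- which is a welcome tightening rather than a divergence.
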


\begin{proof}
The decomposition as vector spaces follows directly from Proposition \ref{proposition:HodgeDecomposition}.  As the embeddings ${\cal H}_{\ol{\del}_{\F}}^\bullet, \ol{\del}_\F\left(\Om^\bullet_{\F}\right),  \ol{\del}_\F^\dagger\left(\Om^\bullet_{\F}\right) \to \Om^{\bullet}_\F$ are all $A$-comodule maps, the direct sum decomposition also respects the $A$-comodule structure.
\end{proof}

\begin{remark}\label{remark:HermitianFormDiagonilisable}
\item Removing the assumption of closure of $\k$ from the definition of a K\"ahler form one arrives at the definition of an Hermitian form;  the definition of diagonalisability (Definition \ref{definition:Diagonalisable}) carries over directly to the Hermitian case.  As closure of the K\"ahler form is not required in the proof of Theorem \ref{theorem:HodgeDecomposition} (it is not required in Proposition \ref{proposition:HodgeDecomposition}), the Hodge decomposition holds in this more general setting.
\end{remark}


\subsection{Serre duality}

We now turn our attention to Serre duality.  We start by using Hodge decomposition (and its implied equivalence between harmonic forms and cohomology classes) to show that the map $\bar{\ast}_\F\colon \Om^\bullet \oby_M \F \to \lvee\!\F \oby_M \Om^\bullet$ induces an isomorphism on homology.

\begin{proposition}\label{proposition:hodgeH}
Under the conditions of Theorem \ref{theorem:HodgeDecomposition}, the map $\bar{\ast}_\F\colon \Om^\bullet \oby_M \F \to {}^\vee\!\F \oby_M \Om^\bullet$ maps harmonic forms to harmonic forms, and hence induces an isomorphism of cohomology groups.
\end{proposition}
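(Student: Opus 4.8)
The plan is to exploit the fact that $\bar{\ast}_\F = \ast_\F \circ C_h$, while not a cochain map, intertwines the Dolbeault data on $\Om^\bullet \oby_M \F$ with that on $\lvee\!\F \oby_M \Om^\bullet$ up to signs and up to interchanging $\adel$ with its adjoint. It therefore restricts to a bijection between the two spaces of harmonic forms, and composing this with the Hodge-theoretic identification of harmonic forms with cohomology on each side yields the desired isomorphism (in bidegrees $(a,b) \leftrightarrow (n-a,n-b)$, by Lemma \ref{Hodgeprop}(1)).

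Concretely, first I would use Theorem \ref{theorem:HodgeDecomposition} together with Proposition \ref{proposition:HodgeDecomposition}(\ref{enumerate:HodgeProposition1}) to replace ``harmonic'' by the pair of conditions $\adel_\F(\alpha) = 0$ and $\adel_\F^\dagger(\alpha) = 0$, and likewise $\H^\bullet_{\adel_{\lvee\!\F}} = \Ker \adel_{\lvee\!\F} \cap \Ker \adel_{\lvee\!\F}^\dagger$. Starting from the codifferential formula $\adel_\F^\dagger = -\bar{\ast}_{\lvee\!\F} \circ \adel_{\lvee\!\F} \circ \bar{\ast}_\F$ of Proposition \ref{proposition:adelIsAdjointable}(1) and substituting $\bar{\ast}_{\lvee\!\F}^{-1} = (-1)^{|\cdot|}\bar{\ast}_\F$ (Lemma \ref{Hodgeprop}(3)--(4)), one gets, on homogeneous elements,
\[
\adel_{\lvee\!\F} \circ \bar{\ast}_\F = (-1)^{|\cdot|}\,\bar{\ast}_\F \circ \adel_\F^\dagger ;
\]
applying Proposition \ref{proposition:adelIsAdjointable}(1) to the right holomorphic vector bundle $\lvee\!\F$ (whose dual is $\F$) and Lemma \ref{Hodgeprop} in the same way gives
\[
\adel_{\lvee\!\F}^\dagger \circ \bar{\ast}_\F = (-1)^{|\cdot|}\,\bar{\ast}_\F \circ \adel_\F .
\]
The precise signs are irrelevant: together the two identities show $\bar{\ast}_\F\big(\Ker \adel_\F \cap \Ker \adel_\F^\dagger\big) \sseq \Ker \adel_{\lvee\!\F} \cap \Ker \adel_{\lvee\!\F}^\dagger$, so $\bar{\ast}_\F$ sends harmonic forms to harmonic forms; running the identical argument for $\bar{\ast}_{\lvee\!\F}$ and invoking $\bar{\ast}_{\lvee\!\F} \circ \bar{\ast}_\F = (-1)^{|\cdot|}\id$ shows that the restriction $\bar{\ast}_\F\colon \H^\bullet_{\adel_\F} \to \H^\bullet_{\adel_{\lvee\!\F}}$ is a bijection, carrying $\H^{(a,b)}_{\adel_\F}$ onto $\H^{(n-a,n-b)}_{\adel_{\lvee\!\F}}$.

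To conclude that this produces an isomorphism on cohomology, I would check that Theorem \ref{theorem:HodgeDecomposition} applies to the right holomorphic vector bundle $\lvee\!\F$ as well, so that its cohomology is computed by harmonic representatives. The Hermitian structure $h$ on $\F$ induces one on $\lvee\!\F$, which must be shown to be positive definite, and the Dirac operator $D_{\adel_{\lvee\!\F}}$ must be diagonalisable; for the latter, combining the two displayed identities gives $\DEL_{\adel_{\lvee\!\F}} \circ \bar{\ast}_\F = \bar{\ast}_\F \circ \DEL_{\adel_\F}$, so $\DEL_{\adel_{\lvee\!\F}}$ is diagonalisable (being conjugate to $\DEL_{\adel_\F} = D_{\adel_\F}^2$), and $D_{\adel_{\lvee\!\F}}$, being self-adjoint for the positive definite Hermitian inner product with diagonalisable square, is itself diagonalisable. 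Granting this, Theorem \ref{theorem:HodgeDecomposition} supplies isomorphisms $\H^{(a,b)}_{\adel_\F} \cong H^{(a,b)}(\Om^\bullet \oby_M \F)$ and $\H^{(n-a,n-b)}_{\adel_{\lvee\!\F}} \cong H^{(n-a,n-b)}(\lvee\!\F \oby_M \Om^\bullet)$, which, composed with the bijection on harmonic forms, yield the asserted isomorphism on cohomology.

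The sign bookkeeping in the intertwining identities is routine. The main obstacle I expect is the verification that the Hermitian structure induced on $\lvee\!\F$ is positive definite: this is a genuine statement about the dual pairing rather than a formal consequence of what precedes, and everything downstream of the Hodge decomposition for the dual bundle (including the self-adjointness of $D_{\adel_{\lvee\!\F}}$ used above) rests on it.
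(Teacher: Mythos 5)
Your proof is correct and follows essentially the same route as the paper: both arguments rest on the codifferential formulas of Proposition \ref{proposition:adelIsAdjointable} and the involution identities of Lemma \ref{Hodgeprop} to show that $\bar{\ast}_\F$ intertwines the relevant operators (the paper conjugates the Laplacians directly, while you split this into the two kernel-preserving identities, which amounts to the same computation). Your additional care in checking that Theorem \ref{theorem:HodgeDecomposition} applies to the dual bundle $\lvee\!\F$ is a point the paper leaves implicit, and is a worthwhile observation rather than a divergence of method.
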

\begin{proof}
We prove the fact that $\bar{\ast}_\F$ maps harmonics to harmonics by showing that  $\bar{\ast}_\F$ intertwines the left and right Laplacians. For $\a \in \Om^k \oby_M \F$.  We have, using Lemma \ref{Hodgeprop} and Proposition \ref{proposition:adelIsAdjointable}.  
\begin{align*}
 \DEL_{\adel_{{}^\vee\!\F}} \circ \bar{\ast}_{\F}(\a)
&=  \big(\adel^\dagger_{\lvee\!\F} \circ \adel_{\lvee\!\F} + \adel_{\lvee\!\F}  \circ \adel^\dagger_{\lvee\!\F} \big) \circ \bar{\ast}_{\F}(\a) \\
&= - \Big(\bar{\ast}_{\F} \circ \adel_{\F} \circ \bar{\ast}_{\lvee\!\F} \circ \adel_{\F} + \adel_{\lvee\!\F}  \circ \bar{\ast}_{\F} \circ \adel_{\F} \circ \bar{\ast}_{{}^\vee\!\F} \Big) \circ \bar{\ast}_{\F}(\a)\\
&= - \bar{\ast}_{\F} \circ \adel_{\F} \circ \bar{\ast}_{\lvee\!\F} \circ \adel_{\F} \circ \bar{\ast}_{\F}(\a) - \adel_{\lvee\!\F}  \circ \bar{\ast}_{\F} \circ \adel_{\F} \circ \bar{\ast}_{{}^\vee\!\F} \circ \bar{\ast}_{\F}(\a)\\
&= - \bar{\ast}_{\F} \circ \adel_{\F} \circ \bar{\ast}_{\lvee\!\F} \circ \adel_{\F} \circ \bar{\ast}_{\F}(\a) -(-1)^{k} \adel_{\lvee\!\F}  \circ \bar{\ast}_{\F} \circ \adel_{\F}(\a)\\
&= - \bar{\ast}_{\F} \circ \adel_{\F} \circ \bar{\ast}_{\lvee\!\F} \circ \adel_{\F} \circ \bar{\ast}_{\F}(\a) - \bar{\ast}_{\F} \circ \bar{\ast}_{\lvee\!\F} \circ \adel_{\lvee\!\F}  \circ \bar{\ast}_{\F} \circ \adel_{\F}(\a)\\
&= \bar{\ast}_{\F} \circ \adel_{\F} \circ \adel^\dagger_{\F}(\a) + \bar{\ast}_{\F} \circ \adel^\dagger_{\F} \circ \adel_{\F}(\a)\\
&= \bar{\ast}_{\F}\,  \circ \DEL_{\adel_{\F}},
\end{align*}
as required.
\end{proof}

\begin{corollary}\label{corollary:HodgeMapsCohomology}
Under the conditions of Theorem \ref{theorem:HodgeDecomposition}, the map $\bar{\ast}_\F\colon \Om^\bullet \oby_M \F \to {}^\vee\!\F \oby_M \Om^\bullet$ induces a map on the cohomology $\bar{\ast}_\F\colon H^{(a,b)}_{\adel_\F} \to \bar{\ast}_\F\colon H^{(n-a,n-b)}_{\adel_{\lvee\!\F}}$.
\end{corollary}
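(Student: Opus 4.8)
The plan is to produce the map on cohomology by passing through harmonic representatives, exactly as in the classical Hodge-theoretic approach to Serre duality: the map $\bar{\ast}_\F$ is not itself a cochain map, so the induced map on cohomology must be defined on harmonic forms and then transported. All the substantive content is already contained in Proposition \ref{proposition:hodgeH} and Theorem \ref{theorem:HodgeDecomposition}; what remains is to assemble these while tracking bidegrees.

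First I would record the bidegree behaviour of $\bar{\ast}_\F$. By Lemma \ref{Hodgeprop}(1), $\bar{\ast}_\F$ maps $\Om^{(a,b)} \otimes_M \F$ onto $\lvee\!\F \otimes_M \Om^{(n-a,n-b)}$, and $\bar{\ast}_\F$ is the direct sum over all bidegrees of its restrictions to these homogeneous pieces, so it is bidegree-reversing in the sense $(a,b) \mapsto (n-a,n-b)$. By Proposition \ref{proposition:hodgeH}, $\bar{\ast}_\F$ intertwines $\DEL_{\adel_\F}$ with $\DEL_{\adel_{\lvee\!\F}}$, hence carries $\H^\bullet_{\adel_\F} = \ker \DEL_{\adel_\F}$ into $\H^\bullet_{\adel_{\lvee\!\F}}$. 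Combining these two facts, $\bar{\ast}_\F$ restricts to a (conjugate-linear) isomorphism $\H^{(a,b)}_{\adel_\F} \to \H^{(n-a,n-b)}_{\adel_{\lvee\!\F}}$, whose inverse is the corresponding restriction of $\bar{\ast}_{\lvee\!\F}$ by Lemma \ref{Hodgeprop}(3)--(4).

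Next I would invoke Theorem \ref{theorem:HodgeDecomposition}, which makes the projections $\H^{(a,b)}_{\adel_\F} \to H^{(a,b)}_{\adel_\F}$ and $\H^{(n-a,n-b)}_{\adel_{\lvee\!\F}} \to H^{(n-a,n-b)}_{\adel_{\lvee\!\F}}$ isomorphisms. The induced map on cohomology is then defined by sending a class $[\a] \in H^{(a,b)}_{\adel_\F}$ to $[\bar{\ast}_\F(\a_h)]$, where $\a_h$ is the unique harmonic representative of $[\a]$; well-definedness and (conjugate-)linearity are immediate, and the analogous construction with $\bar{\ast}_{\lvee\!\F}$ supplies an inverse, so the map is in fact an isomorphism $\bar{\ast}_\F\colon H^{(a,b)}_{\adel_\F} \to H^{(n-a,n-b)}_{\adel_{\lvee\!\F}}$.

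The only point that requires attention, and hence the main obstacle such as it is, is that Theorem \ref{theorem:HodgeDecomposition} is being applied to $\lvee\!\F$ as well as to $\F$, so one must know that the Hermitian structure on $\lvee\!\F$ is positive definite and that the K\"ahler structure on $\Om^\bullet_{\lvee\!\F}$ is diagonalisable. The former is straightforward from the definition of the dual Hermitian structure; the latter follows from Proposition \ref{proposition:hodgeH}, since the intertwining relation conjugates the eigenspace decomposition of $\DEL_{\adel_\F}$ onto one of $\DEL_{\adel_{\lvee\!\F}}$, so $\DEL_{\adel_{\lvee\!\F}}$ is diagonalisable whenever $\DEL_{\adel_\F}$ is. Beyond this bookkeeping, no new homological or analytic input is needed.
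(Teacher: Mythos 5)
Your proposal is correct and follows essentially the same route as the paper: Corollary \ref{corollary:HodgeMapsCohomology} is there an immediate consequence of Proposition \ref{proposition:hodgeH} (the intertwining $\DEL_{\adel_{\lvee\!\F}} \circ \bar{\ast}_\F = \bar{\ast}_\F \circ \DEL_{\adel_\F}$, hence harmonics map to harmonics) combined with the bidegree statement of Lemma \ref{Hodgeprop} and the identification of harmonics with cohomology from Theorem \ref{theorem:HodgeDecomposition}. Your closing observation --- that one must also know the Hodge decomposition holds for $\lvee\!\F$, which you obtain by transporting diagonalisability through the intertwining relation --- is a point the paper glosses over, and is a welcome extra check rather than a deviation in method.
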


With this result in hand we are now in a position to prove a direct noncommutative generalisation of classical Serre duality pairing of cohomology groups. Moreover, under the assumption of finite dimensionality, we get a $\bC$-linear isomorphism of the paired spaces.

\begin{theorem}[Serre duality]
There is a non-degenerate sesquilinear pairing given by
\begin{align*}
H^{(a,b)}_{\odel_\F} \otimes H^{(n-a,n-b)}_{\odel{\lvee\!\F}} \to \mathbb{C} && \big([\alpha],[\beta]\big) \mapsto \int \alpha \wedgeev \beta.
\end{align*}
\end{theorem}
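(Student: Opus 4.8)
The plan is to check the three constituent claims in turn: that the formula $([\alpha],[\beta])\mapsto\int\alpha\wedgeev\beta$ descends to cohomology, that it is sesquilinear, and that it is non-degenerate. Sesquilinearity is formal, being immediate from the $\bC$-linearity of the evaluation map $\wedgeev$ (Remark~\ref{remark:DualsForDGModules}) and of $\int=\operatorname{\bf f}\circ\vol$, so the content lies in the remaining two points. For well-definedness, the key observation is that $\int$ annihilates every bidegree other than $(n,n)$: for $\alpha\in\Om^{(a,b)}\oby_M\F$ and $\beta\in\lvee\!\F\oby_M\Om^{(n-a,n-b)}$ the product $\alpha\wedgeev\beta$ lies in $\Om^{(n,n)}$, which both forces the bidegree constraint in the statement and controls which differentials of coboundaries can contribute. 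If $\alpha$ is replaced by $\alpha+\odel_\F(\gamma)$ with $\gamma\in\Om^{(a,b-1)}\oby_M\F$, then since $\wedgeev$ is a cochain map (Remark~\ref{remark:DualsForDGModules}(3)) and $\odel_{\lvee\!\F}(\beta)=0$ one gets $\odel_\F(\gamma)\wedgeev\beta=\odel(\gamma\wedgeev\beta)$, whence $\int\odel_\F(\gamma)\wedgeev\beta=\int\odel(\gamma\wedgeev\beta)=0$ by Lemma~\ref{lemma:Stokes} --- this is the one place where closedness of $\int$ enters. The mirror computation, now using $\odel_\F(\alpha)=0$, gives independence of the chosen representative of $[\beta]$.

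For non-degeneracy --- the substantive part --- I would pass to harmonic representatives and recognise the pairing of a harmonic form against its Hodge dual as the positive-definite Hermitian inner product. Precisely: by Theorem~\ref{theorem:HodgeDecomposition} every nonzero class in $H^{(a,b)}_{\odel_\F}$ has a nonzero harmonic representative $\alpha$; by Proposition~\ref{proposition:hodgeH} together with Lemma~\ref{Hodgeprop}(1), $\bar{\ast}_\F(\alpha)$ is a harmonic (in particular $\odel_{\lvee\!\F}$-closed) element of $\lvee\!\F\oby_M\Om^{(n-a,n-b)}$; and by Definition~\ref{definition:HermitianMetric} and the definition of the inner product, $\int\alpha\wedgeev\bar{\ast}_\F(\alpha)=\operatorname{\bf f}(g_\F(\alpha,\alpha))=\langle\alpha,\alpha\rangle_\F>0$, using positivity of $h$ through Proposition~\ref{prop:metricpositivity}. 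Hence $[\alpha]$ pairs non-trivially against $[\bar{\ast}_\F(\alpha)]$, which is non-degeneracy in the first argument. For the second argument I would run the mirror construction: given a nonzero class in $H^{(n-a,n-b)}_{\odel_{\lvee\!\F}}$ with harmonic representative $\beta$, set $\alpha\coloneqq\bar{\ast}_{\lvee\!\F}(\beta)$, a nonzero harmonic element of $\Om^{(a,b)}\oby_M\F$ (by Lemma~\ref{Hodgeprop}(2) and the $\lvee\!\F$-version of Proposition~\ref{proposition:hodgeH}), and use $\bar{\ast}_\F\bar{\ast}_{\lvee\!\F}(\beta)=(-1)^{|\beta|}\beta$ from Lemma~\ref{Hodgeprop}(4) to obtain $\int\alpha\wedgeev\beta=(-1)^{|\beta|}\langle\alpha,\alpha\rangle_\F\neq0$.

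The step I expect to be the main obstacle is not a calculation but making the Hodge package legitimately available on the $\lvee\!\F$ side: Theorem~\ref{theorem:HodgeDecomposition} requires $\lvee\!\F$ to carry a positive-definite Hermitian structure and a diagonalisable Dirac operator, and both must be seen to be inherited from $(\F,\odel_\F,h)$. The Hermitian structure transports through $h$, while diagonalisability of $D_{\odel_{\lvee\!\F}}$ should follow from that of $D_{\odel_\F}$ via the ($\bR$-linear, conjugate-$\bC$-linear) isomorphism $\bar{\ast}_\F$, which conjugates $\DEL_{\odel_\F}$ to $\DEL_{\odel_{\lvee\!\F}}$ by (the proof of) Proposition~\ref{proposition:hodgeH} and intertwines the two Dirac operators up to sign via Proposition~\ref{proposition:adelIsAdjointable}. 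For quantum homogeneous spaces this transfer is automatic, but in the general statement it is the one place where genuine care is needed.
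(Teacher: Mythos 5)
Your proposal is correct and follows essentially the same route as the paper: well-definedness via the cochain property of $\wedgeev$ and Lemma \ref{lemma:Stokes}, and non-degeneracy by pairing a harmonic representative $\alpha$ against $\bar{\ast}_\F(\alpha)$ to recover $\langle\alpha,\alpha\rangle_\F\neq 0$. Your additional attention to non-degeneracy in the second argument and to transporting the Hodge-theoretic hypotheses to $\lvee\!\F$ is a refinement the paper leaves implicit, not a departure from its argument.
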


\begin{proof}
We first show that the proposed pairing is well-defined, which is to say, that the definition is independent of the choice of representatives $\alpha$ and $\beta$.  For this, consider 
\bas
\adel_\F (\zeta) \in \Om^{(a,b)} \otimes_M \F, & & \text{ and } & & \adel_\F(\eta) \in {}^\vee \!\F \otimes_M \Om^{(n-a,n-b)}.
\eas
Using Remark \ref{remark:DualsForDGModules} and Lemma \ref{lemma:Stokes}, we find:
\begin{align*}
&\int \big(\alpha + \adel_\F(\zeta)\big) \wedgeev \big(\beta + \adel_{\F}(\eta)\big)\\
&= \int \alpha \wedgeev \beta + \int \adel_\F(\zeta) \wedgeev \beta + \int \alpha \wedgeev \adel_\F(\eta)  + \int \adel_\F(\zeta) \wedgeev \adel_\F(\eta) \\
&= \int \alpha \wedgeev \beta + \int \adel \big(\zeta \wedgeev \beta\big) +(-1)^{|\alpha|} \int \adel\big(\alpha \wedgeev \eta\big)  + \int \adel (\zeta \wedgeev \adel \eta) \\
&= \int \alpha \wedgeev \beta,
\end{align*}
as required.  

Finally, we use Corollary \ref{corollary:HodgeMapsCohomology} above to show that the pairing is non-degenerate. For every nonzero $[\alpha] \in H^{(a,b)}_\F$, where $\a$ is a harmonic choice of representative, we have
\[
\big([\alpha], \bar{\ast}_\F[\alpha]\big) = \int \alpha \wedgeev \bar{\ast}_\F(\alpha) = \langle \alpha, \alpha \rangle_\F \neq 0,
\]
by non-degeneracy of the inner product $\la \cdot,\cdot\ra_\F$.
\end{proof}

\begin{corollary}
If $\Om^\bullet_{\F}$ has finite-dimensional cohomology groups, then 
\bas
H^{(a,b)}_{\adel_\F} \simeq \Big(H^{(n-a,n-b)}_{\adel_{\lvee\!\F}}\Big)^*,
\eas
where $\Big(H^{(n-a,n-b)}_{\adel_{\lvee\!\F}}\Big)^*$ denotes the $\bC$-linear dual of $H^{(n-a,n-b)}_{\adel_\F}$.
\end{corollary}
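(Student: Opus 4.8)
The plan is to read the claimed isomorphism off from the non-degenerate pairing of the Serre duality theorem by a one-line dimension count. Write $V \coloneqq H^{(a,b)}_{\adel_\F}$ and $W \coloneqq H^{(n-a,n-b)}_{\adel_{\lvee\!\F}}$, both of which are $\bC$-vector spaces, and recall the Serre pairing $\langle\!\langle [\a],[\b]\rangle\!\rangle \coloneqq \int \a \wedgeev \b$ on $V \times W$. Since $\wedgeev$, $\vol_\k$ and the chosen state are all $\bC$-linear, this pairing is $\bC$-bilinear; the only place complex conjugation really enters is through the conjugate-linear isomorphism $\bar{\ast}_\F$ used to witness non-degeneracy, and anyone who prefers to record the pairing as sesquilinear may do so at the cost of inserting one harmless complex conjugation into each map below, with no effect on the argument.

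First I would form the $\bC$-linear map
\[
\Psi\colon V \longrightarrow W^*, \qquad [\a] \longmapsto \big([\b] \mapsto \langle\!\langle [\a],[\b]\rangle\!\rangle \big),
\]
which is well defined since the Serre pairing is already defined on cohomology. I claim $\Psi$ is injective. Let $[\a] \in V$ be nonzero and choose a harmonic representative $\a$; by Corollary \ref{corollary:HodgeMapsCohomology} the form $\bar{\ast}_\F(\a)$ represents a class in $W$, and the non-degeneracy computation carried out in the proof of the Serre duality theorem gives
\[
\Psi\big([\a]\big)\big(\bar{\ast}_\F[\a]\big) \;=\; \int \a \wedgeev \bar{\ast}_\F(\a) \;=\; \langle \a, \a\rangle_\F \;\neq\; 0,
\]
so $\Psi([\a]) \neq 0$. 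Separately, Proposition \ref{proposition:hodgeH} tells us that $\bar{\ast}_\F$ induces an isomorphism of cohomology groups $V \to W$ (a conjugate-linear bijection, hence in particular $\dim_\bC V = \dim_\bC W$).

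Finally I would invoke the finiteness hypothesis: by assumption $\dim_\bC V = \dim_\bC W$ is finite, so $\dim_\bC W^* = \dim_\bC W = \dim_\bC V < \infty$, and an injective $\bC$-linear map between complex vector spaces of the same finite dimension is an isomorphism. Hence $\Psi$ is an isomorphism, which is exactly the asserted identification $H^{(a,b)}_{\adel_\F} \simeq \big(H^{(n-a,n-b)}_{\adel_{\lvee\!\F}}\big)^*$. I do not expect any real obstacle here: all of the substance is contained in the Serre duality theorem, and the only thing requiring attention is keeping track of where complex conjugation sits so that $\Psi$ genuinely lands in the $\bC$-linear dual as stated. (Equivalently, one could avoid using Proposition \ref{proposition:hodgeH} by noting that non-degeneracy of the Serre pairing in its second variable, witnessed by $\bar{\ast}_{\lvee\!\F}$ via Corollary \ref{corollary:HodgeMapsCohomology} applied to $\lvee\!\F$, supplies a second injection $W \hookrightarrow V^*$, and then the inequalities $\dim_\bC V \le \dim_\bC W$ and $\dim_\bC W \le \dim_\bC V$ close the argument.)
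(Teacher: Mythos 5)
Your proof is correct and is exactly the argument the paper intends: the corollary is stated without proof as an immediate consequence of the Serre duality pairing, and your map $\Psi$, the injectivity via the harmonic representative and $\bar{\ast}_\F$, and the dimension count using Proposition \ref{proposition:hodgeH} (or the symmetric injection $W \hookrightarrow V^*$) supply precisely the standard linear-algebra step being elided. Your remark that the pairing $\int \alpha \wedgeev \beta$ is in fact $\bC$-bilinear, so that $\Psi$ genuinely lands in the $\bC$-linear dual rather than the conjugate dual, is a worthwhile clarification of the paper's use of the word ``sesquilinear''.
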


\subsection{The quantum homogeneous space case}

In this subsection we specialise to the the case of quantum homogeneous spaces and show that any  covariant K\"ahler structure is automatically diagonalisable. We begin by recalling  that, for any $\F \in \sgmm$, a decomposition into finite-dimensional  left $G$-comodules  is given by
\bas
\F \simeq G \coby \Phi(\F) \simeq \Big(\bigoplus_{V\in \wh{G}} \C(V))\Big) \coby \Phi(\F) = \bigoplus_{V\in \wh{G}} \big(\C(V) \coby  \Phi(\F)\big) \eqqcolon \bigoplus_{V\in \wh{G}} \F_V,
\eas 
where summation is over all equivalence classes of left $G$-comodules, and $\C(V)$ is the coalgebra of matrix coefficients of $V$.  We call this the {\em Peter--Weyl decomposition} of $\F$. 
As is well-known \cite[Proposition 11.15]{KSLeabh}, the Peter--Weyl decomposition of $G$ itself, $G \cong \bigoplus_{V \in \wh{G}} C(V)$, is orthogonal \wrt the inner product induced by the haar functional, which is to say, with respect to the inner product
\bas
\la \cdot,\cdot\ra\colon G \oby G \to \bC, & & f \oby g^* \to \haar(fg^*).  
\eas
The following generalisation of \cite[Lemma 5.7]{MMF3} shows that this extends to the de Rham complex of any Hermitian vector bundle.

\begin{proposition}
For  $(\F,h)$ an Hermitian vector bundle, its Peter--Weyl decomposition is orthogonal with respect to the inner product $\la\cdot,\cdot \ra_\F$.
\end{proposition}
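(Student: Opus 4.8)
The plan is to combine the $G$-invariance of $\langle\cdot,\cdot\rangle_\F$ proved in the preceding lemma with the orthogonality of the Peter--Weyl decomposition of $G$ recalled just above; no analysis is involved, only bookkeeping in Sweedler notation. Recall that the $V$-isotypic summand $\big(\Om^\bullet\oby_M\F\big)_V = \C(V)\coby\Phi\big(\Om^\bullet\oby_M\F\big)$ is a sub-left-$G$-comodule on which the coaction takes values in $\C(V)\oby\big(\Om^\bullet\oby_M\F\big)_V$, since $\Delta\big(\C(V)\big)\sseq\C(V)\oby\C(V)$; in particular $\alpha\m1\in\C(V)$ for every $\alpha\in\big(\Om^\bullet\oby_M\F\big)_V$, and likewise $\beta\m1\in\C(W)$ for $\beta\in\big(\Om^\bullet\oby_M\F\big)_W$. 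Fixing inequivalent $V,W\in\wh G$, it therefore suffices to prove $\langle\alpha,\beta\rangle_\F=0$ for such $\alpha$ and $\beta$.

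I would first unwind the preceding lemma. Since $\langle\cdot,\cdot\rangle_\F$ is conjugate-linear in its second argument, it is a $\bC$-linear map out of $\big(\Om^\bullet\oby_M\F\big)\oby_\bC\ol{\big(\Om^\bullet\oby_M\F\big)}$, and, using the coaction $\ol{f}\mto(f\m1)^*\oby\ol{f\0}$ on the conjugate object, the assertion that $\langle\cdot,\cdot\rangle_\F$ is a morphism of left $G$-comodules into the trivial comodule $\bC$ unwinds to the identity
\bas
\sum \alpha\m1\,(\beta\m1)^*\,\langle\alpha\0,\beta\0\rangle_\F \;=\; \langle\alpha,\beta\rangle_\F\,1_G, \qquad \text{for all } \alpha,\beta\in\Om^\bullet\oby_M\F.
\eas
Applying the Haar functional $\haar$ to both sides turns the right-hand side into $\langle\alpha,\beta\rangle_\F$, while for $\alpha,\beta$ supported in $\big(\Om^\bullet\oby_M\F\big)_V$ and $\big(\Om^\bullet\oby_M\F\big)_W$ each term on the left carries the factor $\haar\big(\alpha\m1(\beta\m1)^*\big)$, which vanishes because $\alpha\m1\in\C(V)$, $\beta\m1\in\C(W)$ and $\C(V)\perp\C(W)$ with respect to $f\oby g^*\mto\haar(fg^*)$ for $V\not\simeq W$. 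Hence $\langle\alpha,\beta\rangle_\F=0$, which is the asserted orthogonality; and since the coaction preserves form-degree, the computation applies verbatim on each homogeneous component.

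The only delicate point is the precise placement of the $*$ and the order of the two coaction legs in the displayed identity, which is dictated by the conjugate-linearity convention for $\langle\cdot,\cdot\rangle_\F$; under the alternative convention one obtains instead the factor $(\alpha\m1)^*\beta\m1$, and the argument goes through unchanged, still matching $\C(V)\perp\C(W)$. There is nothing deeper here than the invariance of the metric combined with classical Peter--Weyl orthogonality, so I do not anticipate a genuine obstacle.
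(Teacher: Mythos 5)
Your proof is correct and follows essentially the same route as the paper's: both arguments reduce the claim to the vanishing of $\haar\big(\alpha_{(-1)}(\beta_{(-1)})^*\big)$ for $\alpha_{(-1)}\in\C(V)$, $\beta_{(-1)}\in\C(W)$ with $V\not\simeq W$, using the $G$-invariance of the pairing together with the Peter--Weyl orthogonality of $G$ itself. The only difference is packaging: the paper re-derives the invariance through $C_h$ and the unit of Takeuchi's equivalence, whereas you invoke the preceding lemma directly and make precise, via the conjugate comodule structure on the second argument, what that lemma must mean for a sesquilinear pairing.
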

\begin{proof}
Let $\a$ and $\b$ be elements of  $\Om^\bullet \oby_M \F$ which are homogeneous with respect to the Peter--Weyl decomposition of $\Om^\bullet \oby_M \F$, and \st each  element is contained in a distinct  summand of the decomposition. Noting that $\ol{g} \coloneqq g\circ (\id \oby C_h\inv)$  is a morphism in $\sgmm$, we see that 
\bas
g_\F(\a,\b) &= \ol{g}_{\F}\big(\a, C_h(\b)\big)\\
                   &= \haar \circ (\id \oby \Phi(\ol{g})) \circ \unit \big(\a \oby_M C_h(\b)\big) \\
                   &= \haar(\a_{(-1)}, \b_{(-1)}^*)  \Phi\big(\ol{g})[\a\0 \oby_M C_h(\b\0)\big]\\
                   &=  0,
\eas
where we have used the easily verifiable identity
\bas
\Delta_G \circ C_h = (* \oby C_h) \circ \Delta_G\colon \Om^\bullet \oby_M \F \to G \oby \Om^\bullet \oby_M \F.
\eas 
\end{proof}

Orthogonality of the decomposition now allows us to prove diagonalisability for covariant K\"ahler structures.

\begin{proposition}\label{cor:EqAdj}
For  $(\F,h)$ an Hermitian vector bundle, every left $G$-comodule map $f:\Om^\bullet \oby_M \F \to \Om^\bullet \oby_M \F$ is adjointable \wrt $\la\cdot,\cdot\ra_\F$.  Moreover, if $f$ is self-adjoint, then it is diagonalisable.
\end{proposition}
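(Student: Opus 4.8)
The plan is to reduce everything to finite-dimensional linear algebra by means of the Peter--Weyl decomposition. First I would invoke the Peter--Weyl decomposition $\Om^\bullet \oby_M \F = \bigoplus_{V \in \wh{G}} (\Om^\bullet \oby_M \F)_V$, which expresses $\Om^\bullet \oby_M \F$ as a direct sum of finite-dimensional left $G$-comodules and which, by the preceding proposition, is orthogonal with respect to $\la \cdot,\cdot\ra_\F$. Since $\la\cdot,\cdot\ra_\F$ is a positive-definite Hermitian inner product (Proposition \ref{proposition:HermitianInnerProduct}), so is its restriction to each finite-dimensional summand $(\Om^\bullet \oby_M \F)_V$.

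Next I would observe that any left $G$-comodule map $f$ respects this decomposition, that is, $f\big((\Om^\bullet \oby_M \F)_V\big) \sseq (\Om^\bullet \oby_M \F)_V$ for each $V$. Indeed, by the structure of the Peter--Weyl decomposition each summand $(\Om^\bullet \oby_M \F)_V$ is a finite direct sum of copies of the single irreducible left $G$-comodule $V$, equivalently it is the $V$-isotypic component of $\Om^\bullet \oby_M \F$; composing any comodule morphism $V \to \Om^\bullet \oby_M \F$ with $f$ yields a further such morphism, whose image again lies in this isotypic component, and since the isotypic component is the sum of the images of all such morphisms, $f$ preserves it. Hence $f$ is block-diagonal, $f = \bigoplus_V f_V$, with each $f_V$ an endomorphism of the finite-dimensional inner product space $(\Om^\bullet \oby_M \F)_V$.

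Adjointability then follows at once: each $f_V$ has an adjoint $f_V^\dagger$ relative to the restricted inner product, and one puts $f^\dagger \coloneqq \bigoplus_V f_V^\dagger$. Using orthogonality of the Peter--Weyl decomposition, for homogeneous $\a \in (\Om^\bullet \oby_M \F)_V$ and $\b \in (\Om^\bullet \oby_M \F)_W$ the identity $\la f\a, \b\ra_\F = \la \a, f^\dagger \b\ra_\F$ holds trivially, both sides being zero, when $V \neq W$, and holds by the defining property of $f_V^\dagger$ when $V = W$; since the decomposition is a direct sum this shows $f^\dagger$ is a two-sided adjoint of $f$. If moreover $f$ is self-adjoint, then $f^\dagger = f$ forces $f_V = f_V^\dagger$ for every $V$, so each $f_V$ is a self-adjoint operator on the finite-dimensional positive-definite inner product space $(\Om^\bullet \oby_M \F)_V$ and, by the spectral theorem, admits a basis of $(\Om^\bullet \oby_M \F)_V$ consisting of eigenvectors. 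The union of these bases over all $V$ is a basis of the algebraic direct sum $\Om^\bullet \oby_M \F$ consisting of eigenvectors of $f$, so $f$ is diagonalisable.

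There is no essential obstacle here once the orthogonal Peter--Weyl decomposition with finite-dimensional summands is available; the two points that deserve a little care are the identification of each Peter--Weyl summand with the corresponding isotypic component of $\Om^\bullet \oby_M \F$ as a $G$-comodule --- which rests on cosemisimplicity of $G$, i.e.\ Schur's lemma, and guarantees that $f$ preserves the decomposition --- and the elementary but worth-recording fact that an operator which is block-diagonal with respect to an algebraic direct sum and diagonalisable on each finite-dimensional block is diagonalisable on the whole space.
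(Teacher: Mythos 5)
Your proof is correct and follows essentially the same route as the paper: the paper likewise uses the orthogonality of the Peter--Weyl decomposition into finite-dimensional summands, the fact that a $G$-comodule map preserves each summand, and finite-dimensional linear algebra (spectral theorem) on each block. Your write-up simply fills in the details that the paper leaves implicit.
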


\begin{proof}
Since $f$ is a left $G$-comodule map  $f\big((\Om^\bullet \oby_M \F)_V\big) \sseq (\Om^\bullet \oby_M \F)_V$, for all $V \in \wh{G}$. Adjointability of $f$ now follows from finite-dimensionality of $\big(\Om^\bullet \oby_M \F\big)$ and the fact that the Peter--Weyl decomposition is orthogonal \wrt  $\la\cdot,\cdot\ra_\F$. The proof that  $f$ is diagonalisable whenever it is self-adjoint is analogous.
\end{proof}

As an immediate consequence we get the following corollary.

\begin{corollary}
Every quantum homogeneous K\"ahler space is diagonalisable.
\end{corollary}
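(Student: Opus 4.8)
The plan is to deduce the statement from Proposition \ref{cor:EqAdj} together with the definition of diagonalisability. Fix a covariant Hermitian holomorphic vector bundle $(\F,\odel_\F,h)$ over the quantum homogeneous space, with $h$ positive definite; by Definition \ref{definition:Diagonalisable} it suffices to show that the Dirac operator $D_{\odel_\F} = \odel_\F + \odel_\F^\dagger$ is diagonalisable as a $\bC$-linear operator on $\Om^\bullet \oby_M \F$.

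First I would observe that $\odel_\F$ is a left $G$-comodule map: this is part of the covariance built into the notion of an $A$-equivariant holomorphic vector bundle (Definition \ref{definition:HolomorphicVectorBundle}), here with $A = G$. Next I would argue that the adjoint $\odel_\F^\dagger$, which exists by Proposition \ref{cor:EqAdj}, is again a left $G$-comodule map. Indeed, the proof of Proposition \ref{cor:EqAdj} shows that any $G$-comodule endomorphism preserves each summand $(\Om^\bullet \oby_M \F)_V$ of the Peter--Weyl decomposition, and since that decomposition is orthogonal with finite-dimensional summands, the adjoint of $\odel_\F$ restricted to a given $(\Om^\bullet \oby_M \F)_V$ is precisely the adjoint of the induced endomorphism of that finite-dimensional inner product space; in particular $\odel_\F^\dagger$ also preserves every summand, hence is a $G$-comodule map.

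Consequently $D_{\odel_\F} = \odel_\F + \odel_\F^\dagger$ is a left $G$-comodule map, and it is self-adjoint by construction. The \textbf{moreover} clause of Proposition \ref{cor:EqAdj} then applies and gives that $D_{\odel_\F}$ is diagonalisable as a $\bC$-linear operator, which by Definition \ref{definition:Diagonalisable} is exactly the assertion that the K\"ahler structure is diagonalisable. The only point needing a little care is the step from ``$\odel_\F$ is a comodule map'' to ``$\odel_\F^\dagger$ is a comodule map'', but this is essentially already contained in the proof of Proposition \ref{cor:EqAdj}, so no genuinely new obstacle arises.
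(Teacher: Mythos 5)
Your proof is correct and follows exactly the route the paper intends: the corollary is stated as an immediate consequence of Proposition \ref{cor:EqAdj}, applied to the self-adjoint map $D_{\odel_\F} = \odel_\F + \odel_\F^\dagger$, which is precisely what you do, including the necessary check that $\odel_\F^\dagger$ respects the Peter--Weyl decomposition. The only cosmetic caveat is that ``preserves every Peter--Weyl summand'' does not by itself imply ``is a $G$-comodule map'', but this is harmless here since the diagonalisability argument in the proof of Proposition \ref{cor:EqAdj} only uses self-adjointness together with preservation of the finite-dimensional orthogonal summands.
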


\newcommand{\mfh}{\mathfrak{h}}

\section{Chern connections and the Nakano identities}\label{section:Nakano}

Let $(\Om^{(\bullet, \bullet)}, \odel, \del, \k)$ be a positive definite K\"ahler structure.  Let $\operatorname{\bf f}$ be a state such that the integral $\int = \operatorname{\bf f} \circ \vol_\k$ is closed.

In this section, we prove the Nakano identities (see Theorem \ref{theorem:Nakano} below), which are a extension of the K\"ahler identities from the bicomplex $\Om^{(\bullet, \bullet)}$ to the bicomplex $\Om^{(\bullet, \bullet)} \otimes_M \F$.  The Nakano identities are key to our noncommutative generalisation of the Kodaira vanishing theorem, and a result of importance in its own right.

To avoid local arguments, we extend the standard global proof of the K\"ahler identities (see \cite{HUY, Weil58}) using the framework of Lefschetz triples.  The map $\del_\F$ that occurs in Theorem \ref{theorem:Nakano} is the unique anti-holomorphic connection such that $\nabla = \del_\F + \odel_\F$ is the Chern connection.  Our first step to proving Theorem \ref{theorem:Nakano} is to establish Proposition \ref{proposition:ChernViaConjugates} below.  The proof does not require all of the standing hypothesis of this section, only the ones that are stated.

\subsection{Chern connections}\label{subsection:Chern} Let $(\Om^{(\bullet, \bullet)}, \odel, \del)$ be an $A$-covariant $*$-differential structure with a complex structure.  Let $(\F, \odel_\F, h)$ be an $A$-covariant Hermitian holomorphic vector bundle.  We know from Proposition \ref{prop:BM1} that there is a unique extension to a Hermitian connection.  In Proposition \ref{proposition:ChernViaConjugates}, we will describe this extension $\odel_\F$ via the map $C_h$ from Definition \ref{definition:TwistedHodge}.  We start with two lemmas.

\begin{lemma}\label{lemma:SwitchInfh}
For all homogeneous $\alpha, \beta \in \Omega^\bullet \otimes_M \F$, we have
\[\mfh(\alpha, \beta) = (-1)^{|\alpha||\beta|} \mfh(\beta, \alpha)^*.\]
\end{lemma}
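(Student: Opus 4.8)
The plan is to unwind the definition of $\mfh$ from Definition \ref{definition:HermitianConnection} and reduce the identity to the Hermitian-symmetry of $h$ together with the sign bookkeeping coming from the $*$-operation on forms. First I would reduce to the case of simple tensors: write $\alpha = \omega \otimes f$ with $\omega \in \Om^i$, and $\beta = \nu \otimes g$ with $\nu \in \Om^j$, so $|\alpha| = i$, $|\beta| = j$. By definition,
\[
\mfh(\alpha,\beta) = \omega\, h_\F(\ol{g})(f) \wed \nu^*.
\]
On the other side,
\[
\mfh(\beta,\alpha) = \nu\, h_\F(\ol{f})(g) \wed \omega^*,
\]
and applying $*$ gives $\mfh(\beta,\alpha)^* = (\nu\, h_\F(\ol{f})(g) \wed \omega^*)^* = \omega\, h_\F(\ol{f})(g)^* \wed \nu^* \cdot (\pm 1)$, where the sign is the Koszul sign $(-1)^{ij}$ coming from $(\eta_1 \wed \eta_2)^* = (-1)^{|\eta_1||\eta_2|}\eta_2^* \wed \eta_1^*$ applied appropriately (the scalar-valued factor $h_\F(\ol{f})(g) \in M = \Om^0$ contributes no sign when moved past forms, only a $*$).

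The key step is then the Hermitian condition from the definition of an Hermitian structure, namely $h_\F(\ol{g})(f) = h_\F(\ol{f})(g)^*$, which identifies the two $M$-valued coefficients; combined with the $(-1)^{ij} = (-1)^{|\alpha||\beta|}$ sign from rearranging the wedge product under $*$, this yields exactly $\mfh(\alpha,\beta) = (-1)^{|\alpha||\beta|}\mfh(\beta,\alpha)^*$. The argument extends from simple tensors to general homogeneous $\alpha, \beta$ by bilinearity, noting that both sides are $\bR$-bilinear (or, more precisely, compatible with the balanced tensor relations over $M$ and $\bR$ appearing in the domain of $\mfh$).

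I do not expect any genuine obstacle here; the only thing requiring care is getting the sign right, i.e.\ tracking precisely how $(-1)^{|\alpha||\beta|}$ emerges from the graded-commutativity of $*$ with $\wed$ on $\Om^\bullet$ and checking that the $0$-form coefficients $h_\F(\ol{g})(f)$ can be commuted past forms without introducing signs (they can, since they have degree $0$, though one should be mindful that $M$ need not be central — however, in the definition of $\mfh$ the coefficient already sits in a fixed position, so no commutation past forms of positive degree is actually needed). So the main ``obstacle'' is purely bookkeeping: confirm that the single sign $(-1)^{|\alpha||\beta|}$ is the only sign produced and that no extra sign arises from $f \leftrightarrow g$ or from the placement of the Hermitian pairing.
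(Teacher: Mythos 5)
Your proposal is correct and follows essentially the same route as the paper's proof: reduce to simple tensors $\omega\otimes f$ and $\nu\otimes g$, apply the graded anti-multiplicativity of $*$ to produce the Koszul sign $(-1)^{|\omega||\nu|}$, and invoke the Hermitian symmetry $h_\F(\ol{g})(f) = h_\F(\ol{f})(g)^*$ to identify the coefficients. The sign bookkeeping you describe is exactly what the paper carries out.
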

\begin{proof}
We write $\alpha = \omega \otimes f$ and $\beta = \nu \otimes g$.  The statement follows from:
\begin{align*}
\mfh(\omega \otimes f, \nu \otimes g) &= \omega h(\ol{g})(f) \wedge \nu^* \\
&= (-1)^{|\omega||\nu|}\left(\nu h(\ol{g})(f)^* \wedge \omega^*\right)^* \\
&= (-1)^{|\omega||\nu|}\left(\nu h(\ol{f})(g) \wedge \omega^*\right)^* \\
&= (-1)^{|\omega||\nu|} \mfh(\nu \otimes g, \omega \otimes f)^*.
\end{align*}
\end{proof}

\begin{lemma}\label{lemma:ChernHermitian} For all $f \in \F, \phi \in \lvee \F$, and $\omega \in \Om^\bullet$, we have
\begin{enumerate}
\item $\mfh\left(f, C^{-1}_h(\phi \otimes \omega) \right) = \phi(f)\omega,$ and
\item $\mfh\left(C^{-1}_h(\phi \otimes \omega), f \right) = (\phi(f)\omega)^*.$
\end{enumerate}
\end{lemma}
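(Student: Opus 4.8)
The plan is to reduce both identities to unwinding the definition of $C_h^{-1}$ and substituting into the defining formula for $\mfh$ from Definition \ref{definition:HermitianConnection}. First I would record explicitly how $C_h^{-1}$ acts: since $C_h$ sends $\nu\otimes g\mapsto h(\ol g)\otimes\nu^*$ and $h\colon\ol\F\to\lvee\F$ is an isomorphism, its inverse sends $\phi\otimes\omega$ to $\omega^*\otimes g$, where $g\in\F$ is the unique element with $h(\ol g)=\phi$; equivalently $C_h^{-1}(\phi\otimes\omega)=\omega^*\otimes\ol{h^{-1}(\phi)}$, which is precisely the expression already used for $\bar{\ast}_{\lvee\!\F}$ in Definition \ref{definition:TwistedHodge}. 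Since both sides of each identity are additive in $f$, $\phi$ and $\omega$, it suffices to argue for simple tensors with $\omega$ homogeneous.

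For part (1), I would view $f$ as $1_M\otimes f\in\Om^0\otimes_M\F$ and compute directly:
\[
\mfh\big(1_M\otimes f,\ \omega^*\otimes g\big)=1_M\,h(\ol g)(f)\wedge(\omega^*)^*=h(\ol g)(f)\,\omega=\phi(f)\,\omega,
\]
using $(\omega^*)^*=\omega$, the fact that $h(\ol g)(f)\in M=\Om^0$ so that wedging by it is left multiplication, and $h(\ol g)=\phi$.

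For part (2) I would invoke Lemma \ref{lemma:SwitchInfh} with $\alpha=C_h^{-1}(\phi\otimes\omega)$ and $\beta=f$: the sign $(-1)^{|\alpha||\beta|}$ is trivial because $f$ is homogeneous of degree $0$, so
\[
\mfh\big(C_h^{-1}(\phi\otimes\omega),\ f\big)=\mfh\big(f,\ C_h^{-1}(\phi\otimes\omega)\big)^*=\big(\phi(f)\,\omega\big)^*
\]
by part (1). There is no genuine obstacle here; the only points demanding care are the correct identification of $C_h^{-1}$ (with the attendant bookkeeping around the conjugate module $\ol\F$ and the $*$-involution), and the observation that one argument lies in degree zero, which is exactly what makes Lemma \ref{lemma:SwitchInfh} collapse part (2) onto part (1) with no sign.
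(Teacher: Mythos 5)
Your proposal is correct and follows the paper's own proof exactly: part (1) by unwinding $C_h^{-1}(\phi\otimes\omega)=\omega^*\otimes\ol{h^{-1}(\phi)}$ and substituting into the defining formula for $\mathfrak{h}_\F$, and part (2) by applying Lemma \ref{lemma:SwitchInfh}, where the sign is trivial since $f$ has degree zero. No changes needed.
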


\begin{proof}
The first statement follows from
\[\mfh\left(f, C^{-1}_h(\phi \otimes \omega) \right) = \mfh\left(f, \omega^* \otimes \ol{h^{-1}(\phi)}\right) = \phi(f)\omega. \]
The second statement follows from the first together with Lemma \ref{lemma:SwitchInfh}.
\end{proof}

\begin{proposition}\label{proposition:ChernViaConjugates}
Let $(\Om^{(\bullet, \bullet)}, \odel, \del)$ be an $A$-covariant $*$-differential structure with a complex structure.  Let $(\F, \odel_\F, h)$ be an $A$-covariant Hermitian holomorphic vector bundle, and consider the degree one map:
\[\del_\F \coloneqq C^{-1}_h \circ \odel_{\lvee\!\F} \circ C_h\colon \Om^{(\bullet, 0)} \otimes_M \F \to \Om^{(\bullet, 0)} \otimes_M \F.\]
The map $\del_\F$ is an anti-holomorpic structure for $\F$ and the map $\nabla \coloneqq \odel_\F + \del_\F$ is the Chern connection for the triple $(\F, \odel_\F, h)$.
\end{proposition}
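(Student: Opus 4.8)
The plan is to check, in turn, that $\del_\F$ is an anti-holomorphic structure, that $\nabla \coloneqq \odel_\F + \del_\F$ is an $A$-covariant connection whose $(0,1)$-component equals $\odel_\F$, and that $\nabla$ is Hermitian; the identification of $\nabla$ with the Chern connection of $(\F,\odel_\F,h)$ is then immediate from the uniqueness clause of Proposition~\ref{prop:BM1}.

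For the first point, note that since $(\Om^{(a,0)})^* = \Om^{(0,a)}$ the map $C_h$ of Definition~\ref{definition:TwistedHodge} restricts to a degree-preserving bijection $\Om^{(\bullet,0)} \otimes_M \F \to \lvee\!\F \otimes_M \Om^{(0,\bullet)}$, so $\del_\F = C_h^{-1} \circ \odel_{\lvee\!\F} \circ C_h$ is a well-defined degree-one map and $\del_\F^2 = C_h^{-1}\circ \odel_{\lvee\!\F}^2 \circ C_h = 0$. What remains is the graded Leibniz rule $\del_\F(\omega \wed \alpha) = \del\omega \wed \alpha + (-1)^{|\omega|}\omega\wed\del_\F(\alpha)$ for homogeneous $\omega \in \Om^{(\bullet,0)}$; I would derive it by commuting $\omega$ past $C_h$ using Lemma~\ref{Hodgeprop}(6), applying the right-module Leibniz rule for the differential $\odel_{\lvee\!\F}$ on $\lvee\!\F\otimes_M\Om^{(0,\bullet)}$, and using $\odel(\omega^*) = (\del\omega)^*$ from \eqref{stardel}, after which the accumulated signs collapse to those of the left Leibniz rule. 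Since $\Om^{(\bullet,0)}\otimes_M\F$ is generated in degree zero over $\Om^{(\bullet,0)}$, this identifies $\del_\F$ with the Leibniz extension of its restriction $\F \to \Om^{(1,0)}\otimes_M\F$, i.e.\ shows that $\del_\F$ is an anti-holomorphic structure for $\F$.

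Restricting the Leibniz rules for $\odel_\F$ and $\del_\F$ to degree zero and using $\exd = \del + \odel$ gives $\nabla(mf) = \exd m \otimes f + m\nabla f$, so $\nabla$ is a connection; it is $A$-covariant because $h$, the $*$-map and $\odel_{\lvee\!\F}$ are all $A$-equivariant, and $(\proj_{\Om^{(0,1)}}\otimes\id)\circ\nabla = \odel_\F$ because $\del_\F f \in \Om^{(1,0)}\otimes_M\F$. For the Hermitian condition $\exd\mfh(f,g) = \mfh(\nabla f,g) + \mfh(f,\nabla g)$, $f,g\in\F$, the key observation is the identity $\mfh(\alpha,\beta) = \alpha \wedgeev C_h(\beta)$, which one reads off from the definitions of $\mfh$, of $C_h$, and of the evaluation map $\wedgeev$ in Remark~\ref{remark:DualsForDGModules}(3). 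Combined with $C_h(\del_\F g) = \odel_{\lvee\!\F}(C_h(g))$ this gives $\mfh(\odel_\F f, g) = \odel_\F f \wedgeev C_h(g)$ and $\mfh(f,\del_\F g) = f\wedgeev\odel_{\lvee\!\F}(C_h(g))$, so that the cochain-morphism property of $\wedgeev$ for $\odel$ yields $\odel\,\mfh(f,g) = \odel(f\wedgeev C_h(g)) = \mfh(\odel_\F f,g) + \mfh(f,\del_\F g)$. A bidegree count then shows that $\exd\mfh(f,g) = \mfh(\nabla f,g) + \mfh(f,\nabla g)$ decomposes into a $(0,1)$-component, which is precisely this identity, and a $(1,0)$-component; applying the $*$-operation to the $(1,0)$-component and using $\mfh(f,g)^* = \mfh(g,f)$, the relations \eqref{stardel}, and Lemma~\ref{lemma:SwitchInfh}, one recognises it as the $(0,1)$-component with $f$ and $g$ interchanged, hence it holds as well. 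Thus $\nabla$ is a Hermitian connection with $(0,1)$-part $\odel_\F$, and so coincides with the Chern connection.

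The conceptual content is slight once the identity $\mfh(\alpha,\beta) = \alpha\wedgeev C_h(\beta)$ is isolated, so I expect the main obstacle to be the sign- and $*$-bookkeeping, chiefly in the Leibniz computation for $\del_\F$ and in the reduction of the Hermitian identity to its $(0,1)$-component.
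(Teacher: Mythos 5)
Your proposal is correct, and the two halves of it relate to the paper's proof in different ways. The verification that $\del_\F$ is an anti-holomorphic structure — the computation $\del_\F^2 = C_h^{-1}\circ\odel_{\lvee\!\F}^2\circ C_h = 0$ followed by the Leibniz rule obtained by commuting $\omega$ past $C_h$ via Lemma \ref{Hodgeprop}(6), applying the right-module Leibniz rule for $\odel_{\lvee\!\F}$, and invoking $\odel(\omega^*)=(\del\omega)^*$ — is exactly the argument in the paper, signs and all.

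For the Hermitian condition you take a genuinely different route. The paper expands all four terms $\mathfrak{h}_\F(\del_\F f,g)$, $\mathfrak{h}_\F(\odel_\F f,g)$, $\mathfrak{h}_\F(f,\del_\F g)$, $\mathfrak{h}_\F(f,\odel_\F g)$ individually using Lemmas \ref{lemma:SwitchInfh} and \ref{lemma:ChernHermitian} and then checks that their sum is $\exd\,\mathfrak{h}(f,g)$. You instead isolate the identity $\mathfrak{h}_\F(\alpha,\beta)=\alpha\wedgeev C_h(\beta)$ (which is immediate from the definitions and does hold), deduce the $(0,1)$-component $\odel\,\mathfrak{h}(f,g)=\mathfrak{h}_\F(\odel_\F f,g)+\mathfrak{h}_\F(f,\del_\F g)$ from the cochain-morphism property of $\wedgeev$ together with $C_h(\del_\F g)=\odel_{\lvee\!\F}(C_h(g))$, and recover the $(1,0)$-component by applying $*$ to the $(0,1)$-identity with $f$ and $g$ interchanged, using Lemma \ref{lemma:SwitchInfh} and \eqref{stardel}; the bidegree bookkeeping here checks out (both $\mathfrak{h}_\F(\odel_\F f,g)$ and $\mathfrak{h}_\F(f,\del_\F g)$ land in $\Om^{(0,1)}$, and the sign in Lemma \ref{lemma:SwitchInfh} is trivial since $f,g$ have degree zero). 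What your version buys is that the Hermitian condition is seen to be a formal consequence of the evaluation pairing being a cochain map plus the conjugate symmetry of $\mathfrak{h}_\F$, with no term-by-term Sweedler manipulation; what the paper's version buys is an explicit description of each summand (via Lemma \ref{lemma:ChernHermitian}) that is reused elsewhere. Your additional remarks — that $C_h$ restricts to a degree-preserving bijection onto $\lvee\!\F\otimes_M\Om^{(0,\bullet)}$, that $A$-covariance of $\nabla$ follows from equivariance of $h$, $*$ and $\odel_{\lvee\!\F}$, and that $(\proj_{\Om^{(0,1)}}\otimes\id)\circ\nabla=\odel_\F$ — are points the paper leaves implicit, and it is reasonable to spell them out.
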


\begin{proof}
We start by verifying that $\del_\F$ is a holomorphic structure for $\F$.  Firstly, note that since 
\[\del_\F^2 = \left( C^{-1}_h \circ \odel_{\lvee\!\F} \circ C_h \right) \circ \left( C^{-1}_h \circ \odel_{\lvee\!\F} \circ C_h \right)
= C^{-1}_h \circ \odel^2_{\lvee\!\F} \circ C_h = 0,\]
the degree one map $\del_\F$ gives $\Om^{(\bullet,0)} \otimes_M \F$ the structure of a complex.  Secondly, for all homogeneous $\omega \in \Om^{(\bullet,0)}$ and $\alpha \in \Om^{(\bullet, 0)} \otimes_M \F$, we have (using Lemma \ref{Hodgeprop})
\begin{align*}
\del_\F(\omega \wedge \alpha) &= C^{-1}_h \circ \odel_{\lvee\!\F} \circ C_h (\omega \wedge \alpha)\\
&= (-1)^{|\alpha||\omega|} C^{-1}_h \circ \odel_{\lvee\!\F} \circ \left(C_h (\alpha) \wedge \omega^*\right)\\
&= (-1)^{|\alpha||\omega|} C^{-1}_h \left( \odel_{\lvee\!\F} \circ C_h (\alpha) \wedge \omega^* \right) + (-1)^{(|\omega|+1)|\alpha|} C^{-1}_h \left(C_h (\alpha) \wedge \odel(\omega^*)\right) \\
&= (-1)^{|\omega|} \omega \wedge C^{-1}_h \left( \odel_{\lvee\!\F} \circ C_h (\alpha) \right)
+ \odel(\omega^*)^* \wedge C^{-1}_h \circ \left(C_h (\alpha))\right) \\
&= (-1)^{|\omega|} \omega \wedge \del_\F(\alpha) + \del(\omega) \wedge \alpha,
\end{align*}
so that $(\Om^{(\bullet,0)} \otimes_M \F, \del_\F)$ is a left dg module over $(\Om^{(\bullet),0}, \del)$.  Hence, $\del_\F$ gives $\F$ the structure of an anti-holomorphic vector bundle.

We now verify that $\nabla = \odel_\F + \del_\F$ is the Chern connection.  For this, we need to show that $h$ is a Hermitian connection (see Definition \ref{definition:HermitianConnection}).  Let $f,g \in \F$.  We have:
\begin{align*}
\exd \mathfrak{h}(f, g) &= \mathfrak{h}_\F(\nabla(f), g) + \mathfrak{h}_\F(f, {\nabla}{g})) \\
&= \mathfrak{h}_\F(\del_\F(f), g)+ \mathfrak{h}_\F(\odel_\F(f), g) + \mathfrak{h}_\F(f, {\del_\F}(g))) + \mathfrak{h}_\F(f, {\odel_\F}(g))).
\end{align*}
We rewrite each of the terms on the right-hand side using Lemmas \ref{lemma:SwitchInfh} and \ref{lemma:ChernHermitian}:
\begin{align*}
\mathfrak{h}_\F(\del_\F(f), g) &= \fh_\F(C^{-1}_h \circ \odel_{\lvee\!\F} \circ C_h (f),g) \\
&= (\ev_g \otimes \id) \left(\odel_{\lvee\!\F} \circ C_h (f)\right)^* \\
&= (\ev_g \otimes \id) \left(\odel_{\lvee\!\F} h(\ol{f})\right)^* \\
&= \odel(h(\ol{f})(g))^* - \left((\id \otimes h(\ol{f}))(\odel_\F(g)) \right)^*.\\
\mathfrak{h}_\F(\odel_\F(f), g) &= (\id \otimes h)(\ol{g})(\odel_\F(f)) \\
\mathfrak{h}_\F(f, {\del_\F}{g})) &= \fh_\F(f, C^{-1}_h \circ \odel_{\lvee\!\F} \circ C_h (g)) \\
&= (\ev_f \otimes \id) \left(\odel_{\lvee\!\F} \circ C_h (g)\right)^* \\
&= (\ev_f \otimes \id) \left(\odel_{\lvee\!\F} \circ h (\ol{g})\right)^* \\
&= \odel(h(\ol{f})(g)) - (\id \otimes h(\ol{g})) \odel_\F(f) \\
\mathfrak{h}_\F(f, {\odel_\F}{g})) &= \fh_\F(\odel_\F(g), (f))^* \\
&= (\id \otimes h(\ol{f})) \odel_\F(g).
\end{align*}
The required equality now follows easily (using $\exd = \del + \odel$).  This completes the proof.
\end{proof}

\subsection{The Nakano Identities}

We now come to the proof of the Nakano identities.  Classically, this is often done using the a local argument to extend the identities from the untwisted to the twisted case (\cite[\textsection 5.2 ]{HUY}).  In the noncommutative setting, however, such local arguments are not available.  We circumvent this problem using the framework of Lefschetz triples developed in \textsection\ref{section:LefschetzPairs}.

\begin{proposition}\label{Proposition:NIDSI}
For any Hermitian vector bundle $(\F,h)$ with a left holomorphic structure $\odel_\F\colon \F \to \Om^{(0,1)} \otimes_M \F$.  Let $\del\colon \F \to \Om^{(0,1)} \otimes_M \F$ be the anti-holomorphic structure from Proposition \ref{proposition:ChernViaConjugates}.  We have the following relations
\begin{align*}
[L_\F,\del_\F] = 0, &&  [L_\F,\adel_\F] = 0 && [\Lam_\F, \del^\dagger_\F] = 0, &&  [\Lam_\F, \adel_\F^\dagger] = 0. 
\end{align*}
\end{proposition}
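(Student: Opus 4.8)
The plan is to establish the first two commutators, $[L_\F,\del_\F]=0$ and $[L_\F,\adel_\F]=0$, by a direct graded-Leibniz computation on the twisted de Rham complex $\Om^\bullet\otimes_M\F$, and then to deduce the remaining two, $[\Lam_\F,\del^\dagger_\F]=0$ and $[\Lam_\F,\adel^\dagger_\F]=0$, by taking adjoints with respect to the Hermitian inner product $\langle\cdot,\cdot\rangle_\F$. Recall that $(\Om^\bullet\otimes_M\F,L_\F)$ is a Lefschetz pair by Proposition \ref{prop:lefadjs}, so $\Lam_\F$, $\ast_\F$ and the adjoint notation all make sense.

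For the first step, recall that $L_\F=L\otimes\id_\F$ is wedging by the K\"ahler form $\k$, which is a central closed $(1,1)$-form. Since $\exd\k=\del\k+\adel\k$ with $\del\k\in\Om^{(2,1)}$ and $\adel\k\in\Om^{(1,2)}$ lying in distinct bidegree components, closedness forces $\del\k=\adel\k=0$. On $\Om^\bullet\otimes_M\F$ the operator $\adel_\F$ is the graded derivation over $\adel$ determined by the holomorphic structure, i.e. $\adel_\F(\omega\otimes x)=\adel\omega\otimes x+(-1)^{|\omega|}\omega\,\adel_\F(x)$ for homogeneous $\omega\in\Om^\bullet$ and $x\in\F$; likewise $\del_\F$ is a graded derivation over $\del$ (this is all that is used here, not the full Chern property). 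Applying $\adel_\F$ to $L_\F(\omega\otimes x)=(\k\wedge\omega)\otimes x$ and using $\adel\k=0$ together with $|\k|=2$ gives $\adel(\k\wedge\omega)=\k\wedge\adel\omega$ and $(-1)^{|\k\wedge\omega|}=(-1)^{|\omega|}$, so that $\adel_\F L_\F(\omega\otimes x)=\k\wedge\adel\omega\otimes x+(-1)^{|\omega|}\k\wedge\bigl(\omega\,\adel_\F(x)\bigr)=L_\F\adel_\F(\omega\otimes x)$ by associativity of the wedge product. The computation for $\del_\F$ is identical with $\del$ in place of $\adel$. Taken together these say that $(\Om^\bullet\otimes_M\F,L_\F,\adel_\F)$ and $(\Om^\bullet\otimes_M\F,L_\F,\del_\F)$ are Lefschetz triples.

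For the second step, note that $\Lam_\F$ is the adjoint of $L_\F$ with respect to $g_\F$ by Proposition \ref{proposition:LefschetzAdjoint}, hence also with respect to $\langle\cdot,\cdot\rangle_\F=\operatorname{\bf f}\circ g_\F$; moreover $\del_\F$ and $\adel_\F$ are adjointable for $\langle\cdot,\cdot\rangle_\F$ by Proposition \ref{proposition:adelIsAdjointable}. Since a composite of adjointable operators is adjointable with $(ST)^\dagger=T^\dagger S^\dagger$, applying $(-)^\dagger$ to the operator identity $L_\F\adel_\F-\adel_\F L_\F=0$ yields $\adel^\dagger_\F\Lam_\F-\Lam_\F\adel^\dagger_\F=0$, and likewise $\del^\dagger_\F\Lam_\F-\Lam_\F\del^\dagger_\F=0$ from $[L_\F,\del_\F]=0$; here one uses uniqueness of adjoints with respect to the non-degenerate pairing $\langle\cdot,\cdot\rangle_\F$.

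There is no substantial obstacle: the content is essentially bookkeeping. The two points requiring care are (i) the signs and bidegrees in the Leibniz computation, in particular deriving $\del\k=\adel\k=0$ from closedness and the type of $\k$; and (ii) checking that the adjoint manipulation is legitimate on the (generally incomplete) inner-product space $\Om^\bullet\otimes_M\F$, which is why I would invoke adjointability of each factor (Propositions \ref{proposition:LefschetzAdjoint} and \ref{proposition:adelIsAdjointable}) and uniqueness of adjoints from non-degeneracy rather than appeal to any Hilbert space completion.
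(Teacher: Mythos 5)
Your proposal is correct and follows essentially the same route as the paper: the paper likewise reduces $[L_\F,\del_\F]=0$ and $[L_\F,\adel_\F]=0$ to the untwisted commutators $[L,\del]=[L,\adel]=0$ (packaged there as the statement that $(\Om^\bullet,L,\del)$ and $(\Om^\bullet,L,\adel)$ are Lefschetz triples) together with the vanishing of the commutator with the connection part, and then obtains the remaining two identities by taking adjoints via Propositions \ref{proposition:LefschetzAdjoint} and \ref{proposition:adelIsAdjointable}. Your version merely fills in details the paper leaves implicit, namely deducing $\del\k=\adel\k=0$ from closedness and bidegree, and justifying the adjoint manipulation by uniqueness of adjoints for the non-degenerate pairing.
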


\begin{proof}
As $(\Om^{\bullet}, L = \kappa \wedge -, \del)$ is a Lefschetz triple, we have $[\del,L] = 0$.  This yields: 
\bas
[\del_{\F},L_\F](\w \otimes f) &= \big[ \del \oby \id + \id \oby \del_\F, L \oby \id\big](\w \otimes f)\\
&= \big[\del \oby \id, L \oby \id\big](\w \otimes f) + \big[\id \oby \del_\F,L \oby \id\big](\w \otimes f)\\
&= \Big(\big[\del,L](\w)\Big) \oby f\\
&= 0.
\eas
The second identity can be proved in a similar way, using that $[\odel, L_\F] = 0$.  Finally, the third and fourth identities now follow as the adjoints of the first and second identities respectively.
\end{proof}

As an immediate consequence we get the following corollary which gives examples of Lefschetz triples, as promised in \textsection\ref{section:LefschetzPairs}.

\begin{corollary}\label{Corollary:UsefulLefschetzTriples}
For any $\F \in \psAM\lproj$ with a left holomorphic and anti-holomorphic structure, there are Lefschetz triples given by
\bas
(\Om^{\bullet} \oby_M \F, L_\F, \del_\F), & & (\Om^{\bullet} \oby_M \F, L_\F, \adel_\F), & & (\Om^{\bullet} \oby_M \F, L_\F, \exd_\F),
\eas
where $\exd_\F \coloneqq \adel_\F + \del_\F.$
\end{corollary}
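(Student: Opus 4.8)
The plan is to deduce the corollary directly from Proposition \ref{prop:lefadjs} and Proposition \ref{Proposition:NIDSI}, since a Lefschetz triple is, by definition, nothing more than a Lefschetz pair together with a degree one linear map that commutes with the Lefschetz operator. The Lefschetz pair underlying all three triples is the same one: by Proposition \ref{prop:lefadjs}(1), $\bigl(\Om^\bullet \oby_M \F, L_\F\bigr)$ is a (left) Lefschetz pair of total degree $2n$. Hence in each of the three cases it only remains to verify that the relevant degree one operator commutes with $L_\F$.

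For the first two triples I would simply quote the relations $[L_\F, \del_\F] = 0$ and $[L_\F, \adel_\F] = 0$ proved in Proposition \ref{Proposition:NIDSI}, noting in addition that $\del_\F$ has bidegree $(1,0)$ and $\adel_\F$ has bidegree $(0,1)$, so that both are degree one maps for the total grading; this shows that $\bigl(\Om^\bullet \oby_M \F, L_\F, \del_\F\bigr)$ and $\bigl(\Om^\bullet \oby_M \F, L_\F, \adel_\F\bigr)$ are Lefschetz triples. For the third, $\exd_\F = \adel_\F + \del_\F$ is visibly a degree one map, and bilinearity of the commutator gives $[L_\F, \exd_\F] = [L_\F, \adel_\F] + [L_\F, \del_\F] = 0$, so $\bigl(\Om^\bullet \oby_M \F, L_\F, \exd_\F\bigr)$ is a Lefschetz triple as well.

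I do not expect any genuine obstacle: all the content has already been packaged into Proposition \ref{Proposition:NIDSI} (which itself uses $L_\F = L \oby \id_\F$ and the fact that $(\Om^\bullet, L, \del)$ and $(\Om^\bullet, L, \odel)$ are Lefschetz triples). The only point warranting a moment's care is a bookkeeping one — confirming that ``degree one'' in the definition of a Lefschetz triple is meant with respect to the total degree $a+b$, so that the bidegree $(1,0)$ and $(0,1)$ operators, and their sum, all qualify.
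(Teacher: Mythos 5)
Your proposal is correct and matches the paper's own argument, which likewise derives the corollary directly from Proposition \ref{prop:lefadjs} (for the Lefschetz pair) and Proposition \ref{Proposition:NIDSI} (for the vanishing commutators), with the third triple following by bilinearity. Your added remark about checking that the bidegree $(1,0)$ and $(0,1)$ operators are degree one for the total grading is a reasonable bookkeeping point that the paper leaves implicit.
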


\begin{proof}
This follows from Propositions \ref{prop:lefadjs} and \ref{Proposition:NIDSI}.
\end{proof}

We now use the fact that we are dealing with Lefschetz triples to extend the K\"ahler identities to the twisted case.

\begin{theorem}[Nakano identities]\label{theorem:Nakano}
Let $(\Om^{(\bullet, \bullet)}, \odel, \del, \k)$ be a positive definite $A$-covariant K\"ahler structure.  Let $\operatorname{\bf f}$ be an $A$-covariant state such that the integral $\int = \operatorname{\bf f} \circ \vol_\k$ is closed.  For an $A$-covariant Hermitian holomorphic vector bundle $(\F, \odel_\F, h)$ with Chern connection $\nabla_\F = \odel_\F + \del_\F$, we have
\begin{align} \label{KIDS2}
[L_\F,\del_\F^{\dagger}] = i \adel_\F, & & [L_\F,\adel_\F^{\dagger}] = - i \del_\F, & &  [\Lambda_{\F},\del_{\F}] = i \adel_{\F}^\dagger, & & {[\Lambda_{\F},\adel_\F] = - i\del^\dagger_{\F}}
\end{align}
\end{theorem}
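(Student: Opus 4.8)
The plan is to reduce the four displayed identities, by purely formal manipulations, to a single identity, and then to prove that one by matching the Lefschetz-triple formula of Proposition~\ref{prop:ltids} (for the left-hand side) against the Hodge-theoretic description of the codifferentials supplied by Proposition~\ref{proposition:adelIsAdjointable} (for the right-hand side).

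First I would cut the four identities in \eqref{KIDS2} down to two. Since $L_\F^\dagger = \Lambda_\F$ by Proposition~\ref{proposition:LefschetzAdjoint} and $\del_\F,\adel_\F$ are adjointable by Proposition~\ref{proposition:adelIsAdjointable}, applying $(-)^\dagger$ to $[L_\F,\del_\F^\dagger] = i\adel_\F$ gives $-[\Lambda_\F,\del_\F] = -i\adel_\F^\dagger$, and applying it to $[L_\F,\adel_\F^\dagger] = -i\del_\F$ gives $-[\Lambda_\F,\adel_\F] = i\del_\F^\dagger$; hence the first two identities are equivalent to the last two, and it is enough to prove
\[
[\Lambda_\F,\del_\F] = i\adel_\F^\dagger, \qquad [\Lambda_\F,\adel_\F] = -i\del_\F^\dagger.
\]
Next I would cut these two down to one: adding them and using $\exd_\F = \del_\F + \adel_\F$, it suffices to establish the single identity $[\Lambda_\F,\exd_\F] = i(\adel_\F^\dagger - \del_\F^\dagger)$, because $[\Lambda_\F,\del_\F]$ and $\adel_\F^\dagger$ are homogeneous of bidegree $(0,-1)$ whereas $[\Lambda_\F,\adel_\F]$ and $\del_\F^\dagger$ are homogeneous of bidegree $(-1,0)$, so projecting this single identity onto bidegrees recovers both. (As a variant, one may prove only $[\Lambda_\F,\del_\F] = i\adel_\F^\dagger$ and then deduce $[\Lambda_\F,\adel_\F] = -i\del_\F^\dagger$ by conjugating with the real-linear isomorphism $C_h$ of Definition~\ref{definition:TwistedHodge}: one uses $\del_\F = C_h^{-1}\circ\adel_{\lvee\F}\circ C_h$ from Proposition~\ref{proposition:ChernViaConjugates} together with the facts that $C_h$ intertwines $L_\F,\Lambda_\F$ with $L_{\lvee\F},\Lambda_{\lvee\F}$ and, via Proposition~\ref{proposition:adelIsAdjointable}, $\adel_\F^\dagger$ with $\del_{\lvee\F}^\dagger$, so that the second identity becomes the first applied to the Hermitian holomorphic vector bundle $\lvee\F$.)

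To prove the single identity $[\Lambda_\F,\exd_\F] = i(\adel_\F^\dagger - \del_\F^\dagger)$ I would exploit that $(\Om^\bullet\otimes_M\F,\, L_\F,\, \exd_\F)$ is a Lefschetz triple (Corollary~\ref{Corollary:UsefulLefschetzTriples}, which rests on Proposition~\ref{Proposition:NIDSI}). By the Lefschetz decomposition (Proposition~\ref{LDecomp} applied to $(\Om^\bullet\otimes_M\F, L_\F)$) it is enough to evaluate both sides on elements $L_\F^j(\a)$ with $\a$ primitive of bidegree $(p,q)$ and total degree $k=p+q$. For the left-hand side, Proposition~\ref{prop:ltids} produces primitive $\a_0,\a_1$ with $\exd_\F(\a) = \a_0 + L_\F(\a_1)$ and
\[
[\Lambda_\F,\exd_\F]\big(L_\F^j(\a)\big) = -j\,L_\F^{j-1}(\a_0) + (n-j-k+1)\,L_\F^j(\a_1).
\]
For the right-hand side I would compute $\adel_\F^\dagger$ and $\del_\F^\dagger$ on $L_\F^j(\a)$ from $\adel_\F^\dagger = -\bar{\ast}_{\lvee\F}\circ\adel_{\lvee\F}\circ\bar{\ast}_\F$ and $\del_\F^\dagger = -\bar{\ast}_{\lvee\F}\circ\del_{\lvee\F}\circ\bar{\ast}_\F$, writing $\bar{\ast}_\F = \ast_\F\circ C_h$ and using that $\ast_\F$ rescales $L_\F^j(\a)$ by the explicit factor of \eqref{equation:NewHodge}; here reality of $\k$ is used through the compatibility of this Hodge map with the $\ast$-operation of the calculus (cf.\ \cite[Lemma 4.12.4]{MMF3}), and closedness of $\int$ through Proposition~\ref{proposition:adelIsAdjointable}. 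The identity then comes down to checking that the combinatorial coefficients $-j$ and $n-j-k+1$ coming out of Proposition~\ref{prop:ltids} match the coefficients produced by the Hodge-star formula --- which is exactly what the factorials and the bidegree-dependent powers of $i$ in \eqref{equation:NewHodge} are calibrated to arrange. I expect this final comparison to be the main obstacle: it is the noncommutative analogue of the classical passage from the purely ``symplectic'' codifferential $[\Lambda_\F,\exd_\F]$ (which depends only on $\k$ and on the relation $[L_\F,\exd_\F]=0$) to the Hermitian codifferential $i(\adel_\F^\dagger - \del_\F^\dagger)$, and carrying it out cleanly amounts to tracking the actions of $\bar{\ast}_\F$, $\adel_{\lvee\F}$, $\del_{\lvee\F}$ and $\bar{\ast}_{\lvee\F}$ on each summand of the $\mathfrak{sl}_2$-decomposition while keeping the bundle $\F$ and its Hermitian structure correctly threaded through $C_h$.
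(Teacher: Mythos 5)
Your proposal follows essentially the same route as the paper: the paper proves $[\Lambda_\F,\del_\F]=i\adel_\F^\dagger$ directly by comparing the output of Proposition~\ref{prop:ltids} for the Lefschetz triple $(\Om^\bullet\otimes_M\F,L_\F,\del_\F)$ against the formula $\adel_\F^\dagger=-\bar{\ast}_{\lvee\!\F}\circ\adel_{\lvee\!\F}\circ\bar{\ast}_\F$, using $C_h^{-1}\circ\adel_{\lvee\!\F}\circ C_h=\del_\F$ and the calibrated coefficients of \eqref{equation:NewHodge}, then treats the fourth identity similarly and obtains the first two by taking adjoints via $\Lambda_\F=L_\F^\dagger$. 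Your repackaging of the two remaining identities into the single statement $[\Lambda_\F,\exd_\F]=i(\adel_\F^\dagger-\del_\F^\dagger)$ is only cosmetic, since the bidegree projection undoes it and the right-hand side must in any case be computed term by term exactly as in the paper.
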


\begin{proof}
 We first prove the third identity, and then derive the remaining three as consequences.  Let $\alpha \in P^k$ be a primitive element for the Lefschetz pair $(\Om^{\bullet} \oby_M \F, L_\F)$.  Proposition \ref{prop:ltids} implies that 
\begin{equation} \label{eqn:NIDSLHS}
[\Lambda_{\F},\del_{\F}](L_{\F}^j(\a)) = (n-j-k+1)L^j(\a_1) -jL^{j-1}(\a_0),
\end{equation}
for some $\alpha_1 \in P^{k-1}$ and $\alpha_0 \in P^{k+1}$.
Moving to the right-hand side of the proposed identity, and assuming without loss of generality that $\a = \omega \otimes f \in P^{(a,b)} \oby_M \F \subseteq P^k \oby_M \F$, we see that (using the Hodge map from \eqref{equation:NewHodge})
\bas
   i\adel_{\F}^\dagger \circ L^j_{\F}\big(\a\big) 
&=  - i \, \bar{\ast}_\F \circ  \adel_{\,\lvee\!\F} \circ \, \bar{\ast}_\F\big(L^j(\w) \oby_M f \big) \\
&=  - i \, \bar{\ast}_\F \circ  \adel_{\,^\vee\!\F} \circ \ast_h \Big(\big(\ast_H \circ \, L^j(\w)\big) \oby_M  f \Big) \\
&= (-1)^{\frac{k(k+1)}{2}} i^{a-b-1} \frac{j!}{(n-j-k)!} \ast_\F \circ \, C^{-1}_h \circ \,\adel_{\,^\vee\!\F} \circ C_h \Big(L^{n-j-k}(\w) \oby_M f)\Big)\\
&=  (-1)^{\frac{k(k+1)}{2}} i^{a-b-1} \frac{j!}{(n-j-k)!} \ast_\F \circ \, \del_{\F} \Big(L^{n-j-k}_{\F}(\a)\Big).
\eas
It now follows from Propositions \ref{prop:ltids} and \ref{Proposition:NIDSI}  that 
\bas
   i\adel_{\F}^\dagger \circ L^j_{\F}(\alpha) &= (-1)^{\frac{k(k+1)}{2}} i^{a-b-1} \frac{j!}{(n-j-k)!} \ast_\F \circ \, L^{n-j-k}_{\F} \circ \del_{\F}(\a)\\
& = (-1)^{\frac{k(k+1)}{2}} i^{a-b-1} \frac{j!}{(n-j-k)!} \ast_\F \Big( L^{n-j-k}_{\F}(\a_0) + L^{n-j-k+1}_{\F}(\a_1)\Big).
\eas
Recalling that $\a_0 \in P^{(a+1,b)} \oby_M \F$ and $\a_1 \in P^{(a,b-1)} \oby_M \F$, we see from the definition of the Hodge map (and Proposition \ref{prop:lefadjs}) that  
\bas
\ast_\F\big(L_\F^{n-j-k}(\a_0)\big) = & \, (-1)^{\frac{(k+1)(k+2)}{2}}i^{a-b+1}\frac{(n-j-k)!}{(j-1)!} L_\F^{j-1}(\a_0),\\
\ast_\F\big(L_\F^{n-j-k+1}(\a_1)\big) =  & \, (-1)^{\frac{k(k-1)}{2}}i^{a-b+1}\frac{(n-j-k+1)!}{j!} L_\F^{j}(\a_1).
\eas
This implies that
\bas
   i\adel_{\F}^\dagger \circ L^j_{\F}\big(\alpha \big) =   & - j L_{\F}^{j-1}(\a_0) + (n-j-k+1) L_{\F}^j(\a_1). 
\eas
Comparing this expression with \eqref{eqn:NIDSLHS} confirms the third identity.  The proof of the fourth identity is similar.

The first and the second equality are adjoint to the third and the fourth, respectively; this uses $\Lam_{\F} = L_{\F}^\dagger$ (see Proposition \ref{proposition:LefschetzAdjoint}).
\end{proof}

As a first application of the Nakano identities we establish anti-commutator relations between $\del_{\F}$ and $\adel_{\F}$, and  $\del^\dagger_{\F}$ and  $\ol{\del}_{\F}$. 

\begin{corollary} \label{LaplacianEq}
It holds that
\bet
\item $\del_{\F}\ol{\del}^\dagger_{\F} + \ol{\del}^\dagger_{\F} \del_{\F} = 0$,
\item $\del^\dagger_{\F} \ol{\del}_{\F} + \ol{\del}_{\F}\del^\dagger_{\F} = 0$,
\eet
\end{corollary}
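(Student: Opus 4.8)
The plan is to derive both identities as formal consequences of the Nakano identities (Theorem~\ref{theorem:Nakano}) together with the vanishing of $\del_\F^2$ and $\adel_\F^2$ on the twisted complex, following the classical template (cf.\ the corresponding argument in \cite{HUY}).

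First I would record that $\del_\F^2 = 0$ and $\adel_\F^2 = 0$ as operators on $\Om^\bullet \otimes_M \F$. Since $\nabla_\F = \odel_\F + \del_\F$ is the Chern connection, the curvature $\nabla_\F^2$ is $\Om^\bullet$-linear and, by Proposition~\ref{prop:BM2}, carries $\F$ into $\Om^{(1,1)} \otimes_M \F$; hence on $\Om^{(a,b)} \otimes_M \F$ it takes values in $\Om^{(a+1,b+1)} \otimes_M \F$. Expanding $\nabla_\F^2 = \del_\F^2 + (\del_\F \adel_\F + \adel_\F \del_\F) + \adel_\F^2$ into its $(2,0)$-, $(1,1)$- and $(0,2)$-components forces the $(2,0)$- and $(0,2)$-parts to vanish, which is the claim.

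Next, for part~(1), I would substitute $\adel_\F^\dagger = -i[\Lambda_\F, \del_\F]$ (the third Nakano identity) into $\del_\F \adel_\F^\dagger + \adel_\F^\dagger \del_\F$, obtaining $-i\big(\del_\F[\Lambda_\F,\del_\F] + [\Lambda_\F,\del_\F]\del_\F\big)$; expanding the commutators, two of the four resulting terms cancel against each other and the remaining two combine to $-i\big(\Lambda_\F \del_\F^2 - \del_\F^2 \Lambda_\F\big)$, which is $0$ by the previous step. For part~(2) I would run the analogous computation with the fourth Nakano identity $\del_\F^\dagger = i[\Lambda_\F, \adel_\F]$ and $\adel_\F^2 = 0$; alternatively, part~(2) is exactly the $\langle-,-\rangle_\F$-adjoint of part~(1), and taking this adjoint is legitimate since $\del_\F$ and $\adel_\F$, and therefore $\del_\F^\dagger$ and $\adel_\F^\dagger$, are adjointable by Proposition~\ref{proposition:adelIsAdjointable}.

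I do not expect a genuine obstacle here: once the Nakano identities are in hand, the rest is a formal manipulation of (anti)commutators. The only point that needs a little care is justifying $\del_\F^2 = 0$ and $\adel_\F^2 = 0$ on all of $\Om^\bullet \otimes_M \F$, and not merely on $\F$ itself, which is the reason for spelling out the bidegree argument in the first step.
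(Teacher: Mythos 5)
Your proposal is correct and follows essentially the same route as the paper: substitute the Nakano identity $[\Lambda_\F,\del_\F] = i\,\adel_\F^\dagger$ into the anticommutator, cancel the cross terms, use $\del_\F^2 = 0$, and obtain part (2) as the adjoint of part (1). The only cosmetic difference is that you justify $\del_\F^2 = 0$ via the bidegree of the curvature, whereas the paper already established it directly in Proposition \ref{proposition:ChernViaConjugates}; either justification is fine.
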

\begin{proof}
\bet
\item The first identity follows from
\bas
 i\big(\del_{\F} \circ  \adel^\dagger_{\F} + \adel^\dagger_{\F} \circ  \del\big)_{\F}  = & \, \del_{\F} \circ [\Lambda_{\F},\del_{\F}] + [\Lambda_{\F},\del_{\F}] \circ \del_{\F} \\
&= \del_{\F} \circ \Lambda_{\F} \circ  \del_{\F} - \del^2_{\F} \circ  \Lambda_{\F} + \Lambda_{\F} \circ  \del^2_{\F} - \del_{\F} \circ  \Lambda_{\F} \circ  \del_{\F} \\
&= 0.
\eas

\item The second identity is the operator adjoint of the first identity. \qedhere
\eet
\end{proof}

We finish this section with a second application of the Nakano identities. In particular, we generalise the equality of the untwisted Laplacians $\DEL_{\del} = \DEL_{\adel}$ to the twisted case \cite[Corollary 7.6]{MMF3}. Note that this generalised identity depends on the curvature of the Chern connection and only gives an equality when the Chern connection is flat.

\begin{corollary}[Akizuki--Nakano identity]
For $\nabla = \del_\F + \odel_\F$ the Chern connection associated to $\adel_\F$ and $h$, we have the identity
\bas
\DEL_{\del_{\F}} = \DEL_{\adel_{\F}} + [\Lambda_\F, i \nabla^2].
\eas
\end{corollary}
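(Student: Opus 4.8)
The plan is to derive the identity purely formally, from the Nakano identities (Theorem~\ref{theorem:Nakano}) together with the observation that the Chern curvature $\nabla^2$ is precisely the anticommutator of $\del_\F$ and $\adel_\F$.

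First I would record the shape of the curvature. Since $\adel_\F$ is a holomorphic structure we have $\adel_\F^2 = 0$, and since $\del_\F = C^{-1}_h \circ \adel_{\lvee\!\F} \circ C_h$ is an anti-holomorphic structure (Proposition~\ref{proposition:ChernViaConjugates}) we also have $\del_\F^2 = 0$. Hence $\nabla^2 = (\del_\F + \adel_\F)^2$ has no $(2,0)$- or $(0,2)$-component, and, consistently with Proposition~\ref{prop:BM2}, it equals
\[
\nabla^2 = \del_\F \adel_\F + \adel_\F \del_\F
\]
as an operator on $\Om^\bullet \oby_M \F$.

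Next I would substitute the Nakano identities \eqref{KIDS2}, rewritten as $\del_\F^\dagger = i[\Lambda_\F,\adel_\F]$ and $\adel_\F^\dagger = -i[\Lambda_\F,\del_\F]$, into the two Laplacians:
\[
\DEL_{\del_\F} = i\big(\del_\F[\Lambda_\F,\adel_\F] + [\Lambda_\F,\adel_\F]\del_\F\big), \qquad \DEL_{\adel_\F} = -i\big(\adel_\F[\Lambda_\F,\del_\F] + [\Lambda_\F,\del_\F]\adel_\F\big).
\]
Expanding the eight monomials occurring in $\DEL_{\del_\F} - \DEL_{\adel_\F}$, the terms $\del_\F\Lambda_\F\adel_\F$ and $\adel_\F\Lambda_\F\del_\F$ cancel in pairs, and the four surviving monomials regroup, using the first step, as
\[
-i(\del_\F\adel_\F + \adel_\F\del_\F)\Lambda_\F + i\Lambda_\F(\adel_\F\del_\F + \del_\F\adel_\F) = i[\Lambda_\F,\nabla^2] = [\Lambda_\F, i\nabla^2],
\]
which is the asserted identity. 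The anticommutator relations of Corollary~\ref{LaplacianEq} can be invoked to streamline the bookkeeping, but, as the direct expansion shows, they are not strictly needed.

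The argument is entirely mechanical once these ingredients are assembled, so there is no serious obstacle beyond tracking signs carefully — in particular that $\del_\F$ and $\adel_\F$ are odd operators, so the brackets appearing in $\DEL_{\del_\F}$ and $\DEL_{\adel_\F}$ are anticommutators, whereas $[\Lambda_\F,\del_\F]$ and $[\Lambda_\F,\adel_\F]$, having the even factor $\Lambda_\F$, are ordinary commutators. Finally I would observe that when $\nabla^2 = 0$ (for instance, for the trivial bundle $\F = M$ with its canonical Hermitian structure, where the Chern connection is $\exd$) the identity collapses to $\DEL_{\del_\F} = \DEL_{\adel_\F}$, recovering \cite[Corollary~7.6]{MMF3}.
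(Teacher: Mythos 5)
Your proposal is correct and follows essentially the same route as the paper: both arguments rest on the two Nakano identities $\del_\F^{\dagger} = i[\Lambda_\F,\adel_\F]$, $\adel_\F^{\dagger} = -i[\Lambda_\F,\del_\F]$ together with the fact (Proposition \ref{prop:BM2}, or equivalently $\del_\F^2 = \adel_\F^2 = 0$) that $\nabla^2 = \del_\F\adel_\F + \adel_\F\del_\F$, and the only difference is organisational — you expand both Laplacians and subtract, whereas the paper expands $\DEL_{\del_\F}$ alone, inserts $\nabla^2$, and then recognises $\DEL_{\adel_\F}$ via a second application of the Nakano identities.
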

\begin{proof}
It follows from the Nakano identities that
\begin{align*}
-i \DEL_{\del_\F} = &  -i  \big(\del_\F \circ \del^\dagger_\F + \del^\dagger_\F \circ \del_\F\big) \\ 
&= \del_\F \circ [\Lambda_\F,\ol{\del}_\F] + [\Lambda_\F,\ol{\del}_\F] \circ \del_\F \\
&= \del_\F \circ \Lambda_\F \circ \ol{\del}_\F - \del_\F \circ \ol{\del}_\F  \circ \Lambda_\F + \Lambda_\F \circ \ol{\del}_\F \circ \del_\F - \ol{\del}_\F \circ \Lambda_\F \circ \del_\F.
\end{align*}
 Proposition \ref{prop:BM2} implies that $\nabla^2 = \del_\F \circ \adel_\F + \adel_\F \circ \del_\F$, and so,
\begin{align*}
 \begin{split}
 -i \DEL_{\del_\F} &= \del_\F \circ \Lambda_\F \circ \ol{\del}_\F - \Lambda_\F \circ \del_\F \circ \adel_\F + \ol{\del}_\F \circ \del_\F  \circ \Lambda_\F - \ol{\del}_\F \circ \Lambda_\F \circ \del_\F \\
 & \qquad + \Lambda_\F \circ \nabla^2  - \nabla^2 \circ \Lambda_\F
\end{split} \\
&= - [\Lambda_\F,\del_\F] \circ \ol{\del}_\F - \ol{\del}_\F \circ [\Lambda_\F,\del_\F] + [\Lambda_\F,\nabla^2] .
\end{align*}
Another application of the Nakano identities now implies that
\begin{align*}
 -i \DEL_{\del_\F}  =  & -i\ol{\del}^\dagger_\F \circ \ol{\del}_\F - i\ol{\del}_\F \circ \ol{\del}^\dagger_\F  + [\Lambda_\F,\nabla^2] \\
 &= -i\big(\ol{\del}^\dagger_\F \circ \ol{\del}_\F + \ol{\del}_\F \circ \ol{\del}^\dagger_\F\big)  + [\Lambda_\F,\nabla^2] \\
 &= -i\DEL_{\ol{\del}_\F} +  [\Lambda_\F,\nabla^2].
\end{align*} 
The required identity follows immediately.
\end{proof}

\section{The Kodaira vanishing theorem and noncommutative Fano structures}\label{section:Kodaira}

Let $(\Om^{(\bullet, \bullet)}, \odel, \del, \k)$ be a positive definite K\"ahler structure.  Let $\operatorname{\bf f}$ be a state such that the integral $\int = \operatorname{\bf f} \circ \vol_\k$ is closed.

In this section we give the definition a positive vector bundle.  We proceed by proving a generalisation of the Kodaira vanishing theorem for such vector bundles. We finish by introducing the notion of a Fano structure for a quantum homogeneous space, and prove an additional vanishing result for such structures.

\subsection{The Kodaira vanishing theorem}

We begin with a positivity result for a general holomorphic Hermitian vector bundle, which is to say, we make no assumption yet that the vector bundles is a line bundle.

\begin{lemma} \label{lem:posforf}
Let $(\F, \odel_\F)$ be an $A$-covariant left holomorphic Hermitian vector bundle, and denote by $\nabla = \del_\F + \odel_\F$ its Chern connection (see Proposition \ref{prop:BM1}). If $D_{\odel_\F} = \odel_\F + \odel_\F^\dagger$ is diagonalisable, then for any $\a \in \H^{(a,b)}_{\adel_\F}$, we have that 
\bas
 \big\langle i\,\nabla^2 \circ \Lambda_{\F}(\a),\a\big\rangle  \leq 0, & & \big\langle \Lambda_{\F} \circ i\,\nabla^2(\a),\a\big\rangle  \geq 0.
\eas
\end{lemma}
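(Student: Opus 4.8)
The plan is to turn both inequalities into squared norms of $\del_\F\a$ and $\del_\F^\dagger\a$, exploiting that a harmonic form is killed by $\adel_\F$ and $\adel_\F^\dagger$, and then feeding in the Nakano identities from Theorem \ref{theorem:Nakano}. First I would record two preliminaries. By Proposition \ref{prop:BM2} the curvature $\nabla^2$ takes values in $\Om^{(1,1)}\otimes_M\F$, so its $(2,0)$- and $(0,2)$-components vanish; combined with $\adel_\F^2=0$ and $\del_\F^2=0$ (the latter from Proposition \ref{proposition:ChernViaConjugates}) this gives $\nabla^2=\del_\F\adel_\F+\adel_\F\del_\F$ as operators on $\Om^\bullet\otimes_M\F$. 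Secondly, by Proposition \ref{proposition:HodgeDecomposition}(\ref{enumerate:HodgeProposition1}), every $\a\in\H^{(a,b)}_{\adel_\F}$ satisfies $\adel_\F\a=0$ and $\adel_\F^\dagger\a=0$. All operators used below --- $\del_\F,\adel_\F$ and their adjoints, and $L_\F$ with $\Lambda_\F=L_\F^\dagger$ --- are adjointable for $\la-,-\ra_\F$ by Propositions \ref{proposition:adelIsAdjointable} and \ref{proposition:LefschetzAdjoint}, so the manipulations are legitimate.

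For the second inequality, $\adel_\F\a=0$ gives $\nabla^2\a=\adel_\F\del_\F\a$, hence $\la\Lambda_\F\circ i\nabla^2(\a),\a\ra_\F=i\la\Lambda_\F\adel_\F\del_\F\a,\a\ra_\F$; applying the Nakano identity $[\Lambda_\F,\adel_\F]=-i\del_\F^\dagger$ rewrites $\Lambda_\F\adel_\F\del_\F\a$ as $\adel_\F\Lambda_\F\del_\F\a-i\del_\F^\dagger\del_\F\a$, where the first term pairs to $\la\Lambda_\F\del_\F\a,\adel_\F^\dagger\a\ra_\F=0$ and the second yields $\|\del_\F\a\|_\F^2\geq 0$. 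For the first inequality, expanding $i\nabla^2\Lambda_\F\a$ and using $\la\adel_\F(-),\a\ra_\F=\la-,\adel_\F^\dagger\a\ra_\F=0$ discards the $\adel_\F\del_\F\Lambda_\F\a$-term, leaving $i\la\del_\F\adel_\F\Lambda_\F\a,\a\ra_\F=i\la\adel_\F\Lambda_\F\a,\del_\F^\dagger\a\ra_\F$; the same Nakano identity together with $\adel_\F\a=0$ gives $\adel_\F\Lambda_\F\a=i\del_\F^\dagger\a$, so this equals $i\la i\del_\F^\dagger\a,\del_\F^\dagger\a\ra_\F=-\|\del_\F^\dagger\a\|_\F^2\leq 0$.

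The calculations are short; the only care needed is the bookkeeping of the scalar $i$ across the sesquilinear pairing and the systematic use of $\adel_\F\a=\adel_\F^\dagger\a=0$ to delete terms, so there is no serious obstacle here. The substantive inputs are the Nakano identities (Theorem \ref{theorem:Nakano}), valid under the standing hypotheses of this section (a positive definite K\"ahler structure with closed integral), together with the $(1,1)$-type of the curvature (Proposition \ref{prop:BM2}); note that no positivity of $\F$ itself is invoked, which is why the statement is made for an arbitrary Hermitian holomorphic vector bundle.
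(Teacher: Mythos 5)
Your argument is correct and follows essentially the same route as the paper: expand $\nabla^2 = \del_\F\adel_\F + \adel_\F\del_\F$, use $\adel_\F\a = \adel_\F^\dagger\a = 0$ to discard terms, and feed in the Nakano identity $[\Lambda_\F,\adel_\F] = -i\del_\F^\dagger$ to land on a signed squared norm. The only cosmetic difference is that for the first inequality you arrive at $-\|\del_\F^\dagger\a\|^2$ where the paper writes $-\|\adel_\F\Lambda_\F\a\|^2$; these agree since $\adel_\F\Lambda_\F\a = i\,\del_\F^\dagger\a$ for harmonic $\a$.
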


\begin{proof}
Using Proposition \ref{proposition:adelIsAdjointable}, we have
\bas
\big\langle i\nabla^2  \circ  \Lambda_{\F}(\a),\a\big\rangle = & \, i \big\langle \del_{\F} \circ \adel_{\F} \circ  \Lambda_{\F}(\a),\a\big\rangle + i \big\langle \adel_{\F} \circ  \del_{\F} \circ  \Lambda_{\F}(\a),\a\big\rangle \\
= & \, \big\langle \adel_\F \circ \Lambda_{\F} (\a),-i \del^\dagger_{\F}(\a)\big\rangle + i \big\langle \del_{\F} \circ \Lambda_{\F}(\a),\adel^\dagger_{\F}(\a)\big\rangle.
\eas
Since by assumption $\a$ is $\adel_\F$-harmonic, we have $\adel^\dagger_{\F}(\a) = 0$, and so, 
\bas
\big\langle i\nabla^2  \circ  \Lambda_{\F}(\a),\a\big\rangle = & \, \big\langle \adel_{\F} \circ  \Lambda_{\F}(\a), -i\del^\dagger_\F(\a)\big\rangle
=  \,  \big\langle\adel_{\F} \circ \Lambda_{\F}(\a),[\Lambda_{\F},\adel_{\F}](\a)\big\rangle,
\eas
where the second equality uses the Nakano identities. Harmonicity of $\a$ means that $[\Lambda_{\F},\adel_{\F}](\a) = - \adel_{\F} \circ \Lambda_{\F}(\alpha)$, and so: 
\bas
\big\langle  i\nabla^2  \circ  \Lambda_{\F}(\a),\a\big\rangle = & \, - \big\langle \adel_{\F} \circ \Lambda_{\F}(\a),\adel_{\F} \circ \Lambda_{\F}(\a)\big\rangle
=  -\|\adel_{\F} \circ \Lambda_{\F}(\a)\|^2 \leq 0.
\eas
Similarly, the second identity follows from
\bas
\big\langle\Lambda_{\F} \circ i\nabla^2 (\a),\a\big\rangle = & \, i \big\langle\Lambda_{\F} \circ \adel_{\F} \circ  \del_{\F}(\a),\a\big\rangle +  i \big\langle \Lambda_{\F} \circ \del_{\F} \circ  \adel_{\F}(\a),\a\big\rangle\\
= & \, i\big\langle[\Lambda_{\F},\adel_{\F}] \circ  \del_{\F}(\a),\a\big\rangle + i\big\langle\adel_{\F} \circ  \Lambda_{\F} \circ  \del_{\F}(\a),\a\big\rangle\\
= & \,  i\big\langle-i \del_{\F}^\dagger \circ  \del_{\F}(\a),\a\big\rangle + i\big\langle\Lambda_\F \circ  \del_{\F}(\a), \adel^\dagger_{\F}(\a)\big\rangle\\
= & \,  \big\langle\del_{\F}(\a),\del_\F(\a)\big\rangle\\
= & \, \|\del_{\F}(\a)\|^2 \geq 0.
\eas
\end{proof}

\begin{definition}\label{definition:PositivelineBundle}  Let $M$ be an algebra, and $(\Om^\bullet, \exd)$ a differential calculus over $M$ admitting a complex structure $(\Om^{(\bullet,\bullet)}, \odel, \del)$.

A holomorphic vector bundle $\fF = (\F, \odel_\F)$ is called {\em positive} if it admits an Hermitian structure $h\colon \ol{\F} \to \lvee\! \F$ such that the curvature form of the associated Chern connection $\nabla_\F$ satisfies
\bas
 L_{\F} = i \nabla_\F^2 = i \adel_\F \circ \del_\F + i \del_\F \circ \adel_\F,
\eas
where $L_\F$ is the Lefschetz operator for some positive definite K\"ahler form $\k \in \Om^{(1,1)}$ for which there is a state $\operatorname{\bf  f}$ such that $\int = \operatorname{\bf  f} \circ \vol_\k$ is closed and for which the twisted Dirac operator $D_{\odel_\F}$ is diagonalisable.
\end{definition}

Combining this definition of positivity with the positivity results presented in Lemma \ref{lem:posforf} above, we can prove the following noncommutative generalisation of the celebrated Kodaira vanishing theorem \cite{Kodaira53}.

\begin{theorem}[Kodaira Vanishing]\label{theorem:Kodaira}
Let $(\Om^{(\bullet, \bullet)}, \odel, \del)$ be an $A$-covariant $*$-differential calculus with a complex structure.  

For any positive vector bundle $(\F,\odel_\F, h)$
\bas
H^{(a,b)}(\F) = 0, & & \text{whenever } a+b > n.
\eas
\end{theorem}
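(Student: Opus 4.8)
The plan is to imitate the classical Kodaira argument, which reduces the vanishing statement to a sign computation against the curvature form via the Akizuki--Nakano identity, with the classical "positivity of the curvature operator" replaced by the positivity results of Lemma \ref{lem:posforf}. Precisely, since $\F$ is positive, $L_\F = i\nabla_\F^2$, so the Akizuki--Nakano identity of the previous section gives $\DEL_{\del_\F} = \DEL_{\adel_\F} + [\Lambda_\F, i\nabla_\F^2] = \DEL_{\adel_\F} + [\Lambda_\F, L_\F]$. By the Lefschetz identities (Proposition \ref{LIDS}, applied to the Lefschetz pair $(\Om^\bullet \otimes_M \F, L_\F)$, valid by Corollary \ref{Corollary:UsefulLefschetzTriples} and Proposition \ref{prop:lefadjs}), we have $[\Lambda_\F, L_\F] = -[L_\F,\Lambda_\F] = -H_\F$, so on $\Om^{(a,b)}\otimes_M\F$ the operator $[\Lambda_\F, L_\F]$ acts as multiplication by the scalar $-(a+b-n) = n - (a+b)$. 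Hence, for any $\adel_\F$-harmonic $\a \in \H^{(a,b)}_{\adel_\F}$, we get $\DEL_{\del_\F}(\a) = (n-(a+b))\,\a$, and consequently
\begin{align*}
\langle \DEL_{\del_\F}(\a), \a\rangle = (n - (a+b))\,\|\a\|^2.
\end{align*}

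**The sign argument.** The key point is now that $\DEL_{\del_\F} = \del_\F \del_\F^\dagger + \del_\F^\dagger \del_\F$ is a manifestly non-negative operator with respect to the Hermitian inner product $\langle\cdot,\cdot\rangle_\F$ (each of $\del_\F, \del_\F^\dagger$ is adjointable by Proposition \ref{proposition:adelIsAdjointable}, since $\del_\F$ is the anti-holomorphic structure from Proposition \ref{proposition:ChernViaConjugates}), so $\langle \DEL_{\del_\F}(\a),\a\rangle = \|\del_\F(\a)\|^2 + \|\del_\F^\dagger(\a)\|^2 \geq 0$. Combining with the displayed equality, whenever $a + b > n$ the scalar $n - (a+b)$ is strictly negative, forcing $\|\a\|^2 = 0$, i.e.\ $\a = 0$ by non-degeneracy of the inner product. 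Thus $\H^{(a,b)}_{\adel_\F} = 0$ for $a+b > n$. Finally, by the Hodge decomposition theorem (Theorem \ref{theorem:HodgeDecomposition}), whose hypotheses hold here because the Hermitian structure $h$ is positive definite and the K\"ahler structure on $\Om^\bullet \otimes_M \F$ is diagonalisable (both built into Definition \ref{definition:PositivelineBundle}), the canonical map $\H^{(a,b)}_{\adel_\F} \to H^{(a,b)}(\F)$ is an isomorphism, so $H^{(a,b)}(\F) = 0$ for $a+b > n$, as claimed.

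**Where the difficulty lies.** Most of the ingredients are now in place, so the genuine work is in verifying that the operator identities compose correctly on the nose rather than up to local trivialisation. Concretely: one must check that the Akizuki--Nakano corollary applies verbatim under the positivity hypothesis $L_\F = i\nabla_\F^2$ (this is where the standing hypotheses of Section \ref{section:Nakano} — positive-definiteness of the K\"ahler structure and closedness of $\int$ — are silently invoked via Definition \ref{definition:PositivelineBundle}), and that $[\Lambda_\F, L_\F]$ really does act by the expected scalar on each bidegree component, which requires knowing that the Lefschetz-pair structure on $\Om^\bullet \otimes_M \F$ from Proposition \ref{prop:lefadjs} is compatible with the bigrading (so that $H_\F$ counts total degree $a+b$). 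I expect the main obstacle to be bookkeeping: ensuring that the diagonalisability hypothesis on $D_{\adel_\F}$ — needed both for Lemma \ref{lem:posforf} and for Hodge decomposition — is the same diagonalisability baked into the definition of positivity, and that no covariance assumption has quietly slipped in (the theorem statement drops the $A$-covariance, consistent with the general Hodge-theoretic setup of Section \ref{section:Hodge}). Once that alignment is confirmed, the proof is a two-line scalar computation.
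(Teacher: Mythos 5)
Your proof is correct and is essentially the paper's own argument: both reduce Kodaira vanishing to the inequality $0 \leq \langle [\Lambda_\F, L_\F](\a),\a\rangle = (n-(a+b))\|\a\|^2$ for harmonic $\a$, combined with the Lefschetz identity $[L_\F,\Lambda_\F]=H_\F$ and the harmonic-forms-to-cohomology isomorphism of Theorem \ref{theorem:HodgeDecomposition}. The only cosmetic difference is that you obtain the non-negativity from the Akizuki--Nakano corollary (writing $[\Lambda_\F,L_\F](\a) = \DEL_{\del_\F}(\a)$ on $\adel_\F$-harmonics, so the pairing is $\|\del_\F(\a)\|^2+\|\del_\F^\dagger(\a)\|^2$), whereas the paper cites Lemma \ref{lem:posforf} directly; these are the same computation, since both are immediate consequences of the Nakano identities and on harmonic forms $\|\del_\F^\dagger(\a)\|^2 = \|\adel_\F\Lambda_\F(\a)\|^2$.
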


\begin{proof}
Let $L_\F$ be the Lefschetz operator associated to the Chern connection of $(\F,h)$. Lemma \ref{lem:posforf}, together with the Lefschetz identities established in Lemma \ref{LIDS}, implies that
\begin{align*}
0 \leq \left\langle [\Lambda_{\F},L_{\F}](\a),\a\right\rangle = -\left\langle H_{\F}(\a),\a \right\rangle = \big(n-(a+b)\big) \|\a\|^2, & & \text{ for all } \a \in \H^{(a,b)}(\F).
\end{align*}
Thus when $a+b > n$, we must have that $\a = 0$, implying that there are no non-zero harmonic forms in $\H^{(a,b)}(\F)$. The required result now follows from the equivalence between harmonic forms and cohomology classes given in Theorem \ref{theorem:HodgeDecomposition}.
\end{proof}


\subsection{Noncommutative Fano structures}\label{subsection:Fano}

In this section, we consider a noncommutative version of a Fano manifold.  We start with the definition of a factorisable almost complex structure.  Throughout, we write $\D = (\Om^{(0,\bullet)}, \odel)$ for the Dolbeault dga.

\begin{definition}\label{definition:Factorisable}
An almost complex structure for an $A$-covariant differential $*$-calculus $\Om^{\bullet}$ over a $*$-algebra $M$ is called {\em factorisable} if the $M$-bimodule isomorphisms
\begin{align*}  \label{wedge-cond}
\wed\colon \Om^{(a,0)} \oby_M \Om^{(0,b)} \to \Om^{(a,b)},  & & \text{ and } & & \wed\colon \Om^{(0,b)} \oby_M \Om^{(a,0)} \to \Om^{(a,b)}
\end{align*}
are isomorphisms, for all $a,b$.
\end{definition}

\begin{remark}
The maps in Definition \ref{definition:Factorisable} are automatically $A$-comodule maps.
\end{remark}

\begin{proposition}\label{proposition:CanonicalBundleIsInvertible}
If the complex structure $(\Om^{(\bullet, \bullet)}, \odel, \del)$ is factorisable, then the left holomorphic vector bundle $(\Om^{(n,\bullet)}, \adel)$ is invertible (that is, it satisfies the conditions of Proposition \ref{proposition:LineBundleTensor}).
\end{proposition}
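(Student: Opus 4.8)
The plan is to verify the two standing hypotheses of Proposition~\ref{proposition:LineBundleTensor} for $\fX = (\Om^{(n,\bullet)}, \odel)$ --- that $\fX \in \psAD\dgzlproj_\D$ and that $\lwedge\fX \in \psAD\dgzlproj_\D$ --- and then to establish its condition~(\ref{enumerate:RankOne}); the implication $(\ref{enumerate:RankOne}) \Rightarrow (\ref{enumerate:lveeLineBundle})$ of that proposition then delivers invertibility.

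First I would observe that $(\Om^{(n,\bullet)}, \odel)$ is a sub-dg-$\D$-bimodule of $(\Om^\bullet, \odel)$ with degree-zero part $\Om^{(n,0)}$, which is a direct summand of $\Om^n$ in $\psAM\Mod_M$ and hence a vector bundle. By Definition~\ref{definition:Factorisable}, in each degree $b$ the left $\D$-action $\Om^{(0,b)} \oby_M \Om^{(n,0)} \to \Om^{(n,b)}$ is precisely the factorisability isomorphism $\wed$, so the canonical map $\D \oby_M \Om^{(n,0)} \to \Om^{(n,\bullet)}$ is an isomorphism; by Proposition~\ref{proposition:HolShape}, $\fX \in \psAD\dgzlproj_\D$ (in particular factorisability is exactly what makes $(\Om^{(n,\bullet)}, \odel)$ a bona fide holomorphic vector bundle in the first place).

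The geometric content is that $\Om^{(n,0)}$ is an \emph{invertible $M$-bimodule}. Factorisability supplies $M$-bimodule isomorphisms $\wed\colon \Om^{(0,n)} \oby_M \Om^{(n,0)} \to \Om^{(n,n)}$ and $\wed\colon \Om^{(n,0)} \oby_M \Om^{(0,n)} \to \Om^{(n,n)}$; since $\Om^{(n,n)} = \Om^{2n}$ and, by the standing K\"ahler hypothesis of the section, $\vol_\k\colon \Om^{2n} \to \Om^0 = M$ is an $M$-bimodule isomorphism (Definition~\ref{definition:VolumeForm}), composing yields $M$-bimodule isomorphisms $\Om^{(0,n)} \oby_M \Om^{(n,0)} \cong M \cong \Om^{(n,0)} \oby_M \Om^{(0,n)}$. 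Thus $\Om^{(n,0)}$ is an invertible bimodule with two-sided inverse $\Om^{(0,n)}$; since an invertible bimodule is finitely generated and projective on both sides and its two-sided inverse is canonically isomorphic to its dual, we conclude $\lvee(\Om^{(n,0)}) \cong \Om^{(0,n)}$ and that the evaluation map $\Om^{(n,0)} \oby_M \lvee(\Om^{(n,0)}) \to M$ is an isomorphism. This is exactly condition~(\ref{enumerate:RankOne}).

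It remains to check $\lwedge\fX \in \psAD\dgzlproj_\D$. By Proposition~\ref{proposition:RestrictedDuals} it already lies in $\psAD\dgzrproj_\D$, and by Remark~\ref{remark:DualsForDGModules} its degree-zero part is $\lvee(\Om^{(n,0)}) \cong \Om^{(0,n)}$, a vector bundle, while its degree-$i$ part is $\lvee(\Om^{(n,0)}) \oby_M \Om^{(0,i)}$. So by Proposition~\ref{proposition:HolShape} it suffices to show the left $\D$-action map $\D \oby_M \lvee(\Om^{(n,0)}) \to \lwedge\fX$ is an isomorphism; degreewise this is an isomorphism $\Om^{(0,i)} \oby_M \lvee(\Om^{(n,0)}) \to \lvee(\Om^{(n,0)}) \oby_M \Om^{(0,i)}$, which one builds by tensoring the factorisability isomorphisms $\Om^{(0,i)} \oby_M \Om^{(n,0)} \cong \Om^{(n,i)} \cong \Om^{(n,0)} \oby_M \Om^{(0,i)}$ on both sides with the inverse $\Om^{(0,n)} = \lvee(\Om^{(n,0)})$. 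I expect this to be the fiddly step: one must check that these bimodule isomorphisms are natural enough to assemble into an isomorphism of dg $\D$-$\D$-bimodules intertwining $\odel$ and both $\D$-actions, rather than a mere collection of degreewise $M$-bimodule identifications. With $\fX$ and $\lwedge\fX$ in $\psAD\dgzlproj_\D$ and condition~(\ref{enumerate:RankOne}) verified, Proposition~\ref{proposition:LineBundleTensor} yields~(\ref{enumerate:lveeLineBundle}), i.e.\ $(\Om^{(n,\bullet)}, \odel)$ is invertible.
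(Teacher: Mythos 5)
Your proof is correct and takes essentially the same route as the paper: in both arguments the key step is that factorisability at bidegree $(n,n)$ composed with the volume isomorphism makes the pairing $\Om^{(n,0)} \otimes_M \Om^{(0,n)} \to M$ an $M$-bimodule isomorphism, from which one deduces that $\Om^{(n,0)}$ is finitely generated projective, that $\lvee\Om^{(n,0)} \cong \Om^{(0,n)}$, and that the evaluation map of condition~(\ref{enumerate:RankOne}) of Proposition~\ref{proposition:LineBundleTensor} is invertible. The only differences are in packaging --- the paper extracts an explicit dual basis via Bourbaki and identifies $\Om^{(0,n)} \cong \lvee\Om^{(n,0)}$ by uniqueness of adjoints, where you invoke the Morita theory of invertible bimodules --- and you additionally verify the standing hypotheses $\fX, \lwedge\fX \in \psAD\dgzlproj_\D$ of Proposition~\ref{proposition:LineBundleTensor}, a point the paper leaves implicit.
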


\begin{proof}
Since the calculus is orientable and factorisable, we have that 
\begin{align*}
\vol(- \wedge -) \colon \Om^{(n,0)} \otimes_M \Om^{(0,n)} &\to M \\
\omega \otimes \nu &\mapsto \vol(\omega \wedge \nu) 
\end{align*}
is an $M$-bimodule isomorphism.  Write $\sum_i \omega_i \otimes \nu_i = \vol(- \wedge -)^{-1}(1)$.  It is now clear that $\{\omega_i\}_i$ generates $\Om^{(n,0)}$ as a $M$-left module and that $\sum_i \omega_i \otimes \{\vol(- \wedge \nu_i)\}_i \in \Om^{(n,0)} \otimes_M \lvee \Om^{(n,0)}$ is a dual basis element.  It follows from \cite[Proposition II.2.6.12]{BourbakiAlgebraI13} that $\Om^{(n,0)}$ is a finitely generated projective left $M$-module, thus: $\Om^{(n,0)} \in \psAM\lproj_M$.  Similarly, we show that $\Om^{(0,n)} \in \psAM\lproj_M$.

As both $\Om^{(0,n)} \otimes_M -$ and $\lvee \Om^{(n,0)} \otimes_M -$ are left adjoint to $\Om^{(n,0)} \otimes_M -$, we have that $\Om^{(0,n)} \cong \lvee \Om^{(n,0)}$ in $\psAM\Mod_M$.  From this follows that $(\Om^{(n,\bullet)}, \odel)$ is an invertible vector bundle.
\end{proof}

\begin{proposition}\label{proposition:Fano}
{Let $\fF = (\mathcal{F},\odel_\F)$ be a $A$-covariant holomorphic vector bundle.  If $\lwedge \Om^{(n,\bullet)} \otimes_\D \fF$ is a positive vector bundle, then $H^{(0,k)}(\fF) = 0$, for all $k > 0$.}
\end{proposition}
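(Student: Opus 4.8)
The plan is to reduce the vanishing of $H^{(0,k)}(\fF)$ to the Kodaira vanishing theorem (Theorem \ref{theorem:Kodaira}) applied to the positive vector bundle $\lwedge \Om^{(n,\bullet)} \otimes_\D \fF$, by identifying the anti-holomorphic cohomology of this twisted bundle in the top holomorphic degree with the anti-holomorphic cohomology of $\fF$ in holomorphic degree zero. Writing $\fG \coloneqq \lwedge \Om^{(n,\bullet)} \otimes_\D \fF$, the underlying complex vector bundle is $\G = \G^0 = \lvee\Om^{(n,0)} \otimes_M \F$, and by Proposition \ref{proposition:CanonicalBundleIsInvertible} (using factorisability, which is part of the standing hypothesis implicit in the existence of $\Om^{(n,\bullet)}$ as an invertible bundle) we have $\lvee \Om^{(n,0)} \cong \Om^{(0,n)}$ as $A$-comodule $M$-bimodules. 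Hence the degree-zero part of $\fG$ is $\Om^{(0,n)} \otimes_M \F$, and the twisted complex $\Om^{(0,\bullet)} \otimes_M \G$ underlying $\fG$ is, degreewise, $\Om^{(0,b)} \otimes_M \Om^{(0,n)} \otimes_M \F$. Using factorisability once more, $\Om^{(0,b)} \otimes_M \Om^{(0,n)}$ maps into $\Om^{(0,\bullet)}$; but since total anti-holomorphic degree is bounded by $n$, this is only nonzero when $b = 0$. So the twisted complex of $\fG$ is concentrated in a single relevant slot and I should instead think of $\fG$ as living inside $\Om^{(n,\bullet)} \otimes_M \F$ after the identification, i.e.\ as the twisted complex $(\Om^{(n,\bullet)} \otimes_M \F, \odel_\F)$.

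First I would make this identification precise: the dual $\lwedge \Om^{(n,\bullet)}$ as a dg module over $\D = (\Om^{(0,\bullet)}, \odel)$ has underlying graded module $\lvee \Om^{(n,0)} \otimes_{M} \Om^{(0,\bullet)} \cong \Om^{(0,n)} \otimes_M \Om^{(0,\bullet)}$ by Remark \ref{remark:DualsForDGModules}, with differential the graded commutator $[\odel,-]$. Tensoring over $\D$ with $\fF = \Om^{(0,\bullet)} \otimes_M \F$ and using Remark \ref{remark:AboutGradedTensors}, I obtain that $\fG = \lwedge \Om^{(n,\bullet)} \otimes_\D \fF$ has, in cohomological degree $k$, the space $(\lwedge \Om^{(n,\bullet)})^k \otimes_M \F \cong \Om^{(0,n)} \otimes_M \Om^{(0,k)} \otimes_M \F$. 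Via the factorisable wedge isomorphism $\Om^{(0,k)} \otimes_M \Om^{(n,0)} \cong \Om^{(n,k)}$ (equivalently, $\lvee\Om^{(n,0)}\otimes_M\Om^{(0,k)}$ is the correct ``shifted'' piece), one checks $\fG^k \cong \Om^{(n,k)} \otimes_M \F$, and one verifies that the differential on $\fG$ corresponds, under this identification, to $\odel_\F$ acting on $\Om^{(n,\bullet)} \otimes_M \F$. Granting this, the cohomology groups satisfy
\[
H^{(a,b)}(\fG) \cong H^{(n+a,\, b)}(\fF)
\]
after the degree shift in the holomorphic direction — in particular $H^{(0,k)}(\fG) \cong H^{(n,k)}(\fF)$. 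Wait: I need to be careful about which cohomology of $\fG$ the Kodaira theorem controls. Theorem \ref{theorem:Kodaira} says $H^{(a,b)}(\fG) = 0$ whenever $a+b > n$, where $(a,b)$ refers to the bidegree in the twisted complex $\Om^{(\bullet,\bullet)}\otimes_M\G$. So the plan is: $H^{(0,k)}(\fF)$ — the cohomology of $(\Om^{(0,\bullet)}\otimes_M\F,\odel_\F)$ in degree $k$ — should be matched, via tensoring with the invertible bundle $\Om^{(n,\bullet)}$ and its dual, with $H^{(n,k)}(\fG^{\text{something}})$; the precise statement is the Dolbeault analogue of $H^k(X,\mathcal F)\cong H^{n,k}(X, \mathcal F\otimes \Omega^n)$ from the classical case, and it is realised here by the invertibility isomorphism $\Om^{(n,\bullet)}\otimes_\D \lwedge\Om^{(n,\bullet)} \cong \D$ of Proposition \ref{proposition:LineBundleTensor}.

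Concretely, the argument I would run is: since $\lwedge\Om^{(n,\bullet)}\otimes_\D \fF$ is positive, Theorem \ref{theorem:Kodaira} gives $H^{(a,b)}(\lwedge\Om^{(n,\bullet)}\otimes_\D\fF) = 0$ for $a+b>n$; taking $(a,b)=(n,k)$ with $k>0$ (so $a+b = n+k > n$), and using the identification $H^{(n,k)}(\lwedge\Om^{(n,\bullet)}\otimes_\D\fF) \cong H^{(0,k)}(\fF)$ coming from tensoring by the invertible bundle $\Om^{(n,\bullet)}$ and the factorisable wedge isomorphisms, I conclude $H^{(0,k)}(\fF)=0$ for all $k>0$, as desired. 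The main obstacle — and the step requiring genuine care rather than routine bookkeeping — is pinning down this last cohomological identification: one must check that tensoring the twisted complex $(\Om^{(0,\bullet)}\otimes_M\F,\odel_\F)$ with the invertible dg bimodule $\Om^{(n,\bullet)}$ (over $\D$) shifts the bidegree exactly as claimed and is compatible with the $\odel$-differentials, so that it induces an isomorphism on cohomology; this is where factorisability (ensuring $\Om^{(n,0)}\otimes_M\Om^{(0,b)}\cong\Om^{(n,b)}$ as $A$-comodule $M$-bimodules) and invertibility (Proposition \ref{proposition:LineBundleTensor}, giving $\Om^{(n,\bullet)}\otimes_\D\lwedge\Om^{(n,\bullet)}\cong\D$) are both essential, and where one should be careful that the tensor-product dg module structure from \textsection\ref{subsection:dg0lproj} matches the naive twisted complex.
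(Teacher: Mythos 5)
Your proposal is correct and follows essentially the same route as the paper: apply Kodaira vanishing to the positive bundle $\lwedge \Om^{(n,\bullet)} \otimes_\D \fF$ in bidegree $(n,k)$ with $k>0$, then untwist via the invertibility of $\Om^{(n,\bullet)}$ (Propositions \ref{proposition:LineBundleTensor} and \ref{proposition:CanonicalBundleIsInvertible}) to identify $H^{(n,k)}(\lwedge \Om^{(n,\bullet)} \otimes_\D \fF)$ with $H^{(0,k)}(\Om^{(n,\bullet)} \otimes_\D \lwedge\Om^{(n,\bullet)} \otimes_\D \fF) = H^{(0,k)}(\fF)$. The only blemish is the mid-proof detour asserting that $\Om^{(0,b)} \otimes_M \Om^{(0,n)}$ vanishes for $b>0$ --- that would hold for the wedge product, not for the tensor product of bimodules --- but you discard that line of thought and the argument you actually run coincides with the paper's.
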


\begin{proof}
If $\lwedge \Om^{(n,\bullet)} \otimes_D \fF$ is positive, then it follows from the Kodaira vanishing theorem that $H^{(n,k)}(\lwedge \Om^{(n,\bullet)} \otimes_M \mathcal{F}) = 0$, for all $k > 0$.  Using Proposition \ref{proposition:CanonicalBundleIsInvertible}, we find:
\[0 = H^{(n,k)}(\lwedge \Om^{(n,\bullet)} \otimes_\D \fF) = H^{(0,k)}(\Om^{(n,\bullet)} \otimes_\D \lwedge \Om^{(n,\bullet)} \otimes_\D \fF) = H^{(0,k)}(\fF),\]
for all $k > 0$, as required.
\end{proof}

Proposition \ref{proposition:Fano} motivates the following generalisation of a Fano structure for a complex manifold. 

\begin{definition}\label{definition:Fano}
Let $\big(\Om^{(\bullet,\bullet)}, \k\big)$ be a K\"ahler structure for an $A$-covariant differential $*$-calculus $\Om^\bullet$ over an \alg $M$ of total dimension $2n$.  We say that the K\"ahler structure is a {\em Fano structure} the complex structure is factorisable and $(\Om^{(n,0)}, \odel)$ is a positive line bundle.
\end{definition}

\begin{corollary}\label{corollary:Fano}
For a Fano structure $\big(\Om^{(\bullet,\bullet)}, \k\big)$, we have $H^{(0,k)}(\Om^{(0,\bullet)}, \odel) = 0$, for all $k > 0$.
\end{corollary}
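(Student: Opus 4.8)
The plan is to obtain Corollary \ref{corollary:Fano} as the special case $\fF = \D$ of Proposition \ref{proposition:Fano}, where $\D = (\Om^{(0,\bullet)}, \odel)$ is the trivial holomorphic line bundle. First I would check that $\D$ is a legitimate input for Proposition \ref{proposition:Fano}: its degree-zero part is $\Om^0 = M$, which is a free rank-one left $M$-module and hence lies in $\psAM\lproj$, so by Definition \ref{definition:HolomorphicVectorBundle} it is a left holomorphic vector bundle, and it carries the tautological dg $\D$-bimodule structure over $\D$ needed to form the tensor product in Proposition \ref{proposition:Fano}.

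Next I would identify the twisted bundle occurring in the hypothesis of Proposition \ref{proposition:Fano} for this choice. Since $\D$ is the unit object for $\otimes_\D$ on $\psAD\dgzlproj_\D$ (equivalently, the standard isomorphism $X \otimes_\D \D \cong X$ for a right $\D$-module $X$), there is a canonical isomorphism $\lwedge\Om^{(n,\bullet)} \otimes_\D \D \cong \lwedge\Om^{(n,\bullet)}$ of holomorphic vector bundles. Thus the hypothesis of Proposition \ref{proposition:Fano} in the case $\fF = \D$ reduces to the assertion that $\lwedge\Om^{(n,\bullet)}$ is a positive vector bundle. I would then observe that this is exactly what a Fano structure provides: factorisability of the complex structure lets Proposition \ref{proposition:CanonicalBundleIsInvertible} apply, so that $(\Om^{(n,\bullet)}, \odel)$ is an invertible holomorphic line bundle and $\lwedge\Om^{(n,\bullet)}$ is its tensor-inverse; in view of this invertibility, the remaining clause of Definition \ref{definition:Fano} — positivity of the line bundle of top holomorphic forms in the sense of Definition \ref{definition:PositivelineBundle} — is precisely the required positivity of $\lwedge\Om^{(n,\bullet)}$. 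Proposition \ref{proposition:Fano} then yields $H^{(0,k)}(\D) = 0$ for all $k > 0$, and since $\D^0 = M$ we have $H^{(0,k)}(\D) = H^{(0,k)}(\Om^{(0,\bullet)}, \odel)$, which is the claim.

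Since Proposition \ref{proposition:Fano} is already available (and behind it the Kodaira vanishing theorem, Theorem \ref{theorem:Kodaira}), there is no genuine obstacle here: the corollary is a direct specialisation. The only points needing care are the bookkeeping identifications — that $\D$ satisfies the hypotheses of Proposition \ref{proposition:Fano}, that $\lwedge\Om^{(n,\bullet)} \otimes_\D \D$ collapses to $\lwedge\Om^{(n,\bullet)}$, and that the positivity condition in the definition of a Fano structure is precisely the one needed to invoke Proposition \ref{proposition:Fano}; the last of these is essentially tautological, as Definition \ref{definition:Fano} was introduced for exactly this purpose.
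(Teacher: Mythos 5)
Your route is the intended one: the corollary is the specialisation $\fF = \D$ of Proposition \ref{proposition:Fano}, and the bookkeeping you record ($\D^0 = M$ is free of rank one, $\D$ is the unit for $\otimes_\D$, so $\lwedge\Om^{(n,\bullet)} \otimes_\D \D \cong \lwedge\Om^{(n,\bullet)}$, and $H^{(0,k)}(\D) = H^{(0,k)}(\Om^{(0,\bullet)},\odel)$) is exactly what the paper leaves implicit.

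The one step you should not dismiss as ``essentially tautological'' is the passage from the positivity clause of Definition \ref{definition:Fano} to the hypothesis of Proposition \ref{proposition:Fano}. As printed, Definition \ref{definition:Fano} asks that $(\Om^{(n,0)},\odel)$ be positive, whereas Proposition \ref{proposition:Fano} needs positivity of the \emph{dual} bundle $\lwedge\Om^{(n,\bullet)}$, whose degree-zero part is $\lvee\Om^{(n,0)} \cong \Om^{(0,n)}$ by (the proof of) Proposition \ref{proposition:CanonicalBundleIsInvertible}. Invertibility does not transport positivity across this duality: in the sense of Definition \ref{definition:PositivelineBundle} a line bundle and its inverse cannot both be positive (classically the dual of a positive line bundle is negative), so ``positivity of the line bundle of top holomorphic forms'' and ``positivity of $\lwedge\Om^{(n,\bullet)}$'' are genuinely different conditions. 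The definition of a Fano structure must therefore be read as positivity of the anticanonical bundle $\Om^{(0,n)} \cong \lvee\Om^{(n,0)}$ --- this is what the classical notion requires and what the quantum Grassmannian example actually supplies, where $\Om^{(0,n)} = \E_{r(n-r)+1}$ has positive index while $\Om^{(n,0)} = \E_{-r(n-r)+1}$ does not. With that reading your argument closes; without an explicit sentence making this identification (via Proposition \ref{proposition:CanonicalBundleIsInvertible}), the only step of your proof that carries any content is the one you have waved through.
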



\subsection{The quantum homogeneous K\"ahler space case}

Verifying positivity is easier in the covariant case, as the following lemma shows. Moreover, the notion of positivity is more natural.

\begin{lemma}
{For $\E$ a covariant holomorphic Hermitian line bundle, and  $\nabla$  its associated Chern connection,  there exists a uniquely defined left $G$-coinvariant $(1,1)$-form $\w_h$ such that the curvature operator $\nabla^2:\E \to \Om^2 \oby_M \E$  acts as }
\bas
\nabla^2(e) = \w_h \oby e, & & \text{ for all } e \in \E.
\eas
{We call $\w_h$ the {\em curvature form} of $\nabla$.}
\end{lemma}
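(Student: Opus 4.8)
The plan is to use the invertibility of $\E$ to realise the curvature $\nabla^2$ as left multiplication by a single $(1,1)$-form. First I would record that, by Proposition \ref{prop:BM2}, the curvature of the Chern connection satisfies $\nabla^2(\E) \sseq \Om^{(1,1)} \oby_M \E$, so that $\nabla^2$ may be regarded as a map $\E \to \Om^{(1,1)} \oby_M \E$. The next point is that this map is a $G$-covariant $M$-bimodule homomorphism (a left $G$-comodule $M$-bimodule map): left $G$-colinearity is immediate from the $G$-covariance of $\nabla$; left $M$-linearity is the usual tensoriality of curvature, since extending $\nabla$ to $\Om^{\bullet} \oby_M \E$ by the graded Leibniz rule and using $\exd^2 = 0$ gives $\nabla^2(me) = m\,\nabla^2(e)$ for all $m \in M$, $e \in \E$; and right $M$-linearity, $\nabla^2(em) = \nabla^2(e)m$, follows from the fact that, $\E$ being a holomorphic bimodule, the Chern connection $\nabla = \odel_\E + \del_\E$ carries a bimodule-connection structure (as in Remark \ref{remark:BimoduleConnections}, applied to $\odel_\E$ and $\del_\E$), whose curvature correction terms are annihilated by $\exd^2 = 0$.

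Then I would invoke invertibility: since $\E$ is a line bundle, Proposition \ref{proposition:LineBundleTensor} supplies a $G$-covariant $M$-bimodule isomorphism $\ev\colon \E \oby_M \lvee\E \xrightarrow{\ \sim\ } M$. Tensoring $\nabla^2$ on the right with $\lvee\E$ and composing with $\ev$ at both ends produces a $G$-covariant $M$-bimodule map
\[
\varphi\colon M \xrightarrow{\ \ev^{-1}\ } \E \oby_M \lvee\E \xrightarrow{\ \nabla^2 \oby \id\ } \Om^{(1,1)} \oby_M \E \oby_M \lvee\E \xrightarrow{\ \id \oby \ev\ } \Om^{(1,1)} \oby_M M = \Om^{(1,1)},
\]
and I would set $\w_h \coloneqq \varphi(1)$. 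A $G$-covariant $M$-bimodule map out of the regular bimodule $M$ is determined by the image of $1$; since $1$ is central and $G$-coinvariant, $\w_h$ is forced to be a $G$-coinvariant $(1,1)$-form that moreover commutes with $M$, and its uniqueness is immediate because $\ev$ is an isomorphism.

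Finally I would verify the formula $\nabla^2(e) = \w_h \oby e$. Writing $\ev^{-1}(1) = \sum_i e_i \oby \phi_i$ and using the rigidity (zig-zag) identities for the dual pair $(\E, \lvee\E)$ together with the identity $e = \sum_i e_i\,\phi_i(e)$ witnessing $\E$ as a finitely generated projective left $M$-module, one unwinds $\varphi$ and recombines $\nabla^2$ applied to the $e_i$, via left $M$-linearity of $\nabla^2$, into the single term $\w_h \oby e$; equivalently, the assignments $\w \mapsto (e \mapsto \w \oby e)$ and $\nabla^2 \mapsto \w_h$ are mutually inverse bijections between the $M$-central $G$-coinvariant $(1,1)$-forms and the $G$-covariant $M$-bimodule maps $\E \to \Om^{(1,1)} \oby_M \E$. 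The hard part will be precisely this last verification: because $\E$ is merely a left $M$-module that also happens to be an $M$-bimodule, and the two $M$-actions on $\Om^{(1,1)} \oby_M \E$ genuinely differ, one has to track carefully the side on which the evaluation and coevaluation act, and it is the invertibility of $\E$ — not merely its finite projectivity — that makes the identification work. An alternative route, should the bookkeeping prove awkward, is to run the argument through Takeuchi's equivalence of Theorem \ref{theorem:Takeuchi}, using that it is monoidal and that $\Phi(\E)$ is one-dimensional.
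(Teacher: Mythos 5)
Your main route is genuinely different from the paper's, and it has a gap at its foundation. The paper does not use the evaluation isomorphism $\E \oby_M \lvee\E \cong M$ at all: it observes that $\nabla^2$ is a left $M$-module and left $G$-comodule map, applies Takeuchi's equivalence (Theorem \ref{theorem:Takeuchi}), notes that $\Phi(\nabla^2)\colon \Phi(\E) \to \Phi(\Om^{(1,1)}) \oby \Phi(\E)$ is a map out of a one-dimensional $H$-comodule and hence of the form $e \mto [\w] \oby e$, extracts coinvariance of $[\w]$ by pairing with the inverse object ${}^\vee\Phi(\E)$ in ${}^H\mathrm{mod}_0$, and then applies $\Psi$. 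The only inputs are left $M$-linearity of the curvature (standard) and $G$-colinearity (from covariance); no right module structure is ever invoked. Your closing suggestion to ``run the argument through Takeuchi's equivalence, using that $\Phi(\E)$ is one-dimensional'' is in fact the paper's proof, but you do not execute it.

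The gap in your primary argument is the right $M$-linearity of $\nabla^2$. You need $\nabla^2(em) = \nabla^2(e)m$ \emph{before} anything else, since otherwise $\nabla^2 \oby \id$ does not descend to the balanced tensor product $\E \oby_M \lvee\E$ and the map $\varphi$ is not defined. Your justification --- that the bimodule-connection structure of Remark \ref{remark:BimoduleConnections} has ``curvature correction terms annihilated by $\exd^2 = 0$'' --- is not a proof. For a bimodule connection $\nabla(em) = \nabla(e)m + \sigma(e \oby \exd m)$ one computes that $\nabla^2(em)$ equals $\nabla^2(e)m$ plus the two correction terms $-(\wedge \oby \id)\circ(\id \oby \sigma)\big(\nabla(e) \oby \exd m\big)$ and $\nabla\big(\sigma(e \oby \exd m)\big)$; their cancellation requires extendability and compatibility conditions on $\sigma$ that are neither stated in the paper nor verified by you, and $\exd^2 = 0$ alone does not produce it. Worse, right linearity of $\nabla^2$ is essentially \emph{equivalent} to the conclusion of the lemma (if $\nabla^2(e) = \w_h \oby e$ then $\nabla^2(em) = \w_h \oby em = \nabla^2(e)m$), so assuming it at the outset is close to circular. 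If you want to keep the evaluation-map strategy, you must first prove the bimodule property of $\nabla^2$ by an independent argument --- for instance by first running the Takeuchi reduction, at which point your construction becomes redundant.
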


\begin{proof}
Since the square of any connection is a left $M$-module map, and by assumption of covariance of the holomorphic structure we have that $\nabla$ is a left $G$-comodule map, we must have that  $\nabla^2$ is  a morphism in $\sgmm$. The image of $\nabla^2$ under the functor $\Phi$ is a left $H$-comodule map 
\bas
\Phi(\nabla^2)\colon \Phi(\E) \to \Phi\big(\Om^{(1,1)}\big) \oby \Phi\big(\E\big).
\eas
Since  $\Phi(\E)$ is one-dimensional, we must have that  $\Phi(\nabla^2)(e) = [\w] \oby_M e$, for some $\w \in \Om^{(1,1)}$. One-dimensionality of $\Phi(\E)$ also implies that $^\vee\Phi(\E)$  acts as an inverse object to $\Phi(\E)$ in $^H \mathrm{mod}_0$. For any $v \in {}^\vee \Phi(\E)$, we have
\bas
  \DEL_L\big( \nabla^2(e) \oby v \big) =  \DEL_L\big( [\w] \oby e \oby v \big) = [\w \m1] \oby [\w \0] \oby e \oby v .
\eas
Moreover,  
\bas
(\nabla^ 2 \oby \id) \circ \DEL_L(e \oby v)  =   (\nabla^ 2 \oby \id) (1 \oby e \oby v)  =   1 \oby [\w] \oby e \oby v.
\eas
That fact that $\Phi(\nabla^2)$ is a right $H$-comodule map  implies that  $\DEL_L[\w] = 1 \oby [\w]$. Considering the  image of $\Phi(\nabla^2)$ under $\Psi$ one now sees that $\nabla$ can be presented in the required form. 
\end{proof}

\section{Positive bundles for the quantum Grassmannians}\label{section:Grassmannians}

In this section we recall the definition of the quantum Grassmannians, and their associated \hk calculus with its unique covariant K\"ahler structure. We describe their line bundles (in particular their positive line bundles) and the fact that the canonical complex structure of each calculus is Fano. We finish by applying the framework of the paper to these examples, and as a consequence prove Bott's extension of the  Borel--Weil theorem for the quantum Grassmannians (see Theorem \ref{theorem:BorelBottWeil}).

\subsubsection{The Quantised Enveloping Algebra $U_q(\mathfrak{sl}_n)$}

Recall that the {\em Cartan matrix} of $\mathfrak{sl}_n$  is the matrix  $a_{ij} = 2\d_{ij} -\d_{i+1,j} - \d_{i,j+1}$, where 
  $i,j=1,\dots,n-1$. The {\em quantised enveloping \algn}  $U_q(\mathfrak{sl}_n)$ is the  \nc  \alg  generated by the elements   $E_i, F_i, K_i,K_i\inv$, for $ i=1, \ldots, n-1$,  subject to  the relations
\bas
& K_iK_j = K_jK_i, & ~ K_iK_i\inv = K_i\inv K_i = 1,  ~~~~~ \\
& K_iE_jK_i\inv =  q^{a_{ij}} E_j,  & K_iF_jK_i\inv = q^{-a_{ij}}F_j, ~~~\,~~~~  \\
&  E_iF_j - F_jE_i  = \d_{ij}\frac{K_i - K\inv_{i}}{(q-q\inv)},
\eas
along with the quantum Serre relations which we omit (see \cite[\textsection 6.1.2]{KSLeabh} for details).

A Hopf \alg structure is defined  on $U_q(\mathfrak{sl}_n)$ by setting
\bas
\DEL(K^{\pm 1}_i) = K^{\pm 1}_i \oby K^{\pm 1}_i, ~~  \DEL(E_i) = E_i \oby K_i + 1 \oby E_i, ~~~ \DEL(F_i) = F_i \oby 1 + K_i\inv  \oby F_i~~~~\\
 S(E_i) =  - E_iK_i\inv,    ~~ S(F_i) =  -K_iF_i, ~~~~  S(K_i) = K_i\inv,
 ~~ ~~\e(E_i) = \e(F_i) = 0, ~~ \e(K_i) = 1.     
\eas

We denote by $\bC_q[SU_n]$ the finitary Hopf dual of $U_q(\mathfrak{sl}_n)$. As is well known, $\bC_q[SU_n]$ admits a unique $*$-map giving it the structure of a compact quantum group algebra \cite[\textsection 11.5]{KSLeabh}. Moreover, as an \alg $\bC_q[SU_n]$ is generated by the set of matrix coefficients $\{u^i_j\}_{i,j =1}^n$  of the fundamental representation  of $U_q(\mathfrak{sl}_2)$.

\subsection{The quantum Grassmannians}

We now define the quantum Grassmannians  $\gr$ in terms of the quantised enveloping algebra  $U_q(\mathfrak{sl}_n)$. Consider the sub\alg $U_q(\mathfrak{l}_r) \sseq U_q(\mathfrak{sl}_n)$ generated by the elements
\bas
\{ E_i, F_i, K_j \mid  i,j = 1, \ldots, n-1; i \neq r\}.
\eas
In the following definition, we denote the finitary Hopf duals of $U_q(\mathfrak{sl}_n)$, and $U_q(\mathfrak{l}_r)$, by $\bC_q[SU_n]$, and  $U_q(\mathfrak{l}_r)^\circ$ respectively. 

\begin{definition}
Let $\pi_{n,r}\colon\bC_q[SU_n] \to U_q(\mathfrak{l}_r)^\circ$ be the Hopf \alg map dual to the inclusion $U_q(\mathfrak{l}_r) \hookrightarrow U_q(\mathfrak{sl}_n)$, and denote $\bC_q[L_r] \coloneqq \pi_{n,r}\left(\bC_q[SU_n]\right)$.  The {quantum Grassmannian} $\bC_q[\text{Gr}_{n,r}]$ is the quantum homogeneous space associated to the Hopf algebra  projection  $\pi_{n,r}\colon \bC_q[SU_n] \to \bC_q[L_r]$.
\end{definition}

As is easily seen, we can alternatively describe $\gr$ in the following way
\bas
\gr = \{ g \in \bC_q[SU_n] \mid g\tl l = \e(l) g, \text{ for all } l \in U_q(\mathfrak{l}_r)\}.
\eas
Moreover,
$\bC_q[L_S]$ is a compact quantum groups algebra. The quantum Grassmannians admit a generating composed of of quantum minors. Recall that for $I \coloneqq \{i_1, \dots, i_k\}$ and $J:=\{j_1, \dots, j_k\}$ two subsets of $\{1,\dots, n\}$,  the associated {\em quantum  minor} $z^I_J$ is the element 
\bas
z^I_J \coloneqq \sum_{\s \in S_k} (-q)^{\ell(\s)}u^{\s(i_1)}_{j_1} \cdots u^{\s(i_k)}_{j_k} =  \sum_{\s \in S_k} (-q)^{\ell(\s)} u^{i_1}_{\s(j_1)} \cdots u^{i_k}_{\s(j_k)}.
\eas
When $J = \{1, \dots, r\}$, we will always denote $z^I := z^I_J$, and when $J = \{r+1, \dots, n\}$, we will denote  $\ol{z}^I := \ol{z}^I_J$. As shown in  \cite[Theorem 2.5]{stok}, and  \cite[Proposition 3.2]{HK}, a set of generators for the \alg $\gr$ is given by the  elements 
\bas
\{ z^{IJ} \coloneqq z^I\ol{z}^J \,|\, |I|=r, |J| = n-r \}.
\eas

We next describe the line bundles over $\gr$. Denote by  $U_q(\mathfrak{k_r}) \sseq U_q(\mathfrak{sl}_n)$ the  sub\alg of $U_q(\mathfrak{sl}_n)$ generated by the set of elements
\bas
\{ E_i, F_i, K_i \mid i = 1, \ldots, n-1; i \neq r\}.
\eas
Analogous to the definition of the quantum Grassmannians, this defines a quantum homogeneous space which we denote by $\bC_q[S^{n,r}]$. It follows easily from the description of  the generators of $\gr$ that \alg  $\bC_q[S^{n,r}]$ is generated by the elements 
\bas
\{ z^{I}, \,\ol{z}^{J} \coloneqq z^I\az^J \mid |I|=r, |J| = n-r \},
\eas
see \cite[Proposition 3.9]{MMFCM} for details.  By construction, $\bC_q[S^{n,r}]$ admits a right action by the sub\alg of $U_q(\mathfrak{sl}_n)$ generated by the elements $K_r^{\pm}$. As was shown in \cite[\textsection 3.3]{MMFCM}, this induces a (strong) $\bZ$-grading on $\bC_q[S^{n,r}]$ uniquely defined by 
\begin{align*}
\text{\rm deg}(z^I) = 1, & & \text{\rm deg}(\ol{z}^J) = -1, & & \text{ for all } I,J \sseq \{1, \dots, r\}, {\text{\rm ~ with \, }} \, |I| = r, |J| = n-r. 
\end{align*}
The homogeneous subspaces of this grading are exactly the equivariant line bundles $\E_i$, for $i\in \bZ$,  over $\gr$.  In particular $\E_0 = \gr$. Each line bundle $\E_i$ has a real one-dimensional space of Hermitian structures, which we fix here for one and for all.

\begin{example}
For the special case of $r=1$, we see that $\bC_q[SU_1] \simeq \bC$, and $\pi_{n,r}$ reduces to $\a_1$. Hence the associated quantum homogeneous space is {\em quantum projective space}  $\cpn$  as introduced in \cite{Mey}. The situation for $r = n-1$ is analogous

For the case of $r=1$, the \alg $\bC_q[S^{n,r}]$ reduces to the odd dimensional quantum spheres, and $\E_i$ the line bundles over $\cpn$. Finally, we note that when  $n=1$ we recover the quantum Hopf fibration over the standard Podle\'s sphere.
\end{example}

\subsection{The \hk calculus}

A brief presentation of the Heckenberger--Kolb calculus of  the quantum Grassmannians is given, starting  with the classification of first-order differential calculi over $\bC_q[Gr_{n,k}]$. 

\subsubsection{First-Order Differential Calculi}

We begin by recalling some details about first-order differential calculi necessary for our presentation of the \hk classification below. A {\em first-order differential calculus} over an algebra $M$ is a pair $(\Om^1,\exd)$, where $\Omega^1$ is an $M$-$M$-bimodule and $\exd\colon M \to \Omega^1$ is a linear map for which the {\em Leibniz rule}, 
$
\exd(ab)=a(\exd b)+(\exd a)b, \text{ for } a,b,\in A,
$
holds and for which $\Om^1 = \spn_{\bC}\{a\exd (b) \mid a,b \in M\}$.  The notions of differential map,  and left-covariance when the calculus is defined over a \qhs $M$, have obvious first-order analogues, for details see \cite[\textsection 2.4]{MMF2}. The {\em direct sum} of two first-order differential calculi $(\Om^1,\exd_{\Om})$ and $(\G^1,\exd_{\G})$ is the first-order calculus $(\Om^1 \oplus \Gamma^1, \exd_\Om + \exd_\G)$.  Finally, we say that  a left-covariant first-order calculus over $M$ is {\em irreducible} if it does not possess any non-trivial quotients by a left-covariant $M$-bimodule. 

We say that a differential calculus $(\G^\bullet,\exd_\G)$ {\em extends} a first-order calculus $(\Om^1,\exd_{\Om})$ if there exists a bimodule isomorphism $\f\colon\Om^1 \to \G^1$ \st $\exd_\G = \f \circ \exd_{\Om}$. It can be shown  \cite[\textsection 2.5]{MMF2} that any first-order calculus admits an extension $\Om^\bullet$ which is maximal  in the sense that there exists a unique differential map from $\Om^\bullet$ onto any other extension of $\Om^1$. We call this extension the {\em maximal prolongation} of the first-order calculus.

\subsubsection{The \hk classification}

We begin with the classification of covariant first-order calculi over the quantum Grassmannians, and then describe how the classification naturally leads to a total differential calculus over $\gr$.

\begin{theorem} \cite[\textsection 2]{HK} \label{HKClass}
There exist exactly two non-isomorphic irreducible left-covariant first-order differential calculi of finite dimension over $\bC_q[Gr_{n,k}]$.
\end{theorem}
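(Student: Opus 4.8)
The statement is the Heckenberger--Kolb classification \cite[\textsection 2]{HK}, and I would reproduce their line of argument. The plan is in two stages: translate the classification of covariant first-order calculi into a question in the representation theory of $U_q(\mathfrak{l}_r)$, and then carry out the resulting explicit module computation.

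First I would invoke the general correspondence between left-covariant first-order differential calculi over a quantum homogeneous space and the adjoint-invariant right ideals of its augmentation ideal. Writing $M = \gr = \bC_q[SU_n]^{\co(H)}$ with $H = \bC_q[L_r]$ and $M^+ = M \cap \ker(\e)$, every left $\bC_q[SU_n]$-covariant first-order calculus over $M$ is of the form $\Om^1 \cong M \oby_\bC (M^+/\R)$, with $\exd$ recovered from the projection $M^+ \to M^+/\R$, where $\R$ runs over the right ideals of $M^+$ invariant under the right adjoint coaction; isomorphism of calculi corresponds to equality of ideals, and irreducibility to $M^+/\R$ being an irreducible object of $\lgmmm$. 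Applying Takeuchi's equivalence $\Phi\colon \lgmmm \to \psH\Mod$ (Theorem \ref{theorem:Takeuchi}), this reduces the problem to finding the finite-dimensional irreducible constituents of the quantum cotangent space $T^\ast := \Phi\big(M^+/(M^+)^2\big)$ as a $U_q(\mathfrak{l}_r)$-module; equivalently, one may work dually with finite-dimensional quantum tangent spaces inside $\usln$.

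Second I would compute $T^\ast$ explicitly. Using the quantum-minor generators $z^{IJ} = z^I \ol{z}^J$ of $\gr$ from \cite[Theorem 2.5]{stok} and \cite[Proposition 3.2]{HK}, together with the quantum Pl\"ucker relations, one identifies $M^+/(M^+)^2$ as a $U_q(\mathfrak{l}_r)$-module. The expected outcome is a $q$-deformation of the classical picture: since the $r$-th node of $A_{n-1}$ is cominuscule, there is an abelian grading $\mathfrak{sl}_n = \mathfrak{u}_- \oplus \mathfrak{l}_r \oplus \mathfrak{u}_+$, the classical cotangent space at the base point is $\mathfrak{u}_+ \oplus \mathfrak{u}_-$, and $\mathfrak{u}_+$, $\mathfrak{u}_-$ are irreducible and non-isomorphic as $\mathfrak{l}_r$-modules. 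These deform to two irreducible, finite-dimensional, non-isomorphic $U_q(\mathfrak{l}_r)$-constituents of $T^\ast$, generating the first-order calculi underlying $\Om^{(1,0)}$ and $\Om^{(0,1)}$, while the finiteness hypothesis discards every remaining (necessarily infinite-dimensional) constituent, in particular the one responsible for the universal calculus; this gives exactly two.

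The hard part is the second stage. One must control the relations among the quantum minors tightly enough to pin down $M^+/(M^+)^2$ on the nose, and then run a highest-weight-vector analysis together with a dimension count for the $U_q(\mathfrak{l}_r)$-action to be certain that no irreducible constituent unexpectedly splits, merges, or becomes infinite-dimensional under the deformation. This is precisely where Heckenberger and Kolb do the substantive work, via a careful study of the $\usln$-module structure of $\bC_q[SU_n]$ and of the locally finite part of its dual coalgebra; rather than redo it from scratch I would import their computation and cite \cite{HK} for the details.
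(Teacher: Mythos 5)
The paper offers no proof of this statement: it is imported verbatim from Heckenberger and Kolb \cite[\textsection 2]{HK}, so there is nothing internal to compare against. Your outline is a faithful high-level reconstruction of their actual strategy --- the correspondence between covariant calculi and $\operatorname{ad}$-invariant right ideals of $M^+$, Takeuchi's equivalence, and the identification of the two irreducible $U_q(\mathfrak{l}_r)$-constituents deforming $\mathfrak{u}_\pm$ --- and, like the paper, you ultimately defer the substantive computation (including the justification that finite dimensionality forces the ideal to contain $(M^+)^2$, so that the analysis really does reduce to $M^+/(M^+)^2$) to \cite{HK}, which is consistent with the paper's own treatment.
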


The direct sum of $\Om\hol$ and $\Om\ahol$ is a $*$-calculus which we call the {\em Heckenberger--Kolb calculus} of $\bC_q[Gr_{n,k}]$ and which we denote  by $\Om^1_q(\cgr)$.

\begin{theorem}[\cite{HKdR}] \label{thm:CalcBasis} Denoting by $\Om^\bullet_q(\cgr) = \bigoplus_{k \in \bN_0} \Om^k_q(\cgr)$ the \mpr of $\Om^1_q(\cgr)$, each $\Om^k_q(\cgr)$ has classical dimension, which is to say, 
\bas
\dim\big(\Om^k_q(\cgr)\big) = \binom{2r(n-r)}{k}, & & k = 0, \dots, 2r(n-r),
\eas 
and $\Om^k_q(\cgr) = 0$, for $k > 2r(n-r)$. Moreover, the decomposition of $\Om^1_q(\cgr)$ into irreducible sub-calculi induces  a pair of opposite factorisable complex structures for  the total calculus of $\Om^1(\cgr)$.
\end{theorem}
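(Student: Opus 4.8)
The dimension formula and the vanishing in degrees above $2r(n-r)$ are due to Heckenberger and Kolb \cite{HKdR}; we outline the strategy and indicate how the statements about complex structures follow. The plan is to transport everything to the category $\psH\Mod$ of comodules over the Levi factor $H = \bC_q[L_r]$ through the Takeuchi equivalence $\Phi$. Under $\Phi$ the two irreducible first-order calculi $\Om\hol$ and $\Om\ahol$ of Theorem \ref{HKClass} become a pair of finite-dimensional $U_q(\mathfrak{l}_r)$-modules $V_+$ and $V_-$ which are mutually dual, each of dimension $r(n-r)$; and the total calculus $\Om^\bullet_q(\cgr)$, being the \mpr of $\Om^1_q(\cgr) = \Om\hol \oplus \Om\ahol$, corresponds to a quadratic quotient $\Lambda_q$ of the tensor algebra $T(V_+ \oplus V_-)$ by the relations obtained from applying $\exd$ to the defining relations of the two first-order calculi. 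These relations are bihomogeneous: a purely holomorphic family in $V_+ \otimes V_+$, a purely anti-holomorphic family in $V_- \otimes V_-$, and a mixed family inside $V_+ \otimes V_- \oplus V_- \otimes V_+$.

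The heart of the matter is to show that $\Lambda_q$ is a \emph{flat} $q$-deformation of the classical exterior algebra $\Lambda(\bC^{2r(n-r)})$: the holomorphic relations should cut $T(V_+)$ down to a quantum exterior algebra $\Lambda_q(V_+)$ with dimension $\binom{r(n-r)}{a}$ in degree $a$ (similarly for $V_-$), and the mixed relations should make the multiplication $\Lambda_q(V_+) \otimes \Lambda_q(V_-) \to \Lambda_q$ bijective. \textbf{This flatness statement is the main obstacle.} It can be attacked by a Bergman diamond-lemma computation in the quantum-minor presentation of $\gr$, checking that the overlap ambiguities among the three families of relations all resolve; alternatively one can present $\Lambda_q$ as a Koszul algebra whose quadratic dual is the associated quantum symmetric algebra (whose flatness is known), or realise the relation spaces as braiding eigenspaces built from the $R$-matrix of $U_q(\mathfrak{sl}_n)$ and invoke a PBW-type theorem. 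Granting flatness, one obtains $\dim_\bC \Phi(\Om^k_q(\cgr)) = \sum_{a+b=k} \binom{r(n-r)}{a}\binom{r(n-r)}{b} = \binom{2r(n-r)}{k}$ by the Vandermonde identity, together with the vanishing for $k > 2r(n-r)$; since $\Om^k_q(\cgr)$ is recovered from the finite-dimensional $H$-comodule $\Phi(\Om^k_q(\cgr))$, this is exactly the claimed classical dimension.

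It remains to read off the complex structures. Let $\Om^{(a,b)}$ be the bidegree-$(a,b)$ part of $\Om^\bullet_q(\cgr)$; since the relations are bihomogeneous this is well defined, $\Om^k_q(\cgr) = \bigoplus_{a+b=k} \Om^{(a,b)}$ holds in $\psAM\Mod_M$ (condition (\ref{compt-grading}) of Definition \ref{defnnccs}), and $(\Om^{(a,b)})^* = \Om^{(b,a)}$ because the $*$-operation of the Heckenberger--Kolb calculus is an anti-linear anti-automorphism interchanging $\Om\hol$ and $\Om\ahol$ (condition (\ref{star-cond})). Integrability, i.e. $\del^2 = 0$, then follows from Lemma \ref{intlem} together with the fact --- established in \cite{HKdR}, cf. \cite{BS} --- that $\exd$ shifts bidegree only by $(1,0)$ or $(0,1)$. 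The second, \emph{opposite}, complex structure is obtained by relabelling $\Om^{(a,b)} \rightsquigarrow \Om^{(b,a)}$, i.e. by interchanging the roles of $\Om\hol$ and $\Om\ahol$; these are the two complex structures furnished by the decomposition $\Om^1_q(\cgr) = \Om\hol \oplus \Om\ahol$. Finally, factorisability is a dimension count: the wedge maps $\Om^{(a,0)} \oby_M \Om^{(0,b)} \to \Om^{(a,b)}$ and $\Om^{(0,b)} \oby_M \Om^{(a,0)} \to \Om^{(a,b)}$ are surjective because $\Om\hol$ and $\Om\ahol$ generate and the mixed relations allow all holomorphic (respectively anti-holomorphic) factors to be moved to the left, and by compatibility of $\Phi$ with relative tensor products both sides have $\Phi$-image of dimension $\binom{r(n-r)}{a}\binom{r(n-r)}{b}$, so surjectivity forces these maps to be isomorphisms.
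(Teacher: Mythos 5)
The paper does not actually prove this theorem: it is quoted from Heckenberger--Kolb \cite{HKdR}, so there is no internal argument to compare against. Judged on its own terms, your sketch correctly identifies the architecture of the argument --- pass through Takeuchi's equivalence $\Phi$, reduce to a quadratic algebra $\Lambda_q$ on $V_+\oplus V_-$ with bihomogeneous relations, and deduce the binomial dimensions from a flat-deformation statement plus Vandermonde --- and it correctly locates where the work lies. But locating the difficulty is not the same as resolving it. The assertion that $\Lambda_q$ is a flat deformation of the classical exterior algebra \emph{is} the content of the dimension formula; your sketch explicitly defers it (``this flatness statement is the main obstacle'') to one of three unexecuted strategies (diamond lemma, Koszul duality, $R$-matrix braiding eigenspaces), none of which is carried out. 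Each would require a substantial computation specific to the relations of the Heckenberger--Kolb first-order calculi; in \cite{HKdR} precisely this step --- identifying the degree-two relation space of the maximal prolongation, verifying it has classical dimension, and propagating this to all higher degrees --- constitutes the technical core of the paper. As it stands, the proposal is an accurate road map rather than a proof.

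A secondary point: the factorisability argument is not independent of the main gap. Surjectivity of $\wedge\colon \Om^{(a,0)} \oby_M \Om^{(0,b)} \to \Om^{(a,b)}$ rests on the claim that the mixed relations allow every word to be rewritten with all holomorphic letters on the left, which is again a statement about the explicit form of the cross relations of the kind your deferred PBW/flatness step would have to supply; and injectivity is the dimension count that presupposes flatness. The logic is sound once the core input is granted, but it means no part of the ``moreover'' clause can be certified without first closing the gap. The remaining items (bihomogeneity giving the bigrading, the $*$-map exchanging $\Om\hol$ and $\Om\ahol$, integrability via Lemma \ref{intlem}, and the opposite structure from swapping bidegrees) are fine.
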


The top holomorphic and anti-holomorphic forms of the complex structure are line bundles and satisfy the identities
\bas
\Om^{(0,n)} = \E_{r(n-r)+1}, & & \Om^{(n,0)} = \E_{-r(n-r)+1}.
\eas
Finally,we recall the following classification of covariant K\"ahler  structures for the calculus.

\begin{theorem}\label{thm:KOBS}
Up to real scalar multiple there exists a unique  $\bC_q(SU_n)$-coinvariant form $\k \in \Om^{(1,1)}$.  Moreover, for all but a finite set $\F$ of values of $q$, the pair $(\Om^{(\bullet,\bullet)}, \k)$ is a covariant K\"ahler structure for $\Om^\bullet(\cgr)$.
\end{theorem}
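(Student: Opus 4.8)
The plan is to check, one at a time, the defining conditions of an $\bC_q[SU_n]$-covariant K\"ahler structure for a suitable normalisation $\k$ of the coinvariant $(1,1)$-form, working throughout through Takeuchi's equivalence $\Phi\colon \lgmmm \to \psH\Mod$ (here $H = \bC_q[L_r]$). The structural facts I would rely on are: $\Om^\bullet_q(\cgr)$ has classical dimension in each degree and a factorisable complex structure (Theorem \ref{thm:CalcBasis}), so that $\Phi\big(\Om^{(a,b)}_q\big) \cong \Phi\big(\Om^{(a,0)}_q\big) \oby \Phi\big(\Om^{(0,b)}_q\big)$ as left $H$-comodules; the two first-order pieces are irreducible (Theorem \ref{HKClass}); and the whole Heckenberger--Kolb package is a flat deformation specialising at $q = 1$ to the classical de Rham complex of $\cgr$ with its standard Hodge decomposition.

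First I would settle existence and uniqueness of the coinvariant $(1,1)$-form. Under $\Phi$, coinvariant $(1,1)$-forms correspond to comodule maps $\bC \to \Phi(\Om^{(1,0)}_q) \oby \Phi(\Om^{(0,1)}_q)$; since $\Om^{(1,0)}_q$ is irreducible and $\Om^{(0,1)}_q = (\Om^{(1,0)}_q)^*$ is its contragredient, Schur's lemma shows this space is one-dimensional, which proves the first assertion and produces a nonzero coinvariant $\k_0$. As the $*$-map is an antilinear comodule map with $(\Om^{(1,1)}_q)^* = \Om^{(1,1)}_q$, the element $\k_0^*$ is again coinvariant, so $\k_0^* = \l\k_0$ with $|\l| = 1$; rescaling by a unimodular scalar gives a real form $\k$, now determined exactly up to a real scalar. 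Centrality, $m\k = \k m$ for $m \in M$, is a property of the two $M$-module structures on the bimodule $\Om^{(1,1)}_q$, which I would read off from the explicit form of $\Phi(\Om^{(1,0)}_q)$ (or inherit from the classical case by specialisation), citing the Heckenberger--Kolb analysis where convenient. For closedness, $\exd\k = \del\k + \adel\k$ is coinvariant with $\del\k \in \Om^{(2,1)}_q$, $\adel\k \in \Om^{(1,2)}_q$; classically $\cgr$ has cohomology concentrated in bidegrees $(p,p)$, hence no nonzero invariant $(2,1)$- or $(1,2)$-forms, i.e.\ the trivial comodule does not occur in $\Phi(\Om^{(2,1)})\oplus\Phi(\Om^{(1,2)})$, and for $q$ outside a finite set the deformed comodules have the same trivial multiplicity, so $\del\k = \adel\k = 0$.

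The hard part will be the hard Lefschetz condition. Writing $N = r(n-r)$ for half the top degree, I must show $L^{N-k}\colon \Om^k_q \to \Om^{2N-k}_q$ is an isomorphism for $1 \le k < N$. As $\k$ is coinvariant and central, $L = \k\wed-$ is a morphism in $\lgmmm$, hence corresponds under $\Phi$ to an $H$-comodule map $\Phi(L)\colon\Phi(\Om^k_q)\to\Phi(\Om^{k+2}_q)$; by the dimension formula of Theorem \ref{thm:CalcBasis} the source and target of $\Phi(L)^{N-k}$ have equal dimension, so it is enough to prove injectivity. The strategy is to realise the whole package --- the complex, its complex structure, and $L$ --- over the ground ring $\bC[q,q^{-1}]$ (or a suitable localisation), compatibly with $\Phi$, so that in a $\bC[q,q^{-1}]$-basis the determinant of $\Phi(L)^{N-k}$ is a Laurent polynomial in $q$. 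This polynomial specialises at $q = 1$ to the determinant of the classical operator $\k_{\mathrm{cl}}^{N-k}\wed-$, which is nonzero by the classical hard Lefschetz theorem for $\cgr$; since a nonzero Laurent polynomial has only finitely many zeros, taking $\F$ to be the union of these zeros with the finitely many values excluded above yields the isomorphism for all $q\notin\F$. Combining the verifications then shows $(\Om^{(\bullet,\bullet)}_q,\k)$ is a covariant K\"ahler structure for every $q\notin\F$.

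I expect the genuine obstacle to be making the ``generic $q$'' machinery precise and uniform: one needs a $\bC[q,q^{-1}]$-form of the Heckenberger--Kolb complex, of its factorisable complex structure, and of the Lefschetz operator whose specialisation at $q = 1$ recovers the classical de Rham complex of $\cgr$ with its Hodge and Lefschetz data, all compatibly with Takeuchi's equivalence. Granting this, every remaining claim --- uniqueness, reality, centrality, closedness, hard Lefschetz --- reduces to a classical fact about $\cgr$ together with the elementary observation that finitely many nonzero Laurent polynomials vanish at only finitely many values of $q$.
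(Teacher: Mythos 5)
First, note that the paper does not actually prove Theorem \ref{thm:KOBS}: it is recalled from \cite{KOBS} (listed as in preparation), so there is no in-text proof to compare your proposal against. Judged on its own terms, your overall strategy --- reduce everything through Takeuchi's equivalence, settle existence and uniqueness of $\k$ by Schur's lemma applied to $\Phi(\Om^{(1,1)}) \cong \Phi(\Om^{(1,0)}) \oby \Phi(\Om^{(0,1)})$ with $\Phi(\Om^{(0,1)})$ dual to the irreducible $\Phi(\Om^{(1,0)})$, and then obtain closedness and hard Lefschetz for generic $q$ by specialising a $\bC[q,q^{-1}]$-form of the calculus at $q=1$ --- is the natural one and is consistent with the hints the paper does give (the ``elementary representation-theoretic argument'' invoked for coinvariant $(1,1)$-forms in the cominiscule section, and the corollary on closedness of the integral). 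The specialisation steps for closedness and for hard Lefschetz are sound in principle because both are \emph{open} conditions in a flat family: vanishing of the trivial isotypic component of $\Phi(\Om^{(2,1)})\oplus\Phi(\Om^{(1,2)})$ and invertibility of $\Phi(L)^{N-k}$ each amount to a matrix of Laurent polynomials having maximal rank, so validity at $q=1$ forces validity off a finite set.

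The genuine gap is centrality of $\k$, which is part of this paper's definition of a K\"ahler form and is needed for $L$ to be a bimodule map. Unlike the other conditions, $m\k = \k m$ for all $m \in M$ is a \emph{closed} condition --- the vanishing of the Laurent-polynomial coordinates of $z^{IJ}\k_q - \k_q z^{IJ}$ --- so it emphatically does not follow from its validity at $q=1$; specialisation only gives divisibility by $(q-1)$. The commutator $m \mapsto m\k - \k m$ is a covariant derivation from $M$ to $\Om^{(1,1)}$, but nothing in the general framework forces such a derivation to vanish; this has to be checked by explicit computation with the quantum minor generators (as is done for $\bC_q[\bC P^n]$ in \cite{MMF3} and for the Grassmannians in \cite{KOBS}), and your proposal offers no argument beyond ``read it off or specialise,'' the second of which fails. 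A secondary, acknowledged but load-bearing, issue is the existence of the $\bC[q,q^{-1}]$-form of the Heckenberger--Kolb complex together with a coinvariant $\k_q$ chosen as a primitive generator of the rank-one module of coinvariants, so that its specialisation at $q=1$ is a \emph{nonzero} multiple of the classical K\"ahler form; without this the determinant and corank arguments have nothing to specialise.
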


\begin{corollary}
The integral associated to $(\Om^{(\bullet,\bullet)}, \k)$ and haar state of $\bC_q[SU_n]$ is closed.
\end{corollary}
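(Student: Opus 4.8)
The statement to prove is that $\int=\haar\circ\vol_\k$ is closed, i.e.\ $\int\circ\exd=0$; as $\int$ annihilates $\Om^k_q(\cgr)$ whenever $k\neq 2N$ (here $2N=2r(n-r)$ is the total dimension, so $N=r(n-r)$), it suffices to show $\int(\exd\omega)=0$ for every $\omega\in\Om^{2N-1}_q(\cgr)$. The plan is to exhibit $\int\circ\exd|_{\Om^{2N-1}}$ as a morphism of left $\bC_q[SU_n]$-comodules into the trivial comodule $\bC$, and then to kill it by showing that $\Om^{2N-1}_q(\cgr)$ has no nonzero coinvariant elements.

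First I would factor $\int\circ\exd$ as
\[
\Om^{2N-1}_q(\cgr)\xrightarrow{\ \exd\ }\Om^{2N}_q(\cgr)\xrightarrow{\ \vol_\k\ }\gr\xrightarrow{\ \haar\ }\bC
\]
and note that each map is a left $\bC_q[SU_n]$-comodule map: the exterior derivative because the calculus is covariant; $\vol_\k=\ast_{H}|_{\Om^{2N}}$ because the Hodge map is a left-comodule map by construction (Definition \ref{HDefn}) and $\k$ is coinvariant; and $\haar$ because right invariance $(\id\otimes\haar)\circ\Delta=\haar(-)1$ of the Haar state says precisely that $\haar$ is a left-comodule map onto the trivial comodule. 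Hence $\int\circ\exd\colon\Om^{2N-1}_q(\cgr)\to\bC$ is a morphism of left $\bC_q[SU_n]$-comodules with trivial target.

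Since $\bC_q[SU_n]$ is cosemisimple and $\Om^{2N-1}_q(\cgr)$ is finite-dimensional (Theorem \ref{thm:CalcBasis}), any such morphism is determined by its restriction to the trivial isotypic component, that is, to the space of coinvariant $(2N-1)$-forms; so it suffices to prove this space vanishes. For this I would invoke the Lefschetz isomorphism $L^{N-1}\colon\Om^1_q(\cgr)\xrightarrow{\sim}\Om^{2N-1}_q(\cgr)$ coming from the K\"ahler structure (Theorem \ref{thm:KOBS}); as $L=\k\wedge-$ is coinvariant, it restricts to an isomorphism on coinvariant forms, reducing the claim to the vanishing of the coinvariant $1$-forms of $\cgr$. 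Writing $\Om^1_q(\cgr)=\Om^{(1,0)}\oplus\Om^{(0,1)}$, the two summands are the irreducible first-order calculi of Theorem \ref{HKClass}, so under Takeuchi's equivalence their images are nontrivial irreducible $H$-comodules and contain no copy of the trivial comodule; equivalently, $\cgr$ has no nonzero coinvariant $1$-form, in agreement with the classical fact that a cominuscule flag manifold carries no invariant $1$-forms. Therefore $\int\circ\exd=0$.

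The single step that is not purely formal is this last one — the vanishing of the coinvariant $(2N-1)$-forms (equivalently of the coinvariant $1$-forms) — which is where the Heckenberger--Kolb classification and Takeuchi's equivalence genuinely enter; alternatively one may simply quote that the coinvariant forms of the Heckenberger--Kolb calculus are exactly the scalar multiples of the powers $\k^j\in\Om^{(j,j)}$ of the K\"ahler form (as in \cite{KOBS}), which live in even total degree and thus miss degree $2N-1$.
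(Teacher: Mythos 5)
Your argument is correct and, at its core, rests on the same key fact as the paper's proof: the two irreducible Heckenberger--Kolb first-order calculi contain no copy of the trivial comodule, hence there are no coinvariant forms in odd degree and the integral of an exact top form vanishes. The difference is one of packaging: the paper delegates the formal reduction (coinvariance of $\int\circ\exd$, cosemisimplicity, the Lefschetz isomorphism down to degree one) to \cite[Lemma 3.3]{MMF3} and the $\cpn$ case to \cite[Lemma 3.4.4]{MMF3}, whereas you reprove that reduction from scratch, which makes your write-up self-contained and uniform in $r$. Two small points to tighten. First, $\Om^{2N-1}_q(\cgr)$ is not finite-dimensional as a vector space --- Theorem \ref{thm:CalcBasis} gives the dimension of $\Phi(\Om^{2N-1})$, i.e.\ the rank; what you actually need is the Peter--Weyl decomposition of $\Om^{2N-1}$ into finite-dimensional irreducible $G$-comodules, after which cosemisimplicity kills every non-trivial isotypic component exactly as you say, and $(G\coby V)^{\co G}\cong V^{\co H}$ completes the reduction to the $H$-comodules $V^{(1,0)},V^{(0,1)}$. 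Second, your assertion that these irreducible $H$-comodules are \emph{non-trivial} is the one substantive input, and you are right to flag it: the paper secures it for $r\neq 1,n-1$ by observing the dimensions exceed $1$ (an irreducible comodule of dimension $>1$ cannot be trivial) and handles quantum projective space by citation, the delicate case being $\bC P^1$ where $V^{(1,0)}$ is one-dimensional and one must check its weight is non-zero. Your fallback --- that the coinvariant forms are exactly the span of the powers $\k^j$, which sit in even degree --- is a legitimate alternative but is itself a result of \cite{KOBS} rather than something established in this paper.
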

\begin{proof}
The case of $\cpn$ has been been dealt with in \cite[Lemma 3.4.4]{MMF3}, so we will restrict to the case of $r \neq 1,n-1$. Since $\Omega\hol$ and $\Omega\ahol$ are irreducible calculi, and by Theorem \ref{thm:CalcBasis} each has dimension greater than $1$, neither can contain a copy of the trivial comodule.  Closure of the integral  now follows from \cite[Lemma 3.3]{MMF3}.
\end{proof}

Finally, we come to the question of positive definiteness, and the implied quantum generalisation of Bott's extension of the Borel--Weil theorem.

\begin{proposition} \cite{KOBS}
For $\left(\Om^{(\bullet,\bullet)}, \k\right)$ the K\"ahler structure for $\Om^\bullet_q(\text{Gr}_{n,r})$ identified in Theorem \ref{thm:KOBS}, there exists an interval $1 \in I_{n,r} \sseq \bR$, such that if $q \in I_{n,r}$, and $q \notin \mathcal{F}$, then $\left(\Om^{(\bullet,\bullet)}, \k\right)$ is  positive definite.  
\end{proposition}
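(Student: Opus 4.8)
The plan is to prove positivity by a deformation argument anchored at the classical point $q=1$, reducing the a priori infinite-dimensional positivity statement to a finite-dimensional one using covariance and Takeuchi's equivalence. Recall that, by definition, $(\Om^{(\bullet,\bullet)},\k)$ is positive definite precisely when the metric $g = g_M$ attached to the canonical Hermitian structure of $M = \bC_q[\cgr]$ is positive definite, and that the associated inner product is $\langle\cdot,\cdot\rangle = \haar\circ g$ with $\haar$ the Haar state. The first step is to show that this positivity is controlled by a single Hermitian form on a finite-dimensional space. Using the orthogonality of the Peter--Weyl decomposition of $\Om^\bullet_q(\cgr)$ (the covariant analogue, proved in \textsection\ref{section:Hodge}, of the orthogonality of $\langle\cdot,\cdot\rangle_\F$) together with the explicit description of $\langle\cdot,\cdot\rangle$ in terms of $\haar$ and $\Phi$ recorded there, the inner product factors as the Haar pairing on the comodule variable times the $H$-comodule-invariant fibre pairing $\Phi(\bar g)$ on $\Phi(\Om^{(\bullet,\bullet)})$. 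Since $\Phi(\Om^1_q(\cgr))$ is finite-dimensional of dimension $2r(n-r)$ (Theorem \ref{thm:CalcBasis}) and $\Phi(\Om^{(\bullet,\bullet)})$ is its exterior algebra, $\Phi(\bar g)$ is a Hermitian form on a finite-dimensional vector space; strict positivity of $\haar$ on $M_{>0}$ then gives that $g$ (and $\langle\cdot,\cdot\rangle$) is positive definite if and only if $\Phi(\bar g)$ is positive definite.

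Second, I would observe that every ingredient entering $\Phi(\bar g)$ --- the generators $u^i_j$ and the quantum minors $z^I$, $\bar z^J$, the bimodule relations of the Heckenberger--Kolb first-order calculus and of its maximal prolongation, the factorisable complex structure, the coinvariant $(1,1)$-form $\k$ of Theorem \ref{thm:KOBS} (normalised consistently, say so that it has a fixed nonzero value on a fixed primitive $(1,1)$-vector), the Hodge map $*_H$ of \eqref{equation:NewHodge}, and the Haar functional --- can be set up to depend continuously, indeed rationally with no poles in a neighbourhood of $q=1$, on the deformation parameter. Consequently the entries of the Gram matrix of $\Phi(\bar g)$ are continuous functions of $q$ on an interval around $1$, with the finite exceptional set $\mathcal{F}$ of Theorem \ref{thm:KOBS} removed --- precisely the values of $q$ at which some Lefschetz power $L^{n-k}$ fails to be an isomorphism and the Hodge map, hence $g$ itself, is not even defined.

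Third, at $q=1$ the entire package specialises to the classical Grassmannian $\text{Gr}_{n,r}(\bC)$ with its de Rham complex and the invariant Kähler form that $\k$ deforms; there $\Phi(\bar g)$ is the honest pointwise Hodge inner product on the exterior algebra of the cotangent space, which is positive definite. Since positive definiteness of a Hermitian matrix is an open condition, and the Gram matrix of $\Phi(\bar g)$ is continuous in $q$ and positive definite at $q=1$, there is an open interval $1 \in I_{n,r}\subseteq\bR$ on which it remains positive definite; by the first step, $(\Om^{(\bullet,\bullet)},\k)$ is positive definite for all $q \in I_{n,r}\setminus\mathcal{F}$.

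The main obstacle is the first step: making rigorous the equivalence between positivity of the covariant metric $g$ on the infinite-dimensional module $\Om^\bullet$ and positivity of the finite-dimensional fibre form $\Phi(\bar g)$. This requires carefully combining the covariance of $g$, the factorisation of $\langle\cdot,\cdot\rangle$ through $\haar$ and $\Phi(\bar g)$, the strict positivity of $\haar$ on $M_{>0}$, and the orthogonality of the Peter--Weyl decomposition; and if one wants the stronger $M_{>0}$-valued positivity of $g$ rather than merely positivity of $\haar\circ g$, some additional bookkeeping is needed to track how a general $\omega\in\Om^\bullet$ descends to its fibre representative. The remaining two steps --- continuity in $q$ and identification of the classical limit --- are routine once this reduction is in place.
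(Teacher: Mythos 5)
The paper contains no proof of this proposition: it is quoted verbatim from the reference [KOBS] (Krutov--\'O~Buachalla--Strung), listed as ``in preparation'', so there is no internal argument to measure your proposal against. That said, the very form of the statement --- existence of an interval $I_{n,r}$ containing $1$, minus the finite exceptional set $\mathcal{F}$ of Theorem \ref{thm:KOBS} --- makes it essentially certain that the intended argument is the one you outline: use covariance and Takeuchi's equivalence to reduce positivity of $g$ to positivity of a single $H$-invariant Hermitian form on the finite-dimensional fibre $\Phi(\Om^{(\bullet,\bullet)})$, check that its Gram matrix is continuous (indeed rational) in $q$ near $1$, and conclude from positivity at the classical point $q=1$ together with openness of positive definiteness. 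Your second and third steps are sound modulo the routine (but nontrivial) verification that the structure constants of the Heckenberger--Kolb calculus, the coinvariant form $\k$, and the Hodge map all vary without poles near $q=1$.

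The genuine gap is the one you flag yourself, and it is not mere bookkeeping. The paper defines a K\"ahler structure to be positive definite when $g(\a,\a) \in M_{>0}$ for every nonzero $\a \in \Om^\bullet$, where $M_{>0}$ is the \emph{algebraic} cone $\big\{\sum_i \lambda_i m_i m_i^* \mid m_i \in M,\ \lambda_i \in \bR_{\geq 0}\big\}\setminus\{0\}$; it does not merely ask that $\haar\big(g(\a,\a)\big) > 0$. Your reduction, via Peter--Weyl orthogonality and faithfulness of the Haar state, establishes positivity of the scalar pairing $\haar \circ g$, which is strictly weaker and is not what the proposition asserts. A plausible route to close this: once the fibre form $\Phi(\ol{g})$ is known to be positive definite, choose a basis $\{v_i\}$ of $\Phi(\Om^{(\bullet,\bullet)})$ orthonormal for it, write $\a \in G\,\square_H\, \Phi(\Om^{(\bullet,\bullet)})$ as $\sum_i g^i \otimes v_i$, and compute that (up to conventions) $g(\a,\a) = \sum_i g^i (g^i)^*$. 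But this exhibits $g(\a,\a)$ as a sum of squares with coefficients $g^i$ in $G$, not in $M$, so one must still show that an element of $M$ of this form lies in the cone $M_{>0}$ defined over $M$ --- or else argue that the weaker scalar positivity suffices for every downstream use of the definition (Propositions \ref{prop:metricpositivity} and \ref{proposition:HermitianInnerProduct} and the Hodge theory of \textsection\ref{section:Hodge}). Until one of these is done, the argument proves a weaker statement than the one claimed.
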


\begin{proposition} \cite{KOBS}
For $i > 0$, the line bundle $\E_i$ is positive, and hence the complex structure $\Om^{(\bullet,\bullet)}$ is of Fano type (see Definition \ref{definition:Fano}). 
\end{proposition}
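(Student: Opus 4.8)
The plan is to verify the conditions of Definition~\ref{definition:PositivelineBundle} directly for each $\E_i$ with $i>0$, exploiting the essential uniqueness of coinvariant $(1,1)$-forms. Equip $\E_i$ with its (essentially unique) covariant Hermitian structure $h_i$ and write $\nabla_{\E_i}=\odel_{\E_i}+\del_{\E_i}$ for the associated Chern connection. By Proposition~\ref{prop:BM2} the operator $\nabla^2_{\E_i}$ maps into $\Om^{(1,1)}\oby_M\E_i$, and by the curvature-form lemma for covariant holomorphic Hermitian line bundles at the close of \textsection\ref{section:Kodaira} it acts as $\nabla^2_{\E_i}(e)=\w_{h_i}\oby e$ for a uniquely determined $\bC_q[SU_n]$-coinvariant $(1,1)$-form $\w_{h_i}$. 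Since by Theorem~\ref{thm:KOBS} the space of coinvariant $(1,1)$-forms is one-dimensional and spanned by $\k$, we have $i\,\w_{h_i}=\lambda_i\,\k$ for a unique $\lambda_i\in\bR$ (real because the curvature of a Chern connection is imaginary, so $i\,\nabla^2_{\E_i}$ is a real form, while $\k$ is real). The whole content of the statement is thus that $\lambda_i>0$ for every $i>0$.

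The first step is to reduce to $i=1$. Since the $\bZ$-grading on $\bC_q[S^{n,r}]$ is strong, there is an isomorphism $\E_i\cong\E_1^{\oby_M i}$ of covariant holomorphic line bundles, and the Chern connection of a tensor product of holomorphic Hermitian line bundles has curvature the sum of the curvatures of the factors; hence $\w_{h_i}=i\,\w_{h_1}$ and $\lambda_i=i\,\lambda_1$. It therefore suffices to show $\lambda_1>0$, equivalently that $i\,\w_{h_1}$ is a positive real multiple of the positive definite K\"ahler form $\k$ --- i.e.\ that $i\,\w_{h_1}$ is itself positive definite as a Hermitian metric. This is the step I expect to be the main obstacle: it is the one genuinely geometric input and is precisely where the hypotheses $q\in I_{n,r}$ and $q\notin\mathcal F$ are used. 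I would carry it out in one of two ways: either by computing $\w_{h_1}$ explicitly inside the Heckenberger--Kolb calculus from the quantum-minor generators of $\bC_q[S^{n,r}]$ together with the explicit Hermitian structure, and comparing it with the explicit coinvariant form $\k$ of Theorem~\ref{thm:KOBS} (both being pinned down up to a scalar by covariance, this reduces to fixing the sign of a single matrix coefficient); or, more robustly, by a continuity argument, noting that $\lambda_1=\lambda_1(q)$ varies continuously with $q$ along $I_{n,r}$ and at $q=1$ recovers the classical positivity of the curvature of the ample generator of $\operatorname{Pic}(\text{Gr}_{n,r})$, whence $\lambda_1(q)>0$ for all $q\in I_{n,r}\setminus\mathcal F$.

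Granting $\lambda_1>0$, so that $\lambda_i>0$ for all $i>0$, set $\k_i\coloneqq i\,\w_{h_i}=\lambda_i\,\k$. As a positive real multiple of $\k$, the form $\k_i$ is again a positive definite K\"ahler form; its associated integral $\operatorname{\bf f}\circ\vol_{\k_i}$ is closed, closure being unaffected by positive rescaling of the K\"ahler form (cf.\ the corollary following Theorem~\ref{thm:KOBS}); and the twisted Dirac operator $D_{\odel_{\E_i}}$ is diagonalisable, since it is a self-adjoint $\bC_q[SU_n]$-comodule map on $\Om^\bullet\oby_M\E_i$ and every such map is diagonalisable (Proposition~\ref{cor:EqAdj}). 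Because $i\,\w_{h_i}=\k_i$, the curvature operator $i\,\nabla^2_{\E_i}$ coincides with the Lefschetz operator $L_{\E_i}$ of $\k_i$, so every requirement of Definition~\ref{definition:PositivelineBundle} holds and $\E_i$ is positive. Finally, for the Fano assertion: the complex structure is factorisable by Theorem~\ref{thm:CalcBasis}, and by Proposition~\ref{proposition:CanonicalBundleIsInvertible} the line bundle $\lwedge\Om^{(n,\bullet)}$ appearing in Proposition~\ref{proposition:Fano} has underlying module $\lvee\Om^{(n,0)}\cong\Om^{(0,n)}=\E_{r(n-r)+1}$, which is positive by the first part since $r(n-r)+1>0$; hence $(\Om^{(\bullet,\bullet)},\k)$ is of Fano type in the sense of Definition~\ref{definition:Fano}.
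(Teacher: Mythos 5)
First, a point of comparison: the paper itself gives no proof of this proposition --- it is quoted from \cite{KOBS}, which is listed as in preparation --- so there is no internal argument to measure your proposal against. Judged on its own terms, your scaffolding is the natural one and is essentially forced: reducing positivity to a single scalar $\lambda_1$ via the one-dimensionality of the space of coinvariant $(1,1)$-forms, rescaling $\k$, and checking the ancillary conditions of Definition~\ref{definition:PositivelineBundle} (positive definiteness of $\k_i$, closedness of the rescaled integral, diagonalisability via Proposition~\ref{cor:EqAdj}) are all correct. Your reading of the Fano clause is also the right one: what Proposition~\ref{proposition:Fano} actually needs is positivity of $\lwedge\Om^{(n,\bullet)}$, whose degree-zero part is $\lvee\Om^{(n,0)}\cong\Om^{(0,n)}=\E_{r(n-r)+1}$, and that is what your argument supplies (Definition~\ref{definition:Fano} as printed asks for positivity of $\Om^{(n,0)}$ itself, which appears to be a slip in the text).

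The genuine gap is that the one substantive claim --- $\lambda_1>0$ --- is not proved: you name two strategies but execute neither, and everything else in the proposition is formal bookkeeping around this single number. The explicit route requires actually computing $\w_{h_1}$ from the quantum-minor generators and the chosen Hermitian structure and comparing its sign against the explicit coinvariant form $\k$; the continuity route requires (i) exhibiting the whole package (calculus, $\k$, $h_1$, Chern connection) as a continuous family in $q$, which is itself nontrivial, and (ii) either non-vanishing of $\lambda_1(q)$ on all of $I_{n,r}\setminus\mathcal{F}$ or an explicit willingness to shrink the interval around $q=1$ (the latter is compatible with the statement as formulated, but should be said). Two smaller unjustified steps: the additivity of Chern curvature under $\E_i\cong\E_1^{\oby_M i}$ presupposes that the tensor product of the bimodule connections is again the Chern connection of the induced Hermitian structure, which needs an argument in this noncommutative setting; and the reality of $\lambda_1$ is inferred from ``the curvature of a Chern connection is imaginary,'' which is classical intuition rather than something established in the paper --- though this last point is harmless, since positivity of $\lambda_1$ is what you must prove in any case.
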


The main result of this section, the Bott--Borel--Weil theorem for the quantum Grassmannians,  now follows from Proposition \ref{proposition:Fano} and \cite[Theorem 6.8]{MMFCM}. In the statement of the theorem we denote by $V(k)$ the irreducible left $\bC_q[SU_n]$-comodule defined by 
\bas
V(k,r) \coloneqq \operatorname{Span}_{\bC}\left\{ z^{I_1} \cdots z^{I_k} \mid I_j \sseq \{1,\dots, r\}, I_j| = r; j =1, \dots, k\right\}.
\eas  

\begin{theorem}[Bott--Borel--Weil]\label{theorem:BorelBottWeil}
For all line bundles $\E_k$, with $k \geq 0$,  over the quantum Grassmannian $\gr$, it holds that 
\bas
H^{(0,0)}(\E_{k}) = V(r,k) , & & ~~  H^{(0,i)}(\E_k) = 0, ~~ \text{ for all ~ } i >0.
\eas
\end{theorem}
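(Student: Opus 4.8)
The plan is to split the theorem into its two halves and assemble each from results already established, with no new analysis required. For the degree-zero statement, observe that $H^{(0,0)}(\E_k)$ is by definition the kernel of the holomorphic structure $\odel_{\E_k}\colon\E_k\to\Om^{(0,1)}\otimes_M\E_k$, that is, the space of holomorphic sections of $\E_k$. The identification of this space with $V(r,k)$ is precisely the quantum Borel--Weil theorem, so here I would simply invoke \cite[Theorem 6.8]{MMFCM}.

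For the higher vanishing $H^{(0,i)}(\E_k)=0$ with $i>0$, the idea is to apply Proposition \ref{proposition:Fano} with $\fF=\E_k$, which reduces the claim to showing that the twisted bundle $\lwedge\Om^{(n,\bullet)}\otimes_\D\E_k$ is a positive vector bundle. The steps, in order, are: (i) by Theorem \ref{thm:CalcBasis} the complex structure of the \hk calculus is factorisable, so Proposition \ref{proposition:CanonicalBundleIsInvertible} applies, giving an isomorphism $\lvee\Om^{(n,0)}\cong\Om^{(0,n)}$ in $\psAM\Mod_M$ and hence identifying the degree-zero part of $\lwedge\Om^{(n,\bullet)}\otimes_\D\E_k$ with $\Om^{(0,n)}\otimes_M\E_k$; (ii) using the identity $\Om^{(0,n)}=\E_{r(n-r)+1}$ recorded after Theorem \ref{thm:CalcBasis}, together with the fact that the $\E_i$ are the homogeneous components of the strong $\bZ$-grading on $\bC_q[S^{n,r}]$, so that $\E_i\otimes_M\E_j\cong\E_{i+j}$, this degree-zero part is isomorphic to $\E_{r(n-r)+1+k}$; (iii) since $k\geq 0$ we have $r(n-r)+1+k>0$, so $\E_{r(n-r)+1+k}$ is a positive line bundle by \cite{KOBS} --- here one also uses that, for $q\in I_{n,r}$ with $q\notin\F$, the covariant K\"ahler structure $(\Om^{(\bullet,\bullet)},\k)$ is positive definite with closed associated integral, and that in the quantum homogeneous space case the twisted Dirac operator is automatically diagonalisable; (iv) transport the covariant holomorphic and Hermitian structures along the isomorphisms of (i) and (ii) to identify $\lwedge\Om^{(n,\bullet)}\otimes_\D\E_k$ with $\E_{r(n-r)+1+k}$ as an Hermitian holomorphic vector bundle, so that it is positive. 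Proposition \ref{proposition:Fano} then yields $H^{(0,i)}(\E_k)=0$ for all $i>0$. Note that for $k=0$ the same argument specialises to $\lwedge\Om^{(n,\bullet)}$ itself and recovers Corollary \ref{corollary:Fano}.

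The main obstacle is step (iv): one must check that the holomorphic structure on $\lwedge\Om^{(n,\bullet)}\otimes_\D\E_k$ coming from the dg-module structure, together with the induced Hermitian structure, coincide with the canonical ones on $\E_{r(n-r)+1+k}$ under the module isomorphism of (ii), so that the positivity asserted in \cite{KOBS} genuinely transfers. I expect this to follow from Takeuchi's equivalence and the essential uniqueness of covariant Hermitian (and holomorphic) structures on an irreducible covariant line bundle over a quantum homogeneous K\"ahler space, but it is the one point that should be spelled out carefully; everything else is bookkeeping with the constructions of \textsection\ref{section:HolomorphicDuals} and the classification recalled above.
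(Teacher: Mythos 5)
Your proposal follows essentially the same route as the paper, which derives the theorem directly from Proposition \ref{proposition:Fano} (via the positivity of $\E_i$ for $i>0$ established in \cite{KOBS}) together with the quantum Borel--Weil theorem of \cite[Theorem 6.8]{MMFCM}. In fact your steps (i)--(iv), including the identification $\lwedge\Om^{(n,\bullet)}\otimes_\D\E_k \cong \E_{r(n-r)+1+k}$ and the transfer of the covariant Hermitian and holomorphic structures, spell out bookkeeping that the paper leaves implicit.
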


\subsection{Quantum flag manifolds of cominiscule type}

Let  $\mathfrak{g}$ be a complex semisimple Lie \alg of rank $r$ and $U_q(\mathfrak{g})$ the corresponding Drinfeld--Jimbo quantised enveloping algebra \cite[\textsection 6.1]{HK}. Consider the following generalisation of the definition of the quantum Grassmannians. For $S$ a subset of simple roots,  denote by $\pi_S:\bC_q[G] \to \bC_q[L_S]$  the surjective Hopf \alg map dual to the inclusion $U_q(\mathfrak{l}_S) \hookrightarrow U_q(\mathfrak{g})$, where
\bas
U_q(\mathfrak{l}_S) \coloneqq \la K_i, E_j, F_j \mid i = 1, \ldots, r; j \in S \ra.
\eas 
The quantum homogeneous space of this coaction is called the {\em quantum flag manifold} corresponding to $S$, and is denoted by $\bC_q[G/L_S]$. (See \cite{HK,HKdR} for a more detailed presentation of this definition.)

If $S = \{1, \ldots, r\}\bs \a_i$ where $\a_i$ appears only once in the longest root, then we say that the quantum flag manifold is {\em irreducible}. It follows that each quantum Grassmannian $\cgr$ is an irreducible quantum flag manifold. 

\begin{center}
\begin{tabular}{ l |c | l }
$A$&
\begin{tikzpicture}[scale=.5]
\draw
(0,0) circle [radius=.25] 
(8,0) circle [radius=.25] 
(2,0)  circle [radius=.25]  
(6,0) circle [radius=.25] ; 

\draw[fill=black]
(4,0) circle  [radius=.25] ;

\draw[thick,dotted]
(2.25,0) -- (3.75,0)
(4.25,0) -- (5.75,0);

\draw[thick]
(.25,0) -- (1.75,0)
(6.25,0) -- (7.75,0);
\end{tikzpicture} & \small quantum Grassmanian \\
 & \\
$B$& 
\begin{tikzpicture}[scale=.5]
\draw
(4,0) circle [radius=.25] 
(2,0) circle [radius=.25] 
(6,0)  circle [radius=.25]  
(8,0) circle [radius=.25] ; 
\draw[fill=black]
(0,0) circle [radius=.25];

\draw[thick]
(.25,0) -- (1.75,0);

\draw[thick,dotted]
(2.25,0) -- (3.75,0)
(4.25,0) -- (5.75,0);

\draw[thick] 
(6.25,-.06) --++ (1.5,0)
(6.25,+.06) --++ (1.5,0);                      

\draw[thick]
(7,0.15) --++ (-60:.2)
(7,-.15) --++ (60:.2);
\end{tikzpicture} & \small odd dimensional quantum  quadric\\ 
 & \\ 

$C$& 
\begin{tikzpicture}[scale=.5]
\draw
(0,0) circle [radius=.25] 
(2,0) circle [radius=.25] 
(4,0)  circle [radius=.25]  
(6,0) circle [radius=.25] ; 
\draw[fill=black]
(8,0) circle [radius=.25];

\draw[thick]
(.25,0) -- (1.75,0);

\draw[thick,dotted]
(2.25,0) -- (3.75,0)
(4.25,0) -- (5.75,0);

\draw[thick] 
(6.25,-.06) --++ (1.5,0)
(6.25,+.06) --++ (1.5,0);                      

\draw[thick]
(7,0) --++ (60:.2)
(7,0) --++ (-60:.2);
\end{tikzpicture} &\small  quantum Lagrangian Grassmannian \\

 & \\ 

$D$& 
\begin{tikzpicture}[scale=.5]

\draw[fill=black]
(0,0) circle [radius=.25] ;

\draw
(2,0) circle [radius=.25] 
(4,0)  circle [radius=.25]  
(6,.5) circle [radius=.25] 
(6,-.5) circle [radius=.25];

\draw[thick]
(.25,0) -- (1.75,0)
(4.25,0.1) -- (5.75,.5)
(4.25,-0.1) -- (5.75,-.5);

\draw[thick,dotted]
(2.25,0) -- (3.75,0);
\end{tikzpicture} &\small  even dimensional quantum quadric \\ 
 & \\ 

$D$ & 
\begin{tikzpicture}[scale=.5]
\draw
(0,0) circle [radius=.25] 
(2,0) circle [radius=.25] 
(4,0)  circle [radius=.25] ;

\draw[fill=black] 
(6,.5) circle [radius=.25] 
(6,-.5) circle [radius=.25];

\draw[thick]
(.25,0) -- (1.75,0)
(4.25,0.1) -- (5.75,.5)
(4.25,-0.1) -- (5.75,-.5);

\draw[thick,dotted]
(2.25,0) -- (3.75,0);
\end{tikzpicture} &\small  quantum orthogonal Grassmannian \\
 & \\ 

$E_6$& \begin{tikzpicture}[scale=.5]
\draw
(2,0) circle [radius=.25] 
(4,0) circle [radius=.25] 
(4,1) circle [radius=.25]
(6,0)  circle [radius=.25] ;

\draw[fill=black] 
(0,0) circle [radius=.25] 
(8,0) circle [radius=.25];

\draw[thick]
(.25,0) -- (1.75,0)
(2.25,0) -- (3.75,0)
(4.25,0) -- (5.75,0)
(6.25,0) -- (7.75,0)
(4,.25) -- (4, .75);
\end{tikzpicture}

 &\small  quantum Cayley plane \\
 & \\ 
$E_7$& 
\begin{tikzpicture}[scale=.5]
\draw
(0,0) circle [radius=.25] 
(2,0) circle [radius=.25] 
(4,0) circle [radius=.25] 
(4,1) circle [radius=.25]
(6,0)  circle [radius=.25] 
(8,0) circle [radius=.25];

\draw[fill=black] 
(10,0) circle [radius=.25];

\draw[thick]
(.25,0) -- (1.75,0)
(2.25,0) -- (3.75,0)
(4.25,0) -- (5.75,0)
(6.25,0) -- (7.75,0)
(8.25, 0) -- (9.75,0)
(4,.25) -- (4, .75);
\end{tikzpicture} &\small   quantum Freudenthal variety\\
\end{tabular}
{\mbox{Dynkin diagrams for cominiscule quantum flag manifolds}}
\end{center}

The classification of first-order calculi over the quantum Grassmannians extends to this more general setting.

\begin{theorem} \cite[\textsection 2]{HK}.
There exist exactly two non-isomorphic irreducible left covariant first-order differential calculi of finite dimension over $\bC_q[G/L_S]$. 
\end{theorem}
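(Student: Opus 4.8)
The plan is to follow Heckenberger and Kolb and reduce the classification of finite-dimensional left-covariant first-order differential calculi over $B \coloneqq \bC_q[G/L_S]$ --- with $S$ chosen so that the flag manifold is \emph{irreducible}, i.e.\ $\alpha_i$ occurs with coefficient one in the highest root --- to a problem in the representation theory of $U_q(\mathfrak{l}_S)$. Recall that a left-covariant FODC over $B$ is a quotient of the universal first-order calculus $\Omega^1_u(B) = \ker(m\colon B\otimes B\to B)$, whose cotangent space is the augmentation ideal $B^+ = \ker(\e|_B)$, and that left-covariance singles out exactly those quotients obtained by dividing out a subobject $N$ of $B^+$ in the category $\lgmmm$; two such calculi are isomorphic iff the subobjects agree, and the calculus is finite-dimensional iff $B^+/N$ is a finite-dimensional vector space. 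Applying Takeuchi's equivalence $\Phi$ between $\lgmmm$ and the category of left $\bC_q[L_S]$-comodules (Theorem \ref{theorem:Takeuchi}), and using that for $q$ not a root of unity this category is semisimple and equivalent to a category of $U_q(\mathfrak{l}_S)$-modules, the problem becomes: classify the simple finite-dimensional quotients of the $U_q(\mathfrak{l}_S)$-module $\Phi(B^+)$, since irreducible finite-dimensional left-covariant FODC correspond precisely to these.

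The second step is to pin down which finite-dimensional $U_q(\mathfrak{l}_S)$-modules occur as such quotients. Filtering $B^+$ by order and passing to the associated graded (equivalently, working with quantum tangent spaces inside $U_q(\mathfrak{g})$, as in \cite{HK}), one identifies the ``linear'' part with the quantum analogue $\mathfrak{m}_q$ of the adjoint $\mathfrak{l}_S$-module $\mathfrak{g}/\mathfrak{l}_S$ (up to duality), and one shows that every finite-dimensional left-covariant FODC is a quotient of $\mathfrak{m}_q$. The cocharacter defining the parabolic grades $\mathfrak{m}_q = \mathfrak{m}_q^{+}\oplus\mathfrak{m}_q^{-}$, and, by complete reducibility of type-$1$ modules at generic $q$, each summand is a flat deformation of the corresponding classical nilradical $\mathfrak{n}^{\pm}$ as an $\mathfrak{l}_S$-module.

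The decisive third step is to use the cominiscule hypothesis: $\alpha_i$ occurs with coefficient one in the highest root exactly when the nilradical $\mathfrak{n}^{\pm}$ is abelian, which is in turn equivalent to $\mathfrak{n}^{\pm}$ being an irreducible $\mathfrak{l}_S$-module (of minuscule type, with highest weight the relevant fundamental weight, resp.\ its dual). Carrying this through the deformation, $\mathfrak{m}_q$ has exactly two simple $U_q(\mathfrak{l}_S)$-submodules, $\mathfrak{m}_q^{+}$ and $\mathfrak{m}_q^{-}$, and these are non-isomorphic. Since the simple quotients of $\mathfrak{m}_q$ coincide with its simple submodules, there are exactly two non-isomorphic irreducible finite-dimensional left-covariant FODC over $\bC_q[G/L_S]$; these are the calculi $\Omega^{(1,0)}$ and $\Omega^{(0,1)}$.

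The main obstacle is the second step, namely showing that $\mathfrak{m}_q$ governs \emph{all} finite-dimensional left-covariant FODC --- that is, ruling out ``exotic'' finite-dimensional quotients of $\Phi(B^+)$ that do not arise from the classical linear cotangent module. This is where the interplay between the structure of $U_q(\mathfrak{g})$, the parabolic $U_q(\mathfrak{l}_S)$, and the cominiscule condition does the real work, and it is the technical heart of the Heckenberger--Kolb analysis; by contrast, the module-theoretic bookkeeping in the first step --- checking that $\Phi$ transports the bimodule structure and the Leibniz rule to the expected data --- is routine.
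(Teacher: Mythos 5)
First, note that the paper does not prove this statement at all: it is imported verbatim from Heckenberger and Kolb, so the only thing to measure your proposal against is their argument. Your first and third steps are sound: the bijection between left-covariant FODC over $B=\bC_q[G/L_S]$ and subobjects $N \subseteq B^+$ in the relative Hopf module category, the transport through Takeuchi's equivalence to $\bC_q[L_S]$-comodules, and the observation that the cominiscule condition makes each nilradical $\mathfrak{n}^{\pm}$ an abelian, irreducible $\mathfrak{l}_S$-module whose type-one $q$-deformation remains irreducible, are all correct and are indeed how the two calculi $\Om^{(1,0)}$ and $\Om^{(0,1)}$ are produced.

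The genuine gap is exactly the one you flag and then set aside: the claim that every finite-dimensional simple quotient of $\Phi(B^+)$ factors through $\mathfrak{m}_q$. Nothing in your write-up establishes this; ``filtering by order and passing to the associated graded'' does not by itself rule out finite-dimensional irreducible quotients supported in higher filtration degree, and a priori $\Phi(B^+)$ is an infinite-dimensional object with no obvious finiteness control on its simple quotients. In Heckenberger--Kolb this is the entire content of \textsection 2--3 of their paper: they pass to the dual picture of quantum tangent spaces $T \subseteq B^{\circ}$ and explicitly compute the locally finite part of the dual coalgebra of $\bC_q[G/L_S]$, showing it is generated over $U_q(\mathfrak{l}_S)$ by the counit together with two irreducible summands dual to $\mathfrak{n}^{\pm}$; that computation is what converts ``at least two'' into ``exactly two.'' Without it (or an equivalent bound on the finite-dimensional quotients of $B^+$), your argument establishes only the existence of the two calculi, not the exhaustiveness of the list. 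As it stands this is a correct roadmap of the Heckenberger--Kolb proof with its technical heart omitted, rather than a proof.
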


The maximal prolongation of the direct sum of these two calculi is shown in cite{HKdR}  to have a unique covariant complex structure $\Om^{(\bullet,\bullet)}$. Using an elementary  representation theoretic argument, it can be shown that the each  $\Om^{(1,1)}$ contains a left-coinvariant  form $\k$ that is unique up to scalar multiple.

\begin{conjecture}
For every irreducible quantum flag manifold $\bC_q[G_0/L_0]$,  the pair \linebreak $(\Om^{(\bullet,\bullet)},\k)$ is a covariant K\"ahler structure for the \hk calculus, and  the associated complex structure is of Fano type.
\end{conjecture}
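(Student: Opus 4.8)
The plan is to deduce this conjecture from the general machinery of the paper together with a small number of facts about the Heckenberger--Kolb calculus $\Om^\bullet_q(G/L_S)$ of an irreducible (that is, cominiscule) quantum flag manifold: (i) reality and closure of the coinvariant $(1,1)$-form $\k$; (ii) the Hard Lefschetz isomorphisms $L^{n-k}\colon \Om^k \to \Om^{2n-k}$ for $1 \le k < n$, which together with (i) gives that $(\Om^{(\bullet,\bullet)},\k)$ is a covariant K\"ahler structure; and (iii) factorisability together with positivity of the top-form line bundle appearing in Definition \ref{definition:Fano}, which is the Fano-type condition. As in Theorem \ref{thm:KOBS}, the statement should be understood for all $q$ outside a finite exceptional set (equivalently, for $q$ in an interval about $1$).

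Step (i) is essentially free. Since the cominiscule condition is precisely that $\mathfrak{g}$ is $|1|$-graded, the coweight dual to the marked node lies in the Cartan of $\mathfrak{l}_S$, and its group-like $K \in U_q(\mathfrak{l}_S)$ acts on $\Phi(\Om^{(1,0)})$ and $\Phi(\Om^{(0,1)})$ by reciprocal powers of $q$; hence it acts on $\Phi(\Om^{(a,b)})$ by $q^{c(b-a)}$ for a fixed $c>0$, so, for $q$ not a root of unity, there is no nonzero $U_q(\mathfrak{l}_S)$-invariant vector --- equivalently no nonzero $\bC_q[G]$-coinvariant $(a,b)$-form --- whenever $a \ne b$. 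Applied to $(a,b) = (2,1)$ and $(1,2)$ this forces $\del\k = \adel\k = 0$; reality of $\k$ follows from its essential uniqueness and the $*$-structure on $\Om^\bullet$, and centrality from Takeuchi's equivalence as in the Grassmannian case. Factorisability, exactly as in Theorem \ref{thm:CalcBasis}, reduces to verifying that the wedge maps of Definition \ref{definition:Factorisable} are isomorphisms; this in turn follows from the \hk dimension count of \cite{HKdR} (each $\Om^k_q$ has classical dimension) together with the fact that a covariant bimodule map between finitely generated projective modules is an isomorphism as soon as its image under Takeuchi's equivalence $\Phi$ is.

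For (ii) the natural route is a specialisation argument. The \hk classification, and its consequence that every $\Om^k_q(G/L_S)$ has classical dimension, should allow one to exhibit a flat $\bC[q^{\pm1}]$-form of the \emph{entire} differential graded algebra $\Om^\bullet_q(G/L_S)$, together with a $\bC[q^{\pm1}]$-form of $\k$, whose fibre at $q=1$ is the de Rham algebra of the compact Hermitian symmetric space $G/L_S$ equipped with its (essentially unique) $G$-invariant K\"ahler form. Granting this, $L^{n-k}$ becomes a morphism of finite-dimensional $\bC_q[G]$-comodules with Laurent-polynomial matrix entries which at $q=1$ is the classical Hard Lefschetz isomorphism; its determinant is then a nonzero element of $\bC[q^{\pm1}]$, and (ii) holds away from that determinant's finite vanishing locus. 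For (iii), the covariant curvature lemma of \textsection\ref{section:Kodaira} identifies the Chern curvature of the essentially unique Hermitian structure on the line bundle $(\Om^{(n,0)},\odel)$ with a $\bC_q[G]$-coinvariant $(1,1)$-form, hence with $\lambda\k$ for a scalar $\lambda$; the normalisation $L_\F = i\nabla_\F^2$ of Definition \ref{definition:PositivelineBundle} together with reality of $\k$ forces $\lambda$ to be purely imaginary, and comparison with the $q=1$ fibre --- where $G/L_S$ is Fano, so the corresponding classical line bundle is positive --- fixes the sign, giving positivity for $q$ near $1$. Combining (i)--(iii) with Theorem \ref{theorem:Kodaira} and Corollary \ref{corollary:Fano} then yields the conjecture.

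The main obstacle is the flat $\bC[q^{\pm1}]$-deformation of the whole graded calculus underlying step (ii) (and the factorisability half of (iii)). The Heckenberger--Kolb classification is formulated over $\bC$ at a fixed transcendental $q$, and producing a genuine integral form --- controlling the $q$-dependence of the maximal prolongation and checking that no defining relation degenerates at $q=1$ --- was carried out for the quantum Grassmannians in \cite{KOBS} via their explicit quantum-minor presentation, but a uniform, presentation-free treatment covering the exceptional cominiscule cases ($E_6$ and $E_7$) is precisely what remains open. A secondary difficulty is quantitative: the genericity argument only locates the good values of $q$ in an a priori uncontrolled cofinite set, whereas the applications in \textsection\ref{section:Grassmannians} (and the envisaged spectral triples) call for an explicit interval around $1$, which requires estimating the relevant determinants in the manner of \cite{KOBS}.
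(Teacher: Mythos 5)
The statement you are addressing is stated in the paper as a \emph{conjecture}; the paper offers no proof of it, so there is nothing on the paper's side to compare your argument against. What you have written is a programme modelled on the quantum Grassmannian case of \textsection\ref{section:Grassmannians}, and as a programme it is sensible: reducing the K\"ahler condition to (i) closure/reality/centrality of the coinvariant $(1,1)$-form via the $|1|$-grading of $\mathfrak{g}$, (ii) the Hard Lefschetz isomorphisms via specialisation at $q=1$, and (iii) factorisability plus positivity of $(\Om^{(n,0)},\odel)$, and then invoking Theorem \ref{theorem:Kodaira} and Corollary \ref{corollary:Fano}, is exactly the route the Grassmannian results of \cite{KOBS} follow. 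But it is not a proof, and the gap is the one you yourself name: steps (ii) and (iii) rest entirely on the existence of a flat $\bC[q^{\pm1}]$-form of the whole graded calculus $\Om^\bullet_q(G/L_S)$, together with a compatible integral form of $\k$ and of the Hermitian structure on $\Om^{(n,0)}$, whose fibre at $q=1$ is the classical de Rham algebra of the Hermitian symmetric space. The Heckenberger--Kolb classification is carried out over $\bC$ at fixed generic $q$ and does not by itself produce such a deformation; constructing it (and checking that the maximal prolongation does not degenerate at $q=1$) is precisely the open content of the conjecture, known only for the Grassmannians via their quantum-minor presentation. Without that input, the determinant/genericity argument for $L^{n-k}$, the sign determination for the curvature scalar $\lambda$, and the verification of positive definiteness all have nothing to specialise to.

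Two further points of caution. First, even granting the flat deformation, your genericity argument yields the Lefschetz isomorphisms and positivity only for $q$ outside an uncontrolled cofinite set (respectively in some unspecified neighbourhood of $1$), whereas the conjecture as stated carries no exceptional set; at best you would establish a weakened form analogous to Theorem \ref{thm:KOBS}, and making the exceptional set explicit requires the determinant estimates you defer. Second, positive definiteness of the metric $g$ built from $\k$ (required by Definition \ref{definition:PositivelineBundle} before the Fano condition of Definition \ref{definition:Fano} even makes sense) is a separate open-interval condition on $q$ that does not follow from nondegeneracy of the Lefschetz maps alone; your sketch folds it into ``positivity for $q$ near $1$'' without argument. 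So the proposal should be read as a correct identification of what needs to be proved, not as a proof.
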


\appendix

\section{Dualities for comodule algebras}\label{Section:ComoduleDualities}

\renewcommand{\hol}{\operatorname{hol}}
\renewcommand{\ev}{\operatorname{ev}}
\renewcommand{\coev}{\operatorname{coev}}

\subsection{Comodule algebras}

Let $A$ and $B$ be Hopf algebras.  We write $\psA\Mod^B$ for the category of $A$-$B$-bicomodules.  Let $V \in \psA\Mod^B$, and let $\Delta^A\colon V \to A \otimes V$ and $\Delta^B\colon V \to V \otimes B$ be the left $A$-coaction and right $B$-coaction, respectively.  We use the Sweedler notation for the coaction:
\begin{align*}
\Delta^A(v) &= v_{(-1)} \otimes v_{(0)}, \\
\Delta^B(v) &= v_{(0)} \otimes v_{(1)}, \\
(\Delta^A \otimes 1) \circ \Delta^B(v) = (1 \otimes \Delta^B) \circ \Delta^A(v) &= v\m1 \otimes v\0 \otimes v\1.
\end{align*}

An \emph{$A$-$B$-bicomodule algebra} is an algebra object in the monoidal category $\psA\Mod^B$.  Explicitly, $M$ is an $A$-$B$-bicomodule algebra if $M$ is both a $\bC$-algebra and an $A$-$B$-bicomodule satisfying the following compatibility conditions
\begin{align*}
\Delta^A(m.n) &= m\m1 n\m1 \otimes m\0 n\0, & \Delta^A(1_M) = 1_A \otimes 1_M, \\
\Delta^B(m.n) &= m\0 n\0 \otimes m\1 n\1, & \Delta^B(1_M) = 1_M \otimes 1_B.
\end{align*}

Let $M$ and $N$ be $A$-$B$-comodule algebras.  We write $\psAM\Mod^B_N$ for the Eilenberg-Moore category of the monad $M \otimes_\bC (-) \otimes_\bC N\colon \psA\Mod^B\to \psA\Mod^B$.  The objects are given by pairs $(X, \mu\colon M \otimes X \otimes N \to X)$ where $X \in \psA\Mod^B$ and
\begin{align*}
\Delta^A(\mu(m \otimes x \otimes n)) &= m_{(-1)} x_{(-1)} n_{(-1)} \otimes \mu(m_{(0)} \otimes x_{(0)} \otimes n_{(0)}), \\
\Delta^B(\mu(m \otimes x \otimes n)) &= m_{(0)} x_{(0)} n_{(0)} \otimes \mu(m_{(1)} \otimes x_{(1)} \otimes n_{(1)}),
\end{align*}
satisfying the usual unital and associativity conditions.  Thus, explicitly, $\mu\colon M \otimes X \otimes N \to X$ is an $A$-$B$-bicomodule map which makes $X$ into an $M$-$N$-bimodule.  A morphism $f\colon X \to Y$ in $\psAM\Mod^B_N$ is a $\bC$-linear map $X \to Y$ which is both an $M$-$N$-bimodule map and an $A$-$B$-bicomodule map.

\begin{notation}\label{notation:LabeledMods}
\begin{enumerate}
\item We adopt the convention that, if any of the Hopf algebras $A$ or $B$, or any of the $A$-$B$-comodule algebras $M$ or $N$ are $\bC$, we do not include them in the notation $\psAM\Mod^B_N$.  In particular, we write $\Mod$ for the category of $\bC$-vector spaces.
\item We write $\psAM\lproj^B_N$ or $\psAM\rproj^B_N$ for the full subcategories of $\psAM\Mod^B_N$ given by those objects which are finitely generated and projective as left $M$-modules or as right $N$-modules, respectively (thus, forgetting about the bimodule structure and the $A$-$B$-bicomodule structure). 
\item We write $\psA\Hom(X,Y), \psM\Hom(X,Y), \Hom^B(X,Y)$, and $\Hom_N(X,Y)$ for the $\bC$-linear maps which respect the left $A$-comodule structure, the left $M$-module structure, the right $B$-comodule structure, and the right $N$-module structure, respectively.  We also consider the obvious extension of this notation, using multiple subscripts and superscripts, if we consider morphisms preserving multiple structures.
\end{enumerate}
\end{notation}

\subsection{Motivation - dual modules and dual bases}\label{subsection:DualModules}

Let $M$ be a complex algebra.  We consider the contravariant functors
\begin{align*}
\lvee(-) = \psM\Hom(-,M)\colon \psM\Mod \to \Mod_M, && (-)^\vee = \Hom_M(-,M)\colon \Mod_M \to \psM\Mod.
\end{align*}
If $X \in \psM\Mod$, then we call $\lvee X$ the \emph{dual module} of $X$.  We recall the following standard proposition.

\begin{proposition}\label{proposition:MainPropositionDuals}
\begin{enumerate}
\item The contravariant functors $\lvee(-)\colon \psM\lproj \to \rproj_M$ and $(-)^\vee\colon {}\rproj_M \to \prescript{}{M}\lproj$ are quasi-inverse.
\item For any finitely generated projective $P \in \psM\lproj$, the functor $(P \otimes_\bC -) \colon \Mod \to \psM\Mod$ is left adjoint to $(\lvee P \otimes_M -) \colon \psM\Mod \to \Mod$.
\end{enumerate}
\end{proposition}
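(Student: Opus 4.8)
The engine of both parts is the \emph{dual basis lemma}: a left $M$-module $P$ is finitely generated and projective precisely when there are finitely many pairs $(p_i, \phi_i)_{i=1}^r$ with $p_i \in P$ and $\phi_i \in \lvee P = \psM\Hom(P,M)$ such that $p = \sum_i \phi_i(p)\, p_i$ for every $p \in P$; equivalently, $P$ is a direct summand of a finite free module $M^{\oplus r}$. I would open the proof by recalling this, together with the canonical evaluation morphism $\ev_P \colon P \to (\lvee P)^\vee$, $\ev_P(p)(\phi) = \phi(p)$, and its right-handed analogue; a short check using $\phi(mp) = m\phi(p)$ and $(\phi m)(p) = \phi(p)m$ shows these are morphisms in $\psM\Mod$ and $\Mod_M$ respectively, and they are evidently natural.

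For (1), two things need verification. First, $\lvee(-)$ really lands in $\rproj_M$: applying $\psM\Hom(-,M)$ to a split monomorphism $P \hookrightarrow M^{\oplus r}$ of left modules produces a split epimorphism of right modules onto $\lvee P$ from $\lvee(M^{\oplus r}) \cong M^{\oplus r}$, so $\lvee P$ is a direct summand of a finite free right module; symmetrically $(-)^\vee$ lands in $\prescript{}{M}\lproj$. Second, for $P$ with dual basis $(p_i,\phi_i)$ the assignment $\psi \mapsto \sum_i \psi(\phi_i)\,p_i$ is a two-sided inverse to $\ev_P$ — one composite is the dual basis identity, and $(\phi_i, p_i)$ serves as a dual basis for $\lvee P$, giving the other. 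Thus $\ev_P$ is a natural isomorphism $\id_{\psM\lproj} \cong (-)^\vee \circ \lvee(-)$, and the opposite composite is handled the same way; this exhibits the two functors as quasi-inverse.

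For (2), the functor $P \otimes_\bC (-)\colon \Mod \to \psM\Mod$ always has a right adjoint, namely $\Hom_M(P,-)$ (tensor--hom over $\bC$, using that $-\otimes_\bC V$ commutes with the $M$-action on $P$). It therefore suffices to produce a natural isomorphism $\lvee P \otimes_M W \xrightarrow{\sim} \Hom_M(P,W)$ for $W \in \psM\Mod$, given by $\phi \otimes w \mapsto \big(p \mapsto \phi(p)\,w\big)$ with inverse $f \mapsto \sum_i \phi_i \otimes f(p_i)$; well-definedness and mutual inverseness are once more the dual basis lemma, and naturality in $W$ is immediate. Equivalently, one can write down the unit $v \mapsto \sum_i \phi_i \otimes (p_i \otimes v)$ and counit $p \otimes \phi \otimes w \mapsto \phi(p)\,w$ and check the triangle identities directly against $\sum_i \phi_i(p)p_i = p$.

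I do not anticipate a real obstacle: everything is driven by the dual basis lemma and standard naturality arguments. The only genuinely delicate point — and the one most prone to slips — is tracking the handedness of all the module structures in play: $\lvee P$ of a left module is a right module, $(\lvee P)^\vee$ is again a left module, and every evaluation, comparison, unit and counit map must be shown to intertwine exactly these left/right actions.
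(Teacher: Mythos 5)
Your proposal is correct and follows essentially the same route as the paper, which simply cites Bourbaki (II.2.7 and II.4.2) for the two facts you prove by hand via the dual basis lemma, and likewise obtains part (2) from the tensor--hom adjunction combined with the identification $\psM\Hom(P,-) \cong \lvee P \otimes_M -$. One small notational caution in the paper's conventions: the right adjoint of $P \otimes_\bC (-)$ is the space of \emph{left} $M$-linear maps $\psM\Hom(P,-)$, not $\Hom_M(P,-)$ (which would denote right-module maps), though the explicit formulas you write down, e.g.\ $\phi \otimes w \mapsto \bigl(p \mapsto \phi(p)\,w\bigr)$, do land in the correct left-linear hom space.
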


\begin{proof}
The first statement follows from \cite[II.2.7]{BourbakiAlgebraI13}.  For the second statement, we use that $\psM\Hom(P,-)$ is right adjoint to $P \otimes -$.  It follows from \cite[II.4.2]{BourbakiAlgebraI13} (see also \cite[Theorem B.15]{JensenLenzing89}) that $\psM\Hom(P,-) \cong \lvee P \otimes_M -$.
\end{proof}

\begin{remark}
In particular, for any $P \in \psM\lproj$, there are isomorphisms $\psM\Hom(P \otimes X, Y) \cong \Hom(X,\lvee P \otimes_M Y)$, natural in $X \in \Mod$ and $Y \in \psM\Mod$.
\end{remark}
  
The unit $\eta$ and counit $\epsilon$ of the adjunction $(P \otimes_M -) \dashv (\lvee P \otimes_\bC -)$ are induced by maps
\begin{align*}
\epsilon_M\colon P \otimes_\bC {\lvee P} &\to M & \eta_\bC \colon \bC & \to \lvee P \otimes_M P \\
p \otimes f &\mapsto f(p)  & 1 &\mapsto \sum_i f_i \otimes p_i.
\end{align*}
The triangle identities for adjoint functors show that:
\begin{align*}
\forall p \in P\colon p = \sum_i f_i(p) p_i, && \forall f \in {^\vee P}\colon f = \sum_i f(p_i) f_i.
\end{align*}
We call $\eta_\bC(1) = \sum_i f_i \otimes p_i \in \lvee P \otimes_\bC P$ a \emph{dual basis element}.  Note that $\{p_i\}_i \subseteq P$ generates $P$.

\begin{remark}
When $M$ is a comodule algebra over a Hopf algebra $A$, there is an $A$-equivariant version of Proposition \ref{proposition:MainPropositionDuals}.  We refer to \cite{Ulbrich90} for more information.
\end{remark}

The following result is standard (see for example \cite[II.4.2]{BourbakiAlgebraI13}).  We provide a proof for the benefit of the reader.

\begin{proposition}\label{proposition:TensorCommutesWithHom}
Let $M$ be an algebra and let $P \in {\psM\lproj}$.  The linear map
\begin{align*}
\varphi\colon V \otimes {\psM\Hom(P,X)} \otimes_\bC W &\to {\psM\Hom(P,V \otimes X \otimes_\bC W)} \\
v \otimes f \otimes w &\mapsto (p \mapsto v \otimes f(p) \otimes w)
\end{align*}
(natural in $X \in {\psM\Mod}$ and $V, W \in \Mod$) is invertible.  Here, the left $M$-module structure of $V \otimes X \otimes_\bC W$ is given by $m(v \otimes x \otimes w) = v \otimes mx \otimes w$, for all $m \in M, v \in V, w \in W,$ and $x \in X.$
\end{proposition}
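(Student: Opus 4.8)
The plan is to establish the isomorphism by reducing to the case where $P$ is finitely generated free, and then to the case $P = M$, where the statement is essentially a triviality. First I would fix a dual basis element $\sum_i f_i \otimes p_i \in \lvee P \otimes_\bC P$, whose existence is guaranteed since $P \in \psM\lproj$ (see \textsection\ref{subsection:DualModules}), and use it to write down an explicit candidate inverse
\begin{align*}
\psi\colon \psM\Hom(P, V \otimes X \otimes_\bC W) &\to V \otimes \psM\Hom(P,X) \otimes_\bC W, \\
g &\mapsto \sum_i (\id_V \otimes (p \mapsto (\id_V^* \otimes \id_X \otimes \id_W^*)(\dots)) \otimes \id_W),
\end{align*}
but it is cleaner to phrase $\psi$ coordinate-free: given $g \colon P \to V \otimes X \otimes_\bC W$, the composite $g \circ (p_i \cdot -)$ applied to $1$ picks out $g(p_i) \in V \otimes X \otimes_\bC W$, and we set $\psi(g) = \sum_i \iota_{V,W}\big(g(p_i)\big)$ where $\iota_{V,W}$ slots in the factor $f_i$ to convert $v \otimes x \otimes w$ into $v \otimes \big(p \mapsto f_i(p)\,x\big) \otimes w$. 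The key points to verify are then (a) $\psi(g)$ is well-defined, i.e. independent of the choice of dual basis, which follows from the triangle identities $f = \sum_i f(p_i) f_i$ and $p = \sum_i f_i(p)p_i$; (b) $\psi \circ \varphi = \id$, which unwinds to $\sum_i v \otimes f(p_i)\,x \otimes w \mapsto$ the element $p \mapsto \sum_i f(p_i) f_i(p)\, x = f(p)\, x$ using the second triangle identity; and (c) $\varphi \circ \psi = \id$, which uses $\sum_i f_i(p)\, g(p_i) = g\big(\sum_i f_i(p) p_i\big) = g(p)$ by left $M$-linearity of $g$.

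Concretely I would carry out the steps as follows. First, check $\varphi$ is well-defined: it is visibly $\bC$-linear in $v$ and $w$, and linear over $M$ in $f$ since $\varphi(v \otimes mf \otimes w)$ sends $p$ to $v \otimes (mf)(p) \otimes w = v \otimes m(f(p)) \otimes w = m\big(v \otimes f(p) \otimes w\big)$, matching the stated $M$-module structure on $V \otimes X \otimes_\bC W$; also the assignment $p \mapsto v \otimes f(p) \otimes w$ is indeed left $M$-linear in $p$. Second, define $\psi$ via the dual basis as above and check well-definedness by the triangle identities. Third, verify $\psi \circ \varphi = \id$ and $\varphi \circ \psi = \id$ by the short computations indicated. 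Finally, record naturality in $X$, $V$, $W$, which is immediate since all constructions are built from composition and from the dual basis element (which depends only on $P$).

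The main obstacle — and really the only subtle point — is confirming that $\psi$ does not depend on the choice of dual basis, since different dual bases could a priori give different maps; this is handled cleanly by the triangle identities, but it is worth stating explicitly rather than suppressing. A secondary, purely bookkeeping, difficulty is keeping the three tensor factors $V$, $\psM\Hom(P,X)$ (respectively $X$), and $W$ in the right order throughout, since $V$ and $W$ play a passive role and only $X$ interacts with the $\Hom$; I would handle this by treating $V \otimes (-) \otimes_\bC W$ as a fixed exact functor $\Mod \to \Mod$ commuting with everything in sight, so that the content of the proposition is genuinely the case $V = W = \bC$, namely that $\psM\Hom(P,-)$ commutes with arbitrary $\bC$-linear tensoring — which for $P$ finitely generated projective is exactly \cite[II.4.2]{BourbakiAlgebraI13}. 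If one prefers the reduction argument over the explicit dual-basis argument: pick a splitting $P \oplus Q \cong M^{\oplus n}$, observe $\varphi$ is natural in $P$ and additive, so the claim for $M^{\oplus n}$ (obvious, as $\psM\Hom(M^{\oplus n}, Y) \cong Y^{\oplus n}$ naturally) together with the fact that $\varphi$ is compatible with the idempotent cutting out $P$ yields the claim for $P$; I would likely present the dual-basis proof as the main line and mention this reduction as an alternative.
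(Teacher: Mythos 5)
Your proof is correct and is in substance the paper's own proof made explicit: the paper simply chains the natural isomorphisms $\psM\Hom(P,V\otimes X\otimes_\bC W)\cong \lvee P\otimes_M(V\otimes X\otimes_\bC W)\cong V\otimes(\lvee P\otimes_M X)\otimes_\bC W\cong V\otimes\psM\Hom(P,X)\otimes_\bC W$ supplied by \cite[II.4.2]{BourbakiAlgebraI13}, and your dual-basis map $\psi$ is exactly what one obtains by unwinding that chain. Two cosmetic remarks: once $\psi$ is verified to be a two-sided inverse of $\varphi$, its independence of the chosen dual basis is automatic, and the identity actually used in both composite checks is $p=\sum_i f_i(p)p_i$ combined with left $M$-linearity of $f$ (resp.\ $g$), not the other triangle identity.
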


\begin{proof}
The inverse map is given by the following sequence of natural isomorphisms:
\begin{align*}
\psM\Hom(P,W \otimes X \otimes_\bC W) &\cong \lvee P \otimes_M (W \otimes_\bC X \otimes_\bC V) \\
& \cong W \otimes_\bC (\lvee P \otimes_M X) \otimes_\bC V \cong W \otimes_\bC \psM\Hom(P,X) \otimes_\bC V.
\end{align*}
\end{proof}

\subsection{Duality for comodule algebras}

The aim of this section is to obtain results similar to that of \textsection\ref{subsection:DualModules}, starting from an $A$-$B$-bicomodule algebra $M$, and allowing for a right $N$-action.  The analog of Proposition \ref{proposition:MainPropositionDuals} will be Theorem \ref{theorem:TensorWithDual} below.

Our first step is, given $X \in \psAM\Mod^B_N$, to construct a functor $\psM\hom(X,-)\colon \psAM\Mod^B_N \to \psAN\Mod^B_N$ which is right adjoint to $X \otimes_N -\colon \psAN\Mod^B_N \to \psAM\Mod^B_N$.  This will be done in Theorem \ref{theorem:AdjointWithhom}.

\subsubsection{An enrichment}

Let $X,Y \in \psAM\Mod_N^B$.  We start by considering the vector space $\psM\Hom(X,Y)$ of all left $M$-module morphisms (disregarding, for the moment, the $A$-$B$-bicomodule structure and the right $N$-module structure) and a first approximation of an $A$-$B$-bicomodule structure.

\begin{lemma}\label{lemma:DefinitionDelta}
Let $A,B,X,Y$ as above.  We consider the maps
\begin{align*}
(\Delta^A)'\colon {\psM\Hom(X,Y)} &\to {\psM\Hom(X,A \otimes_\bC Y)} \\
f &\mapsto \left(x \mapsto S(x_{(-1)}) [f(x_{(0)})]_{(-1)} \otimes [f(x_{(0)})]_{(0)} \right)\\
(\Delta^B)'\colon {\psM\Hom(X,Y)} &\to {\psM\Hom(X,Y \otimes_\bC B)} \\
f &\mapsto \left(x \mapsto [f(x_{[0]})]_{[0]} \otimes  S^{-1}(x_{[1]}) [f(x_{[0]})]_{[1]} \right)
\end{align*}
where the $M$-$N$-bimodule structure on $A \otimes Y$ and on $Y \otimes B$ is given by $m(a \otimes y)n = a \otimes myn$ and $m(y \otimes b)n = myn \otimes b$, for all $a \in A, b \in B, y \in Y, m \in M$, and $n \in N$.

Moreover, the following diagram commutes:
\begin{equation}\label{equation:DiagramForDelta}
\begin{tikzcd}
{\psM\Hom(X,Y)} \arrow[d, "(\Delta^B)'"] \arrow[r, "(\Delta^A)'"] & {\psM\Hom(X,A \otimes Y)} \arrow[d, "(\Delta^B)'"] \\
\psM\Hom(X,Y \otimes B) \ar[r, "(\Delta^A)'"] & {\psM\Hom(X,A \otimes Y \otimes_\bC B)}
\end{tikzcd}
\end{equation}
\end{lemma}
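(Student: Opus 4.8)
The statement to prove is Lemma~\ref{lemma:DefinitionDelta}: that the two maps $(\Delta^A)'$ and $(\Delta^B)'$ are well-defined (land in left $M$-module morphisms into the claimed bimodules), and that the square \eqref{equation:DiagramForDelta} commutes. I would organise the argument into three parts.

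First, \emph{well-definedness of $(\Delta^A)'$}. Fix $f \in \psM\Hom(X,Y)$ and set $g = (\Delta^A)'(f)$, so $g(x) = S(x\m1)[f(x\0)]\m1 \otimes [f(x\0)]\0$. I would check that $g$ is genuinely $A$-valued-tensor-$Y$-valued (clear from the formula) and, crucially, that $g$ is a left $M$-module map for the bimodule structure $m(a\otimes y) = a \otimes my$. Unravelling $g(mx)$: since $X \in \psAM\Mod^B_N$, the coaction satisfies $\Delta^A(mx) = m\m1 x\m1 \otimes m\0 x\0$, and since $f$ is a left $M$-module map, $f(m\0 x\0) = m\0 f(x\0)$, whose $A$-coaction picks up $m\m1[f(x\0)]\m1$ on the left leg. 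Then the $S(x\m1)$ that appears in $g(mx)$ is actually $S(m\m1 x\m1) = S(x\m1)S(m\m1)$, and the pieces $S(m\m1) \cdot m\m1 \cdots m\0$ collapse by the antipode axiom $S(m\m1)m\m2$... — here I must be careful with Sweedler bookkeeping, but the counit–antipode identity $m\m1 S(m\m2) = \epsilon(m\m1)1$ (applied in the comodule-algebra setting) gives exactly $g(mx) = a \otimes m g(x)$ after the dust settles. The analogous computation with $S^{-1}$ and the right $B$-coaction (using that $S^{-1}$ is the antipode of $B^{\mathrm{cop}}$, or equivalently that $S$ is bijective since $A,B$ are Hopf algebras over a field — implicitly assumed throughout the paper) gives well-definedness of $(\Delta^B)'$. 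The right $N$-linearity is \emph{not} claimed here (these are only $\psM\Hom$'s), so I do not need to check it.

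Second, \emph{commutativity of the square}. Start with $f \in \psM\Hom(X,Y)$ and compute both composites applied to an element $x \in X$. Going right-then-down: apply $(\Delta^A)'$ to get $x \mapsto S(x\m1)[f(x\0)]\m1 \otimes [f(x\0)]\0$, then apply $(\Delta^B)'$ in the $Y$-variable, producing a triple in $A \otimes Y \otimes B$ involving $S(x\m1)$, $S^{-1}(x\1)$, and the iterated coaction $[f(x\0)]\m1 \otimes [f(x\0)]\0 \otimes [f(x\0)]\1$. Going down-then-right gives the same expression with the order of the $S$ and $S^{-1}$ operations interchanged. The equality reduces to the compatibility of the left $A$-coaction and right $B$-coaction on $Y$ (they commute, since $Y$ is an $A$-$B$-bicomodule — this is the displayed Sweedler identity $(\Delta^A \otimes 1)\circ \Delta^B = (1 \otimes \Delta^B)\circ \Delta^A$), together with the analogous bicomodule compatibility on $X$ used to relate $x\m1$ and $x\1$. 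Since $S$ acts only on the $X$-legs and the $A$- and $B$-coactions act on independent tensor factors, the two composites agree termwise; this is essentially a formal consequence of the interchange law and requires no deep input.

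\textbf{Main obstacle.} The only real delicacy is the Sweedler-notation bookkeeping in the well-definedness of $(\Delta^A)'$ and $(\Delta^B)'$: one must correctly track which comultiplications act on which tensor legs, use the comodule-algebra axiom $\Delta^A(mx) = m\m1 x\m1 \otimes m\0 x\0$ in the right place, and then invoke the antipode axiom to collapse the spurious $m$-legs introduced by $S$. I would write this out carefully once for $(\Delta^A)'$ and remark that the $(\Delta^B)'$ case is identical after replacing $S$ by $S^{-1}$ and left coactions by right coactions; the bijectivity of $S$ (automatic for Hopf algebras over a field, which the paper assumes) is what licenses using $S^{-1}$. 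Everything else — the commuting square in particular — is a routine consequence of the bicomodule interchange law, and I would present it as such rather than grinding through every Sweedler index.
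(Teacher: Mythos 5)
Your proposal matches the paper's proof essentially step for step: the paper verifies left $M$-linearity of $(\Delta^A)'(f)$ by exactly the Sweedler computation you outline (the collapse uses $S(m_{(-2)})\,m_{(-1)} = \epsilon_A(m_{(-1)})1$, i.e.\ the antipode acting on the \emph{first} iterated-coaction leg and multiplied on the left --- the identity $m_{(-1)}S(m_{(-2)})$ you quote is not an antipode axiom in general, so take care with that step when writing it out), declares the $(\Delta^B)'$ case similar, and settles the commuting square in one line by the $A$-$B$-bicomodule compatibility of $X$ and $Y$. One side remark needs correcting: bijectivity of the antipode is \emph{not} automatic for arbitrary Hopf algebras over a field; here $S^{-1}$ is available because the lemma's statement already presupposes it and the Hopf algebras of interest are Hopf $*$-algebras (for which $S$ is invertible), so your argument is otherwise unaffected.
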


\begin{proof}
One needs to verify that $(\Delta^A)'(f)$ is indeed a morphism of left $M$-modules.  We have
\begin{eqnarray*}
mx &\mapsto& S((mx)_{( -1 )}) [f((mx)_{( 0 )})]_{( -1 )} \otimes [f((mx)_{( 0 )})]_{( 0 )} \\
&=& S(m_{( -1 )} x_{( -1 )}) [f(m_{( 0 )}x_{( 0 )})]_{( -1 )} \otimes [f(m_{( 0 )}x_{( 0 )})]_{( 0 )} \\
&=& S(x_{( -1 )}) S(m_{( -2 )}) m_{( -1 )} [f(x_{( 0 )})]_{( -1 )} \otimes m_{( 0 )} [f(x_{( 0 )})]_{( 0 )} \\
&=& S(x_{( -1 )}) \epsilon_A(m_{( -1 ))}) [f(x_{( 0 ))})]_{( -1 )} \otimes m_{( 0 )} [f(x_{( 0 )})]_{( 0 )} \\
&=& S(x_{( -1 )})  [f(x_{( 0 ))})]_{( -1 )} \otimes m [f(x_{( 0 )})]_{( 0 )} \\
&=& m\left[ S(x_{( -1 )})  [f(x_{( 0 )})]_{( -1 )} \otimes [f(x_{( 0 )})]_{( 0 )} \right]
\end{eqnarray*}
as required.  Hence, the morphism $(\Delta^A)'\colon \psM\Hom(X,Y) \to {\psM\Hom(X,A \otimes Y)}$ is well-defined.  Similarly, one checks that $(\Delta^B)'\colon {\psM\Hom(X,Y)} \to {\psM\Hom(X,Y \otimes_\bC B)}$ is well-defined.  Finally, the commutativity of the diagram follows from the fact that $X$ and $Y$ are $A$-$B$-bicomodules.
\end{proof}

\begin{definition}\label{definition:SmallHom}
We write $\Delta'\colon \psM\Hom(X,Y) \to \psM\Hom(X,A \otimes Y \otimes_\bC B)$ for the map given by \eqref{equation:DiagramForDelta} in Lemma \ref{lemma:DefinitionDelta}.  We recall the morphism $\varphi\colon A \otimes \psM\Hom(X,Y) \otimes B \to \psM\Hom(X,A \otimes_\bC Y \otimes B)$ from Propoosition \ref{proposition:TensorCommutesWithHom}.  We define $\psM\hom(X,Y)$ as the pullback of the diagram
\begin{center}
\begin{tikzcd}
{\psM\Hom(X,Y)} \arrow[r, "\Delta'"] & {\psM\Hom(X,A \otimes Y \otimes B)} \\
\psM\hom(X,Y) \arrow[u, dashrightarrow] \ar[r, "\Delta",dashrightarrow] & A \otimes {\psM\Hom(X,Y)} \otimes B \arrow[u,"\varphi"]
\end{tikzcd}
\end{center}
thus, $\psM\hom(X,Y) \coloneqq (\Delta')^{-1}(A \otimes {\psM\Hom(X,Y)} \otimes B) \subseteq \psM\Hom(X,Y)$.
\end{definition}

\begin{remark}\label{remark:Abouthom}
\begin{enumerate}
\item If $X$ is a finitely generated projective left $M$-module (such that $\varphi$ is invertible), then $\psM\hom(X,Y) = \psM\Hom(X,Y)$ and $\Delta = \varphi^{-1} \circ \Delta'$.
\item For any $f \in \psM\hom(X,Y)$, we have $\Delta^A(f) = f\m1 \otimes f\0$ such that, for all $x \in X:$
\[f\m1 \otimes f\0(x) = S(x_{(-1)}) [f(x_{(0)})]_{(-1)} \otimes [f(x_{(0)})]_{(0)}.\]
Similarly, we have $\Delta^B(f) = f\0 \otimes f\1$ such that, for all $x \in X:$
\[ f\0(x) \otimes f\1 = [f(x_{(0)})]_{(0)} \otimes  S^{-1}(x_{(1)}) [f(x_{(0)})]_{(1)}. \] 
\end{enumerate}
\end{remark}

\begin{proposition}\label{proposition:AllStructuresOnHom}
For all $X,Y \in \psAM\Mod^B_N$, we have $\psM\hom(X,Y) \in \psAN\Mod^B_N$.  The $A$-$B$-bicomodule structure is given by $\Delta\colon {\psM\hom(X,Y)} \to A \otimes {\psM\Hom(X,Y)} \otimes B$.
\end{proposition}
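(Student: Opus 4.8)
The plan is to verify, one structure at a time, that the subspace $\psM\hom(X,Y) \subseteq \psM\Hom(X,Y)$ is closed under each of the relevant actions and carries the stated coactions, then assemble these into the claim that $\psM\hom(X,Y) \in \psAN\Mod^B_N$. First I would record the candidate structure maps. The left $N$-action and right $N$-action on $\psM\Hom(X,Y)$ come from the right $N$-module structure of $Y$ and on the source from pulling back along $N$ acting on $X$; explicitly $(n \cdot f)(x) = f(x)n$ gives a left $N$-action (using that $f$ is a left $M$-module map and $X$, $Y$ are $M$-$N$-bimodules), and $(f \cdot n)(x) = f(xn)$ gives a right $N$-action (well-defined since $X$ is a right $N$-module and $M$ acts on the left). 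These commute with each other and with the left $M$-module structure used to define $\psM\Hom$, so $\psM\Hom(X,Y)$ is an $N$-$N$-bimodule. The left $A$-coaction and right $B$-coaction are the maps $\Delta^A = (\Delta^A)'$ and $\Delta^B = (\Delta^B)'$ of Lemma \ref{lemma:DefinitionDelta}, corestricted to $\psM\hom(X,Y)$, with the combined coaction $\Delta\colon \psM\hom(X,Y) \to A \otimes \psM\Hom(X,Y) \otimes B$ as in Definition \ref{definition:SmallHom}.

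Next I would check that $\psM\hom(X,Y)$ is actually stable under the $N$-$N$-bimodule structure. This is where the pullback definition does its work: one must show that if $f \in \psM\Hom(X,Y)$ has $\Delta'(f)$ landing in the image of $\varphi$ (i.e. $(\Delta^A)'(f)$ and $(\Delta^B)'(f)$ have ``finite support'' in the $A$- and $B$-factors), then so do $n \cdot f$ and $f \cdot n$. Using the explicit formulas from Remark \ref{remark:Abouthom}(2), one computes
\[
(n \cdot f)\m1 \otimes (n\cdot f)\0(x) = S(x\m1)[f(x\0)n]\m1 \otimes [f(x\0)n]\0 = f\m1 \otimes f\0(x) n,
\]
where I used $\Delta^A(yn) = y\m1 n\m1 \otimes y\0 n\0$ together with the fact that the left $A$-coaction on $Y\otimes B$-type targets only sees the $Y$-part — in fact the cleanest route is: the map $f \mapsto n\cdot f$ is induced by the morphism $Y \to Y$, $y \mapsto yn$, which is a left $A$-comodule map (being a right $N$-module map on a bicomodule algebra), hence commutes with $(\Delta^A)'$; similarly $f \mapsto f \cdot n$ is induced by $X \to X$, $x \mapsto xn$, and commutes with $(\Delta^A)'$ and $(\Delta^B)'$ by naturality of these constructions in $X$ and $Y$. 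Naturality therefore immediately gives that $\Delta'$ commutes with the $N$-$N$-actions, hence the preimage $\psM\hom(X,Y) = (\Delta')^{-1}(\operatorname{im}\varphi)$ is an $N$-subbimodule, and $\Delta$ is an $N$-$N$-bimodule map into $A \otimes \psM\Hom(X,Y) \otimes B$.

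Finally I would verify the coassociativity, counit, and bicomodule compatibility axioms. Coassociativity of $\Delta^A$ (i.e. $(\id_A \otimes \Delta^A) \circ \Delta^A = (\Delta_A \otimes \id) \circ \Delta^A$ on $\psM\hom(X,Y)$) and the analogous identity for $\Delta^B$, plus the mutual commutativity $(\Delta^A \otimes \id_B)\Delta^B = (\id_A \otimes \Delta^B)\Delta^A$, follow by a direct Sweedler computation from the corresponding axioms for $X$ and $Y$, using $S(ab) = S(b)S(a)$, $S^{-1}$ being the convolution inverse of $S$ for the opposite comultiplication, and the antipode identities $S(a\1)a\2 = \epsilon(a) = a\1 S(a\2)$; the commutativity of the square \eqref{equation:DiagramForDelta} already gave the compatibility at the level of $(\Delta')$, so it descends. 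The counit axioms reduce to $\epsilon_A(S(x\m1))\,\epsilon_A([f(x\0)]\m1) = \epsilon_X(x) \cdot (\text{scalar})$-type cancellations, which are immediate from $\epsilon \circ S = \epsilon$. The main obstacle — really the only subtle point — is making sure that the $N$-actions genuinely preserve the pullback $\psM\hom(X,Y)$ rather than just $\psM\Hom(X,Y)$; handling this by the naturality argument above (rather than by brute-force chasing support conditions through the formulas of Remark \ref{remark:Abouthom}) keeps the proof short. Everything else is a bookkeeping exercise in Sweedler notation carried over from Lemma \ref{lemma:DefinitionDelta}.
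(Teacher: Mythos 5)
Your overall scaffolding (define the $N$-bimodule structure, check that the pullback $\psM\hom(X,Y)$ is stable under it, verify the comodule axioms and the module--comodule compatibility) matches what the proposition requires, but the step that carries all of the mathematical content is wrong in two ways. First, the handedness of the $N$-actions is swapped: for noncommutative $N$ the formula $(n\cdot f)(x)=f(x)n$ satisfies $(n_1n_2)\cdot f=n_2\cdot(n_1\cdot f)$ and is therefore a \emph{right} action, while $(f\cdot n)(x)=f(xn)$ is a \emph{left} action. The bimodule structure the paper actually uses (and verifies) is $(nfn')(x)=f(xn)n'$.

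Second, and more seriously, your ``cleanest route'' rests on the false premise that right multiplication $r_n\colon Y\to Y$, $y\mapsto yn$, is a left $A$-comodule map. Since $Y\in\psAM\Mod^B_N$ one has $\Delta^A(yn)=y\m1 n\m1\otimes y\0 n\0$, which equals $y\m1\otimes y\0 n$ only when $n$ is $A$-coinvariant. Accordingly the displayed identity is incorrect: the correct outcome is $f\m1 n\m1\otimes f\0(x)n\0$, not $f\m1\otimes f\0(x)n$. This matters because membership in $\psAN\Mod^B_N$ does \emph{not} mean that the coaction commutes with the $N$-actions (your ``$\Delta$ is an $N$-$N$-bimodule map''); it means the twisted compatibility
\[
\Delta^A(nfn') = n\m1 f\m1 n'\m1 \otimes n\0 f\0 n'\0,\qquad
\Delta^B(nfn') = n\0 f\0 n'\0 \otimes n\1 f\1 n'\1 .
\]
Establishing the first of these is essentially the entire content of the paper's proof: one evaluates both sides at $x\in X$, applies the defining formula $f\m1\otimes f\0(x)=S(x\m1)[f(x\0)]\m1\otimes[f(x\0)]\0$ at the element $xn\0$, and cancels via $n\m2 S(x\m1 n\m1)=\e(n\m1)S(x\m1)$. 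No naturality argument can substitute for this antipode computation, because the relevant multiplication maps are not morphisms in $\psA\Mod^B$. Once the twisted compatibility is proved, stability of the pullback subspace under the $N$-actions does follow, so that portion of your plan is recoverable; the coassociativity and counit axioms are indeed routine and are delegated in the paper to Ulbrich's Lemma 2.2.
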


\begin{proof}
It is shown in \cite[Lemma 2.2]{Ulbrich90} that $\psM\hom(X,Y)$ is a left $A$-comodule.  Showing that $\psM\hom(X,Y)$ is an $A$-$B$-bicomodule is similar.  To show that $\psM\hom(X,Y) \in \prescript{A}{N}\Mod^B_N$, we need to show that
\begin{align*}
\Delta^A(nfn') &= n_{(-1)} f_{(-1)} n'_{(-1)} \otimes n_{(0)} f_{(0)} n'_{(0)}, \\
\Delta^B(nfn') &= n\0 f\0 n'\0\otimes n\1 f\1 n'\1,
\end{align*}
{for all $f \in \psM\hom(X,Y)$ and $n,n' \in N.$}  We prove the former; the later statement is proved similarly.  For all $x \in X$, we have
\begin{align*}
&(1 \otimes \ev_x) (n_{(-1)} f_{(-1)} n'_{(-1)} \otimes n_{(0)} f_{(0)} n'_{(0)}) \\
&= n_{(-1)} f_{(-1)} n'_{(-1)} \otimes (n_{(0)} f_{(0)} n'_{(0)})(x) \\
&= n_{(-1)} f_{(-1)} n'_{(-1)} \otimes f_{(0)}(xn_{(0)})n'_{(0)} \\
&= n_{(-2)} S_A(x_{(-1)}n_{(-1)}) [f(x_{(0)}n_{(0)})]_{(-1)} n'_{(-1)} \otimes [f(x_{(0)}n_{(0)})]_{(0)}n'_{(0)} \\
&= \epsilon_A(n_{(-1)}) S_A(x_{(-1)}) [f(x_{(0)}n_{(0)})]_{(-1)} n'_{(-1)} \otimes [f(x_{(0)}n_{(0)})]_{(0)}n'_{(0)} \\
&= {\left(\epsilon_A(n_{(-1)}) S_A(x_{(-1)}) \otimes 1\right) \Delta^A\left[f(x_{(0)}n_{(0)})n'\right]} \\
&= {\left(S_A(x_{(-1)}) \otimes 1\right) \Delta^A\left[f(x_{(0)}n)n'\right]} \\
&= S_A(x_{(-1)}) [f(x_{(0)}n)]_{(-1)} n'_{(-1)} \otimes [f(x_{(0)}n)]_{(0)}n'_{(0)} \\
&= S_A(x_{(-1)}) [f(x_{(0)}n)n']_{(-1)} \otimes [f(x_{(0)}n)n']_{(0)} \\
&= S_A(x_{(-1)}) [(nfn')(x_{(0)})]_{(-1)} \otimes [(nfn')(x_{(0)})]_{(0)} \\
&= (1 \otimes \ev_x) ((nfn')_{(-1)} \otimes (nfn')_{(0)}),
\end{align*}
as required.
\end{proof}

\begin{theorem}\label{theorem:AdjointWithhom}
For any $X \in \psAM\Mod^B_N$, the functor $(X \otimes_N -)\colon \psAN\Mod^B_N \to \psAM\Mod^B_N$ is left adjoint to $\psM\hom(X,-)\colon \psAM\Mod^B_N \to \psAN\Mod^B_N.$
\end{theorem}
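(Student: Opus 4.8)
The plan is to exhibit an explicit adjunction, following the classical template for $\otimes$-$\Hom$ adjunctions but carefully tracking all four structures (left $A$-comodule, right $B$-comodule, left $M$- or $N$-module, right $N$-module) at once. First I would recall that the underlying purely linear/right-module statement is classical: for fixed $X$, the functor $X \otimes_N -$ on right $N$-modules is left adjoint to $\Hom_M(X,-)$ when one works over $\bC$ with no comodule structures, and in fact $\psM\Hom(X,-)$ is right adjoint to $X \otimes_N (-)$ at the level of $M$-$N$-bimodules and $N$-bimodules. The content of the theorem is that this adjunction descends to the bicomodule-equivariant categories once $\psM\Hom$ is replaced by the ``rational part'' $\psM\hom$ constructed in Definition \ref{definition:SmallHom}.

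Concretely, the steps I would carry out are: (1) Define the unit and counit. The counit $\varepsilon_Z\colon X \otimes_N \psM\hom(X,Z) \to Z$ should be the evaluation map $x \otimes f \mapsto f(x)$; the unit $\eta_W\colon W \to \psM\hom(X, X \otimes_N W)$ should send $w$ to the map $x \mapsto x \otimes w$. (2) Check that $\varepsilon_Z$ and $\eta_W$ land in the correct subcategories — i.e. that evaluation is a morphism in $\psAM\Mod^B_N$ and that $x \mapsto x \otimes w$ actually lies in $\psM\hom$ (not merely $\psM\Hom$), using the explicit Sweedler formulas for $\Delta^A, \Delta^B$ on $\psM\hom$ recorded in Remark \ref{remark:Abouthom}(2). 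This is where the antipode twists $S(x_{(-1)})$ and $S^{-1}(x_{(1)})$ in Lemma \ref{lemma:DefinitionDelta} do their work: they are precisely engineered so that the naive ``transpose'' map becomes equivariant. (3) Verify naturality of $\varepsilon$ and $\eta$ in $Z$ and $W$ respectively — routine. (4) Verify the two triangle identities $(\varepsilon_{X \otimes_N W}) \circ (X \otimes_N \eta_W) = \id_{X \otimes_N W}$ and $\psM\hom(X,\varepsilon_Z) \circ \eta_{\psM\hom(X,Z)} = \id_{\psM\hom(X,Z)}$; these reduce to the same identities as in the classical ungraded case because evaluation and the coevaluation-like unit are given by the same underlying linear formulas.

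An alternative, perhaps cleaner, route is to invoke Theorem \ref{theorem:TensorCommutesWithHom} / Proposition \ref{proposition:TensorCommutesWithHom} together with the known adjunction $\psM\Hom(X,-) \dashv$ (no, the other way) and then observe that $\psM\hom$ is by construction the pullback/equalizer that cuts out exactly the bicomodule maps, so that the Hom-set bijection
\begin{align*}
\psAM\Mod^B_N\big(X \otimes_N W, Z\big) \;\cong\; \psAN\Mod^B_N\big(W, \psM\hom(X,Z)\big)
\end{align*}
follows by restricting the classical $M$-$N$-bimodule adjunction isomorphism $\Hom_{M\text{-}N}(X \otimes_N W, Z) \cong \Hom_{N\text{-}N}(W, \psM\Hom(X,Z))$ to the sub-bijection picking out morphisms compatible with the $A$- and $B$-coactions, and then identifying the right-hand side with maps into $\psM\hom$ via Definition \ref{definition:SmallHom}. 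I would present the explicit unit/counit version since it is more self-contained and makes the later use of $\ev_x$ transparent.

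The main obstacle I expect is the bookkeeping in step (2): showing that the unit map $w \mapsto (x \mapsto x \otimes w)$ genuinely factors through $\psM\hom(X, X\otimes_N W)$ rather than just $\psM\Hom$. One must compute $\Delta^A$ and $\Delta^B$ of this map using the formulas in Remark \ref{remark:Abouthom}, check that the resulting element of $\psM\Hom(X, A \otimes X \otimes_N W \otimes B)$ lies in the image of $\varphi$ from Proposition \ref{proposition:TensorCommutesWithHom} (equivalently, is ``finite'' over $A \otimes B$), and then that it equals $x \mapsto x_{(-1)} \otimes (x_{(0)} \otimes w) \otimes 1$ and the analogous $B$-expression — i.e. that the coaction on $\psM\hom$ restricts to the expected coaction on $W$. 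The antipode identities $S(x_{(-1)}) x_{(-2)} \cdots$ telescoping via the counit are the crux, and they are exactly the computations already modelled in the proof of Lemma \ref{lemma:DefinitionDelta} and Proposition \ref{proposition:AllStructuresOnHom}, so while fiddly it is not conceptually hard. Everything else — naturality and the triangle identities — is then formal.
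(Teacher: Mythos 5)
Your proposal is correct and agrees with the paper's argument in substance: the paper establishes the adjunction via the currying bijection $f \mapsto \big(y \mapsto f(-\otimes y)\big)$ --- precisely your ``alternative route'' --- and in both versions the crux is the antipode-telescoping computation showing that the transposed map lands in $\psM\hom$ rather than merely in $\psM\Hom$. Your preferred unit/counit packaging is equivalent, and your key step (2), verifying that $\eta_W(w) = (x \mapsto x \otimes w)$ lies in $\psM\hom(X, X \otimes_N W)$, is exactly the paper's verification specialised to $f = \id_{X \otimes_N W}$.
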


\begin{proof}
To show that the functor $\psM\hom(X,-)\colon \psAM\Mod^B_N \to \psAN\Mod^B_N$ is right adjoint to the functor $(X \otimes_N -)\colon {\psAN\Mod^B_N} \to {\psAM\Mod^B_N}$, we show that there is a natural isomorphism
\begin{align*}
\phi\colon \psAM\Hom^B_N(X \otimes_N Y, Z) &\to \psAN\Hom^B_N(Y, \psM\hom(X,Z)) \\
f & \mapsto (\phi(f)\colon y \mapsto f(- \otimes y))
\end{align*}
We need to verify that $f(- \otimes y) \in \psM\hom(X,Z) \subseteq \psM\Hom(X,Z)$.  For this, it suffices to show that $f(-\otimes y)_{(-1)} \otimes f(-\otimes y)_{(0)} = y_{(-1)} \otimes f(-\otimes y_{(0)})$ and $f(-\otimes y)_{(0)} \otimes f(-\otimes y)_{(1)} = f(-\otimes y_{(0)}) \otimes y_{(1)}$, for all $y \in Y$.

For any $x \in X$, we have
\begin{align*}
(1 \otimes \ev_x)f(-\otimes y)_{(-1)} \otimes f(-\otimes y)_{(0)} &= f(-\otimes y)_{(-1)} \otimes f(-\otimes y)_{(0)}(x) \\
&= S(x_{(-1)}) [f(x_{(0)} \otimes y)]_{(-1)} \otimes [f(x_{(0)} \otimes y)]_{(0)} \\
&= S(x_{(-2)}) x_{(-1)}y_{(-1)} \otimes f(x_{(0)} \otimes y_{(0)}) \\
&= \epsilon(x_{(-1)}) y_{(-1)} \otimes f(x_{(0)} \otimes y_{(0)}) \\
&= y_{(-1)} \otimes f(x \otimes y_{(0)}), \\
&= (1 \otimes \ev_x)(y_{(-1)} \otimes f(- \otimes y_{(0)}))
\end{align*}
where we have used that $f$ is an $A$-comodule morphism.  {Similarly, we show that $f(-\otimes y)_{(0)} \otimes f(-\otimes y)_{(1)} = f(-\otimes y_{(0)}) \otimes y_{(1)}$ so that $\phi(f) \in \psM\hom(X,Z)$.  These equalities also establish that $\phi$ is an $A$-$B$-bicomodule morphism.  Furthermore, $\phi$ is clearly an $N$-bimodule morphism.  It is standard to check that $\phi$ is an isomorphism.}
\end{proof}

\begin{corollary}
There exists a category $\psAM{\underline{\Mod}}^B_N$, enriched over $\psAN\Mod^B_N$, whose underlying ($\bC$-linear) category is canonically isomorphic to $\psAM\Mod^B_N$.  The $\psAN\Mod^B_N$-valued hom is given by $\psM\hom(-,-)$.
\end{corollary}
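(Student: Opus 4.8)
The plan is to exhibit the statement as an instance of the standard construction of a $\mathcal V$-enrichment on a category tensored over a monoidal category $\mathcal V$, with $\mathcal V := \psAN\Mod^B_N$ and the tensoring given by $\otimes_N$. First I would record that $\mathcal V$ is monoidal under $\otimes_N$, with unit $N$ regarded as an object of $\psAN\Mod^B_N$ via its own structure maps; this is routine since $N$ is an $A$-$B$-bicomodule algebra, the coactions on $V \otimes_N W$ being the diagonal ones. The same formulas make $\otimes_N$ into a coherently associative and unital right action $\psAM\Mod^B_N \times \mathcal V \to \psAM\Mod^B_N$, $(X,V) \mapsto X \otimes_N V$. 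The essential input is Theorem \ref{theorem:AdjointWithhom}: for each $X \in \psAM\Mod^B_N$ the functor $X \otimes_N -\colon \mathcal V \to \psAM\Mod^B_N$ has right adjoint $\psM\hom(X,-)$, with evaluation counit $\epsilon_{X,W}\colon X \otimes_N \psM\hom(X,W) \to W$, $x \otimes f \mapsto f(x)$.

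From these data I would define $\psAM{\underline{\Mod}}^B_N$: same objects as $\psAM\Mod^B_N$; hom-object $\psM\hom(X,Y) \in \mathcal V$ (an object of $\psAN\Mod^B_N$ by Proposition \ref{proposition:AllStructuresOnHom}); composition $c_{XYZ}\colon \psM\hom(X,Y) \otimes_N \psM\hom(Y,Z) \to \psM\hom(X,Z)$ defined as the transpose, under $(X \otimes_N -) \dashv \psM\hom(X,-)$, of
\[
X \otimes_N \psM\hom(X,Y) \otimes_N \psM\hom(Y,Z) \xrightarrow{\epsilon_{X,Y} \otimes \id} Y \otimes_N \psM\hom(Y,Z) \xrightarrow{\epsilon_{Y,Z}} Z;
\]
and identity $u_X\colon N \to \psM\hom(X,X)$ the transpose of the unitor $X \otimes_N N \cong X$. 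Concretely $c_{XYZ}(f \otimes g) = g \circ f$ and $u_X$ is $n \mapsto (x \mapsto x \cdot n)$, with $1_N \mapsto \id_X \in \psM\hom(X,X)$ since $\id_X$ is $A$- and $B$-coinvariant (Remark \ref{remark:Abouthom}). Note that the composition is read in ``diagrammatic'' order: by the adjunction, the left $N$-action on $\psM\hom(X,Y)$ is precomposition with the $N$-action on $X$ and the right $N$-action is postcomposition with that on $Y$, which forces this order. Associativity of $c$ and the unit laws then follow by the usual diagram chase, reducing via the triangle identities and coherence of the $\otimes_N$-action to associativity and unitality of composition of $M$-linear maps.

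To identify the underlying $\bC$-linear category, note that its morphism sets are $\mathcal V\bigl(N, \psM\hom(X,Y)\bigr) = \psAN\Hom^B_N\bigl(N, \psM\hom(X,Y)\bigr)$; applying Theorem \ref{theorem:AdjointWithhom} once more gives a natural bijection
\[
\psAN\Hom^B_N\bigl(N, \psM\hom(X,Y)\bigr) \cong \psAM\Hom^B_N(X \otimes_N N, Y) \cong \psAM\Hom^B_N(X,Y).
\]
Equivalently, a morphism $N \to \psM\hom(X,Y)$ in $\mathcal V$ is pinned down by the image $f$ of $1_N$, and the bicomodule-bimodule constraints on it unwind precisely to ``$f$ is right $N$-linear and an $A$-$B$-bicomodule map'', i.e. $f \in \psAM\Hom^B_N(X,Y)$. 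Compatibility of this bijection with composition and identities (again via the triangle identities) makes it an isomorphism of categories, identity on objects, which is the asserted canonical isomorphism.

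The step I expect to require actual work is checking that $c_{XYZ}$ is a legitimate morphism of $\psAN\Mod^B_N$: that ordinary composition of $M$-linear maps is balanced over $N$ and respects the antipode-twisted $A$- and $B$-coactions of Definition \ref{definition:SmallHom}. This is where one must push the Sweedler-calculus identities around, and also where the variance of composition has to be gotten right; beyond it everything is a formal consequence of Theorem \ref{theorem:AdjointWithhom} together with the monoidality of $\otimes_N$ on $\psAN\Mod^B_N$.
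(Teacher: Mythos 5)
Your proposal is correct and is essentially the paper's own argument: the paper proves this corollary by citing Theorem \ref{theorem:AdjointWithhom} together with \cite[\textsection 6]{JanelidzeKelly01}, which is precisely the general principle you unwind by hand (a coherent action of a monoidal category with pointwise right adjoints induces an enrichment, with composition the transpose of iterated evaluation and the underlying category recovered via $\mathcal V(N,\psM\hom(X,Y)) \cong \psAM\Hom^B_N(X,Y)$). Your explicit identification of the $N$-bimodule structure on $\psM\hom(X,Y)$ and of the unit $1_N \mapsto \id_X$ matches Proposition \ref{proposition:AllStructuresOnHom} and Remark \ref{remark:Abouthom}, so nothing further is needed.
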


\begin{proof}
This follows from Theorem \ref{theorem:AdjointWithhom} and \cite[\textsection6]{JanelidzeKelly01}.
\end{proof}

\begin{remark}\label{remark:CoactionForLeftModules}
Similar results hold for the functor $\hom_M(X,-)\colon \psAN\Mod^B_M \to \psAN\Mod^B_N$.  Here, the $A$-$B$-bicomodule structure of $\hom_M(X,Y)$ is given by
\begin{align*}
\Delta'\colon {\Hom_M(X,Y)} &\to {\Hom_M(X,A \otimes_\bC Y \otimes_\bC B)} \\
f &\mapsto \left(x \mapsto [f(x_{( 0 )})]_{( -1 )} S^{-1}(x_{( -1 )}) \otimes [f(x_{( 0 )})]_{( 0 )} \otimes [f(x_{( 0 )})]_{( 1 )} S(x_{( 1 )}) \right).
\end{align*}
\end{remark}

\subsubsection{The duality ${}^\vee(-) = \psM\hom(-,M)$}

We will now assume that $M \in \psAM\Mod^B_N$.  This will be the case for the applications we have in mind.

\begin{proposition}
For any $X \in \psAM\lproj^B_N$, we have $\psM\hom(X,M) \in \psAN\rproj_M^B$.
\end{proposition}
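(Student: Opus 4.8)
The plan is to establish the three separate claims packaged into the conclusion $\psM\hom(X,M)\in\psAN\rproj_M^B$: that $\psM\hom(X,M)$ carries a natural structure making it an object of $\psAN\Mod^B_M$, that as a right $M$-module it is finitely generated and projective, and that the bicomodule/bimodule structure is compatible with all of this. First I would recall that $X\in\psAM\lproj^B_N$ means in particular that the underlying left $M$-module of $X$ is finitely generated and projective, so by Remark \ref{remark:Abouthom}(1) the map $\varphi$ of Proposition \ref{proposition:TensorCommutesWithHom} is invertible and hence $\psM\hom(X,M)=\psM\Hom(X,M)=\lvee X$, with $A$-$B$-bicomodule structure transported via $\varphi^{-1}\circ\Delta'$. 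This already identifies the object we must analyse with the ordinary dual module $\lvee X$, and the plan is to run the classical dual-basis argument on top of this identification.

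Next I would invoke Proposition \ref{proposition:AllStructuresOnHom} (applied with the target $M$, which by hypothesis lies in $\psAM\Mod^B_N$, so that $\psM\hom(X,M)$ a priori lands in $\psAN\Mod^B_N$) together with the variant for $\hom_M$ discussed in Remark \ref{remark:CoactionForLeftModules}: since $M$ is moreover an $M$-bimodule, $\lvee X=\psM\Hom(X,M)$ inherits a right $M$-module structure by postcomposition, $(f\cdot m)(x)=f(x)m$, and one checks this right $M$-action is a morphism of $A$-$B$-bicomodules using exactly the Sweedler computations already carried out in the proof of Proposition \ref{proposition:AllStructuresOnHom}. So the object sits in $\psAN\Mod^B_M$. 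Then, to see it is finitely generated projective as a right $M$-module, I would use the dual-basis element $\sum_i f_i\otimes p_i\in\lvee X\otimes_{\bC} X$ from \textsection\ref{subsection:DualModules}: the identities $p=\sum_i f_i(p)p_i$ and $f=\sum_i f(p_i)f_i$ exhibit $\{f_i\}_i$ as a finite generating set for $\lvee X$ as a right $M$-module and $\sum_i p_i\otimes f_i$ (or rather the evaluation maps $f\mapsto f(p_i)$) as the corresponding coordinate functionals, so that $\lvee X$ is a direct summand of a finite free right $M$-module by the standard criterion (as cited, \cite[Proposition II.2.6.12]{BourbakiAlgebraI13}). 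Combined with the previous paragraph this gives $\psM\hom(X,M)\in\psAN\rproj_M^B$.

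The main obstacle I expect is purely bookkeeping rather than conceptual: one has to verify that the right $M$-module structure on $\lvee X$ is genuinely compatible with the transported bicomodule structure $\Delta=\varphi^{-1}\circ\Delta'$, i.e. that $\psM\hom(X,M)$ really is an object of the Eilenberg--Moore category $\psAN\Mod^B_M$ and not merely a module and a comodule separately. This amounts to checking $\Delta^A(f\cdot m)=f\m1 m\m1\otimes (f\0\cdot m\0)$ and the analogous $B$-identity, which I would do by evaluating both sides on an arbitrary $x\in X$ and unwinding the explicit formulas for $f\m1\otimes f\0$ from Remark \ref{remark:Abouthom}(2) together with the bicomodule-algebra axioms for $M$ from \textsection\ref{Section:ComoduleDualities} — essentially a one-line Sweedler manipulation of the same shape as the displayed computation in Proposition \ref{proposition:AllStructuresOnHom}. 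Once that compatibility is in place, the finite-projectivity statement over $M$ is immediate from the dual basis, and the proof is complete.
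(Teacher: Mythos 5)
Your proposal is correct and follows essentially the same route as the paper: the paper's proof simply observes that $\psM\hom(X,M)\in\rproj_M$ is standard (the dual-basis argument you spell out) and that the remaining comodule/module compatibilities follow from Proposition \ref{proposition:AllStructuresOnHom}. You have merely expanded the two steps the paper leaves implicit, including the identification $\psM\hom(X,M)=\psM\Hom(X,M)=\lvee X$ via Remark \ref{remark:Abouthom}, so nothing further is needed.
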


\begin{proof}
It is standard that $\psM\hom(X,M) \in \rproj_M$.  The rest follows from Proposition \ref{proposition:AllStructuresOnHom}.
\end{proof}

\begin{definition}\label{definition:Vee}
We write $\lvee (-)$ for the contravariant functor $\psM\hom(-,M)\colon \psAM\lproj^B_N \to \psAN\rproj_M^B$, and $(-)^\vee$ for the contravariant functor $\hom_M(-,M)\colon \psAN\rproj_M^B \to \psAM\lproj_N^B$.
\end{definition}

\begin{theorem}\label{theorem:TensorWithDual}\label{theorem:ComoduleAlgebraDuality}
\begin{enumerate}
\item\label{enumerate:Duality1} The contravariant functors $(-)^\vee\colon \psAN\rproj^B_M \to \psAM\lproj^B_N$ and $\lvee(-)\colon \psAM\lproj^B_N \to \psAN\rproj^B_M$ form a duality.
\item\label{enumerate:Duality2} For any $X \in \psAM\lproj^B_N$, the functor $(X \otimes_N -)\colon \psAN\Mod^B_N \to \psAM\Mod^B_N$ is left adjoint to $(\lvee X \otimes_M -)\colon \psAM\Mod^B_N \to \psAN\Mod^B_N$.
\item\label{enumerate:Duality3} For any $Y \in \psAN\rproj^B_M$, the functor $(Y \otimes_M -)\colon \psAM\Mod^B_N \to \psAN\Mod^B_N$ is left adjoint to $(Y^\vee \otimes_N -)\colon \psAN\Mod^B_N \to \psAM\Mod^B_N$.
\end{enumerate}
Moreover: for any $X \in \psAM\lproj^B_N$, we have $(\lvee X \otimes_M -) \cong {\psM\hom(X,-)} \cong {\psM\Hom(X,-)}$. 
\end{theorem}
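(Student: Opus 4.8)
The plan is to derive all three statements and the \emph{moreover} clause from the hom--tensor adjunction of Theorem~\ref{theorem:AdjointWithhom} (together with its evident right-handed counterpart, recorded in Remark~\ref{remark:CoactionForLeftModules}) and from the classical tensor--hom and double-dual isomorphisms for finitely generated projective modules. The only genuine work will be to confirm that those classical isomorphisms are compatible with the twisted $A$- and $B$-coactions of Lemma~\ref{lemma:DefinitionDelta}.

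First I would prove the \emph{moreover} clause, since parts (\ref{enumerate:Duality2}) and (\ref{enumerate:Duality3}) follow at once from it. Fix $X \in \psAM\lproj^B_N$. As $X$ is in particular finitely generated projective as a left $M$-module, Remark~\ref{remark:Abouthom}(1) already gives $\psM\hom(X,Y) = \psM\Hom(X,Y)$ for every $Y$, with the bicomodule structure transported along $\varphi^{-1}$. It then remains to exhibit a natural isomorphism $\psi_Y\colon \lvee X \otimes_M Y \to \psM\Hom(X,Y)$ in $\psAN\Mod^B_N$: I take $\psi_Y(f \otimes y)(x) = f(x)y$, with inverse $g \mapsto \sum_i \phi_i \otimes g(x_i)$ for a dual basis $\sum_i \phi_i \otimes x_i$ of $X$ (which exists by \textsection\ref{subsection:DualModules}). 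Bijectivity and $N$-linearity of $\psi_Y$ are the classical tensor--hom statement invoked in the proof of Proposition~\ref{proposition:MainPropositionDuals}, and naturality in $Y$ is immediate; the delicate point is that $\psi_Y$ intertwines the $A$- and $B$-coactions, which I would check by a Sweedler computation against the explicit formulas of Remark~\ref{remark:Abouthom}(2) and Proposition~\ref{proposition:AllStructuresOnHom}, using the antipode identities in $A$ and $B$ exactly as in the proof of Lemma~\ref{lemma:DefinitionDelta} to cancel the twists.

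With the \emph{moreover} clause available, part~(\ref{enumerate:Duality2}) is immediate: $(X \otimes_N -)$ is left adjoint to $\psM\hom(X,-)$ by Theorem~\ref{theorem:AdjointWithhom}, and $\psM\hom(X,-) \cong (\lvee X \otimes_M -)$. For part~(\ref{enumerate:Duality3}) I would run the mirror-image argument: by Remark~\ref{remark:CoactionForLeftModules} the functor $\hom_M(Y,-)$ is right adjoint to $(Y \otimes_M -)$, and since $Y \in \psAN\rproj^B_M$ is finitely generated projective as a right $M$-module, the right-handed analogue of the \emph{moreover} clause (proved verbatim) identifies $\hom_M(Y,-) = \Hom_M(Y,-)$ with $(Y^\vee \otimes_N -)$, whence $(Y \otimes_M -) \dashv (Y^\vee \otimes_N -)$. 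For the duality in part~(\ref{enumerate:Duality1}) I would show that the evaluation maps $\ev_X\colon X \to (\lvee X)^\vee$, $x \mapsto (f \mapsto f(x))$, and symmetrically $\ev_Y\colon Y \to \lvee(Y^\vee)$, are natural isomorphisms: that they are morphisms in $\psAM\Mod^B_N$ and $\psAN\rproj^B_M$ respectively uses the proposition preceding Definition~\ref{definition:Vee} (which ensures $\lvee(-)$ and $(-)^\vee$ land in the stated categories) plus one further Sweedler check that $\ev_X$ respects the coactions, while bijectivity and the triangle identities are the classical double-dual statement, part (1) of Proposition~\ref{proposition:MainPropositionDuals}.

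The main obstacle throughout is purely bookkeeping: verifying that the classical natural isomorphisms (tensor--hom for $\psi_Y$, double-dual for $\ev_X$) respect the twisted $A$- and $B$-coactions of Lemma~\ref{lemma:DefinitionDelta} and Remark~\ref{remark:CoactionForLeftModules}. For the $A$-comodule part of these verifications one can instead invoke \cite{Ulbrich90}, which treats precisely the single-Hopf-algebra case; the $B$-coaction computations are then formally the same after replacing the antipode $S$ by $S^{-1}$, and the compatibility with the $N$-module structures is automatic from the definitions.
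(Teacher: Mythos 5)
Your proposal is correct and follows essentially the same route as the paper's proof: the classical tensor--hom and double-dual isomorphisms for finitely generated projectives, upgraded to the equivariant setting by Sweedler verifications of the twisted coactions, combined with the adjunction of Theorem~\ref{theorem:AdjointWithhom}. The only difference is cosmetic ordering --- the paper proves the duality (1) first and derives (2) from the isomorphism $\lvee X \otimes_M - \cong \psM\hom(X,-)$ exactly as you do, with (3) obtained by the same mirror-image argument.
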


\begin{proof}
For the first statement, we start by considering $X \in \psAM\lproj^B_N$ and $Y \in \psAN\rproj^B_M$.  It is well-known that $X \to (\lvee X)^\vee\colon x \mapsto \ev_x$ and $Y \to \lvee{}(Y^\vee)\colon y \mapsto \ev_y$ are isomorphisms in $\psM\lproj$ and $\rproj_M,$ respectively.  We check that these morphisms are morphisms (and hence isomorphisms) in $\psAM\lproj^B_N$ and $\psAN\rproj^B_M$, respectively.  Let $x \in X$ and let $\phi \in \lvee X$.  We have
\begin{align*}
(\id \otimes \ev_\phi) \Delta^A(\ev_x) &= (\id \otimes \ev_\phi) \left((\ev_x)\m1 \otimes (\ev_x)\0 \right) \\
&= (\ev_x)\m1 \otimes (\ev_x)\0(\phi) \\
&= S(\phi\m1)[\ev_x(\phi\0)]\m1 \otimes [\ev_x(\phi\0)]\0 \\
&= S(\phi\m1)[\phi\0(x)]\m1 \otimes [\phi\0(x)]\0.
\end{align*}
It follows from Remark \ref{remark:CoactionForLeftModules} that $\phi\m1 \otimes \phi\0(x) = [\phi(x\0)]\m1 S^{-1}(x\m1) \otimes [\phi(x\0)]\0$, so that
\begin{align*}
(\id \otimes \ev_\phi) \Delta^A(\ev_x) &= S\left([\phi(x\0)]_{(-2)} S^{-1}(x_{(-1)})\right) [\phi(x\0)]_{(-1)} \otimes [\phi(x\0)]\0 \\
&= (x_{(-1)}) S\left([\phi(x\0)]_{(-2)}\right) [\phi(x\0)]_{(-1)} \otimes [\phi(x\0)]\0 \\
&= (x_{(-1)}) \e\left([\phi(x\0)]_{(-1)}\right) \otimes [\phi(x\0)]\0 \\
&= (x_{(-1)}) \otimes \phi(x\0) \\
&= (\id \otimes \ev_\phi) ((x_{(-1)}) \otimes \ev_{x\0}).
\end{align*}
This shows that $X \to (\lvee X)^\vee\colon x \mapsto \ev_x$ is a left $A$-comodule map.  The other coactions are verified in a similar way.  Moreover, these isomorphisms are natural in $X$ and $Y$, respectively.  Hence, there are natural equivalences $1 \to \lvee{}((-)^\vee)$ and $1 \to (\lvee{}(-))^\vee$, finishing the proof of (\ref{enumerate:Duality1}).

We continue by showing the second statement.  As $X \in \psM\lproj,$ the map
\begin{align*}
\varphi\colon \psM\Hom(X,M) \otimes_N Y \to \psM\Hom(X,Y) && \phi \otimes y \mapsto \phi(-)y 
\end{align*}
is an isomorphism (\cite[II.4.2]{BourbakiAlgebraI13}).  Recall from Remark \ref{remark:Abouthom} that $\psM\Hom(X,-) = \psM\hom(X,-)$.  To verify that $\varphi$ is an $A$-comodule morphism, consider (for any $x \in X$)
\begin{align*}
(1 \otimes \ev_x) \left((\phi(-)y)\m1 \otimes (\phi(-)y)\0\right)
(\phi(-)y)\m1 \otimes (\phi(-)y)\0(x) \\
&= S(x\m1) [\phi(x\0)y]\m1 \otimes [\phi(x\0)y]\0 \\
&= S(x\m1) \phi(x\0)\m1 y\m1 \otimes \phi(x\0)\0 y\0\\
&= \phi\m1 y\m1 \otimes \phi\0(x) y\0 \\
&= (1 \otimes \ev_x) \left(\phi\m1 y\m1 \otimes \phi\0 y\0 \right), \\
\end{align*}
so that $(\phi(-)y)\m1 \otimes (\phi(-)y)\0 = \phi\m1 y\m1 \otimes \phi\0 y\0$, and
\[
(\phi\otimes y)\m1 \otimes (\phi\otimes y)\0 = \phi\m1 y\m1 \otimes \phi\0 \otimes y\0.
\]
Showing that $\varphi$ is a $B$-comodule morphism is similar.  This shows that $\lvee X \otimes_N - \cong \psM\hom(X,-)$.  It then follows from Theorem \ref{theorem:AdjointWithhom} that $X \otimes_N -$ is left adjoint to $(\lvee X \otimes_M -)$.

The last statement is follows from the previous statements.
\end{proof}

\begin{example}\label{example:Contragredient} We consider the case where $B = M = N = \bC$.  Here, the category $\psA\lproj$ is the category of finite-dimensional left $A$-comodules.  Let $X \in \psA\lproj$, then $\lvee X = \hom(X,\bC) = \Hom(X,\bC)$ has an $A$-comodule stucture given by $\Delta(f) = f_{(-1)} \otimes f_{(0)}$ where
\[ f_{(-1)} \otimes f_{(0)}(x) = S(x_{-1}) [f(x_{(0)})]_{(-1)} \otimes [f(x_{(0)})]_{(0)} =  S(x_{(-1)}) \otimes [f(x_{(0)})]\]
where we have used that $\Delta(1) = 1 \otimes 1$ in $\bC$.  Hence, $\lvee X$ is the usual contragredient comodule.
\end{example}

\section{Dualities for comodule differential graded modules}\label{Section:Dualsdgas}

This section is preliminary in nature.  Let $A$ be a Hopf algebra.  We give the definition of a $A$-comodule dg algebra $\D$ and apply the results from \ref{Section:ComoduleDualities} to discuss the dual of a finitely generated projective $\D$-module.  For this, we use that the category of cochain complexes is monoidally equivalent to the category of comodules over the Pareigis Hopf algebra.

Our main result is Theorem \ref{theorem:DualitiesForDg} below.  Remark \ref{remark:DualsForDGModules} collects most properties in this section that we will use in this paper.

\subsection{Differential graded algebras and modules}\label{subsection:dgam}

Throughout, we fix a Hopf algebra $A$.  A \emph{graded $A$-comodule} is an $A$-comodule $V$ together with $A$-subcomodules $V^i \subseteq V$ (for all $i \in \bZ$) such that $V = \oplus_{i \in \bZ} V^i$.  An element $v \in V$ is called \emph{homogeneous of degree $i \in \bZ$} if $v \in V^i$.  For a homogeneous $v \in V$, we also write $|v|$ for the degree of $v$.

A linear map $f\colon V \to W$ between graded $A$-comodules $V$ and $W$ is said to be \emph{homogeneous of degree $j \in \bZ$} if $f(V^i) \subseteq W^{i+j}$, for all $i \in \bZ$.  The space of all linear maps of degree $j$ will be denoted by $\Lin^j(V,W)$.  We write $\Lin(V,W)$ for the graded vector space $\bigoplus_{j \in \bZ} \Lin^j(V,W)$.

Note that we do not require the morphisms in $\Lin(V,W)$ to be $A$-comodule morphisms.  Furthermore, if there are only finitely many $i \in \bZ$ for which $V^i \not= 0$ (so that $V$ is bounded in degree), then $\Lin(V,W)$ is the space of all $\bC$-linear maps from $V$ to $W$.

We write $\psA\Lin^{i}(V,W)$ and $\psA\Lin(V,W)$ for the subspaces of $\Lin^{i}(V,W)$ and $\Lin(V,W)$ consisting of $A$-comodule morphisms, respectively.

The category $\psA\GrMod$ is the category whose objects are graded $A$-comodules and whose morphisms are homogeneous $A$-comodule maps of degree 0 (thus, elements of $\psA\Lin^0(-,-)$).  The category $\psA\GrMod$ has a monoidal structure, given by
\[(V \otimes W)^n = \bigoplus_{i+j=n} V^{i} \otimes_\bC W^j,\]
where $V$ and $W$ are graded $A$-comodules; here, $V^{i} \otimes_\bC W^j$ is the usual tensor product of $A$-comodules.  

A \emph{differential graded $A$-comodule} over the Hopf algebra $A$ is a pair $(V, \exd)$ where $V$ is a graded $A$-comodule and $\exd \in \psA\Lin^1(V,V)$ is an $A$-comodule morphism of degree 1 satisfying $\exd^2 = 0$.  If $(V, \exd_V)$ and $(W, \exd_W)$ are differential graded $A$-comodules, we give the space $\Lin(V,W) = \bigoplus_j \Lin^j(V,W)$ the structure of a differential graded vector space by defining
\[\exd_{\Lin(V,W)}(f) = [\exd,f] \coloneqq \exd_W \circ f - (-1)^{|f|} f \circ \exd_V,\]
for homogeneous $f \in \Lin(V,W)$.  The differential $\exd_{\Lin(V,W)} = [\exd,-]$ is called the \emph{graded commutator}.

A \emph{cochain $A$-comodule morphism} $f\colon (V, \exd_V) \to (W, \exd_W)$ is an $A$-comodule morphism $f\colon V \to W$ of degree 0 such that $[\exd,f] = 0$.  Explicitly, it is an $A$-comodule morphism $f\colon A \to B$ satisfying $f(V^i) \subseteq W^i$ (for all $i \in \bZ$) and $\exd_W \circ f = f \circ \exd_V$.

The category $\psA\dgMod$ of differential graded $A$-comodules and $A$-comodule cochain morphisms is a monoidal category.  The monoidal structure is given by 
\begin{align*}
(V\otimes W)^n &\cong \bigoplus_{i+j = n} V^i \otimes W^j\\
\exd_{V \otimes W} (v \otimes w) &= \exd_V (v) \otimes v^j + (-1)^{|v|} v \otimes \exd_W(w)
\end{align*}
where $v, w$ are homogeneous.

An \emph{$A$-comodule dg algebra} is an algebra object in the category $\psA\dgMod$.  Explicitly, an $A$-comodule dg algebra is a differential graded $A$-comodule $\D = (D, \exd)$ together with cochain $A$-comodule morphisms $\mu\colon D \otimes D \to D$ and $\eta\colon I \to D$ such that $(D,\mu,\eta)$ is a graded algebra and $\exd\colon D \to D$ is a degree 1 map satisfying the graded Leibniz rule:
\[ \exd(ab) = \exd(a)b + (-1)^{|a|} a \exd(b),\]
for all $a,b \in D$ (where $a$ is homogeneous).

A \emph{left $\D$-dg module} is an object in the Eilenberg-Moore category of the monad $(\D \otimes -)$.  Specifically, a left $\D$-dg module is a cochain complex $(X, \exd_X)$ such that $X$ is a graded left $D$-module and for all homogeneous $a \in D$, we have $\exd_X(ax) = \exd(a)x + (-1)^{|a|}a \exd_X (x)$.  Right dg modules are defined in an analogous way.

We write $\psAD\dgproj$ and $\psA\dgproj_\D$ for the full subcategories of $\psAD\dgMod$ and $\psA\dgMod_\D$ consisting of those dg modules that are finitely generated projective as $D$ modules (thus, disregarding the differential graded structure).

\subsection{Equivariant differential graded algebras as comodule algebras}

We now consider the Pareigis Hopf algebra (see \cite{Pareigis81}) $B = \bC \langle x,y^\pm \rangle / (xy+yx,x^2)$ with
\begin{align*}
\Delta(x) &= 1 \otimes x + x \otimes y^{-1} & \epsilon(x) &= 0 \\
\Delta(y) &= y \otimes y & \epsilon(y) &= 1 \\
S(x) &= -xy & S(y) &= y^{-1}.
\end{align*}
Let $B'$ be the Hopf algebra $\bC[y^\pm]$ with the usual structure maps (for all $i \in \mathbb{Z}$, $y^i$ is grouplike; thus, $B'$ is isomorphic to the group algebra of $\bZ,+$).  There is a surjective Hopf algebra map $\varpi\colon B \to B'$ given by $\varpi(x) = 0$ and $\varpi(y) = y$.

The following result is well-known (see \cite[\textsection 3.2]{BrzezinskiWisbauer03} for the first statement and \cite{EstradaGarciaTorrecillas07, Pareigis81} for the second statement).

\begin{theorem}\label{theorem:Pareigis}
\begin{enumerate}
\item There is an isomorphism of monoidal categories $F\colon \GrMod \to \Mod^{B'}$.
\item There is an isomorphism of monoidal categories $F\colon \dgMod \to \Mod^{B}$.
\end{enumerate}
\end{theorem}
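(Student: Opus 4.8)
The plan is to construct $F$ explicitly on objects and morphisms in each case, observe that after unwinding the definitions it is a bijection on objects and literally the identity on hom-sets (hence an isomorphism of categories), and then match the two monoidal structures. For statement (1): since $B' = \bC[y^{\pm}]$ is spanned by its grouplike elements $\{y^n\}_{n\in\bZ}$, any right $B'$-coaction $\rho\colon V \to V \otimes B'$ has the form $\rho(v) = \sum_{n\in\bZ}\rho_n(v)\otimes y^n$ with each $\rho_n$ a degree-$0$ linear map; the counit axiom gives $\sum_n \rho_n = \id_V$, and coassociativity together with $\Delta(y^n) = y^n\otimes y^n$ forces $\rho_n\rho_m = \delta_{nm}\rho_n$, so the datum of $\rho$ is exactly the $\bZ$-grading $V = \bigoplus_n \im\rho_n$. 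Conversely a graded space $V = \bigoplus_n V^n$ yields such a coaction by $\rho(v) = v\otimes y^n$ for $v\in V^n$. These assignments are mutually inverse and the identity on underlying linear maps, and a linear map is $B'$-colinear precisely when it is homogeneous of degree $0$; so $F\colon \GrMod \to \Mod^{B'}$ is an isomorphism of categories. It is monoidal because the tensor-product coaction sends $v\otimes w$ (with $v\in V^i$, $w\in W^j$) to $v\otimes w\otimes y^iy^j = v\otimes w\otimes y^{i+j}$, which is the coaction of the grading on $V\otimes W$, because the monoidal unit $\bC$ in degree $0$ is the trivial $B'$-comodule, and because the associativity and unit constraints on both sides are the evident ones.

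For statement (2): using $x^2 = 0$ and $xy + yx = 0$, the set $\{y^n, xy^n\}_{n\in\bZ}$ is a $\bC$-basis of $B$, so a right $B$-coaction on $V$ can be written uniquely as $\rho(v) = \sum_n \rho_n(v)\otimes y^n + \sum_n \sigma_n(v)\otimes xy^n$. Pushing $\rho$ forward along the Hopf map $\varpi\colon B \to B'$ and applying part (1) produces a $\bZ$-grading $V = \bigoplus_n V^n$ with $\rho_n$ the projection onto $V^n$. A short computation using $\Delta(x) = 1\otimes x + x\otimes y^{-1}$ and $\Delta(y) = y\otimes y$ then shows that coassociativity forces $\sigma_m$ to vanish on $V^n$ unless $m = n+1$, that the resulting map $\exd$ given on $V^n$ by $\sigma_{n+1}$ has degree $+1$, and that $\exd^2 = 0$ (the vanishing of the coefficient of $xy^{n+2}\otimes xy^{n+1}$ mirrors $x^2 = 0$ in $B$); thus $\rho(v) = v\otimes y^n + \exd v\otimes xy^{n+1}$ for $v\in V^n$. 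Conversely this formula turns a cochain complex $(V,\exd)$ into a $B$-comodule. The two assignments are mutually inverse and identical on underlying data, and a degree-$0$ linear map $f$ is $B$-colinear iff it is homogeneous and satisfies $\exd_W f = f\exd_V$, i.e. iff it is a cochain map; hence $F\colon \dgMod \to \Mod^B$ is an isomorphism of categories.

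It remains to check that $F$ is monoidal in case (2). Computing the tensor-product coaction $v\otimes w \mapsto v\0\otimes w\0 \otimes v\1 w\1$ for homogeneous $v\in V^m$, $w\in W^n$ and collecting the coefficient of $xy^{m+n+1}$: the cross term $(v\otimes y^m)$ times $(\exd_W w\otimes xy^{n+1})$ contributes with the sign $(-1)^m$ coming from $y^m x = (-1)^m xy^m$, the cross term $(\exd_V v\otimes xy^{m+1})$ times $(w\otimes y^n)$ contributes with no sign, and the term $(\exd_V v\otimes xy^{m+1})$ times $(\exd_W w\otimes xy^{n+1})$ vanishes since $x^2 = 0$. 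This yields precisely $\exd_{V\otimes W}(v\otimes w) = \exd_V v\otimes w + (-1)^{|v|} v\otimes\exd_W w$, so $F$ identifies the two tensor products; since the unit $\bC$ in degree $0$ with zero differential carries the trivial coaction, and the coherence constraints on both sides are the canonical ones, $F$ is a (strict) monoidal isomorphism. The one genuinely delicate point — and the step I expect to be the main obstacle — is exactly this Koszul-sign bookkeeping, reconciling the sign $(-1)^{|v|}$ in $\exd_{V\otimes W}$ with the interplay between the skew-primitivity of $x$ and the relation $xy + yx = 0$; everything else reduces to routine verification of (co)module axioms. Detailed proofs are in \cite[\textsection 3.2]{BrzezinskiWisbauer03} for the first statement and in \cite{Pareigis81, EstradaGarciaTorrecillas07} for the second.
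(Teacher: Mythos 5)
Your proposal is correct and follows essentially the same route as the paper, which states the result as well-known, describes the same explicit functors ($v \mapsto v \otimes y^{|v|}$, resp.\ $v \mapsto v \otimes y^{|v|} + \exd v \otimes y^{|v|+1}x$), and defers the verifications to \cite[\textsection 3.2]{BrzezinskiWisbauer03} and \cite{Pareigis81, EstradaGarciaTorrecillas07}; your write-up simply carries out those verifications, including the Koszul-sign check, which is sound. The only cosmetic difference is that you normalise the basis as $xy^{n}$ where the paper writes $y^{n}x$, a sign convention that does not affect the argument.
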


The first isomorphism $F\colon \Mod^{B'} \to \GrMod$ is given by mapping a graded vector space $V = \oplus_i V^i$ to a $B'$-comodule $(V,\Delta^{B'}_V)$ with $\Delta^{B'}_V(v) = v \otimes y^{|v|}$, for homogeneous $v \in V$.  Conversely, given a $B'$-comodule, we consider the grading on $V$ given by $V^i = \{v \in V\mid \Delta_V = v \otimes y^i\}$.

The second isomorphism $F\colon \dgMod \to \Mod^{B}$ maps a differential graded vector space $(V,\exd_V)$ to the $B$-comodule $(V, \Delta^B_V)$ with $\Delta^B_V(v) = v \otimes y^{|v|} + \exd v \otimes y^{|v|+1}x$, for homogeneous $v \in V$.  The inverse isomorphism can be described as follows.  Starting from a $B$-comodule $(V \Delta^B_V)$, we give $V$ the structure of a graded vector space via $V^i = \{v \in V\mid (1 \otimes \varpi) \Delta_V = v \otimes y^i\}$.  The map $\exd_V\colon V \to V$ is then defined via $\exd_V (v) \otimes y^{|v|+1}x = \Delta_V(v) - v \otimes y^{|v|}$, for homogeneous $v \in V$.

\begin{remark}
\begin{enumerate}
\item The grading of a $B$-comodule $V$ is obtained by restricting the $B$-coaction $\Delta^B_V$ to a $B'$-coaction $\Delta^{B'}_V = (1 \otimes \varpi)\circ \Delta_V$ and invoking the first statement of Theorem \ref{theorem:Pareigis}.
\item To see that $\exd_V\colon V \to V$ is a well-defined map of degree 1 with $\exd^2 = 0$, one needs that $\Delta_V(v) = v \otimes y^{|v|} + v' \otimes y^{|v|+1}x$ and $\Delta_V(v') = v' \otimes y^{i+1}$, for all homogeneous $v \in V$.  This follows from the coassociativity of the coaction (see \cite[Proof of Theorem 18]{Pareigis81} for details).
\item The Pareigis Hopf algebra $B$ can be obtained via a bosonisation process (see \cite{Majid97}), namely as $\bC[Y^\pm] {>\!\!\!\triangleleft\kern-.33em\cdot\, } \bC[x]/(x^2)$ where the $\bC[Y^\pm]$-coaction of $\bC[x]/(x^2)$ is given by $\Delta(x) = y \otimes x$.
\end{enumerate}
\end{remark}

We have the following extension of Theorem \ref{theorem:Pareigis}.

\begin{proposition}\label{proposition:PareigisEquivariant}
There is an isomorphism of monoidal categories $\psA F \colon \psA\dgMod \to \psA\Mod^{B}$.
\end{proposition}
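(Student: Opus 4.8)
The plan is to upgrade the monoidal isomorphism $F\colon \dgMod \to \Mod^B$ of Theorem \ref{theorem:Pareigis}(2) to the $A$-equivariant setting by checking that $F$ is compatible with an $A$-coaction, and that this compatibility is "transparent" to the $B$-comodule structure. Concretely, an object of $\psA\dgMod$ is a differential graded $A$-comodule $(V,\exd,\Delta^A_V)$; applying the underlying functor to $\dgMod$ and then $F$ produces a $B$-comodule $(V,\Delta^B_V)$ with $\Delta^B_V(v) = v\otimes y^{|v|} + \exd v\otimes y^{|v|+1}x$ for homogeneous $v$. First I would observe that, since $\exd$ is an $A$-comodule map of degree $1$, the left $A$-coaction $\Delta^A_V$ commutes with the formula defining $\Delta^B_V$; that is, $\Delta^A_V$ is simultaneously a $B$-comodule morphism when $V$ carries the $B$-coaction above (the grading pieces $V^i$ are $A$-subcomodules, and $\exd$ preserves the $A$-coaction, which is exactly what is needed for $(\Delta^A_V\otimes 1)\circ\Delta^B_V = (1\otimes\Delta^B_V)\circ\Delta^A_V$ to hold). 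Hence $(V,\Delta^A_V,\Delta^B_V)$ is an $A$-$B$-bicomodule, i.e. an object of $\psA\Mod^B$; one defines $\psA F$ on objects this way and on morphisms by $\psA F(f) = f$.

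Next I would check that $\psA F$ is a functor and is essentially surjective: given $(W,\Delta^A_W,\Delta^B_W)\in\psA\Mod^B$, apply the inverse of $F$ from Theorem \ref{theorem:Pareigis}(2) to the underlying $B$-comodule to recover a grading $W^i = \{w\mid (1\otimes\varpi)\Delta^B_W(w) = w\otimes y^i\}$ and a differential $\exd_W$, and then verify that $\Delta^A_W$ is a graded map (because $\varpi$ is a Hopf map, so the induced $B'$-coaction is still $A$-equivariant) and commutes with $\exd_W$ (because $\exd_W$ is extracted from $\Delta^B_W$, which is an $A$-comodule map). Full faithfulness is immediate since on both sides morphisms are exactly $\bC$-linear maps preserving all the structure, and "preserving the $B$-coaction" is equivalent, under $F$, to "preserving the grading and the differential." Finally, the monoidal compatibility follows because $F$ is already monoidal by Theorem \ref{theorem:Pareigis}(2), and the tensor product of $A$-$B$-bicomodules is computed componentwise in the $A$- and $B$-directions separately; the coherence isomorphisms are the identity maps on underlying vector spaces, exactly as in the non-equivariant case.

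The only genuinely delicate point — and the step I expect to be the main obstacle — is verifying the \emph{bicomodule compatibility} $(\Delta^A_V\otimes \id_B)\circ\Delta^B_V = (\id_A\otimes\Delta^B_V)\circ\Delta^A_V$, i.e. that the $A$-coaction and the $B$-coaction defined via the Pareigis formula genuinely commute. Unwinding the definitions, the left-hand side sends $v$ to $v_{(-1)}\otimes v_{(0)}\otimes y^{|v|} + (\exd v)_{(-1)}\otimes(\exd v)_{(0)}\otimes y^{|v|+1}x$, and the right-hand side sends $v$ to $v_{(-1)}\otimes v_{(0)}\otimes y^{|v|} + v_{(-1)}\otimes \exd(v_{(0)})\otimes y^{|v|+1}x$; these agree precisely because $\exd$ is an $A$-comodule morphism, so $(\exd v)_{(-1)}\otimes(\exd v)_{(0)} = v_{(-1)}\otimes\exd(v_{(0)})$, and because $\exd$ has degree $1$ so that $|\exd v| = |v|+1$. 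This is a short computation but it is the conceptual heart of the statement; everything else is a mechanical transfer of Theorem \ref{theorem:Pareigis} through the forgetful functor $\psA\Mod^B\to\Mod^B$.
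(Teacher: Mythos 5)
Your proof is correct and follows essentially the same route as the paper: you define $\psA F$ by keeping the underlying $A$-coaction, reduce the $A$-$B$-bicomodule compatibility to the facts that the $V^i$ are $A$-subcomodules and that $\exd$ is an $A$-comodule map, treat the inverse via $F^{-1}$ symmetrically, and inherit monoidality from Theorem \ref{theorem:Pareigis}. The computation you single out as the ``conceptual heart'' is precisely the one displayed in the paper's proof, so nothing further is needed.
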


\begin{proof}
Let $(V, \exd) \in \psA\dgMod^B$.  As the functor $F\colon \dgMod \stackrel{\sim}{\rightarrow} \Mod^B$ is the identity on the underlying vector spaces, we may endow $F(V,\exd)$ with the same $A$-comodule structure as $V$.  To show that this induces a functor $\psA F\colon \psA\dgMod {\rightarrow} \psA\Mod^B$, we need to show this $A$-coaction turns $F(V, \exd)$ into an $A$-$B$-comodule. 

As we have for a homogeneous $v \in V$ of degree $i$:
\begin{align*}
(\Delta^A \otimes 1) \circ \Delta^B(v) &= v_{(-1)} \otimes v_{(0)} \otimes y^i + (\exd v)_{(-1)} \otimes (\exd v)_{(0)} \otimes y^{i+1}x, \\
(\Delta^B \otimes 1) \circ \Delta_A(v) &= v_{(-1)} \otimes \Delta^B(v_{(0)}),
\end{align*}
we see that $F(V, \exd)$ is an $A$-$B$-bicomodule if and only if, for every homogeneous $v \in V^i \subseteq V$ we have that $v_{(0)} \in V^i$ and $(\exd v)_{(-1)} \otimes (\exd v)_{(0)} = v_{(-1)} \otimes \exd (v_{(0)})$.  This means that $\psA F\colon \psA\dgMod {\rightarrow} \psA\Mod^B$ is indeed well-defined.

For the other direction, we consider the functor $F^{-1}\colon \psA\Mod^B \stackrel{\sim}{\rightarrow} \psA\dgMod$ from Theorem \ref{theorem:Pareigis}.  Again, $F^{-1}(V)$ inherits an $A$-comodule structure directly from the $A$-comodule structure of $V$.  Similar as above, one checks that $F^{-1}(V)$ is an $A$-$B$-comodule; hence, the functor $(\psA F)^{-1}\colon \psA\Mod^B \stackrel{\sim}{\rightarrow} \psA\dgMod$ is well-defined.

That $\psA F$ and $\psA F^{-1}$ are isomorphisms and monoidal functors follows directly from the fact that $F$ and $F^{-1}$ are isomorphisms and monoidal functors, respectively.
\end{proof}

\begin{corollary}\label{corollary:PareigisEquivariant}
Let $\D$ and $\E$ be algebra objects in $\psA\dgMod$ (thus, $\D$ and $\E$ are $A$-comodule dg algebras).  The monoidal isomorphism $\psA F$ from Proposition \ref{proposition:PareigisEquivariant} induces isomorphisms
\begin{enumerate}
\item $\psAD\dgMod_\E \stackrel{\sim}{\rightarrow} \prescript{A}{F(\D)}\Mod^B_{F(\E)}$,
\item $\psAD\dglproj_\E \stackrel{\sim}{\rightarrow} \prescript{A}{F(\D)}\lproj^B_F(\E)$,
\item $\psAD\dgrproj_{\E} \stackrel{\sim}{\rightarrow} \prescript{A}{F(\D)}\rproj^B_{F(\E)}$.
\end{enumerate}
\end{corollary}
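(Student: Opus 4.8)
The plan is to obtain all three isomorphisms as formal consequences of the fact, established in Proposition \ref{proposition:PareigisEquivariant}, that $\psA F$ is an isomorphism of monoidal categories, using that monoidal isomorphisms carry algebra objects to algebra objects and induce isomorphisms of the associated categories of bimodule objects.

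First I would record the general categorical principle: if $G$ is a monoidal isomorphism and $R, R'$ are algebra objects in its source category, then $G(R)$ and $G(R')$ are algebra objects in its target category, and $G$ transports the monad $R \otimes_\bC (-) \otimes_\bC R'$ to the monad $G(R) \otimes_\bC (-) \otimes_\bC G(R')$ (the unit and multiplication of each monad being assembled from the algebra structure maps and the monoidal constraints, all of which $G$ preserves). Since an isomorphism of categories intertwining two monads induces an isomorphism of the corresponding Eilenberg--Moore categories, $G$ restricts to an isomorphism between the category of $R$-$R'$-bimodule objects in its source and that of $G(R)$-$G(R')$-bimodule objects in its target.

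Next I would apply this with $G = \psA F$, $R = \D$, and $R' = \E$. Since $\D$ and $\E$ are algebra objects in $\psA\dgMod$ (that is, $A$-comodule dg algebras), their images $F(\D)$ and $F(\E)$ are algebra objects in $\psA\Mod^B$, that is, $A$-$B$-bicomodule algebras. By definition $\psAD\dgMod_\E$ is the Eilenberg--Moore category of the monad $\D \otimes_\bC (-) \otimes_\bC \E$ on $\psA\dgMod$, while $\prescript{A}{F(\D)}\Mod^B_{F(\E)}$ is the Eilenberg--Moore category of the monad $F(\D) \otimes_\bC (-) \otimes_\bC F(\E)$ on $\psA\Mod^B$, so statement (1) follows from the principle above. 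I would also note, as a convenience, that $\psA F$, like the functor $F$ of Theorem \ref{theorem:Pareigis}, is the identity on underlying $\bC$-vector spaces; hence the two monads literally agree as $\bC$-linear functors, and the only role played by monoidality of $\psA F$ is to match the $A$-$B$-bicomodule structures on the tensor products together with the monad structure maps.

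Finally, for (2) and (3) I would observe that $\psAD\dglproj_\E$ is the full subcategory of $\psAD\dgMod_\E$ on those objects whose underlying graded left $D$-module is finitely generated and projective, and $\prescript{A}{F(\D)}\lproj^B_{F(\E)}$ is the full subcategory of $\prescript{A}{F(\D)}\Mod^B_{F(\E)}$ on those objects finitely generated and projective as left $F(\D)$-modules. As the isomorphism of (1) carries the left $D$-action of an object to the left $F(\D)$-action of its image and is the identity on underlying vector spaces, the two finiteness-plus-projectivity conditions coincide on corresponding objects; hence the isomorphism of (1) restricts to the required isomorphism, which gives (2), and (3) is obtained identically with ``left'' and ``$\D$'' replaced by ``right'' and ``$\E$''. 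I do not anticipate any real obstacle: the argument is pure bookkeeping, the only mildly delicate point being to confirm that $\psA F$ transports the monad correctly, i.e.\ that $\psA F(\D \otimes_\bC X \otimes_\bC \E) = F(\D) \otimes_\bC \psA F(X) \otimes_\bC F(\E)$ compatibly with units and multiplications, which is immediate since $\psA F$ is a monoidal isomorphism that is the identity on underlying vector spaces.
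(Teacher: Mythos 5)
Your argument is correct and is precisely the formal bookkeeping the paper intends: the paper states this corollary without proof, treating it as immediate from the fact that $\psA F$ is a monoidal isomorphism (hence transports algebra objects, the associated monads, and their Eilenberg--Moore categories), which is exactly what you spell out. Your additional observation that $\psA F$ is the identity on underlying vector spaces, so that the finite-generation-plus-projectivity conditions in (2) and (3) transfer verbatim, is the right way to handle the restricted subcategories.
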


\begin{notation}
Recall that $\psA\Mod^B$ is a closed monoidal category.  We can use Proposition \ref{proposition:PareigisEquivariant} to transfer this property to $\psA\dgMod.$  We write $\lin(V,-)\colon \psA\dgMod \to \psA\dgMod$ for the right adjoint of $(V \otimes -)\colon \psA\dgMod \to \psA\dgMod$.
\end{notation}

Explicitly: $\lin(V,-) = \psA F^{-1}\big(\hom (\psA F(V), \psA F(-)) \big).$

\begin{corollary}
	 If $V \in \psAD\dglproj$, then $\psD\lin(V,-) = \psD\Lin(V,-)$ as differential graded vector spaces.
\end{corollary}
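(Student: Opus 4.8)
The plan is to read both $\psD\lin(V,-)$ and $\psD\Lin(V,-)$ off the Pareigis correspondence of Appendix \ref{Section:Dualsdgas} and then quote the behaviour of the enriched hom of Appendix \ref{Section:ComoduleDualities} under finite generation. First I would observe that, exactly as in the ``Explicitly'' formula recorded above for the non-relative $\lin(V,-)$ (and using the $N=\bC$, $M=F(\D)$ case of Theorem \ref{theorem:AdjointWithhom}), the functor $\psD\lin(V,-)$ is the transport along the monoidal isomorphism $\psA F$ of the $\psA\Mod^B$-enriched hom of Definition \ref{definition:SmallHom}: that is, $\psD\lin(V,W)=\psA F^{-1}\big(\prescript{}{F(\D)}\hom(\psA F(V),\psA F(W))\big)$ for every dg $\D$-module $W$. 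In the same way I would identify $\psD\Lin(V,W)$ with $\psA F^{-1}\big(\prescript{}{F(\D)}\Hom(\psA F(V),\psA F(W))\big)$, where the large hom $\prescript{}{F(\D)}\Hom$ carries the $A$-$B$-bicomodule structure of Lemma \ref{lemma:DefinitionDelta}; unwinding $\psA F^{-1}$ here recovers the graded space of homogeneous left $D$-module maps $V\to W$ with the graded commutator differential $[\exd,-]$ and the $A$-coaction of \textsection\ref{subsection:dgam}, which is what $\psD\Lin(V,W)$ denotes.

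The decisive step is then short. By Definition \ref{definition:SmallHom} the only difference between $\prescript{}{F(\D)}\hom(X,-)$ and $\prescript{}{F(\D)}\Hom(X,-)$ is the pullback correction that compensates for possible non-invertibility of the comparison map $\varphi$ of Proposition \ref{proposition:TensorCommutesWithHom}; by Remark \ref{remark:Abouthom}(1), the two functors agree whenever $X$ is finitely generated and projective as a left $F(\D)$-module. Now Corollary \ref{corollary:PareigisEquivariant}(2) tells us that $\psA F$ restricts to an isomorphism $\psAD\dglproj\xrightarrow{\,\sim\,}\prescript{A}{F(\D)}\lproj^B$, so the hypothesis $V\in\psAD\dglproj$ is precisely the statement that $\psA F(V)$ is finitely generated projective as a left $F(\D)$-module. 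Hence $\prescript{}{F(\D)}\hom(\psA F(V),-)=\prescript{}{F(\D)}\Hom(\psA F(V),-)$, and applying $\psA F^{-1}$ yields $\psD\lin(V,-)=\psD\Lin(V,-)$ as differential graded $A$-comodules.

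The one point requiring real care — and the main obstacle in the write-up — is the identification in the first paragraph of $\psA F^{-1}\big(\prescript{}{F(\D)}\Hom(\psA F(V),\psA F(W))\big)$ with the ``naive'' object $\psD\Lin(V,W)$. This means chasing the reconstruction formulas of Theorem \ref{theorem:Pareigis} and the coaction formulas of Lemma \ref{lemma:DefinitionDelta} through the antipode of the Pareigis Hopf algebra $B$ (for which $S^2\neq\id$), and checking that the signs coming from the group-like twist $y^{-1}$ in $\Delta(x)=1\otimes x+x\otimes y^{-1}$ are exactly the Koszul signs appearing in $[\exd,-]$ and in the $A$-coaction on $\prescript{}{D}\Lin(V,W)$. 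I expect this to be a routine but sign-sensitive verification; once it is done the conclusion is a purely formal consequence of Remark \ref{remark:Abouthom}(1) and Corollary \ref{corollary:PareigisEquivariant}(2) and needs no further ideas.
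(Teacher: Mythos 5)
Your argument is correct and is exactly the route the paper intends: the corollary is an immediate consequence of the displayed formula $\lin(V,-)=\psA F^{-1}\big(\hom(\psA F(V),\psA F(-))\big)$, Remark \ref{remark:Abouthom}(1), and Corollary \ref{corollary:PareigisEquivariant}, which is how the text (implicitly) justifies it. The only phrasing to tighten is in your first paragraph: the large $\prescript{}{F(\D)}\Hom$ does not carry a $B$-comodule structure in general --- Lemma \ref{lemma:DefinitionDelta} only produces a map into $\Hom(X,A\otimes Y\otimes B)$, and the coaction is defined only on the pullback $\hom$ --- but this is moot here, since finite generation and projectivity of $\psA F(V)$ makes $\varphi$ invertible, whence $\hom=\Hom$ and your sign-sensitive identification with $\psD\Lin(V,-)$ goes through as you describe.
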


\subsection{Dualities for equivariant differential graded modules}

We can now directly apply the results of Appendix \ref{Section:ComoduleDualities} to this setting. 

\begin{notation}
We write $\lwedge(-)$ for the contravariant functor $\psD\lin(-,\D)\colon \psAD\dgMod \to \psA\dgMod_\D$ and $(-)^\wedge$ for the contravariant functor $\lin_\D(-,\D)\colon \psA\dgMod_\D \to \psAD\dgMod$.
\end{notation}

\begin{theorem}\label{theorem:DualitiesForDg}
Let $X \in \psAD\dglproj_\E$.
\begin{enumerate}
\item The functor $(X \otimes_\E -)\colon \psAE\dgMod \to \psAD\dgMod$ iss left adjoint to $(\lwedge X \otimes_\D -)\colon \psAD\dgMod \to \psAE\dgMod$.
\item The contravariant functors 
\[\mbox{$\lwedge (-) \colon \psAD\dglproj_\E \to \psAE\dgrproj_\D$ and $(-)^\wedge \colon \psAE\dgrproj_\D \to \psAD\dglproj_\E$}\]
form a duality.
\end{enumerate}
\end{theorem}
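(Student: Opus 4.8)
The plan is to reduce both assertions to Theorem~\ref{theorem:ComoduleAlgebraDuality} via the Pareigis Hopf algebra $B$. By Proposition~\ref{proposition:PareigisEquivariant} the monoidal isomorphism $\psA F\colon \psA\dgMod \xrightarrow{\sim}\psA\Mod^B$ carries an $A$-comodule dg algebra $\D$ to a $B$-comodule algebra $F(\D)$, and by Corollary~\ref{corollary:PareigisEquivariant} it identifies $\psAD\dgMod_\E$ with $\prescript{A}{F(\D)}\Mod^B_{F(\E)}$, hence $\psAD\dglproj_\E$ with $\prescript{A}{F(\D)}\lproj^B_{F(\E)}$ and $\psAE\dgrproj_\D$ with $\prescript{A}{F(\E)}\rproj^B_{F(\D)}$. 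Since $\lin$ on $\psA\dgMod$ was defined as $\psA F^{-1}\big(\hom(\psA F(-),\psA F(-))\big)$, the contravariant functor $\lwedge(-)=\psD\lin(-,\D)$ transports to $\lvee(-)=\prescript{}{F(\D)}\hom(-,F(\D))$ and $(-)^\wedge$ to $(-)^\vee$. Under these identifications the theorem becomes a mild repackaging of Theorem~\ref{theorem:ComoduleAlgebraDuality}, and essentially only the bookkeeping remains.

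Concretely, I would first treat the case $\E=\bC$, i.e.\ apply Theorem~\ref{theorem:ComoduleAlgebraDuality} with $M=F(\D)$ and $N=\bC$ (the standing hypothesis $M\in\psAM\Mod^B_N$ is then trivial). This gives: the contravariant functors $\lwedge(-)\colon\psAD\dglproj\to\psA\dgrproj_\D$ and $(-)^\wedge\colon\psA\dgrproj_\D\to\psAD\dglproj$ form a duality; for $X\in\psAD\dglproj$ the functor $X\otimes_\bC-\colon\psA\dgMod\to\psAD\dgMod$ is left adjoint to $\lwedge X\otimes_\D-\colon\psAD\dgMod\to\psA\dgMod$; and $\lwedge X\otimes_\D-\cong\psD\lin(X,-)\cong\psD\Lin(X,-)$, using that $X$ is finitely generated projective over $\D$ (cf.\ Remark~\ref{remark:Abouthom} and the identity $\psD\lin(V,-)=\psD\Lin(V,-)$ for $V\in\psAD\dglproj$ recorded above).

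Next I would reinstate the right $\E$-action. For $X\in\psAD\dglproj_\E$ the right $\E$-action on $X$ induces a left $\E$-action on $\lwedge X=\psD\Lin(X,\D)$ by $(e\cdot f)(x)=f(xe)$; one checks directly that this commutes with the right $\D$-action, the differential, and the $A$- and $B$-coactions, and it is irrelevant to projectivity over $\D$. Hence $\lwedge(-)$ restricts to $\psAD\dglproj_\E\to\psAE\dgrproj_\D$, and dually $(-)^\wedge$ restricts to $\psAE\dgrproj_\D\to\psAD\dglproj_\E$; since the biduality unit $X\to(\lwedge X)^\wedge$, $x\mapsto\ev_x$, is right $\E$-linear (and similarly for $(-)^\wedge$), the restricted pair is still a duality, which is~(2). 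For~(1) I would relativise the $\E=\bC$ adjunction over $\E$: writing $X\otimes_\E Y$ as the coequaliser of $X\otimes_\bC\E\otimes_\bC Y\rightrightarrows X\otimes_\bC Y$ in $\psAD\dgMod$ and applying $\Hom_{\psAD\dgMod}(-,Z)$ together with the adjunction isomorphism $\Hom_{\psAD\dgMod}(X\otimes_\bC Y,Z)\cong\Hom_{\psA\dgMod}(Y,\lwedge X\otimes_\D Z)$, one obtains $\Hom_{\psAD\dgMod}(X\otimes_\E Y,Z)$ as exactly the equaliser computing $\Hom_{\psAE\dgMod}(Y,\lwedge X\otimes_\D Z)$; here one uses that the left $\E$-action on $\lwedge X\otimes_\D Z\cong\psD\Lin(X,Z)$ is precisely the one transported from the right $\E$-structure of $X$. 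This gives the adjunction $(X\otimes_\E-)\dashv(\lwedge X\otimes_\D-)$ of~(1).

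The main obstacle is the compatibility bookkeeping rather than any isolated hard step: through the Pareigis transfer, and through the hand-made left $\E$-action on $\lwedge X$, one must simultaneously track both module actions on each side, the differential, and the left $A$- and right $B$-coactions (the latter encoding the grading and the differential in the Pareigis picture), and verify that every natural transformation involved respects all of them. The relativisation step in~(1), although conceptually just base change, is where this is most delicate, since the dg signs and the $B$-coaction must be carried along together.
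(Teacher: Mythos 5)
Your first paragraph is precisely the paper's (implicit) argument: the paper offers no separate proof of this theorem, stating only that one ``directly applies'' the results of Appendix~\ref{Section:ComoduleDualities} after transporting everything through the Pareigis isomorphism of Corollary~\ref{corollary:PareigisEquivariant}, exactly as you do. The subsequent reduction to $\E=\bC$ followed by reinstating the right $\E$-action and relativising the adjunction by hand is correct but unnecessary, since Theorem~\ref{theorem:ComoduleAlgebraDuality} is already stated for an arbitrary second comodule algebra $N$, so taking $M=F(\D)$ and $N=F(\E)$ yields both statements at once.
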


\section{Differential graded calculi}

In this appendix, we recall some basic definitions about equivariant calculi that we use in this paper.  Although the results and the setting are well-known, we repeat some definitions here to establish notation.

\subsection{\texorpdfstring{Differential $*$-calculi}{Differential *-calculi}}\label{subsection:DifferentialCalculi}

Let $A$ be a Hopf $*$-algebra.  An \emph{$A$-comodule dg $*$-algebra} is a pair $(\D^\bullet, *)$ where $\D = (D,\exd)$ is an $A$-comodule dg algebra and $(-)^*\colon D \to D$ is an involutive conjugate-linear map satisfying
\begin{align*}
(\exd e)^* &= \exd (e^*), \\
(ef)* &= (-1)^{|e| |f|}f^* e^*, \\
\Delta^A(e^*) &= e\m1^* \otimes e\0^*.
\end{align*}
An \emph{$A$-comodule $*$-algebra} is an {$A$-comodule dg $*$-algebra}, concentrated in degree 0.

\begin{definition}
An {\em $A$-covariant $*$-differential calculus} over an $A$-comodule $*$-algebra $M$ is an $A$-comodule differential graded $*$-algebra $(\Om^\bullet,\exd)$ \st $\Om^0=M$ (as $A$-comodule $*$-algebras) and $(\Om^{\bullet}, \exd)$ is generated by $\Om^0$ as a dg algebra.
\end{definition}

We use $\wedge$ to denote the multiplication between elements of a differential $*$-calculus when both elements are {homogeneous of} degree greater than $0$.  We call an element of a differential calculus a {\em form}.  A form $\omega$ is called {\em closed} if $\exd \omega = 0$.  {A {\em differential map} between two $A$-covariant differential $*$-calculi $(\Om^\bullet,\exd_{\Om})$ and $(\G^\bullet,\exd_{\G})$, defined over the same algebra $M$, is an $A$-comodule $M$-bimodule map $\f\colon \Om^\bullet \to \Gamma^\bullet$ of degree 0 such that $\f \circ \exd_{\Om} = \exd_{\Gamma} \circ \f$.}

We say that a differential $*$-calculus has {\em total dimension} $n$ if $\Om^k = 0$, for all $k>n$, and $\Om^n \neq 0$.  If, in addition, there exists an $A$-comodule $M$-bimodule isomorphism $\vol\colon \Om^n \simeq M$ which commutes with the $*$-map, then we say that $\Om^{\bullet}$ is a {\em (covariantly) $*$-orientable calculus}.  We call a choice of such an isomorphism an {\em orientation}.

\subsection{Quantum homogeneous spaces}\label{subsection:QuantumHomogeneousSpace}

A compact quantum group algebras $G$ is a Hopf $*$-algebra which is cosemisimple, and of which the Haar map $\haar$ satisfies $\haar(g^*g) > 0$, for all $g \in G$.  (We recall that compact quantum group algebras can always be completed to a compact quantum group in the sense of Woronowicz, and that this gives us an equivalence between compact quantum groups and compact quantum group algebras.)

For a right $G$-comodule $V$ with coaction $\DEL^G$, we say that an element $v \in V$ is {\em coinvariant} if $\DEL^G(v) = v \oby 1$. We denote the subspace of all coinvariant elements by $V^G$, and call it the {\em coinvariant subspace} of the coaction. We use analogous conventions for left comodules.

\begin{definition}
Let $G$ and $H$ be compact quantum group algebras.  A {\em homogeneous} right $H$-coaction on $G$ is a coaction of the form $(\id \oby \pi)  \DEL$, where $\pi\colon G \to H$ is a surjective Hopf algebra map. A {\em quantum homogeneous space} $M\coloneqq G^H$ is the coinvariant subspace of such a coaction.
\end{definition}

In this paper we will {\em always} use the symbols $G,H$ and $\pi$ in this sense. As is easily seen, $M$ is a sub\alg  of $G$.  Moreover, if $\pi$ is a Hopf $*$-algebra map, then $M$ is a $*$-subalgebra of $G$.

We now define the abelian categories $\lgmmm$ and $\psH\Mod_0$.  The category $\lgmmm$ is the full subcategory of $\psGM\Mod_M$ consisting of those $M$-bimodules $\F$ satisfying $\F M^+ \sseq M^+ \F$.  The category $\psH\Mod_0$ is the full subcategory of $\psH\Mod_M$ consisting of those objects $V$ whose left $M$-action is given by $v\cdot m = v \e(m)$.  Note that $\psH\Mod_0$ is equivalent under the obvious forgetful functor  to $\psH\!\Mod$.

If $\F \in \lgmmm$, then $\F/(M^+\F)$ becomes an object in $\psH\mod_0$ with the left $H$-coaction 
\bal \label{comodstruc0}
\DEL^H[f] = \pi(f\m1) \oby [f\0], & & f \in \F,
\eal
where $[f]$ denotes the coset of $f$ in  $\F/(M^+\F)$. We define a functor
\[\Phi\colon \lgmmm \to \psH\Mod\]
as follows:  $\Phi(\F) \coloneqq  \F/(M^+\F)$, and if $g\colon \F \to \F$ is a morphism in $\lgmmm$, then $\Phi(g)\colon \Phi(\F) \to \Phi(\F)$ is the map to which $g$ descends on $\Phi(\F)$.

If $V \in \lmhm$, then the {\em cotensor product} of $G$ and $V$, defined by
\bas
G \coby V \coloneqq \ker(\DEL^H \oby \id - \id \oby \DEL^H\colon G\oby V \to G \oby H \oby V),
\eas  
becomes an object in $\lgmmm$ by defining an $M$-bimodule structure
\begin{align} \label{rightmaction}
m \Big(\sum_i g^i \oby v^i\Big)  = \sum_i m g^i \oby v^i, & & \Big(\sum_i g^i \oby v^i\Big) m = \sum_i g^i m \oby v^i,
\end{align}
and a left $G$-coaction 
\begin{align*}
\DEL^G\Big(\sum_i g^i \oby v^i\Big) = \sum_i g^i\1 \oby g^i\2 \oby v^i.
\end{align*}
We define a functor $\Psi\colon\psH\Mod_0 \to \lgmmm$ by:
$
\Psi(V) \coloneqq G \coby V,
$
and if $\g$ is a morphism in $\psH\Mod_0$, then $\Psi(\g) \coloneqq \id \oby \g$. 

The following theorem is \cite[Theorem 1]{Tak}.

\begin{theorem}[Takeuchi's equivalence]\label{theorem:Takeuchi}
The monoidal functors $\Phi\colon \lgmmm \to \lmhm$ and $\Psi\colon\lmhm \to \lgmmm$ are quasi-inverses.  There are natural transformations given by
\begin{align}
\counit_V\colon\Phi \circ  \Psi(V) \to V, && \Big[\sum_i g^i \oby v^i\Big] \mto \sum_{i} \e(g^i)v^i \label{counit},\\
\unit_\E\colon \E \to \Psi \circ \Phi(\E), && e \mto e\m1 \oby [e\0]. \label{unit}
\end{align}
\end{theorem}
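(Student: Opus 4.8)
The plan is to recover Takeuchi's theorem in three movements: first show that $(\Phi,\Psi)$ is an adjunction with the displayed unit $\unit$ and counit $\counit$; then recall that cosemisimplicity of $G$ forces $G$ to be \ff over $M$; and finally use faithful flatness to promote the adjunction to an equivalence. Since the statement is quoted from \cite{Tak}, in the paper one may also simply cite it, but I would give the argument as follows.

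First I would check that $\unit_\E$ and $\counit_V$ are well defined and natural. The map $\counit_V$ annihilates $M^+(G\coby V)$ because $\e$ vanishes on $M^+$; for $\unit_\E$ one verifies from coassociativity and $\DEL^H[f]=\pi(f\m1)\oby[f\0]$ that $e\m1\oby[e\0]$ lies in the equalizer defining $G\coby\Phi(\E)$. The two triangle identities are then short Sweedler computations, collapsing respectively to $\e(e\m1)[e\0]=[e]$ and to $g\1\oby\e(g\2)v=g\oby v$ on $G\coby V$. This gives $\Phi\dashv\Psi$ with the stated $\unit$ and $\counit$. Here I would also record that $\Phi$ and $\Psi$ are monoidal --- for $\Phi$ via the isomorphism $\Phi(\E\oby_M\F)\cong\Phi(\E)\oby\Phi(\F)$, for $\Psi$ via the analogous statement for cotensor products --- compatibly with $\unit$ and $\counit$, as in Takeuchi's original treatment.

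The substantive step is showing $\unit_\E$ and $\counit_V$ are isomorphisms, and the key input is that, $G$ being a compact quantum group algebra, it is cosemisimple, so that $M=G^{H}\hookrightarrow G$ is a \ff $H$-Hopf--Galois extension: $G$ is \ff as a left and right $M$-module and the Galois map $G\oby_M G\to G\oby H$ is bijective \cite{KSLeabh}. Granting this, I would base-change the unit along $M\hookrightarrow G$. There is a natural isomorphism $G\oby_M\E\cong G\oby\Phi(\E)$, $g\oby e\mto ge\m1\oby[e\0]$, with inverse $g\oby[e]\mto gS(e\m1)\oby e\0$; well-definedness over $M$ uses the defining relation $\E M^+\sseq M^+\E$ of $\lgmmm$. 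Since $G$ is flat over $M$ one may commute $G\oby_M-$ past the cotensor, and bijectivity of the Galois map then gives $G\oby_M(G\coby\Phi(\E))\cong(G\oby H)\coby\Phi(\E)\cong G\oby\Phi(\E)$. Under these identifications $\id_G\oby_M\unit_\E$ becomes an isomorphism, so by faithful flatness $\unit_\E$ is an isomorphism. The counit is then formal: the triangle identity forces $\Psi(\counit_V)=(\unit_{\Psi(V)})^{-1}$, which is invertible, and $\Psi=G\coby-$ is faithful (again by faithful flatness), hence reflects isomorphisms, so $\counit_V$ is an isomorphism. Combined with the monoidality noted above, this exhibits $\Phi$ and $\Psi$ as quasi-inverse monoidal equivalences.

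The hard part will be the faithful flatness of $G$ over $M$ and, given it, the bookkeeping that keeps track of the left $G$-comodule, right $M$-module and $H$-comodule structures as one threads them through the base-change isomorphisms of the last step; with that in hand the rest is a routine rephrasing of Takeuchi's proof.
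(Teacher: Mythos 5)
Your proposal is correct in substance, but note that the paper offers no proof of this statement to compare against: it simply records the result as \cite[Theorem 1]{Tak}. Your sketch is the standard faithfully-flat-descent argument for Takeuchi's theorem, and it hangs together: the adjunction $\Phi \dashv \Psi$ with the displayed $\unit$ and $\counit$ is a routine Sweedler verification, and the promotion to an equivalence correctly isolates the two genuine inputs, namely (i) that cosemisimplicity of $G$ forces $G$ to be \ff over $M = G^{\co(H)}$ with bijective Galois map, and (ii) that $H$, being itself a compact quantum group algebra, is cosemisimple, so that $G \coby -$ is exact and faithful and hence reflects isomorphisms (you use this implicitly when deducing invertibility of $\counit_V$ from that of $\Psi(\counit_V)$; it is worth stating, since coflatness of $G$ over $H$ is a separate fact from flatness of $G$ over $M$). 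The one point you flag as "the hard part" is indeed where all the content lies: faithful flatness of $G$ over the coinvariants of a quotient Hopf algebra is a nontrivial theorem in general, though in the compact quantum group setting it follows from cosemisimplicity. Given that the paper treats the whole statement as a citation, reproducing Takeuchi's descent argument as you do is a reasonable, if strictly optional, expansion.
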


\begin{corollary}
Takeuchi's equivalence restricts to an equivalence of categories between $\sgmm$ and $\psH\mod_0$, where $\sgmm$ is the full subcategory of $\lgmmm$ consisting of finitely generated left $M$-modules, and $\psH\mod_0$ is the full subcategory of $\psH\Mod_0$ consisting of finite-dimensional comodules.
\end{corollary}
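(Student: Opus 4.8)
The plan is to show that the functors $\Phi$ and $\Psi$ of Theorem~\ref{theorem:Takeuchi} each carry the relevant full subcategory into the other; since $\sgmm \subseteq \lgmmm$ and $\psH\mod_0 \subseteq \psH\Mod_0$ are full subcategories, the unit $\unit$ and counit $\counit$ of Takeuchi's equivalence then automatically restrict to natural isomorphisms between the restricted composites, and so one obtains the desired adjoint equivalence. Concretely, I would establish the two inclusions $\Phi(\sgmm) \subseteq \psH\mod_0$ and $\Psi(\psH\mod_0) \subseteq \sgmm$.

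The first inclusion is immediate: for $\F \in \sgmm$ the comodule $\Phi(\F) = \F/(M^+\F)$ is a quotient of the finitely generated left $M$-module $\F$, hence finitely generated over $M$; since $M^+$ acts as zero on it and $M/M^+ \cong \bC$, it is a finite-dimensional complex vector space, so $\Phi(\F) \in \psH\mod_0$.

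The substance of the corollary lies in the second inclusion: for a finite-dimensional left $H$-comodule $V$, the object $\Psi(V) = G \coby V$ should be finitely generated as a left $M$-module. Here I would invoke the \emph{faithful flatness} of $G$ as a left $M$-module, a standard property of quantum homogeneous spaces of compact quantum group algebras (see \cite{KSLeabh}, \cite{Tak}), together with the canonical Hopf--Galois isomorphism $G \oby_M G \cong G \oby H$, $g \oby g' \mapsto g\, g'\1 \oby \pi(g'\2)$, which is an isomorphism in this setting for the same reason. Since $G$ is then flat over $M$, the functor $G \oby_M -$ is exact and commutes with the kernel defining the cotensor product; applying it to $\Psi(V) = G \coby V = \ker\!\big(G \oby V \to G \oby H \oby V\big)$ and using the two isomorphisms above yields a left $G$-module isomorphism $G \oby_M \Psi(V) \cong (G \oby_M G)\coby V \cong (G \oby H)\coby V \cong G \oby V$. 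Setting $d = \dim_{\bC} V < \infty$, the last term is $\cong G^{\oplus d}$, which is finitely generated over $G$; as finite generation descends along the faithfully flat extension $M \hookrightarrow G$, the module $\Psi(V)$ is finitely generated over $M$, i.e.\ $\Psi(V) \in \sgmm$.

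Granting both inclusions, the restricted functors $\Phi \colon \sgmm \to \psH\mod_0$ and $\Psi \colon \psH\mod_0 \to \sgmm$ are well defined, and the restrictions of $\unit$ and $\counit$ exhibit them as quasi-inverse. The main obstacle is the second inclusion and, more precisely, the appeal to faithful flatness of $G$ over $M$ and the resulting Galois isomorphism; if one prefers to avoid unwinding this, the restricted form of Takeuchi's equivalence, namely the correspondence between finitely generated relative Hopf modules and finite-dimensional comodules, is itself recorded in \cite{KSLeabh, Tak} and may simply be cited.
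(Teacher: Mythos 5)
Your argument is correct. Note that the paper itself states this corollary without proof, so there is no in-text argument to compare against; you are supplying the standard justification that the authors leave implicit. Both of your inclusions hold: $\Phi(\F)=\F/(M^+\F)$ is a finitely generated module annihilated by $M^+$, hence a finitely generated $M/M^+\cong\bC$-module, i.e.\ finite-dimensional; and for the converse direction your faithfully flat descent argument is the right one. Two remarks to reassure yourself that the appeal to faithful flatness is legitimate here: first, Takeuchi's equivalence (Theorem~\ref{theorem:Takeuchi}, i.e.\ \cite[Theorem 1]{Tak}) is itself only valid under the hypothesis that $G$ is faithfully flat over $M$, so this is already a standing assumption of the statement being restricted (and it is automatic for cosemisimple $G$, hence for compact quantum group algebras); second, the descent of finite generation along a faithfully flat ring map goes through verbatim in the noncommutative setting (if $G\oby_M N'\to G\oby_M N$ is surjective for the submodule $N'$ generated by the finitely many tensor legs of a generating set, then $G\oby_M(N/N')=0$ forces $N=N'$). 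One could be tempted by a shortcut — using cosemisimplicity of $H$ to exhibit $G\coby V$ as an $M$-module direct summand of $G\oby V\cong G^{\oplus d}$ — but this fails to give finite generation because $G$ itself is typically not finitely generated over $M$ (e.g.\ $\bC_q[SU_2]$ over the Podle\'s sphere decomposes as an infinite direct sum of line bundles), so your route through the Galois isomorphism and descent is genuinely needed.
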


For any $\F \in \psGM\mod_0$, we refer to $\dim_\bC \Phi(\F)$ as the \emph{rank} of $\F$.

In this paper, all differential $*$-calculi over a quantum homogeneous space will be assumed to satisfy $\Om^\bullet M^+ \sseq M^+\Om^\bullet$ and to be of finite rank (that is, $\Om^\bullet \in \psGM\mod_0$). This implies that {there is} a multiplication defined on $\Phi(\Om^\bullet)$ by $[\w] \wed [\nu] \coloneqq [\w \wed \nu]$.  It follows from Theorem \ref{theorem:Takeuchi} that every element of $\Phi(\Om^k)$ is a sum of elements of the form $[\w_1] \wed \cdots \wed [\w_k]$, for $\w_i \in \Om^1$.  When working with quantum homogeneous spaces, we usually adopt the convenient notation $V^\bullet \coloneqq \Phi(\Om^\bullet)$.


\def\cprime{$'$}
\providecommand{\bysame}{\leavevmode\hbox to3em{\hrulefill}\thinspace}
\providecommand{\MR}{\relax\ifhmode\unskip\space\fi MR }
\providecommand{\MRhref}[2]{%
  \href{http://www.ams.org/mathscinet-getitem?mr=#1}{#2}
}
\providecommand{\href}[2]{#2}

\end{document}